\theoremstyle{plain}
\newtheorem{theorem}{Theorem}[subsection]
\newtheorem{lemma}[theorem]{Lemma}
\newtheorem{proposition}[theorem]{Proposition}
\newtheorem{corollary}[theorem]{Corollary}
\newtheorem{theoremalpha}{Theorem}
\theoremstyle{definition}
\newtheorem{definition}[theorem]{Definition}
\newtheorem{example}[theorem]{Example}
\newtheorem{remark}[theorem]{Remark}
\newtheorem{warning}[theorem]{Warning}
\newcommand{\cA}{\mathcal{A}}
\newcommand{\cX}{\mathcal{X}}
\newcommand{\cY}{\mathcal{Y}}
\newcommand{\C}{\mathbb{C}}
\newcommand{\CC}{\mathcal{C}}
\newcommand{\Einfty}{{\mathbb{E}_\infty}}
\newcommand{\OO}{\mathcal O}
\newcommand{\ZZ}{\mathbb{Z}}
\DeclareMathOperator{\Alg}{Alg}
\DeclareMathOperator{\CAlg}{CAlg}
\DeclareMathOperator{\Coh}{Coh}
\newcommand{\colim}{\varinjlim}
\DeclareMathOperator{\End}{End}
\DeclareMathOperator{\Ext}{Ext}
\DeclareMathOperator{\Extu}{\underline{Ext}}
\DeclareMathOperator{\Fun}{Fun}
\DeclareMathOperator{\Hom}{Hom}
\DeclareMathOperator{\Homs}{\underline{Hom}}
\DeclareMathOperator{\Op}{Op}
\DeclareMathOperator{\Pic}{Pic}
\DeclareMathOperator{\PreStk}{PreStk}
\DeclareMathOperator{\Pro}{Pro}
\DeclareMathOperator{\ProMod}{\Pro\Mod}
\DeclareMathOperator{\Mod}{Mod}
\DeclareMathOperator{\IndCoh}{IndCoh}
\DeclareMathOperator{\IndSch}{IndSch}
\DeclareMathOperator{\Perf}{Perf}
\DeclareMathOperator{\Shv}{Shv_{\mathit{R}}}
\DeclareMathOperator{\Shvab}{Shv_{\mathit{R}}^{ab}}
\DeclareMathOperator{\Spec}{Spec}
\DeclareMathOperator{\SpInd}{SpInd}
\DeclareMathOperator{\Spc}{Spc}
\DeclareMathOperator{\Sym}{Sym}
\DeclareMathOperator{\Tor}{Tor}
\DeclareMathOperator{\Tot}{Tot}
\DeclareMathOperator{\QCoh}{QCoh}
\newcommand{\DCAlg}{\mathrm{DCAlg}}
\renewcommand{\lim}{\varprojlim}
\newcommand\Sets{\mathcal{S}\mathit{ets}}
\renewcommand{\t}[1]{{}^{\leq #1}\!}
\newcommand\Gm{\mathbb{G}_m}
\newcommand{\Ga}{\mathbb{G}_a}
\newcommand{\series}[1]{[\![#1]\!]}
\newcommand{\lseries}[1]{(\!(#1)\!)}
\newcommand{\tensor}[1]{\underset{#1}{\otimes}}
\newcommand{\Lotimes}[1]{\underset{#1}{\overset{L}{\otimes}}}
\newcommand{\EXT}{\mathtt{EXT}}
\newcommand{\unit}{\mathrm{unit}}
\newcommand{\ML}{M\!L}
\newcommand{\stCats}{\mathrm{Cat_\infty}^{\mathrm{st,cocompl}}_{\mathrm{cont}}}
\title{Cartier duality via Mittag-Leffler modules}
\author{Dima Arinkin, Joshua Mundinger}
\date{December 15, 2025}
\subjclass[2020]{Primary: 14L15, Secondary: 14L05, 14F08}
\begin{document}

\begin{abstract}
    We construct the Cartier duality equivalence for affine commutative group schemes $G$ whose coordinate ring is a flat Mittag-Leffler module over an arbitrary base ring $R$. The dual $G^\vee$ of $G$ turns out to be an ind-finite ind-scheme over $R$. When $R$ is Noetherian and admits a dualizing complex, we construct a Fourier-Mukai transform between quasicoherent derived categories of $G$ and of $BG^\vee$ and also between those of $G^\vee$ and $BG$.
\end{abstract}

\maketitle


\section{Introduction}

\subsection{Extended abstract}

If $G$ is a finite abelian group, then the set of homomorphisms $G \to \C^\times$ is again a finite abelian group $G^\vee$. It is a fundamental observation of Pontryagin that the assignment $G \rightsquigarrow G^\vee$ is an antiequivalence of categories. 
In algebraic geometry, motivated by the study of algebraic groups in positive characteristic, Cartier introduced the dual $G^\vee$ of a finite group scheme $G$ over a field, given by the formula $G^\vee = \Homs(G, \Gm)$ \cite{Car62}. Again the assignment $G \rightsquigarrow G^\vee$ is an antiequivalence of categories. 
This extends verbatim to finite locally free families of group schemes \cite[VIIA]{SGA3I}.

In both  settings, the duality is in fact defined on a larger category of abelian groups. 
In the topological setting, Pontryagin duality is an antiequivalence on locally compact abelian groups.
In the algebraic setting, the symmetry is broken: duality becomes an antiequivalence between categories of affine commutative group schemes over a field $k$ and commutative ``formal groups'' over $k$, which in modern terms are group ind-finite ind-schemes over $k$ \cite{Car62, Die73}. 

One of the main goals of this paper is to explain an extension of the Cartier duality equivalence to group schemes over an arbitrary base. 
Cartier observed that a finite algebraic group $G$ is exactly the spectrum of a finite-dimensional commutative cocommutative Hopf algebra $A$, and the linear dual $A^\vee$ of a commutative cocommutative Hopf algebra is again a commutative cocommutative Hopf algebra.  The dual group $G^\vee$ is then the spectrum of $A^\vee$. Linear duality is an antiequivalence of the category of finite-dimensional vector spaces and induces the Cartier duality.
Thus, the crux of the matter is to give an appropriate duality theory for modules over a ring. For us, the answer is the theory of \emph{flat Mittag-Leffler modules}, introduced by Raynaud and Gruson in the seminal paper \cite{RG71}. Informally, Drinfeld calls flat Mittag-Leffler modules ``projective modules with a human face''. 
In particular, projective modules are flat Mittag-Leffler, and countably generated flat Mittag-Leffler modules are projective. 

In this paper, we construct the Cartier duality equivalence for affine commutative group schemes $G$ whose coordinate ring is a flat Mittag-Leffler module over the base. The dual of $G$ turns out to be an ind-finite ind-scheme over the same base, and the duality results in an antiequivalence of categories.

We then go one categorical level higher, which leads to two types of duality: geometric 1-duality of group stacks and categorical 1-duality (the Fourier-Mukai transform). We show in what sense both types of duality hold between $G$ and the classifying stack $BG^\vee$ when $G$ is flat Mittag-Leffler. 
More generally, we prove an equivalence of categories of sheaves on $G/H^\vee$ and $H/G^\vee$ when $G$ and $H$ are flat Mittag-Leffler; this extends Laumon's Fourier transform for generalized 1-motives.
We also consider the duality between $G^\vee$ and $BG$, which turns out to be more complicated; we prove partial results in this direction. 

Our primary motivation is in applications to geometric class field theory, such as appropriate versions of the Contou-Carrère pairing in families.\footnote{This is similar to Campbell and Hayash's study of geometric class field theory \cite{CH21} We focus on categorical 1-duality, but even in the geometric setting, we provide new analysis of the geometric 1-dual of $G$ using torsors in $h$-topology.} This will be considered in future work. The present work does not deal with geometric class field theory, and we expect the results will have wider applications.

\subsection{Summary of main results}

Let $R$ be a commutative ring.
We work over the base $\Spec R$.
The results of this paper belong to two different categorical levels. At the 0-level, the main result is the following theorem:

\begin{theoremalpha}\label{maintheorem: cartier}
    (Corollary \ref{corollary: cartier duality is an antiequivalence})
    The assignment $G \mapsto G^\vee = \Homs(G, \Gm)$ 
    is an antiequivalence of categories between flat Mittag-Leffler affine commutative group schemes over $R$ and coflat commutative group ind-finite ind-schemes over $R$.
\end{theoremalpha}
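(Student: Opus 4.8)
The plan is to reduce the statement to a Hopf-algebraic duality by matching the geometric objects on both sides with algebraic data, and then invoke a linear duality between flat Mittag-Leffler modules and their ``dual'' (pro-finite, or ind-finite on the Spec side) counterparts. Concretely: an affine commutative group scheme $G = \Spec A$ with $A$ a flat Mittag-Leffler $R$-module is the same as a commutative, cocommutative Hopf $R$-algebra whose underlying module is flat Mittag-Leffler; on the dual side, a coflat commutative group ind-finite ind-scheme $G^\vee = \colim \Spec B_i$ corresponds to a filtered colimit of finite $R$-algebras, i.e.\ an ind-(finite flat) cocommutative, commutative Hopf ind-algebra, with a coflatness hypothesis playing the role dual to flatness. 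The crux, as the introduction emphasizes, is a linear duality: I expect the paper to have already established (before this corollary) an antiequivalence $M \mapsto M^\vee = \Homs_R(M,-)$ between flat Mittag-Leffler $R$-modules and a suitable category of ``coflat'' pro-finite-free (or ind-finite-free) $R$-module objects, with the key property that $M^{\vee\vee} \cong M$ and that this duality is monoidal for the appropriate tensor structures (the completed/ind tensor product on the dual side). Given such a linear duality, the proof is then formal.

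**First I would** fix the equivalence of the plain (non-group) objects: show $G = \Spec A \mapsto \Homs(A, -)$, or rather the ind-scheme $\Homs(G,\Gm)$, is computed by the linear dual $A^\vee$ equipped with the transpose structure maps, so that $G^\vee$ is literally the ``ind-spectrum'' of the dual Hopf object. This uses the functor-of-points description $G^\vee(S) = \Hom_{\mathrm{grp}}(G_S, \Gm{}_{,S})$ together with the identification of group-like elements / primitive-vs-grouplike bookkeeping; the flat Mittag-Leffler hypothesis on $A$ is exactly what makes $A^\vee$ well-behaved (a coflat pro-object, hence $\Spec$ of it an ind-finite ind-scheme after dualizing) and what makes formation of $A^\vee$ commute with the base changes needed to check the functor of points. \textbf{Second}, I would transport the Hopf structure: the comultiplication $A \to A \otimes_R A$ dualizes, using monoidality of the linear duality, to a multiplication $A^\vee \,\widehat\otimes_R\, A^\vee \to A^\vee$ and similarly for unit, counit, antipode, so that $A^\vee$ is a (complete/ind) commutative cocommutative Hopf object; the commutativity of the relevant diagrams follows by applying the (contravariant, monoidal) duality functor to the diagrams for $A$. \textbf{Third}, running the same construction in the reverse direction on a coflat ind-finite ind-group $H = \colim \Spec B_i$ produces a flat Mittag-Leffler affine commutative group scheme $H^\vee$, and the double-duality isomorphism $A^{\vee\vee}\cong A$ (respectively $B^{\vee\vee}\cong B$) of the linear theory upgrades — again by naturality and monoidality — to an isomorphism of Hopf objects, hence a natural isomorphism $G^{\vee\vee}\cong G$ and $H^{\vee\vee}\cong H$ of group (ind-)schemes. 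This gives the antiequivalence.

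**The main obstacle I anticipate** is not the Hopf-algebraic bookkeeping, which is formal once the linear duality is in hand, but rather two points of genuine content that the linear-duality input must supply. (1) Identifying the correct target category: one must pin down precisely what ``coflat commutative group ind-finite ind-scheme'' means at the module level — presumably a pro-object of finite projective (or finite flat) modules satisfying a Mittag-Leffler-type condition ensuring the pro-system is ``coflat'' — and check that the linear dual of a flat Mittag-Leffler module genuinely lands there and nothing is lost (no non-trivial $\lim^1$, compatibility of the pro-structure with the coalgebra structure). (2) Checking that $\Homs(G,\Gm)$, defined as a sheaf functor of points, is actually representable by this ind-scheme and not merely a formal gadget; here one needs that $G$ being flat Mittag-Leffler lets one write $A$ as a filtered colimit of ``nice'' pieces so that $\Homs(G,\Gm)$ is a filtered colimit of affine (indeed finite) schemes — this is where the Raynaud--Gruson structure theory for flat Mittag-Leffler modules does the real work, and I would expect this representability to be proved in the body of the paper and merely cited here. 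Granting those inputs, assembling them into the antiequivalence — exhibiting the unit and counit of the (anti)adjunction as the double-duality maps and verifying the triangle identities by monoidality — is routine, so the corollary in the excerpt should indeed be a short consequence of the preceding results.
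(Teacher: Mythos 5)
Your proposal follows essentially the same route as the paper: a symmetric monoidal linear duality between flat Mittag-Leffler modules and pro-finite pro-projective Mittag-Leffler pro-modules induces a duality of Hopf algebras, the functor-of-points/grouplike-element computation identifies $\Homs(G,\Gm)$ with $\SpInd(A^\vee)$, and double duality yields the antiequivalence. The only cosmetic difference is that you phrase the dual side as an ind-system of finite algebras while the paper works with the coordinate pro-algebra (so the $\lim^1$ concern you raise does not arise, since limits are formal); the two anticipated obstacles you name are exactly the content of the paper's Theorem~\ref{theorem: algebras in pro}, Proposition~\ref{prop:ind-schemes}, and Theorem~\ref{theorem: perfect pairing for flat ML group}.
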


See Definition \ref{definition: coflat} below for the meaning of the adjective ``coflat".
The main input to Theorem \ref{maintheorem: cartier} is the linear duality for flat Mittag-Leffler modules, following Drinfeld. This is developed in §\ref{section: linear duality}.

It is instructive to compare our results with \cite{ASS07}. Similar to our setting, the Cartier duality is deduced from a linear duality statement. However, their linear duality is realized in terms of presheaves of $\OO$-modules on the category of affine schemes over the base (which they call $R$-module functors). For us, linear duality is the duality between flat modules and pro-finite pro-projective modules (see Corollary \ref{cor: linear duality equivalence}). As a minor point, they work with projective coordinate rings while we consider the more general setting of flat Mittag-Leffler coordinate rings.

At the 1-categorical level, 
the first-named author posited in \cite[Appendix A]{DP08} that there are two natural types of duality between commutative group stacks $\mathcal X$ and $\mathcal Y$.
Given a group stack $\mathcal X$, let its 1-dual be $\mathcal X^D = \Homs(\mathcal X, B\Gm)$.
Given a pairing $P: \mathcal X \times \mathcal Y \to B\Gm$, we say that $P$ induces \emph{geometric 1-duality} if it is perfect in the sense that the induced maps  $\mathcal Y \to \mathcal{X}^D$ and $\mathcal{X} \to \mathcal{Y}^D$ are equivalences.
We say $P$ induces \emph{categorical 1-duality} if the Fourier-Mukai transform with kernel $P$ is an equivalence $\QCoh(\mathcal X) \simeq \QCoh(\mathcal{Y})$ of derived categories of quasi-coherent sheaves.\footnote{In \cite[Appendix A]{DP08}, categorical 1-duality is formulated using bounded derived categories.}

\begin{example}
    If $A$ is an abelian variety, then the Barsotti-Weil formula states that $A^D$ is the dual abelian variety to $A$.
    Since $A^D$ is again an abelian variety, 
    the Poincaré pairing $P: A \times A^D \to B\Gm$ is perfect.
    Mukai showed that the Fourier-Mukai transform
    \[ \QCoh(A) \to \QCoh(A^D)\]
     with kernel $P$ is an equivalence \cite{Muk81}. Thus the Poincaré pairing induces both a geometric and categorical 1-duality.
\end{example}




Let $G$ be a flat Mittag-Leffler affine commutative group scheme over $R$. 
Cartier duality gives rise to a pairing 
\[G \times BG^\vee \to B\Gm.\]
Surprisingly, this pairing is not perfect in general. For example, when $G = \Ga$ over a field of characteristic zero, the induced map $B\Ga^\vee \to \Ga^D$ is not an equivalence because $\Extu^1(\Ga,\Gm) \neq 0$.
Our point of view is that the flat topology used to define $B\Ga^\vee$ is not suitable for ind-schemes such as $\Ga^\vee$.
In §\ref{section: geometric 1-dual} we define a version of the classifying stack using $h$-topology and show (under certain technical assumptions) that for flat Mittag-Leffler $G$, the 1-dual $G^D$ is the $h$-classifying stack of $G^\vee$.

More generally, given $G$ and $H$ and a pairing $G^\vee \times H^\vee \to \Gm$, we obtain a pairing of quotient stacks
\[[G/H^\vee] \times [H/G^\vee] \to B\Gm.\]
If $H$ is trivial, then $[H/G^\vee]$ is the classifying stack of $G^\vee$. 
Even though this pairing may not induce geometric 1-duality, the derived categories of sheaves are still equivalent.
\begin{theoremalpha}\label{maintheorem: fourier for 1-motives}
    (Theorem \ref{theorem: FM for 1-motives})
    Let $R$ be a Noetherian ring admitting a dualizing complex.    
    Let $G$ and $H$ be flat Mittag-Leffler affine commutative group schemes of finite type over $R$, equipped with a bilinear pairing $G^\vee \times H^\vee \to \Gm$.
    Then there is an equivalence of $\infty$-categories
    \[ 
        \QCoh([G/H^\vee])  \simeq \QCoh([H/G^\vee]).
    \]
\end{theoremalpha}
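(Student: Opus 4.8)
The plan is to bootstrap from the case $H=0$, where the statement becomes the Fourier transform $\QCoh(G)\simeq\QCoh(BG^\vee)$, using that both quotient stacks are relatively affine over classifying stacks of the ind-finite groups $H^\vee$ and $G^\vee$. First I would repackage the input data. By Cartier duality (Theorem~\ref{maintheorem: cartier} applied to $G$ and to $H$), together with the biduality $G\simeq\Homs(G^\vee,\Gm)$ that holds because $G$ is flat Mittag--Leffler, a bilinear pairing $G^\vee\times H^\vee\to\Gm$ is the same datum as a pair of homomorphisms $\phi\colon H^\vee\to G$ and $\psi\colon G^\vee\to H$ which are Cartier-dual to one another. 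These are the homomorphisms through which $H^\vee$ acts on $G$ (translation through $\phi$) and $G^\vee$ acts on $H$ (translation through $\psi$); both actions are free, so $G\to[G/H^\vee]$ and $H\to[H/G^\vee]$ are torsors, the descent along which is governed by the coflatness of $H^\vee$ and $G^\vee$ (Definition~\ref{definition: coflat}, automatic by Theorem~\ref{maintheorem: cartier}).

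Next I would record two ``packaging'' identifications. Since $G$ is affine over $\Spec R$ compatibly with the $H^\vee$-action, one has $[G/H^\vee]=\Spec_{BH^\vee}\OO(G)$, hence $\QCoh([G/H^\vee])\simeq\Mod_{\OO(G)}(\QCoh(BH^\vee))$, where $\OO(G)$ is the commutative algebra in $\QCoh(BH^\vee)=\mathrm{Rep}(H^\vee)$ determined by its $\phi$-translation action; symmetrically $\QCoh([H/G^\vee])\simeq\Mod_{\OO(H)}(\QCoh(BG^\vee))$. Dually, descent along the $H^\vee$-torsor $G\to[G/H^\vee]$ identifies $\QCoh([G/H^\vee])$ with the category of modules over the convolution algebra $\phi_*\OO_{H^\vee}$ (with the dualizing-complex normalization appropriate to the ind-finite group $H^\vee$, i.e.\ a $\phi_*\omega_{H^\vee}$) inside the monoidal category $(\QCoh(G),\star)$ — that is, with $H^\vee$-equivariant quasicoherent sheaves on $G$ for the $\phi$-action.

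The heart of the matter is the case $H=0$: the Fourier transform
\[
    F_G\colon(\QCoh(G),\star)\;\xrightarrow{\ \sim\ }\;(\QCoh(BG^\vee),\otimes)=(\mathrm{Rep}(G^\vee),\otimes),
\]
a monoidal equivalence carrying translation by a point $a\in G$ to tensoring with the corresponding character of $G^\vee$. On underlying abelian categories $F_G$ is the identification of $\OO(G)$-modules with continuous $\OO(G^\vee)$-comodules supplied by linear duality (§\ref{section: linear duality}, Corollary~\ref{cor: linear duality equivalence}) applied to the Hopf algebra $\OO(G)$, using the biduality $\OO(G)^{\vee\vee}\simeq\OO(G)$; the finite type hypothesis makes $\OO(G)$ a projective $R$-module, so that $G^\vee$ is a countably indexed ind-finite ind-scheme. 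Granting $F_G$, the theorem follows formally: $F_G$ carries the convolution algebra $\phi_*\OO_{H^\vee}$ to the algebra $\OO(H)$ equipped with its $\psi$-translation $G^\vee$-action — this is precisely where the Cartier-duality compatibility of $\phi$ with $\psi$ is used — whence
\[
    \QCoh([G/H^\vee])\simeq\Mod_{\phi_*\OO_{H^\vee}}(\QCoh(G),\star)\;\xrightarrow[\ F_G\ ]{\ \sim\ }\;\Mod_{\OO(H)}(\QCoh(BG^\vee))\simeq\QCoh([H/G^\vee]).
\]
The resulting composite is the Fourier--Mukai transform with kernel the line bundle on $[G/H^\vee]\times[H/G^\vee]$ obtained by descending the Cartier pairing $G\times BG^\vee\to B\Gm$ along the two torsors, and its symmetry under $G\leftrightarrow H$ is built in.

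I expect the main obstacle to be proving $F_G$ at the level of large derived $\infty$-categories rather than of abelian categories. The target $\QCoh(BG^\vee)$ lives over the ind-finite ind-scheme $G^\vee$, where $\QCoh$ and $\IndCoh$ genuinely differ; showing that $F_G$ is an equivalence (not merely fully faithful) and that $\QCoh$ is the correct theory here is exactly where the hypothesis that $R$ be Noetherian with a dualizing complex enters, via Grothendieck--Serre duality on the finite truncations of $G^\vee$ assembled compatibly along the ind-structure. The same input is needed to pin down the dualizing-complex-twisted normalizations in the convolution-algebra description of equivariant categories and in the identification of $F_G(\phi_*\OO_{H^\vee})$; once these are settled, tracking the monoidal and equivariance structures through the displayed equivalences is routine but lengthy.
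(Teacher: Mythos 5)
Your proposal is essentially correct in outline, but it takes a genuinely different route from the paper. The paper never passes through the monoidal Fourier transform for $(G,BG^\vee)$: instead it shows directly (via monadicity of $q^!$ on $\IndCoh$ of the quotient of a ``Laumon groupoid'' and the comparison $\Upsilon:\QCoh\hookrightarrow\IndCoh$) that $\QCoh([G/H^\vee])$ and $\QCoh([H/G^\vee])$ are the categories of modules in $\QCoh(S)$ over two explicit smash products $B\sharp_u A$ and $A\sharp_u B$, and then writes down an explicit $R$-algebra isomorphism $a\sharp b\mapsto a\iota(b)$ between them (a composite of two anti-isomorphisms, using commutativity and the antipode axiom). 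Your plan instead reduces to the case $H=0$, i.e.\ the symmetric monoidal equivalence $(\QCoh(G),\star)\simeq(\QCoh(BG^\vee),\otimes)$, and transports the convolution algebra of distributions on $H^\vee$ across it; this is legitimate and non-circular, since the paper proves that monoidal equivalence independently (Theorems \ref{theorem: FM for B of G dual} and \ref{theorem: FM for B of G dual is symmetric monoidal}), and it has the virtue of making the $G\leftrightarrow H$ symmetry and the Fourier--Mukai kernel visible. What it costs you is twofold. First, your two ``packaging'' identifications are not free: descent along the ind-finite torsor $G\to[G/H^\vee]$ and relative affineness of $[H/G^\vee]\to BG^\vee$ at the level of $\QCoh$ are exactly what the paper's Propositions \ref{prop: shriek pullback is monadic}--\ref{prop: pullback detects QCoh} and Theorem \ref{theorem: Laumon groupoid monad is algebra} establish, by detouring through $\IndCoh$ and the dualizing complex; you would need that machinery anyway. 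Second, the step you describe as ``precisely where the Cartier-duality compatibility of $\phi$ with $\psi$ is used'' --- identifying $F_G(\phi_*\omega_{H^\vee})$ with $\OO(H)$ as a $G^\vee$-equivariant algebra --- is asserted rather than carried out, and it is the exact analogue of (and no easier than) the paper's smash-product computation in Proposition \ref{prop: quotient stack and smash product} together with the antipode manipulation in the proof of Theorem \ref{theorem: FM for 1-motives}. Finally, note that your route leans on the symmetric monoidal upgrade of $\Phi_G$, which in the paper requires the $\infty$-categorical input of Proposition \ref{prop: symmetric monoidal structures on monads}; the paper's algebra-isomorphism argument avoids that entirely, needing only a plain equivalence of module categories. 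One small correction: freeness of the translation actions is irrelevant --- $G\to[G/H^\vee]$ is an $H^\vee$-torsor by definition of the quotient stack for any action.
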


Theorem \ref{maintheorem: fourier for 1-motives} may be considered as a version\footnote{Laumon's generalized 1-motives include abelian varieties as subfactors, but we only allow for affine group schemes and their duals.} of Laumon's Fourier-Mukai transform in families \cite{Lau96}.
Theorem \ref{maintheorem: fourier for 1-motives} does not fit neatly into the framework of categorical 1-duality of \cite[Appendix A]{DP08} because integration along ind-schemes requires pushforward with compact support, while integration along affine schemes uses the ordinary pushforward. The issue already appeared in Laumon's work.

If $H$ is trivial, then the transform $\QCoh(G) \to \QCoh(BG^\vee)$ can be written only in terms of ordinary pushforward and pullback, and so the issue above disappears.
In fact, the pairing $G \times BG^\vee \to B\Gm$ induces categorical 1-duality in the sense of \cite[Appendix A]{DP08}.

\begin{theoremalpha}
    (Theorems \ref{theorem: FM for B of G dual}, \ref{theorem: FM for B of G dual is symmetric monoidal})
    \label{maintheorem: fourier for BG^vee}
    Let $R$ be a Noetherian ring admitting a dualizing complex.
    Let $G$ be a flat Mittag-Leffler affine commutative group scheme over $R$.
    Then the Fourier-Mukai transform
    \[ 
        \QCoh(G) \to \QCoh(BG^\vee)
    \]
    induced by the pairing $G \times BG^\vee \to B\Gm$ is an equivalence.
    The functor upgrades to a symmetric monoidal equivalence
    taking convolution in $\QCoh(G)$ to tensor product in $\QCoh(BG^\vee)$.
\end{theoremalpha}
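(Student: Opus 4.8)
The plan is to reduce the statement to an explicit algebraic description of both sides. Write $A := \OO(G)$; this is a commutative cocommutative Hopf algebra over $R$, flat Mittag-Leffler as an $R$-module, and $\QCoh(G) = \Mod_A$, the convolution product being the tensor product $(M,N) \mapsto M \otimes_R N$ twisted by the comultiplication $\Delta_A$ and the pointwise product being $\otimes_A$. The first step is to identify $\QCoh(BG^\vee)$ with the derived category of continuous comodules over the topological Hopf algebra $A^\vee = \OO(G^\vee)$; by Cartier duality (Theorem \ref{maintheorem: cartier}) the coalgebra structure on $A^\vee$ is the linear dual of the algebra structure on $A$, so these are exactly the $\OO(G^\vee)$-comodules. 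The second step is to recognize the Fourier--Mukai transform as an essentially tautological equivalence between $A$-modules and $A^\vee$-comodules.

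The key algebraic input, from \S\ref{section: linear duality}, is the natural isomorphism $\Hom_R(A, M) \cong A^\vee \mathbin{\widehat{\otimes}}_R M$ (completed tensor product) for every $R$-module $M$, obtained by writing $A$ as a filtered colimit of finitely generated projective modules and commuting $\Hom$ past the limit. It follows formally that an $A$-module structure $A \otimes_R M \to M$ is the same datum as a continuous $A^\vee$-coaction $M \to A^\vee \mathbin{\widehat{\otimes}}_R M$, associativity corresponding to coassociativity, so that $\Mod_A \simeq \mathrm{Comod}_{A^\vee}$ as abelian categories and hence as derived categories. To see that the Fourier--Mukai transform realizes this equivalence I would unwind the kernel: the pairing $G \times BG^\vee \to B\Gm$ is the ``universal character'', i.e.\ the canonical element of $A^\vee \mathbin{\widehat{\otimes}}_R A$; since $G$ is affine the projection $G \times BG^\vee \to BG^\vee$ is affine and its pushforward is exact, and pull--tensor--push against this kernel then computes precisely the equivalence above. (Alternatively, the equivalence is the case of trivial $H$ in Theorem \ref{maintheorem: fourier for 1-motives}, where $[G/H^\vee] = G$ and $[H/G^\vee] = BG^\vee$.)

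For the symmetric monoidal refinement, note that $P \colon G \times BG^\vee \to B\Gm$ is a homomorphism in the $G$-variable: the multiplicativity isomorphisms $P_{g + g'} \cong P_g \otimes P_{g'}$ of line bundles on $BG^\vee$ equip the Fourier--Mukai transform with a lax symmetric monoidal structure from the convolution product on $\QCoh(G)$ to the tensor product on $\QCoh(BG^\vee)$. Under the identification above this lax structure is the evident one --- the $\Delta_A$-twisted tensor product of $A$-modules dualizes exactly to the diagonal coaction on the tensor product of $A^\vee$-comodules, and the units correspond (the skyscraper $\delta_e$ at the identity, namely $R$ with the augmentation action, maps to $R$ with the group-like coaction, which is the unit of $\mathrm{Comod}_{A^\vee}$) --- so the structure maps are isomorphisms, and cocommutativity of $A$, equivalently commutativity of $G$, makes the equivalence symmetric.

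The main obstacle is the first step. Since $G^\vee$ is an ind-finite ind-scheme rather than a scheme, $\QCoh(BG^\vee)$ must be constructed from the ind-structure, and one must verify both that it agrees with the derived category of continuous $A^\vee$-comodules and that it is well behaved --- in particular that the pushforward functors along the ind-directions are the expected ones, and that no ``pushforward with compact support'' correction is needed, which holds here precisely because the transform $\QCoh(G) \to \QCoh(BG^\vee)$ only ever pushes forward along the affine projection to $BG^\vee$. This is where the hypotheses that $R$ is Noetherian and admits a dualizing complex are used, just as in the proof of Theorem \ref{maintheorem: fourier for 1-motives}. The remaining verifications --- that unwinding the Cartier pairing yields the universal character, and the coherence diagrams for the symmetric monoidal structure --- are routine.
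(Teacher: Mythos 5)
Your overall shape is right---both sides should be controlled by the Hopf algebra $A=\OO(G)$ and its Cartier dual, the kernel is the canonical element $\unit_A$, and multiplicativity of the kernel in the $G$-variable is what drives the monoidal statement---but the step you yourself flag as ``the main obstacle'' is exactly where your route fails, and the paper resolves it in the opposite way from what you propose. You want to present $\QCoh(BG^\vee)$ as \emph{continuous comodules over} $\OO(G^\vee)=A^\vee$, presumably by descent along $q\colon S\to BG^\vee$. But $G^\vee\to S$ is only \emph{coflat} (its pro-coordinate ring is pro-projective), not flat, so $q$ is not an fppf cover: $q^*$ is not comonadic, and the terms of the \v{C}ech nerve are ind-schemes whose $\QCoh$ (a limit under $*$-pullbacks along non-flat closed immersions) is exactly the badly behaved object one must avoid. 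The paper's Theorem \ref{theorem: FM for B of G dual} instead exploits that $q$ is ind-\emph{proper}: $q^!\colon\IndCoh(BG^\vee)\to\IndCoh(S)$ is \emph{monadic} (\cite[Chapter 3, Proposition 3.3.3]{GR17II}), the monad $q^!q_!$ is computed by base change along $G^\vee\to S$ to be $\Hom_R(A^\vee,-)\simeq A\otimes-$, and the equivalence follows from the monad comparison criterion of \cite[Lemma 3.25]{HM21_monads} once the square $q^!\Phi_G\simeq\pi_*$ is established. So $\IndCoh(BG^\vee)$ (and then $\QCoh(BG^\vee)$ via $\Upsilon$ and Proposition \ref{prop: pullback detects QCoh}) is identified with \emph{modules} over the distribution algebra $A$ of $G^\vee$, not comodules over $\OO(G^\vee)$; the paper is explicit that it ``only compares categories of modules to categories of modules.'' Your proposed identification $\Mod_A\simeq\mathrm{Comod}_{A^\vee}$ is plausible for abelian categories of discrete objects, but promoting it to the relevant unbounded, renormalized derived categories is precisely the content that needs proof, and no argument is offered. (Your fallback via Theorem \ref{maintheorem: fourier for 1-motives} with $H$ trivial does land in the module picture, but only under the extra finite-type hypothesis that Theorem \ref{theorem: FM for B of G dual} deliberately removes; also note the equivalence of $\IndCoh$-level categories needs only Noetherianity---the dualizing complex enters only when restricting to $\QCoh$ via $\Upsilon$.)

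For the symmetric monoidal refinement, your idea (the pairing is a homomorphism in $G$, so the kernel is multiplicative for convolution) is the correct heuristic, but constructing a coherent lax symmetric monoidal structure on a pull--tensor--push functor between $\infty$-categories and verifying it is strong is not ``routine.'' The paper sidesteps this by quoting Heine's theorem (Proposition \ref{prop: symmetric monoidal structures on monads}): since both sides are monadic over the base with monad $A\otimes-$, symmetric monoidal structures on the forgetful functors correspond to cocommutative bialgebra structures on the monad, and the two bialgebra structures (the coproduct $\Delta_A$ for convolution on $\QCoh(G)$, and the coproduct on the distribution algebra of $G^\vee$ dual to multiplication on $\OO(G^\vee)$ for the tensor product on $\IndCoh(BG^\vee)$) are matched by Cartier duality. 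If you want to salvage your write-up, the fix is to replace the comodule description by the monadic description via $q^!q_!$ and base change, and to route the monoidal statement through the bialgebra structure on the monad rather than through an ad hoc lax structure on the kernel.
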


Reversing the roles of $G$ and $G^\vee$, there is also a version of the Fourier-Mukai transform for $BG$ and $G^\vee$.
\begin{theoremalpha}(Theorem \ref{theorem: FM for BG})
    \label{maintheorem: fourier for BG}
    Let $R$ be a Noetherian ring and let $G$ be a flat Mittag-Leffler affine commutative group scheme.
    Then the pairing $G^\vee \times BG \to B\Gm$ induces an equivalence 
    \[ \IndCoh(G^\vee)^{\geq 0} \simeq \QCoh(BG)^{\geq 0}.\]
\end{theoremalpha}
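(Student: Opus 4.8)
The plan is to use the presentation of $G^\vee$ as a colimit of finite flat schemes to reduce, on coconnective objects, to the elementary linear duality between modules and comodules over a finite projective Hopf algebra.

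First I would record matching presentations of the two sides. By Theorem~\ref{maintheorem: cartier} and the linear duality of \S\ref{section: linear duality}, write $\OO(G^\vee) = \lim_\mu C_\mu$ (as a pro-ring), a filtered inverse limit of finite projective $R$-algebras with surjective transition maps, so that $G^\vee = \colim_\mu Z_\mu$ with $Z_\mu = \Spec C_\mu \hookrightarrow G^\vee$ the tautological closed subschemes; dually $\OO(G) = \colim_\mu D_\mu$ is the filtered union of its finite projective subcoalgebras $D_\mu = C_\mu^\vee$. By definition of $\IndCoh$ on an ind-scheme, $\IndCoh(G^\vee) = \colim_\mu \IndCoh(Z_\mu)$ along the pushforwards $i_{\mu\nu,*}$ attached to the closed immersions $Z_\mu \hookrightarrow Z_\nu$. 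On the other side, since $\OO(G)$ is flat Mittag-Leffler, every $\OO(G)$-comodule is the filtered union of its sub-comodules that are finitely generated over $R$, each of which is a $D_\mu$-comodule; upgrading this to the derived level --- most cleanly after restricting to coconnective objects, where cohomological amplitude is controlled --- one obtains $\QCoh(BG) = \OO(G)\text{-comod}$ as $\colim_\mu (D_\mu\text{-comod})$ along the inflation functors induced by the inclusions $D_\mu \hookrightarrow D_\nu$. Since $D_\mu$ is finite projective, $D_\mu\text{-comod} \simeq C_\mu\text{-mod} = \QCoh(Z_\mu)$, and under this identification the transition functors become the pushforwards $i_{\mu\nu,*}$ of $\QCoh$.

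Next I would pass to coconnective objects and compare $\IndCoh$ with $\QCoh$ levelwise. The transition functors on both sides (pushforward along closed immersions, resp. inflation of comodules) are t-exact, so both colimits carry t-structures compatible with $(-)^{\geq 0}$. The basic comparison for a Noetherian scheme $X$ --- that $\Psi_X\colon \IndCoh(X)^{\geq n} \xrightarrow{\ \sim\ } \QCoh(X)^{\geq n}$, which needs only Noetherianity and no dualizing complex --- gives $\IndCoh(Z_\mu)^{\geq 0} \simeq \QCoh(Z_\mu)^{\geq 0}$ compatibly with the closed pushforwards. Hence
\[
    \IndCoh(G^\vee)^{\geq 0} \simeq \colim_\mu \IndCoh(Z_\mu)^{\geq 0} \simeq \colim_\mu \QCoh(Z_\mu)^{\geq 0} \simeq \QCoh(BG)^{\geq 0}.
\]
It remains to identify this abstract equivalence with the Fourier--Mukai transform $\Phi$ of the pairing. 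Writing $\mathcal P$ for the line bundle on $G^\vee \times BG$ classified by the pairing and $\rho, \pi$ for the projections to $G^\vee$ and $BG$, the functor $\Phi(-) = \pi_*(\rho^*(-) \otimes \mathcal P)$ is most safely defined through its finite pieces: since each $Z_\mu \times BG \to BG$ is finite (hence affine) and $\mathcal P$ is invertible, base change and the projection formula give $\Phi \circ i_{\mu,*} = \Phi_\mu$, where $\Phi_\mu\colon \QCoh(Z_\mu)^{\geq 0} \simeq \IndCoh(Z_\mu)^{\geq 0} \to \QCoh(BG)^{\geq 0}$ is the Fourier--Mukai transform for the restricted pairing $Z_\mu \times BG \to B\Gm$. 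Now $\mathcal P|_{Z_\mu \times BG}$ is the rank-one free $C_\mu$-module twisted by the tautological character $\psi_\mu \in \OO(G) \otimes_R C_\mu$ (a grouplike element: the tautological $C_\mu$-point of $G^\vee$), and an explicit computation --- pure finite linear duality --- identifies $\Phi_\mu$ with the composite $\QCoh(Z_\mu) = C_\mu\text{-mod} \xrightarrow{\ \sim\ } D_\mu\text{-comod} \hookrightarrow \OO(G)\text{-comod} = \QCoh(BG)$, i.e.\ with the $\mu$-th structure functor of the colimit on the $\QCoh(BG)$ side. Passing to the colimit in $\mu$ then shows that $\Phi$ is exactly the equivalence constructed above; along the way one also sees that $\Phi$ is t-exact, since each $\Phi_\mu$ is (flat pullback, twist by a line bundle, affine pushforward).

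The main obstacle I expect is twofold. Conceptually, the presentation $\QCoh(BG)^{\geq 0} \simeq \colim_\mu \QCoh(Z_\mu)^{\geq 0}$ is where the flat Mittag-Leffler hypothesis is essential: it is precisely what makes $\OO(G)$ the filtered union of its finite projective subcoalgebras (equivalently, $\OO(G^\vee)$ a pro-finite-projective ring), and the passage from the abelian ``fundamental theorem of comodules'' to the derived, $\infty$-categorical colimit statement must be handled with care --- which is one reason the theorem is stated only on coconnective objects. Technically, one must make sense of $\Phi$ and its t-exactness over a merely Noetherian base, where the full $\IndCoh$ six-functor formalism is not available; I would circumvent this by building $\Phi$ out of the finite pieces $\Phi_\mu$ as above, rather than intrinsically on $\IndCoh(G^\vee)$.
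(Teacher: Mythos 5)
Your treatment of the $\IndCoh(G^\vee)^{\geq 0}$ side --- the presentation as $\colim_\mu\QCoh(Z_\mu)^{\geq 0}$ along closed pushforwards and the levelwise identification of $\Phi_\mu$ with finite linear duality $C_\mu\text{-mod}\simeq D_\mu\text{-comod}$ --- is correct and agrees with how the paper sets up $\Phi'_{G^\vee}$. The gap is the other half of your dictionary, the presentation
\[
\QCoh(BG)^{\geq 0}\;\simeq\;\colim_\mu\,(D_\mu\text{-comod})^{\geq 0},
\]
which you flag as the main obstacle but do not establish, and which carries essentially the entire content of the theorem rather than serving as an input to it. Two concrete problems. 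First, the colimit on the $\IndCoh$ side is a colimit of presentable categories along left adjoints, hence is computed as the limit along the right adjoints $i_{\mu\nu}^!$; to match it you must exhibit $\QCoh(BG)^{\geq 0}$ as the limit of $(D_\mu\text{-comod})^{\geq 0}$ along the right adjoints of inflation (derived cotensor down to $D_\mu$). The abelian fundamental theorem of comodules does not give this: your parenthetical that coconnectivity ``controls cohomological amplitude'' is not right --- coconnective objects are bounded below but not above, and an object of $\QCoh(BG)^{\geq 0}$ with cohomology in infinitely many degrees need not factor through any single $D_\mu$ (already for $B\Ga$ in characteristic zero, take $\bigoplus_n M_n[-n]$ with $M_n$ requiring ever larger subcoalgebras), so the naive reading ``every object of the colimit lives on a finite piece'' fails. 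Second, the derived inflation functors $(D_\mu\text{-comod})^{\geq 0}\to(D_\nu\text{-comod})^{\geq 0}$ are not fully faithful (the Ext-algebras change as $\mu$ grows), so you also cannot realize $\QCoh(BG)^{\geq 0}$ as an increasing union of full subcategories.

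The paper sidesteps this entirely by working comonadically over the base: $q^*:\QCoh(BG)^{\geq 0}\to\QCoh(S)^{\geq 0}$ is comonadic by flat descent, $\pi_!:\IndCoh(G^\vee)^{\geq 0}\to\QCoh(S)^{\geq 0}$ is comonadic by the Barr-Beck-Lurie theorem (the nontrivial point being that a left $t$-exact functor between right-complete stable categories preserves totalizations of coconnective cosimplicial objects, Lemma \ref{lemma: totalizations in right-complete categories}), and the induced morphism of comonads $q^*q_*\to\pi_!\pi^!$ is precisely the linear-duality isomorphism $A\otimes M\to\Hom(A^\vee,M)$ for the flat Mittag-Leffler module $A$ --- which is exactly where the hypothesis on $G$ enters. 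If you attempt to prove your colimit presentation of $\QCoh(BG)^{\geq 0}$, the natural route is to check that both sides are comonadic over $\QCoh(S)^{\geq 0}$ with the same comonad, at which point you have reproduced the paper's argument and the levelwise bookkeeping becomes unnecessary.
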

When $G^\vee$ is a formal Lie group, then the equivalence of Theorem \ref{maintheorem: fourier for BG} can be extended to an equivalence of unbounded derived categories (see Theorem \ref{theorem: FM for dual formal Lie group} and Remark \ref{remark: symmetric monoidal structure and BG}).
\subsection{Conventions}

We use $\infty$-categories in the sense of Lurie \cite{Lur09}.
However, the results of this paper are 1-categorical in nature until §\ref{section: categorical 1-duality}.

All of our results are formulated over a base affine scheme (which at some points may be Noetherian or also admit a dualizing complex). However, the results are local over the base, and thus evidently generalize to a general base scheme (or prestack).

\subsection*{Acknowledgments}

The basic idea that flat Mittag-Leffler modules gives an appropriate foundation for infinite-dimensional linear duality is due to Vladimir Drinfeld. 
Theorem \ref{maintheorem: cartier} was independently proved by Akhil Mathew and Vadim Vologodsky.\footnote{Mathew and Vologodsky considered affine group schemes with countably generated projective coordinate ring. Recall that by \cite{RG71}, a countably generated module is projective if and only if it is flat Mittag-Leffler.}
The authors thank 
Justin Campbell,
Vladimir Drinfeld,
Roman Fedorov,
Andreas Hayash,
Akhil Mathew,
Tony Pantev,
Nick Rozenblyum,
and
Vadim Vologodsky
for useful conversations and communications.

In the course of this work, the second-named author was supported by the National Science Foundation under Award No. 2503534.
Any opinions, findings, and conclusions or recommendations expressed in this material are those of the authors and do not necessarily reflect the views of the National Science Foundation.

\section{Linear duality}\label{section: linear duality}

Let $R$ be a commutative ring. In this section, we consider the linear duality for $R$-modules. More precisely, we construct an antiequivalence between the category of flat Mittag-Leffler modules and the category of pro-projective pro-finite Mittag-Leffler pro-modules.

We learned of the notion of a flat Mittag-Leffler module from \cite[§2]{Dri06}.
Our exposition is inspired by \cite[§7.12]{BD91_Hitchin}, with the main results going back to Raynaud and Gruson \cite{RG71}.
There is also an exposition of Raynaud and Gruson's work contained in \cite[§10.86-95]{stacks-project}, which we cite as a textbook reference.

\subsection{Mittag-Leffler modules}

Recall the Mittag-Leffler condition for inverse systems of sets:

\begin{definition}[\cite{stacks-project}, 10.86.1]
\label{def: mittag-leffler}
    If $I$ is a filtered poset and $A= (A_i,\varphi_{ji}:A_j \to A_i)$ is an inverse system of sets indexed by $I$,
    then $A$ is \emph{Mittag-Leffler} if for all $i$ there exists $j \geq i$ such that if $k \geq j$,
    \[ \varphi_{ki}(A_k) = \varphi_{ji}(A_j).\]
\end{definition}

Now let $M$ be an $R$-module. Recall that we can write $M$ as a filtered colimit of finitely presented $R$-modules
\begin{equation}\label{eq:colimit}
    M=\colim_i M_i.
\end{equation}

\begin{definition}[\cite{stacks-project}, 10.88.7]\label{definition: mittag-leffler}
    Given a presentation \eqref{eq:colimit}, 
    the $R$-module $M$ is \emph{Mittag-Leffler} 
    if for all $R$-modules $Q$, the inverse system $\Hom(M_i,Q)$ of sets is Mittag-Leffler.
\end{definition}


\begin{example}
    If $M = \bigoplus_i M_i$, then $M$ is Mittag-Leffler if and only if each $M_i$ is Mittag-Leffler \cite[Lemma 10.89.10]{stacks-project}.
    As the free module of rank 1 is trivially Mittag-Leffler, we conclude that every projective $R$-module is Mittag-Leffler.
\end{example}


We are primarily interested in flat Mittag-Leffler modules, according to Drinfeld's philosophy that these form a good theory of infinite-dimensional vector bundles in algebraic geometry \cite{Dri06}. In particular, we will see that flat Mittag-Leffler modules have a good duality theory.

\begin{remark}
    Raynaud and Gruson introduced the notion of a Mittag-Leffler module in order to study flat descent for projective modules \cite{RG71}. They proved that an $R$-module is projective if and only if it is flat, Mittag-Leffler, and a direct sum of countably generated modules. It is easy to see that this implies flat descent for projectivity.
    
    Note that a finitely generated module is Mittag-Leffler if and only if it is finitely presented.
    In this way, the class of Mittag-Leffler modules is an extension of the class of finitely presented modules.
    Raynaud and Gruson's theorem is then a generalization of the well-known theorem that a flat finitely presented module is projective.
\end{remark}

\subsection{Pro-modules}
Recall the definition of a pro-completion of a category.

\begin{definition} Let $\CC$ be a category; consider the category $\Fun(\CC,\Sets)$ of functors from $\CC$ to the category of sets $\Sets$. Given a cofiltered inverse system
\[\{x_\alpha\}_{\alpha\in I}\]
in $\CC$, its \emph{formal limit} is
\[\lim_\alpha x_\alpha\in\Fun(\CC,\Sets)^{op}:y\mapsto\colim_\alpha\Hom_\CC(x_\alpha,y);\]
the pro-completion $\Pro\CC$ is the full subcategory of $\Fun(\CC,\Sets)^{op}$ spanned by all cofiltered formal limits.
\end{definition}

The category $\CC$ is identified with a full subcategory of $\Pro\CC$ via the Yoneda embedding 
\[h:\CC\to \Fun(\CC,\Sets)^{op}:x\mapsto h^x=\Hom_{\CC}(x,-).\]

Now put $\CC=\Mod_R$ and let $\End_R(\Mod_R)$ be the category of $R$-linear endo-functors of $\Mod_R$. The forgetful functor 
\[\End_R(\Mod_R)\to\Fun(\Mod_R,\Sets)\]
is fully faithful and its essential image contains all formal limits of $R$-modules. Thus, $\Pro(\Mod_R)$ is identified with a full subcategory of $\End_R(\Mod_R)^{op}$; we denote the embedding by
\[h:\ProMod_R\to\End_R(\Mod_R)^{op}:N\mapsto h^N.\] The objects of $\ProMod_R$ are called \emph{pro-$R$-modules}.

For future use, we record the following easy claim:

\begin{lemma} The embedding $\Mod_R\to\ProMod_R$ admits a right adjoint. Explicitly, the right adjoint sends a formal limit $\lim M_\alpha$ to the same limit evaluated in the category of $R$-modules. \hfill\qed 
\end{lemma}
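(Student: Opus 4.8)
The plan is to read off the adjunction directly from the functor-of-points description of $\ProMod_R$ inside $\End_R(\Mod_R)^{\mathrm{op}}$. It suffices to produce, for each pro-module $N$, an $R$-module $r(N)$ corepresenting the functor $L\mapsto\Hom_{\ProMod_R}(L,N)$ on $\Mod_R$ (where we identify $L\in\Mod_R$ with its image in $\ProMod_R$); uniqueness of corepresenting objects then automatically makes $r$ a functor, makes the defining isomorphism natural in $N$ as well, and shows $r(N)$ is independent of any choices, so the explicit formula follows.

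Fix a presentation $N=\lim_\alpha M_\alpha$. Since morphisms in $\ProMod_R$ are morphisms in $\End_R(\Mod_R)^{\mathrm{op}}$, we have
\[
\Hom_{\ProMod_R}(L,N)=\Hom_{\End_R(\Mod_R)}(h^N,h^L),
\]
the $R$-linear natural transformations from $h^N=\colim_\alpha\Hom_R(M_\alpha,-)$ to $h^L=\Hom_R(L,-)$; here the cofiltered formal limit defining $N$ has turned into a filtered colimit. A colimit in the source of a set of natural transformations becomes a limit, so this is $\lim_\alpha\Hom_{\End_R(\Mod_R)}(\Hom_R(M_\alpha,-),\Hom_R(L,-))$, and the ($R$-linear) Yoneda lemma identifies each term with $\Hom_R(L,M_\alpha)$. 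Hence $\Hom_{\ProMod_R}(L,N)\cong\lim_\alpha\Hom_R(L,M_\alpha)$, naturally in $L$.

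Finally, $\Hom_R(L,-)$ preserves limits, so $\lim_\alpha\Hom_R(L,M_\alpha)\cong\Hom_R\bigl(L,\lim_\alpha M_\alpha\bigr)$ with the limit on the right taken in $\Mod_R$. Setting $r(N):=\lim_\alpha M_\alpha\in\Mod_R$ we obtain an isomorphism $\Hom_{\ProMod_R}(L,N)\cong\Hom_{\Mod_R}(L,r(N))$ natural in $L$, which is exactly what is needed. The only genuine care needed is bookkeeping the two opposite-category conventions so that the cofiltered system really does become a filtered colimit inside the Hom of natural transformations, and checking that the Yoneda identification is compatible with the $R$-linear enrichment; I do not anticipate a substantive obstacle.
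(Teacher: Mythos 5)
Your argument is correct: the paper records this lemma without proof (it is flagged as an ``easy claim'' and closed with a \qed), and your computation $\Hom_{\ProMod_R}(L,N)\cong\lim_\alpha\Hom_R(L,M_\alpha)\cong\Hom_R(L,\lim_\alpha M_\alpha)$ via Yoneda is exactly the standard argument one would supply. The enrichment worry you flag at the end is harmless here, since the paper has already noted that the forgetful functor $\End_R(\Mod_R)\to\Fun(\Mod_R,\Sets)$ is fully faithful, so the Hom-sets may be computed with the ordinary (unenriched) Yoneda lemma.
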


\begin{definition} The \emph{evaluation functor}
\[ev:\ProMod_R\to\Mod_R\]
is the right adjoint of the full embedding
\[\Mod_R\to\ProMod_R.\]
\end{definition}

Consider now several conditions on pro-$R$-modules.

\begin{definition} A pro-module $N$ is \emph{pro-finite}
if there exists a presentation
\[N\simeq\lim N_\alpha\] 
where all modules of the cofiltered inverse system $\{N_\alpha\}$ are finitely generated. We let 
\[\ProMod_R^f\subset\ProMod_R\] be the full subcategory spanned by pro-finite pro-$R$-modules.
\end{definition}

\begin{remark} Equivalently, $\ProMod_R^f$ is
the pro-completion of the category of finitely generated $R$-modules.
\end{remark}

Consider now the Mittag-Leffler condition for inverse systems. Note the following easy claim.

\begin{lemma}\label{lemma:mittag-leffler pro} The following two conditions on a pro-module $N\in\ProMod_R$ are equivalent:
\begin{enumerate}
\item There exists a presentation 
\[N\simeq\lim N_\alpha\]
with surjective structure maps $\varphi_{\alpha\beta}:N_\beta\to N_\alpha$;
\item All presentations
\[N\simeq\lim N_\alpha\]
satisfy the Mittag-Leffler condition: for all $\alpha$, the submodule $\varphi_{\alpha\beta}(N_\beta)\subseteq N_\alpha$ stabilizes as $\beta$ grows.\qed
\end{enumerate}
\end{lemma}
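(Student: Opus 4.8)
The plan is to prove the two implications separately, with the bulk of the work in showing that condition (1) — the existence of \emph{one} surjective presentation — forces \emph{every} presentation to satisfy the stabilization property in (2). The easy direction is (2)$\Rightarrow$(1): given a presentation $N \simeq \lim_\alpha N_\alpha$ satisfying Mittag-Leffler, for each $\alpha$ choose $\beta(\alpha) \geq \alpha$ with $\varphi_{\alpha\beta(\alpha)}(N_{\beta(\alpha)}) = \varphi_{\alpha\gamma}(N_\gamma)$ for all $\gamma \geq \beta(\alpha)$; then set $N'_\alpha := \varphi_{\alpha\beta(\alpha)}(N_{\beta(\alpha)}) \subseteq N_\alpha$. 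One checks that the $N'_\alpha$ (with the restricted structure maps, which are now surjective by the stabilization) form a cofiltered system with $\lim_\alpha N'_\alpha \simeq \lim_\alpha N_\alpha \simeq N$, since passing to the eventual images does not change the formal limit (the inclusion of pro-systems is a pro-isomorphism because each $N'_\alpha$ is cofinally dominated by the $N_\gamma$). This produces the required surjective presentation.

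For (1)$\Rightarrow$(2), the key tool is that any two presentations of the same pro-object are cofinally comparable: if $N \simeq \lim_\alpha N_\alpha \simeq \lim_\beta P_\beta$, then by the universal property of formal limits (unwinding the definition $h^N(y) = \colim \Hom(N_\alpha, y) = \colim \Hom(P_\beta, y)$) each structure projection $N \to P_\beta$ factors through some $N_{\alpha(\beta)}$, and vice versa. So I would fix the given surjective presentation $\lim_\alpha N_\alpha$ and an arbitrary presentation $\lim_\beta P_\beta$, and show the second one is Mittag-Leffler. Concretely, fix $\beta_0$; I want to find $\beta_1 \geq \beta_0$ with $\varphi_{\beta_0 \beta}(P_\beta) = \varphi_{\beta_0 \beta_1}(P_{\beta_1})$ for all $\beta \geq \beta_1$. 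The strategy: factor $N \to P_{\beta_0}$ through some $N_\alpha$, say via $g: N_\alpha \to P_{\beta_0}$; since $N_\alpha \to N \to P_{\beta_0}$ and the system $(N_\alpha)$ has surjective structure maps, the image of $N \to P_{\beta_0}$ equals $\varphi(N_{\alpha'})$'s image for $\alpha'$ large; then compare this stable image against the images $\varphi_{\beta_0\beta}(P_\beta)$, which decrease and are sandwiched between $\mathrm{im}(N \to P_{\beta_0})$ and $P_{\beta_0}$, using that the pro-maps in both directions compose to the identity pro-map.

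The main obstacle is the bookkeeping around ``the image of the structure map $N \to P_{\beta}$,'' since $N = \lim P_\beta$ is only a formal limit and $N \to P_\beta$ is a morphism of pro-modules, not literally a module map with a literal image. I would handle this by working one index at a time: the composite $N_{\alpha} \to N \to P_\beta$ \emph{is} an honest $R$-module map once we lift $N \to P_\beta$ through a representative, so I will always phrase things as: for suitable $\alpha$ depending on $\beta$, there is an honest map $N_\alpha \to P_\beta$ compatible with the transition maps up to further refinement, and the relevant images are honest submodules. The cofinality comparison between the two index categories, together with the surjectivity hypothesis on the $N_\alpha$-system, then pins down the eventual image in each $P_{\beta_0}$, giving stabilization. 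An alternative, possibly cleaner route is to invoke \cite[Lemma 10.86.3 or thereabouts]{stacks-project} on Mittag-Leffler inverse systems, which already establishes that the Mittag-Leffler property is intrinsic to the pro-object (independent of presentation) for inverse systems indexed by $\mathbb{N}$, and then reduce the general cofiltered case to the countable case by extracting a cofinal sequence whenever one exists — though in full generality one cannot always do this, so the direct argument above is the safer default.
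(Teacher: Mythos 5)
The paper states this lemma with a \qed{} and no proof at all (it is introduced as an ``easy claim''), so there is nothing on the paper's side to compare against; the question is only whether your plan is sound, and it is. Your (2)$\Rightarrow$(1) direction is complete and correct: replacing each $N_\alpha$ by its stable image yields a surjective subsystem, and the inclusion of pro-systems is a pro-isomorphism because each transition map $N_\gamma\to N_\alpha$ eventually factors through the stable image. Your (1)$\Rightarrow$(2) direction is the standard argument and the sketch assembles correctly: fixing $\beta_0$, represent the projection $N\to P_{\beta_0}$ by an honest map $g:N_{\alpha_0}\to P_{\beta_0}$; the cofinal comparability of the two presentations gives, for $\beta$ large, a factorization $\varphi_{\beta_0\beta}=g\circ h\circ(\text{transition})$ showing $\varphi_{\beta_0\beta}(P_\beta)\subseteq g(N_{\alpha_0})$, while for \emph{every} $\beta$ the reverse inclusion $g(N_{\alpha_0})\subseteq\varphi_{\beta_0\beta}(P_\beta)$ follows by representing $N\to P_\beta$ by some $k:N_{\alpha_1}\to P_\beta$ and using surjectivity of $\varphi_{\alpha_0\alpha_2}$ in the $N$-system to see that $g(N_{\alpha_0})=g(\varphi_{\alpha_0\alpha_2}(N_{\alpha_2}))$ lands in the image of $\varphi_{\beta_0\beta}$. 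That sandwich pins the eventual image at $g(N_{\alpha_0})$, which is exactly the stabilization required. Your caution about not reducing to countable cofinal chains is well placed, and the one point needing care --- that ``the image of $N\to P_{\beta_0}$'' only makes sense via a chosen representative, with surjectivity of the $N$-system guaranteeing the result is independent of that choice --- is precisely the point you flag and resolve. No gaps.
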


\begin{remark} The lemma holds for inverse systems of sets, but as we need it for $R$-modules, we formulate it in this generality.
\end{remark}

\begin{definition} 
\label{def: pro-mittag-leffer}
A pro-module $N\in\ProMod_R$ is \emph{Mittag-Leffler} if it satisfies the equivalent conditions of Lemma~\ref{lemma:mittag-leffler pro}. We let
\[(\ProMod_R)^{M\!L}\subset\ProMod_R\]
be the full subcategory spanned by Mittag-Leffler pro-$R$-modules.
\end{definition}
\begin{warning}
The terminology of Definition \ref{def: pro-mittag-leffer} conflicts with that of Definition \ref{def: mittag-leffler} when a module is viewed as a pro-module.
We will see in Proposition \ref{prop:linear duality} that these two distinct notions are linearly dual.
We hope that in each instance, it is clear whether Definition \ref{def: mittag-leffler} or Definition \ref{def: pro-mittag-leffer} is being applied.
\end{warning}

Finally, note that for any $N\in\ProMod_R$, the functor
\[h^N:\Mod_R\to\Mod_R\]
is left exact.

\begin{definition} A pro-module $N\in\ProMod_R$ is \emph{pro-projective} if $h^N$ is exact. Let 
\[(\ProMod_R)^{pproj}\subset\ProMod_R\]
be the full subcategory of $\ProMod_R$ spanned by pro-projective pro-$R$-modules.
\end{definition}
\begin{remark}\label{remark: pro-projective conditions}
    By \cite[Proposition 5.2.1]{Bou23}, the following are equivalent for a pro-$R$-module $N = \lim_\alpha N_\alpha$:
    \begin{itemize}
        \item $N$ is pro-projective;
        \item for each $\alpha$, there exists $\beta \geq \alpha$ such that $N_\beta \to N_\alpha$ factors through a projective module;
        \item $N$ is equivalent to a cofiltered formal limit of projective $R$-modules.
    \end{itemize}
    Note also that the condition that $N_\beta \to N_\alpha$ factors through a projective module is equivalent to the induced map $\Ext^1_R(N_\alpha,-) \to \Ext^1_R(N_\beta,-)$ being zero.
    It follows that $\ProMod_R^{pproj}$ is the pro-completion of the category of projective $R$-modules.
\end{remark}

We will also combine the indexes for subcategories of $\ProMod_R$; thus,
\[(\ProMod_R)^{pproj,\ML}=\ProMod_R^{pproj} \cap \ProMod_R^{\ML},\]
and
\[(\ProMod_R^f)^?=\ProMod_R^f\cap(\ProMod_R)^?,\]
where $?$ is a subset of the set $\{pproj,\ML\}$

\subsection{Linear duality}

There is a full embedding of categories 
\begin{align*}
    h: \ProMod_R&\hookrightarrow \End_R(\Mod_R)^{op} \\
    N&\mapsto h^N.
\intertext{On the other hand, tensor product gives a functor}
    t: \Mod_R&\to\End_R(\Mod_R)\\
    M&\mapsto t_M = (M\otimes_R -),
\end{align*}
which is easily seen to be a full embedding as well. 

Let $\Mod_R^{fl} \subset \Mod_R$ be the full subcategory of flat $R$-modules.
\begin{theorem}[\cite{BD91_Hitchin}, Proposition 7.12.6]
\label{thm: intersection of the categories}
\begin{enumerate}
\item Suppose $M\in\Mod_R$. Then an object $N\in\ProMod_R$ such that $h^N\simeq t_M$ exists iff $M\in\Mod_R^{fl}$.

\item Suppose $N\in\ProMod_R$. Then an object $M\in\Mod_R$
such that $h^N\simeq t_M$ exists iff $N\in(\ProMod^f_R)^{pproj}$.

\item Suppose $M\in\Mod_R^{fl}$ and $N\in(\ProMod^f_R)^{pproj}$ are such that $t_M\simeq h^N$.
Then $M$ is Mittag-Leffler if and only if $N$ is.
\end{enumerate}
\end{theorem}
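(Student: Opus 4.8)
The plan is to prove the three parts in turn, using repeatedly: that $h$ is fully faithful; that $h^N$ is always left exact while $t_M$ is always right exact; that $t_M$ is exact iff $M$ is flat; that $h^N$ is exact iff $N$ is pro-projective (Remark~\ref{remark: pro-projective conditions}); and that for a finitely generated projective $R$-module $P$ the evaluation map $P^\vee\otimes_R Q\to\Hom_R(P,Q)$ is an isomorphism, naturally in $Q$, where $P^\vee:=\Hom_R(P,R)$. In part (1), the implication $(\Rightarrow)$ is then immediate: if $h^N\simeq t_M$, then $t_M$ is left and right exact, hence exact, so $M$ is flat. For $(\Leftarrow)$, I would apply Lazard's theorem to write a flat $M$ as a filtered colimit $M=\colim_i P_i$ of finitely generated free modules, set $N:=\lim_i P_i^\vee$, and compute, naturally in $Q$, that $h^N(Q)=\colim_i\Hom_R(P_i^\vee,Q)\cong\colim_i(P_i\otimes_R Q)\cong M\otimes_R Q=t_M(Q)$, using the evaluation isomorphism for the finitely generated free modules $P_i^\vee$. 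This $N$ is automatically pro-finite and pro-projective (in fact pro-free).

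In part (2), the implication $(\Rightarrow)$ is soft given part (1): if $h^N\simeq t_M$, then $M$ is flat by part (1), the pro-module $N':=\lim_i P_i^\vee$ attached to a Lazard presentation $M=\colim_i P_i$ lies in $(\ProMod^f_R)^{pproj}$ and satisfies $h^{N'}\simeq t_M\simeq h^N$, so $N\simeq N'$ by full faithfulness of $h$. For $(\Leftarrow)$, given a pro-finite presentation $N=\lim_\alpha N_\alpha$ with the $N_\alpha$ finitely generated and $N$ pro-projective, I would set $M:=h^N(R)=\colim_\alpha N_\alpha^\vee$ and prove $h^N\simeq t_M$ via the Eilenberg--Watts theorem: $h^N$ is right exact since $N$ is pro-projective, and it preserves arbitrary coproducts since $N$ is pro-finite (each $\Hom_R(N_\alpha,-)$ preserves coproducts because $N_\alpha$ is finitely generated, and $h^N=\colim_\alpha\Hom_R(N_\alpha,-)$ is a filtered colimit of such functors). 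Alternatively, and closer to the spirit of part (1), one checks directly that the evaluation transformation $M\otimes_R-\to h^N$ is an isomorphism; the point here is that pro-finiteness together with pro-projectivity forces every structure map of $N$ to factor, for large enough index, through a finitely generated free module (a homomorphism from a finitely generated module into a projective module factors through a finitely generated free module), which simultaneously identifies $(\ProMod^f_R)^{pproj}$ with the pro-completion of the category of finitely generated projective $R$-modules.

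In part (3), given $M\in\Mod_R^{fl}$ and $N\in(\ProMod^f_R)^{pproj}$ with $t_M\simeq h^N$, I would fix a Lazard presentation $M=\colim_i P_i$ with $P_i$ finitely generated free and, by part (1) and full faithfulness of $h$, take $N=\lim_i P_i^\vee$. By Lemma~\ref{lemma:mittag-leffler pro}, $N$ is Mittag-Leffler exactly when, for each $i$, the image of $P_j^\vee\to P_i^\vee$ stabilizes as $j$ grows; and since the Mittag-Leffler property of a module does not depend on the chosen presentation by finitely presented modules (\cite{stacks-project}), $M$ is Mittag-Leffler exactly when, for every $R$-module $Q$, the inverse system $\{\Hom_R(P_i,Q)\}\cong\{P_i^\vee\otimes_R Q\}$ satisfies the Mittag-Leffler condition. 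Taking $Q=R$ gives one implication directly. For the converse, if the image $J_i\subseteq P_i^\vee$ of $P_j^\vee\to P_i^\vee$ is stable for $j\gg0$, then, because tensoring preserves surjections, the image of $P_j^\vee\otimes_R Q\to P_i^\vee\otimes_R Q$ equals the image of $J_i\otimes_R Q\to P_i^\vee\otimes_R Q$ for $j\gg0$, hence is stable.

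The step I expect to be the main obstacle is $(\Leftarrow)$ of part (2) --- constructing the module $M$, equivalently showing that every pro-finite pro-projective pro-$R$-module is a formal limit of finitely generated projective modules. Everything else is either formal manipulation of exactness combined with full faithfulness of $h$, or (for part (3)) reduces to the elementary fact about images under tensoring; the only substantial external inputs are Lazard's theorem and the Eilenberg--Watts theorem (or, on the alternative route, Remark~\ref{remark: pro-projective conditions}).
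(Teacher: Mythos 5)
Your proposal is correct and follows essentially the same route as the paper: part (1) via Lazard's theorem and the evaluation isomorphism for finite free modules, part (2) via full faithfulness of $h$ for necessity and Eilenberg--Watts (right exactness from pro-projectivity, preservation of coproducts from pro-finiteness) for sufficiency, and part (3) by reducing to the dual system $\{F_\alpha^\vee\}$ and noting that tensoring preserves surjections. The only difference is that you flag the identification of $(\ProMod^f_R)^{pproj}$ with the pro-completion of finite projectives as a potential obstacle, but as you yourself observe, the Eilenberg--Watts route sidesteps this entirely (and in the paper that identification is deduced \emph{from} the theorem, as Proposition~\ref{prop:coLazard}).
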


\begin{proof} The proof is sketched in \cite{BD91_Hitchin}; for the sake of completeness we fill in the details.

Proof of \textit{(1)}. Since $h_N$ is left exact, the condition that $M$ is flat is clearly necessary. To prove it is sufficient, we use Lazard's Theorem: if $M$ is flat, then we can write $M$ as a filtered colimit
\begin{equation}\label{eq:free colimit}
M\simeq \colim F_\alpha
\end{equation}
for free finitely generated modules $F_\alpha$. Then $t_M\simeq h^N$ for $N$ given by
\begin{equation}\label{eq:dual limit}
N=\lim F_\alpha^\vee\in\ProMod_R,
\end{equation}
where $F_\alpha^\vee = \Hom_R(F_\alpha,R)$ is the usual linear dual of a finite free module.

Proof of \textit{(2)}. It follows from \textit{(1)} that the condition is necessary: if $t_M\simeq h^N$, then $M$ must be flat and therefore $N$ is given by \eqref{eq:dual limit}. To prove it is sufficient, we use a form of Eilenberg-Watts Theorem: namely, an $R$-linear functor $\Mod_R\to\Mod_R$ is of the form $t_M$ for some $M\in\Mod_R$ if and only if it is right exact and commutes with direct sums. If $N$ is pro-projective, then $h^N$ is exact, and if $N\in\ProMod^f_R$, then $h^N$ commutes with direct sums. 

Proof of \textit{(3)}. Let us represent $M$ as a filtered colimit as in \eqref{eq:free colimit}; then $N$ is given by \eqref{eq:dual limit}. For any $Q\in\Mod_R$, we can identify 
\[\Hom_R(F_\alpha,Q)\simeq F_\alpha^\vee\tensor{R} Q.\]
This inverse system satisfies the Mittag-Leffler condition for all $Q$ if and only if $\{F_\alpha^\vee\}$ satisfies the Mittag-Leffler condition. But by \eqref{eq:dual limit}, this is equivalent to $N$ being Mittag-Leffler.
\end{proof}


\begin{corollary} \label{cor: linear duality equivalence}
There exists an equivalence
\[(\Mod_R^{fl})^{op}\simeq(\ProMod^f_R)^{pproj},\] unique up to a canonical isomorphism. 
\end{corollary}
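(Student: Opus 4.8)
The plan is to assemble the equivalence directly from the three parts of Theorem~\ref{thm: intersection of the categories}, using the two full embeddings $h \colon \ProMod_R \hookrightarrow \End_R(\Mod_R)^{op}$ and $t \colon \Mod_R \hookrightarrow \End_R(\Mod_R)$ as a common ambient category in which to compare objects. First I would record that, by part~(1) of the theorem, the essential image of $t$ restricted to $\Mod_R^{fl}$ consists precisely of those endofunctors which also lie in the essential image of $h$; and by part~(2), the essential image of $h$ restricted to $(\ProMod_R^f)^{pproj}$ consists precisely of those endofunctors which also lie in the essential image of $t$. Hence $t$ and $h$ have the \emph{same} essential image inside $\End_R(\Mod_R)$ once we restrict the source of $t$ to $\Mod_R^{fl}$ and the source of $h$ to $(\ProMod_R^f)^{pproj}$. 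Call this common full subcategory $\mathcal{E} \subset \End_R(\Mod_R)$.

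Next I would observe that, since $t$ and $h$ are both fully faithful, they induce equivalences $\Mod_R^{fl} \xrightarrow{\ \sim\ } \mathcal{E}$ and $(\ProMod_R^f)^{pproj} \xrightarrow{\ \sim\ } \mathcal{E}^{op}$ (the $op$ appearing because $h$ lands in $\End_R(\Mod_R)^{op}$). Composing the first with a quasi-inverse of the second yields an equivalence
\[
    (\Mod_R^{fl})^{op} \xrightarrow{\ \sim\ } (\ProMod_R^f)^{pproj},
    \qquad M \mapsto N \text{ with } h^N \simeq t_M,
\]
and Lazard's theorem together with formula \eqref{eq:dual limit} makes this explicit: a flat $M \simeq \colim F_\alpha$ goes to $\lim F_\alpha^\vee$. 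Functoriality and the fact that the construction is well-defined up to canonical isomorphism both follow formally from full faithfulness of $t$ and $h$: any two choices of $N$ with $h^N \simeq t_M$ are canonically isomorphic, and a morphism $M \to M'$ is sent to the unique morphism $N' \to N$ inducing the corresponding natural transformation $t_{M} \to t_{M'}$, i.e. $h^{N'} \to h^{N}$.

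I do not expect a serious obstacle here, since all the content has been isolated into Theorem~\ref{thm: intersection of the categories}; the only thing to be slightly careful about is bookkeeping of the variances (that $h$ is contravariant into $\End_R(\Mod_R)$, so that composing the two equivalences produces the $op$ on the flat side and not on the pro side) and the precise meaning of ``unique up to canonical isomorphism,'' which I would phrase as: the forgetful functors to $\End_R(\Mod_R)$ from the two sides are naturally isomorphic through the equivalence, and any equivalence compatible with these forgetful functors is canonically isomorphic to the one constructed. If one wants the cleanest formulation, one can simply say that $\mathcal{E} \subset \End_R(\Mod_R)$ is the full subcategory of $R$-linear endofunctors that are exact and commute with direct sums (by the Eilenberg--Watts argument in the proof of part~(2)), and that both $t$ and $h$ are equivalences onto $\mathcal{E}$, up to the variance twist.
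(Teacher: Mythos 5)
Your proposal is correct and is exactly the argument the paper intends: the corollary is stated as an immediate consequence of Theorem~\ref{thm: intersection of the categories}, with the equivalence obtained by identifying the common essential image of the fully faithful functors $t$ and $h$ and composing one with a quasi-inverse of the other, variance handled as you describe. Nothing is missing.
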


\begin{definition} We refer to the equivalence as \emph{linear duality} and denote both it and its inverse by $M\mapsto M^\vee$.
\end{definition}

Theorem~\ref{thm: intersection of the categories} implies basic properties of linear duality, which we record below.

First, let $\Mod_R^{f,proj}\subset\Mod_R$ be the category of projective $R$-modules of finite rank. Then, on the one hand, 
\[\Mod_R^{f,proj}\subset\Mod_R^{fl}.
\]
On the other hand, consider the embedding
\[\Mod_R^{f,proj}\hookrightarrow\Mod_R\hookrightarrow\ProMod_R.\]
Clearly, the image of $\Mod_R^{f,proj}$ is contained in
$(\ProMod_R^f)^{pproj}$.

\begin{proposition} Suppose $M\in\Mod_R^{f,proj}$.  
\begin{enumerate}
\item Consider $M$ as an object of $\Mod_R^{fl}$. Then its linear dual $M^\vee$ is identified with the dual $R$-module $\Hom_R(M,R)$.

\item Dually, consider $M$ as an object of $(\ProMod_R^f)^{pproj}$. Then its linear dual is also identified with the dual $R$-module.\qed
\end{enumerate}
\end{proposition}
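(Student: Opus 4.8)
The plan is to unwind the characterization of linear duality provided by Theorem~\ref{thm: intersection of the categories}. Since both $t\colon\Mod_R\to\End_R(\Mod_R)$ and $h\colon\ProMod_R\to\End_R(\Mod_R)^{op}$ are fully faithful, the dual $M^\vee$ of a flat module $M$ is the unique pro-module $N$ (up to canonical isomorphism) with $h^N\simeq t_M$, and symmetrically $N^\vee$, for $N\in(\ProMod_R^f)^{pproj}$, is the unique flat module $M$ with $t_M\simeq h^N$. So in both parts it suffices to produce a natural isomorphism of endofunctors of $\Mod_R$ that puts $\Hom_R(M,R)$ in the correct slot.

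For part~(1), I would first observe that $M':=\Hom_R(M,R)$ is again a projective $R$-module of finite rank, so its image under $\Mod_R\hookrightarrow\ProMod_R$ is a constant pro-system lying in $(\ProMod_R^f)^{pproj}$, and $h^{M'}=\Hom_R(M',-)$. Then I would exhibit the canonical evaluation map, natural in $Q$,
\[ M\otimes_R Q\longrightarrow\Hom_R(\Hom_R(M,R),Q),\qquad m\otimes q\mapsto\bigl(\phi\mapsto\phi(m)q\bigr), \]
and check it is an isomorphism. This reduces to the free case: it is the identity when $M=R^n$, and since both sides are additive functors of $M$ and the map is natural in $M$, it remains an isomorphism after passing to a direct summand of $R^n$. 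This gives $t_M\simeq h^{M'}$, hence $M^\vee\simeq\Hom_R(M,R)$ in $\ProMod_R$.

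For part~(2) I would take whichever of two routes reads more cleanly. Directly: $M$, viewed through $\Mod_R\hookrightarrow\ProMod_R$, is finitely generated and genuinely projective, hence lies in $(\ProMod_R^f)^{pproj}$ with $h^M=\Hom_R(M,-)$; the tensor--hom map
\[ \Hom_R(M,R)\otimes_R Q\longrightarrow\Hom_R(M,Q),\qquad\phi\otimes q\mapsto\bigl(m\mapsto\phi(m)q\bigr) \]
is again an isomorphism for $M$ finite projective (identity on $R^n$, additive in $M$), so $t_{\Hom_R(M,R)}\simeq h^M$ and $M^\vee\simeq\Hom_R(M,R)$ in $\Mod_R$. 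Alternatively, apply part~(1) to the finite projective module $\Hom_R(M,R)$ and combine the resulting identification with the double-duality isomorphism $\Hom_R(\Hom_R(M,R),R)\simeq M$ and the fact (Corollary~\ref{cor: linear duality equivalence} and the subsequent definition) that $\vee$ is its own inverse.

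There is no substantive obstacle here; the work is bookkeeping. The points to watch are: keeping track of which of the embeddings $t$, $h$ each side is compared through; remembering that ``$\Hom_R(M,R)$ as a pro-module'' means its image under the full embedding $\Mod_R\hookrightarrow\ProMod_R$, i.e.\ a constant pro-system rather than a genuine formal limit; and the (routine) reduction of the two evaluation/tensor--hom maps to the case $M=R^n$.
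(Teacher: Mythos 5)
Your argument is correct and is exactly the unwinding the paper has in mind: the paper marks this proposition \qed{} with no written proof, treating it as immediate from the full faithfulness of $t$ and $h$ and Theorem \ref{thm: intersection of the categories}, and your evaluation and tensor--hom isomorphisms (checked on $R^n$ and passed to direct summands) are the standard verification being elided. No issues.
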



\begin{remark}
It is easy to see that 
$\Mod_R^{f,proj}$ is identified with the intersection
\[\Mod_R\cap(\ProMod_R^f)^{pproj}.\]
In particular, we see that the notation $M^\vee$ for the dual object is unambiguous, even though we use it in three settings: for flat $R$-modules, for pro-projective pro-finite pro-$R$-modules, and for projective modules of finite rank. 
\end{remark}

\begin{proposition}\label{prop:linear duality}
\begin{enumerate}
\item If $M\in\Mod_R^{fl}$ is represented as a filtered colimit of free modules of finite rank
\[M\simeq\colim F_\alpha,\] we obtain an isomorphism
\[M^\vee\simeq\lim F_\alpha^\vee\in(\ProMod_R^f)^{pproj}.\]

\item Dually, if $N\in(\ProMod_R^f)^{pproj}$ is represented as a cofiltered limit of free modules of finite rank
\[N\simeq\lim F_\alpha,\]
we obtain an isomorphism
\[N^\vee \simeq\colim F_\alpha^\vee.\]

\item A flat module $M\in\Mod_R^{fl}$ is Mittag-Leffler if and only if its dual $M^\vee\in(\ProMod_R^f)^{pproj}$ is Mittag-Leffler.
\end{enumerate}
\qed
\end{proposition}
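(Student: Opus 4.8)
The plan is to deduce all three parts directly from Theorem~\ref{thm: intersection of the categories} together with the defining property of linear duality recorded in Corollary~\ref{cor: linear duality equivalence}: for $M\in\Mod_R^{fl}$ the dual $M^\vee$ is the unique object of $(\ProMod_R^f)^{pproj}$ equipped with an isomorphism $h^{M^\vee}\simeq t_M$, and symmetrically in the other direction. Nothing new needs to be proved; the task is to match the ad hoc constructions in the proof of Theorem~\ref{thm: intersection of the categories} against the (co)limit presentations given in the statement.

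For \emph{(1)} I would simply unwind the proof of Theorem~\ref{thm: intersection of the categories}(1): given $M\simeq\colim F_\alpha$ with the $F_\alpha$ free of finite rank, that argument exhibits the pro-module $N=\lim F_\alpha^\vee$ together with an isomorphism $h^N\simeq t_M$. The only remaining point is that $N$ genuinely lies in $(\ProMod_R^f)^{pproj}$, which is immediate: each $F_\alpha^\vee$ is again free of finite rank, hence finitely generated and projective, so by Remark~\ref{remark: pro-projective conditions} the cofiltered formal limit $\lim F_\alpha^\vee$ is pro-finite and pro-projective. The uniqueness clause of Corollary~\ref{cor: linear duality equivalence} then identifies $M^\vee$ with $\lim F_\alpha^\vee$ canonically, independently of the chosen presentation.

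For \emph{(2)} I would run the dual computation directly rather than formally dualizing. If $N\simeq\lim F_\alpha$ with the $F_\alpha$ free of finite rank, then $\{F_\alpha^\vee\}$ is a filtered direct system over the opposite index category, and for every $Q\in\Mod_R$
\[
h^N(Q)=\colim_\alpha\Hom_R(F_\alpha,Q)\simeq\colim_\alpha\bigl(F_\alpha^\vee\tensor{R}Q\bigr)\simeq\Bigl(\colim_\alpha F_\alpha^\vee\Bigr)\tensor{R}Q,
\]
using the adjunction isomorphism for finite free modules and the fact that tensor products commute with filtered colimits. Hence $h^N\simeq t_M$ for $M=\colim_\alpha F_\alpha^\vee$, which is flat as a filtered colimit of finite free modules, and Corollary~\ref{cor: linear duality equivalence} gives $N^\vee\simeq\colim_\alpha F_\alpha^\vee$. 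As a consistency check one can instead apply (1) to this $M$, use $(F_\alpha^\vee)^\vee\simeq F_\alpha$, and invoke that linear duality is an antiequivalence.

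Finally, for \emph{(3)} I would take $N=M^\vee$, so that $t_M\simeq h^N$ with $M\in\Mod_R^{fl}$ and $N\in(\ProMod_R^f)^{pproj}$; Theorem~\ref{thm: intersection of the categories}(3) then says precisely that $M$ is Mittag-Leffler in the sense of Definition~\ref{definition: mittag-leffler} if and only if $N$ is Mittag-Leffler in the sense of Definition~\ref{def: pro-mittag-leffer}. I do not expect a genuine obstacle anywhere: the real content is already in Theorem~\ref{thm: intersection of the categories}, and what is left is the bookkeeping of subcategory membership in (1) and the routine interchange of tensor product with filtered colimits in (2) — the closest thing to a subtle point is that presentation-independence in (1) and (2) is not checked by hand but inherited from the uniqueness in Corollary~\ref{cor: linear duality equivalence}.
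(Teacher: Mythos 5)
Your proposal is correct and follows essentially the same route the paper intends: the proposition is stated with no written proof precisely because, as you observe, parts (1) and (3) are read off from the proof of Theorem~\ref{thm: intersection of the categories} together with the uniqueness in Corollary~\ref{cor: linear duality equivalence}, and (2) is the symmetric computation. Your explicit verification that $h^N\simeq t_{\colim F_\alpha^\vee}$ in part (2) is exactly the bookkeeping the paper leaves to the reader.
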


Finally, note that in statements (1) and (2) of Proposition~\ref{prop:linear duality}, representation always exists. For (1), this is Lazard's Theorem, while for (2), we have the following dual statement.

\begin{proposition} \label{prop:coLazard}
Suppose $N\in\ProMod_R$. Then $N\in(\ProMod_R^f)^{pproj}$ if and only if $N$ can be represented as a cofiltered limit of free modules of finite rank.
\end{proposition}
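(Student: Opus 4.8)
The plan is to deduce this directly from Theorem~\ref{thm: intersection of the categories} together with Lazard's Theorem, so the content is almost entirely bookkeeping. The "if" direction is immediate: if $N \simeq \lim F_\alpha$ is a cofiltered limit of free modules of finite rank, then each $F_\alpha$ is finitely generated and projective, so $N$ lies in $\ProMod_R^f$ and, by the characterization in Remark~\ref{remark: pro-projective conditions} (a cofiltered formal limit of projective modules is pro-projective), $N$ is pro-projective. Hence $N \in (\ProMod_R^f)^{pproj}$.

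For the "only if" direction, suppose $N \in (\ProMod_R^f)^{pproj}$. By Theorem~\ref{thm: intersection of the categories}(2), there is a flat module $M \in \Mod_R^{fl}$ with $h^N \simeq t_M$. By Lazard's Theorem, write $M \simeq \colim F_\alpha$ as a filtered colimit of free modules of finite rank. Then, exactly as in the proof of part (1) of Theorem~\ref{thm: intersection of the categories}, the pro-module $\lim F_\alpha^\vee$ satisfies $h^{\lim F_\alpha^\vee} \simeq t_M \simeq h^N$. Since $h$ is a full embedding of $\ProMod_R$ into $\End_R(\Mod_R)^{op}$, we conclude $N \simeq \lim F_\alpha^\vee$, which is a cofiltered limit of free modules of finite rank (each $F_\alpha^\vee = \Hom_R(F_\alpha,R)$ is again free of finite rank).

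The only point requiring a word of care is the direction of the transition maps: a map $F_\alpha \to F_\beta$ in the colimit system dualizes to $F_\beta^\vee \to F_\alpha^\vee$, so the index category for $\{F_\alpha^\vee\}$ is the opposite of that for $\{F_\alpha\}$, which is cofiltered since the original was filtered. There is no real obstacle here; the statement is essentially a restatement of the existence of $N$ in part (2) of Theorem~\ref{thm: intersection of the categories} combined with the explicit formula \eqref{eq:dual limit}. One could equivalently cite Remark~\ref{remark: pro-projective conditions} to reduce to a cofiltered limit of projective (not necessarily finite) modules and then refine, but going through the linear-duality correspondence with a flat module and applying Lazard is the cleanest route.
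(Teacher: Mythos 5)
Your proof is correct and follows essentially the same route as the paper: the paper's argument is to set $M = N^\vee$ (which is precisely the flat module with $t_M \simeq h^N$ furnished by Theorem~\ref{thm: intersection of the categories}(2)), apply Lazard's Theorem, and invoke Proposition~\ref{prop:linear duality}(1), which is exactly the explicit formula $N \simeq \lim F_\alpha^\vee$ you rederive from the proof of part (1). Your slightly more explicit unwinding via fullness of the embedding $h$ and your appeal to Remark~\ref{remark: pro-projective conditions} for the ``if'' direction are both fine substitutes for the paper's terser phrasing.
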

\begin{proof}
The `if' direction follows from definitions. For the `only if'
direction, consider $M=N^\vee\in\Mod_R^{fl}$, represent it as a filtered colimit of free modules of finite rank via Lazard's Theorem, and apply Proposition~\ref{prop:linear duality}(1).
\end{proof}

\begin{remark}\label{remark: duality by continuity}
In other words, we can use Lazard's Theorem to identify $\Mod_R^{fl}$ with the ind-completion of the category $\Mod_R^{f,proj}$. (The notion of ind-completion is opposite to that of pro-completion, so $(\Mod_R^{fl})^{op}\simeq\Pro((\Mod_R^{f,proj})^{op})$.) Similarly, Proposition~\ref{prop:coLazard} identifies $(\ProMod_R^f)^{pproj}$ with $\Pro(\Mod_R^{f,proj})$. Now the linear duality 
\[(\Mod_R^{fl})^{op}\simeq(\ProMod_R^f)^{pproj}\]
can be obtained from the ``usual" duality
\[(\Mod_R^{f,proj})^{op}\simeq\Mod_R^{f,proj}\]
by continuity.
\end{remark}

\begin{remark}
    Over a field $k$, $\Pro(\mathrm{Vect}_k^f)$ may be identified with linearly compact topological vector spaces over $k$. However, for general $R$, linearly compact topological modules correspond to a \emph{proper} subcategory of $\ProMod_R^f$.
    For if $\{M_\alpha\}_{\alpha \in I}$ is a Mittag-Leffler pro-system, it is not necessarily the case that $\lim_\beta M_\beta \to M_\alpha$ is surjective; such pro-systems are known as \emph{strictly Mittag-Leffler}. 
    See \cite[(7.12.16)]{BD91_Hitchin} and references therein for an example of a flat Mittag-Leffler module which is not strictly Mittag-Leffler.
\end{remark}

\subsection{Linear duality and monoidal structure}

The tensor product of $R$-modules equips $\Mod_R$ with a symmetric monoidal structure. 
The symmetric monoidal structure extends to a symmetric monoidal structure on $\ProMod_R$
uniquely determined by compatibility with cofiltered limits. Explicitly, the tensor product on $\ProMod_R$ is given by the formula
\[(\lim_\alpha M_\alpha)\otimes(\lim_\beta N_\beta)=\lim_{\alpha,\beta}(M_\alpha\otimes N_\beta). \]

Consider now the linear duality. On finite projective modules, it is an equivalence
\begin{equation}\label{eq:linear duality proj}
(\Mod_R^{f,proj})^{op}\to\Mod_R^{f,proj};
\end{equation}
this equivalence naturally upgrades to an equivalence of symmetric monoidal categories.

\begin{proposition} \label{prop:linear duality monoidal}
The symmetric monoidal structure on \eqref{eq:linear duality proj} has a unique (up to a canonical isomorphism) extension to a symmetric monoidal structure on the linear duality 
\[(\Mod_R^{fl})^{op}\simeq(\ProMod^f_R)^{pproj}.\]
\end{proposition}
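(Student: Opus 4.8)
The plan is to transport the symmetric monoidal structure along the two ind/pro-completion identifications recorded in Remark \ref{remark: duality by continuity}, rather than to verify the hexagon and pentagon axioms by hand on pro-modules. First I would observe that the tensor product on $\Mod_R^{fl}$, restricted from $\Mod_R$, is compatible with filtered colimits in each variable; hence under the identification $\Mod_R^{fl}\simeq\Ind(\Mod_R^{f,proj})$ it is the unique symmetric monoidal structure extending the tensor product on $\Mod_R^{f,proj}$ and commuting with filtered colimits separately in each variable. Dually, the tensor product on $\ProMod_R$ defined by $(\lim_\alpha M_\alpha)\otimes(\lim_\beta N_\beta)=\lim_{\alpha,\beta}(M_\alpha\otimes N_\beta)$ restricts to $(\ProMod_R^f)^{pproj}\simeq\Pro(\Mod_R^{f,proj})$ — one must check that the tensor product of two pro-finite pro-projective pro-modules is again pro-finite pro-projective, which is immediate since finite projectives are closed under $\otimes$ — and is the unique symmetric monoidal structure extending the one on $\Mod_R^{f,proj}$ and commuting with cofiltered limits separately in each variable.

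Next I would use the general principle that a symmetric monoidal structure on a small category $\CC$ that is compatible with finite (co)limits — or here simply extends along the universal property — extends uniquely, up to canonical symmetric monoidal equivalence, to $\Ind(\CC)$ by requiring compatibility with filtered colimits, and likewise to $\Pro(\CC)$ by requiring compatibility with cofiltered limits. Since the linear duality $(\Mod_R^{f,proj})^{op}\xrightarrow{\sim}\Mod_R^{f,proj}$ is already symmetric monoidal (sending $M\mapsto\Hom_R(M,R)$, with the canonical isomorphisms $(M\otimes N)^\vee\simeq M^\vee\otimes N^\vee$ and $R^\vee\simeq R$), applying $\Pro((-)^{op})$ to it yields a symmetric monoidal equivalence
\[
(\Mod_R^{fl})^{op}\;\simeq\;\Pro\big((\Mod_R^{f,proj})^{op}\big)\;\xrightarrow{\;\sim\;}\;\Pro(\Mod_R^{f,proj})\;\simeq\;(\ProMod_R^f)^{pproj}.
\]
By construction this equivalence underlies the linear duality functor of Corollary \ref{cor: linear duality equivalence}, because on the generating subcategory $\Mod_R^{f,proj}$ it agrees with ordinary dualization, and both the duality and the monoidal structures are characterized by continuity. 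Uniqueness up to canonical isomorphism follows the same way: any symmetric monoidal refinement restricts on finite projectives to a symmetric monoidal refinement of \eqref{eq:linear duality proj}, which is unique, and then propagates by the universal properties.

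The step I expect to be the genuine obstacle is justifying that these symmetric monoidal structures — not merely the underlying tensor \emph{bifunctors} but the full coherence data — extend uniquely along $\Ind$ and $\Pro$. For $\Ind$-completion this is standard (a symmetric monoidal structure preserving finite colimits, or just built from the universal property, extends uniquely to one preserving filtered colimits in each variable, cf.\ Lurie, \cite{Lur09} and its sequel), but for $\Pro$-completion one should either dualize that statement by passing to opposite categories or invoke the analogous result directly; one must be careful that $\Mod_R^{f,proj}$ is essentially small so that $\Ind$ and $\Pro$ behave well, and that the tensor product on $\ProMod_R$ genuinely commutes with cofiltered limits in each variable (which is built into its defining formula). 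Once this formal input is in place, matching the extended structure with the explicit formulas $M^\vee\simeq\lim F_\alpha^\vee$ of Proposition \ref{prop:linear duality} is routine: the isomorphism $(M\otimes N)^\vee\simeq M^\vee\otimes N^\vee$ is obtained by writing $M\simeq\colim F_\alpha$, $N\simeq\colim G_\beta$, so $M\otimes N\simeq\colim_{\alpha,\beta}(F_\alpha\otimes G_\beta)$, and dualizing termwise using $(F_\alpha\otimes G_\beta)^\vee\simeq F_\alpha^\vee\otimes G_\beta^\vee$ together with the cofiltered-limit formula for $\otimes$ on $\ProMod_R$.
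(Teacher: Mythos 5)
Your proposal is correct and follows essentially the same route as the paper: both identify $\Mod_R^{fl}$ and $(\ProMod_R^f)^{pproj}$ with the ind- and pro-completions of $\Mod_R^{f,proj}$, observe that the symmetric monoidal structures arise by unique ind- and pro-extension of the one on finite projectives, and conclude compatibility with the duality from that uniqueness. The additional checks you flag (closure of finite projectives under $\otimes$, continuity of the tensor bifunctors, coherence data along $\Ind$/$\Pro$) are exactly the details the paper leaves implicit.
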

\begin{proof}
As in Remark \ref{remark: duality by continuity}, we can identify $\Mod_R^{fl}$ with the ind-completion of $\Mod_R^{f,proj}$ and $(\ProMod_R^f)^{pproj}$ with its pro-completion. The symmetric monoidal structure on the two categories is obtained from that on $\Mod_R^{f,proj}$ by ind-extension and pro-extension, respectively. Since the extensions are unique up to unique isomorphisms, they are compatible with the linear duality. 
\end{proof}

From now on, we consider the linear duality as a symmetric monoidal functor via the structure of Proposition~\ref{prop:linear duality monoidal}.

Given a finite projective $R$-module $F$ and an $R$-module $L$, there is a isomorphism
\[ \Hom_R(F,L) \cong F^\vee \otimes L \]
natural in $F$ and $L$.
\begin{proposition} \label{prop:unit}
Given $M\in\Mod_R^{fl}$ and $L\in\Mod_R$, there exists a natural isomorphism of $R$-modules
\[\Hom_R(M,L) \cong ev(M^\vee\otimes L)\]
uniquely extending the natural isomorphism for $M\in\Mod_R^{f,proj}$.
\end{proposition}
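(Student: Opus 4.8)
The plan is to reduce the general case to the finite projective case by expressing $M$ as a filtered colimit and passing to the limit on the dual side. First I would fix a presentation $M \simeq \colim_\alpha F_\alpha$ as a filtered colimit of finite free (or finite projective) $R$-modules, which exists by Lazard's Theorem. By Proposition~\ref{prop:linear duality}(1), this gives $M^\vee \simeq \lim_\alpha F_\alpha^\vee$ in $(\ProMod_R^f)^{pproj}$. Now compute both sides of the desired isomorphism through this presentation: on the left, since $\Hom_R(-,L)$ turns the colimit into a limit, $\Hom_R(M,L) \cong \lim_\alpha \Hom_R(F_\alpha, L)$. On the right, $M^\vee \otimes L = (\lim_\alpha F_\alpha^\vee)\otimes L$, which by the formula for the monoidal structure on $\ProMod_R$ recalled in §2.4 equals $\lim_\alpha (F_\alpha^\vee \otimes L)$ (here $L$ is a constant pro-module). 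Applying the evaluation functor $ev$, which by the Lemma after the definition of pro-finiteness sends a formal limit to the honest limit in $\Mod_R$, gives $ev(M^\vee \otimes L) \cong \lim_\alpha (F_\alpha^\vee \otimes L)$.

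Next I would invoke the natural isomorphism $\Hom_R(F_\alpha, L) \cong F_\alpha^\vee \otimes L$ stated just before the proposition, valid for finite projective $F_\alpha$. Since this isomorphism is natural in $F_\alpha$, it is compatible with the transition maps of the inverse system $\{F_\alpha\}$, and therefore induces an isomorphism of the two inverse limits:
\[
\Hom_R(M,L) \cong \lim_\alpha \Hom_R(F_\alpha,L) \cong \lim_\alpha (F_\alpha^\vee \otimes L) \cong ev(M^\vee \otimes L).
\]
This composite visibly restricts to the given natural isomorphism when $M$ is itself finite projective (take the constant system), which gives the asserted uniqueness of the extension.

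The main obstacle — and the only genuine content — is checking that the isomorphism is \emph{independent of the chosen presentation} $M \simeq \colim_\alpha F_\alpha$, so that it is natural in $M$. For this I would argue as in Remark~\ref{remark: duality by continuity}: the category $\Mod_R^{fl}$ is the ind-completion of $\Mod_R^{f,proj}$, and both functors $M \mapsto \Hom_R(M,L)$ and $M \mapsto ev(M^\vee \otimes L)$ send filtered colimits in $\Mod_R^{fl}$ to cofiltered limits in $\Mod_R$ (the first because $\Hom_R(-,L)$ is continuous; the second because $(-)^\vee$ converts ind-colimits to pro-limits by Corollary~\ref{cor: linear duality equivalence}, $-\otimes L$ is continuous on $\ProMod_R$, and $ev$ preserves limits). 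Hence both are the right Kan extension along $\Mod_R^{f,proj} \hookrightarrow \Mod_R^{fl}$ of their common restriction, and the natural isomorphism between those restrictions extends uniquely to a natural isomorphism between the extensions. This simultaneously yields naturality in $M$ and in $L$ (naturality in $L$ is already manifest at each finite stage) and the uniqueness claim.
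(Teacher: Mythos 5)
Your proposal is correct and is essentially an expanded version of the paper's own (one-line) proof, which simply observes that both functors $M\mapsto\Hom_R(M,L)$ and $M\mapsto ev(M^\vee\otimes L)$ send filtered colimits in $M$ to limits, hence are determined by their common restriction to $\Mod_R^{f,proj}$ via Lazard's Theorem. The details you supply (the computation through a presentation $M\simeq\colim F_\alpha$ and the right-Kan-extension argument for independence of the presentation) are exactly the intended justification.
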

\begin{proof} 
Indeed, both functors send filtered colimits in $M$ into limits.
\end{proof}

\begin{remark}\label{remark: existence of unit for duality}
In particular, setting $M=L$, we obtain a canonical unit element $\unit_M\in ev(M^\vee\otimes M)$ for any $M\in\Mod_R^{fl}$. Note that we can reinterpret it as a homomorphism of pro-modules
\[\unit_M:R\to M^\vee\otimes M.\]
The morphism $\unit_M$ determines the isomorphism of Proposition~\ref{prop:unit} by composition with homomorphisms $M\to L$. 

Note that there is no natural counit element
\[M^\vee\otimes M \overset{?}\to R\]
unless $M\in\Mod_R^{f,proj}$. This lack of symmetry between units and counits is caused by our choice to work in the category $\ProMod_R$: thus, given a colimit and a limit
\begin{align*}
M&=\colim M_\alpha\in\Mod_R^{fl}& M_\alpha&\in\Mod_R^{f,proj}\\
N&=\lim N_\beta\in(\ProMod_R^f)^{pproj}& N_\beta&\in\Mod_R^{f,proj},
\end{align*}
their product is given by
\[N\otimes M=\lim_\beta\colim_\alpha N_\beta\otimes M_\alpha.\]
Essentially, the unit is defined because we work with pro-ind-objects of $\Mod_R^{f,proj}$, while counit would require working with ind-pro-objects.  
\end{remark}

\begin{remark}\label{remark: using the unit the wrong way}
    If $M \in \Mod_R^{fl}$, then by definition of linear duality there is an isomorphism
    \[ \Hom_{\ProMod_R}(M^\vee, L) \to M \otimes L\]
    natural in $L \in \Mod_R$.
    This isomorphism is also induced by the \emph{same} unit $\unit_M : R \to M^\vee \otimes M$ as in Remark \ref{remark: existence of unit for duality},
    i.e. the isomorphism sends $f: M^\vee \to L$ to $(f \otimes 1)\unit_M \in M \otimes L$.
    This can be checked by writing an explicit formula for $\unit_M$ in terms of a presentation $M = \colim_\alpha F_\alpha$ as a filtered colimit of finite free modules.
\end{remark}

While a general flat $R$-module $M$ is not dualizable, the following weaker statement concerning maps from a flat module to the dual of a flat module holds:

\begin{proposition}\label{prop: mapping flat into coflat and bilinear}
    Let $M, N \in \Mod_R^{fl}$. Then there exist natural isomorphisms of $R$-modules
    \[ 
        \Hom_{\ProMod_R}(M,N^\vee) \cong \Hom_R(M \otimes N, R) \cong ev( (M \otimes N)^\vee) \cong \Hom_{\ProMod_R}(N, M^\vee)
    \]
    uniquely extending the natural isomorphisms when $M,N \in \Mod_R^{f,proj}$
\end{proposition}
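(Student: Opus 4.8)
The plan is to obtain the middle isomorphism $\Hom_R(M\otimes N,R)\cong ev((M\otimes N)^\vee)$ directly from Proposition~\ref{prop:unit}: since a tensor product of flat modules is flat, $M\otimes N\in\Mod_R^{fl}$, and applying that proposition with flat module $M\otimes N$ and $L=R$ gives exactly this identification. For the two outer isomorphisms I would either reduce to finite projective modules via Lazard's Theorem or, what amounts to the same thing, compute $\Hom_{\ProMod_R}(M,N^\vee)$ explicitly; I describe the explicit route.

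First I would choose a Lazard presentation $N\simeq\colim_\beta G_\beta$ of $N$ as a filtered colimit of finite free $R$-modules, so that $N^\vee\simeq\lim_\beta G_\beta^\vee$ in $\ProMod_R$ by Proposition~\ref{prop:linear duality}(1), with each $G_\beta^\vee=\Hom_R(G_\beta,R)$ finite free. The defining formula for morphisms into a formal limit in a pro-category gives $\Hom_{\ProMod_R}(M,N^\vee)\cong\lim_\beta\Hom_{\ProMod_R}(M,G_\beta^\vee)\cong\lim_\beta\Hom_R(M,G_\beta^\vee)$, the second isomorphism by full faithfulness of $\Mod_R\hookrightarrow\ProMod_R$. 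Tensor–hom adjunction rewrites each term as $\Hom_R(M\otimes_R G_\beta,R)$, and commuting the limit past $\Hom_R(-,R)$ identifies the whole thing with $\Hom_R(M\otimes_R N,R)$. Running the identical computation with the roles of $M$ and $N$ swapped (now unfolding $M$ rather than $N$) and feeding in the symmetry isomorphism $M\otimes_R N\cong N\otimes_R M$ of the monoidal structure produces the last isomorphism $\Hom_R(M\otimes N,R)\cong\Hom_{\ProMod_R}(N,M^\vee)$. The one point that needs care here is that the pro-category mapping formula must be applied after unfolding $N$ (so that $N^\vee$ is literally a formal cofiltered limit of finite free modules), not $M$.

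Finally I would check naturality and that these isomorphisms are the unique ones extending the finite projective case. When $M,N\in\Mod_R^{f,proj}$ every term collapses to $\Hom_R(M\otimes N,R)=(M\otimes N)^\vee$ and the maps above are the evident tensor–hom identifications. All four functors $(M,N)\mapsto\Hom_{\ProMod_R}(M,N^\vee)$, $\Hom_R(M\otimes N,R)$, $ev((M\otimes N)^\vee)$, $\Hom_{\ProMod_R}(N,M^\vee)$ are contravariant in $M$ and $N$ and send filtered colimits in each variable to limits: for the two middle ones this is clear, and for $\Hom_{\ProMod_R}(M,N^\vee)$ it holds because $\Mod_R\hookrightarrow\ProMod_R$ is left adjoint to $ev$ (hence preserves colimits) and $(-)^\vee$ takes filtered colimits to cofiltered limits. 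Since $\Mod_R^{fl}$ is the ind-completion of $\Mod_R^{f,proj}$ (Remark~\ref{remark: duality by continuity}), a natural transformation between such functors is determined by its restriction to $\Mod_R^{f,proj}\times\Mod_R^{f,proj}$; this simultaneously gives the canonical natural isomorphism extending the finite projective case, its uniqueness, and the fact that nothing depended on the chosen Lazard presentations. The main obstacle is really just this bookkeeping — orienting the pro-Hom formula correctly and then checking continuity in each variable — after which the statement follows formally from density of $\Mod_R^{f,proj}$ in $\Mod_R^{fl}$.
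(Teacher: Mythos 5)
Your proof is correct and rests on the same principle as the paper's: the paper's entire proof is the observation that all the functors involved send filtered colimits in $M$ and $N$ to limits, so the isomorphisms extend uniquely by continuity from the finite projective case. Your explicit computation of $\Hom_{\ProMod_R}(M,N^\vee)$ via a Lazard presentation of $N$ and the pro-category mapping formula is a valid (and correctly oriented) unwinding of that same argument.
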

\begin{proof}
    All three functors send filtered colimits in $M$ and $N$ into limits.
\end{proof}

\subsection{Pro-algebras}\label{subsection: proalgebras}

Once we have equipped the category $\ProMod_R$ with a symmetric monoidal structure, we can consider (associative unital) algebras in it. Denote by
\[\Alg(\ProMod_R)\]
the category of such algebras.

On the other hand, we can consider the category $\Alg_R$ of algebras in $\Mod_R$, and take its pro-completion $\Pro(\Alg_R)$. The natural functor
\[\Pro\Alg_R\to\Alg(\ProMod_R)\]
is fully faithful; its essential image consists of algebras in pro-modules that can be represented as a cofiltered limit of algebras in modules.

\begin{theorem}\label{theorem: algebras in pro}
Suppose $A\in\Alg(\ProMod_R)$ and that, as a pro-$R$-module, $A$ satisfies the Mittag-Leffler condition. 
\begin{enumerate}
\item $A$ can be represented as a cofiltered limit
\[A=\lim A_\alpha,\]
of $R$-algebras $A_\alpha$ with surjective transition maps. 

\item If $A$ is pro-finite as a pro-$R$-module, then we can choose $A_\alpha$ to be finitely generated $R$-modules for all $\alpha$. 

\item If $A$ is commutative (resp. commutative and pro-finite), then we can choose $A_\alpha$ to be commutative (resp. commutative and finitely generated) for all $\alpha$.

\end{enumerate}
\end{theorem}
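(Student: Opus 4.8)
The plan is to realize $A$ as a cofiltered limit of honest $R$-algebras by producing enough ``clopen two-sided ideals'' and taking the corresponding quotients. First I would use the Mittag-Leffler hypothesis and Lemma~\ref{lemma:mittag-leffler pro} to fix a presentation $A\simeq\lim_{i\in I}M_i$ with $I$ cofiltered and all transition maps $M_j\to M_i$ surjective. With surjective transition maps every projection $q_i\colon A\to M_i$ is an epimorphism in $\ProMod_R$, and $A$ is the cofiltered limit of these ``clopen quotients''; moreover any clopen quotient of $A$ (epimorphism onto an honest module) factors through some $q_i$, so the kernels $K_i=\ker q_i$ are cofinal among all clopen pro-submodules of $A$. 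Next I would record that the algebra structure is ``visible at finite stages'': since the tensor product on $\ProMod_R$ commutes with cofiltered limits we have $A^{\otimes n}\simeq\lim_i M_i^{\otimes n}$, and since $\Hom_{\ProMod_R}(\lim_k X_k,M)=\colim_k\Hom_R(X_k,M)$ for an honest module $M$, for every $i$ there exist $j\ge i$ and maps $M_j\otimes M_j\to M_i$, $M_j^{\otimes 3}\to M_i$ representing $q_i\circ m$ and $q_i\circ m_3$, where $m_3=m\circ(m\otimes 1)$.

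The heart of the argument is the claim that for every clopen pro-submodule $K=\ker(q\colon A\twoheadrightarrow Q)$, with $Q$ an honest module, there is a clopen \emph{two-sided ideal} $P\subseteq K$, i.e.\ a clopen pro-submodule with $m(P\otimes A)\subseteq P$ and $m(A\otimes P)\subseteq P$. To prove it I would form two successive conductors. Picking $j$ so that $q\circ m$ is represented by $g\colon M_j\otimes M_j\to Q$, define $P_1\subseteq A$ as the kernel of $A\xrightarrow{q_j}M_j\xrightarrow{g^\flat}\Hom_R(M_j,Q)$, where $g^\flat$ is obtained by currying $g$ in the first variable; informally, $P_1=\{a:m(a\otimes A)\subseteq K\}$. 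Then set $P=\{a:m(A\otimes a)\subseteq P_1\}$, defined analogously. A short computation with the unit and associativity axioms shows that $P_1$ is a right ideal, that $P$ is a two-sided ideal, and that $P\subseteq P_1\subseteq K$. Finally $P$ is clopen: taking $j'\ge j$ so that $q\circ m_3$ is represented by $M_{j'}^{\otimes 3}\to Q$, the composite $A\otimes K_{j'}\otimes A\to A^{\otimes 3}\to Q$ vanishes because its middle factor $K_{j'}\to M_{j'}$ is zero, and from this one deduces $K_{j'}\subseteq P$. (If $A$ is commutative, $P_1$ is already two-sided, so one conductor step suffices.)

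Granting the claim, I would descend the multiplication. Since $P$ is a two-sided ideal, $A\otimes A\xrightarrow{m}A\to A/P$ annihilates the sub-pro-modules $P\otimes A$ and $A\otimes P$, hence (as $A\otimes A\to(A/P)\otimes(A/P)$ is the quotient by the sub-pro-module they generate, by right exactness of $\otimes$ on $\ProMod_R$) factors through a map $(A/P)\otimes(A/P)\to A/P$; this makes $A/P$ an honest $R$-algebra and $A\to A/P$ a morphism of algebras in $\ProMod_R$, and associativity, unitality, and---when $A$ is commutative---commutativity of $A/P$ follow from those of $A$ since $A\to A/P$ is epi. The clopen two-sided ideals form a directed system ($P\cap P'$ is again one, with $A/(P\cap P')\hookrightarrow A/P\times A/P'$ a subalgebra) which is cofinal among all clopen pro-submodules by the claim, so $A=\lim_i A/K_i=\lim_P A/P$ with surjective transition maps and each $A/P$ an $R$-algebra; this proves (1), and (3) for commutative $A$ is the parenthetical remark above. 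For (2), if $A$ is pro-finite then any Mittag-Leffler presentation already has finitely generated terms---each $q_i$ is epi and factors through a finitely generated module, forcing $M_i$ to be finitely generated---and $A/P$ is a quotient of the finitely generated module $M_{j'}$; combining this with the commutative case handles the rest of (3).

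I expect the main obstacle to be the careful formalization of the conductor construction inside $\ProMod_R$: writing $P_1$ and $P$ as honest kernels rather than manipulating ``elements'', and making precise the right-exactness of $\otimes$ on $\ProMod_R$ needed to identify the kernel of $A\otimes A\to(A/P)\otimes(A/P)$ so that the multiplication descends. Once the correct objects are in place, the ring-theoretic checks (the conductor of a right ideal is two-sided; the threefold-product estimate giving clopenness) are routine.
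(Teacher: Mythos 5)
Your proposal is correct and follows essentially the same route as the paper: both proofs represent the triple product $\mu^{(3)}\colon A^{\otimes 3}\to A$ at finite stages of a surjective presentation $A=\lim M_\alpha$ and exploit the vanishing of $\mu^{(3)}_{\alpha\beta}$ on $M_\beta\otimes\ker(\varphi_{\alpha\beta})\otimes M_\beta$ to interleave a cofinal system of honest algebra quotients with the $M_\alpha$. The only real difference is dual packaging of the ideal: the paper quotients $M_\alpha$ by the two-sided ideal \emph{generated} by $\ker(\varphi_{\alpha\beta})$ (its image under $\mu^{(3)}_{\alpha\beta}$), whereas you take the \emph{conductor} (the largest clopen two-sided ideal inside $K_\alpha$) --- the paper's choice sidesteps the currying and right-exactness bookkeeping that you correctly flag as the main formalization burden of your version.
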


The theorem is a generalization of a theorem of Dieudonné that over a field $k$, a linearly compact commutative algebra is the inverse limit of finite-dimensional $k$-algebras \cite[p.12, Proposition 1]{Die73}.

\begin{proof}
Let us represent the pro-$R$-module $A$ as a cofiltered limit of a surjective system of $R$-modules
\[A=\lim M_\alpha.\]
Each $M_\alpha$ has a maximal quotient $M_\alpha\twoheadrightarrow A_\alpha$ such that the projection $A\to M_\alpha$ induces an algebra structure on $A_\alpha$. Explicitly, $A_\alpha$ can be constructed as follows:
The algebra structure on $A$ induces a map \[\mu^{(3)}:A^{\otimes 3}\to A;\]
therefore, for $\beta$ large enough, we obtain a map
\[\mu^{(3)}_{\alpha\beta}:M_\beta^{\otimes 3}\to M_\alpha.\]
Put \[A_\alpha:=M_\alpha/\mu^{(3)}_{\alpha\beta}(M_\beta\otimes\ker(\varphi_{\alpha\beta})\otimes M_\beta),\]
where $\varphi_{\alpha\beta}:M_\beta\to M_\alpha$ is the structure map. Since the maps $\mu^{(3)}_{\alpha\beta}$ depend on $\beta$ in a compatible way, the definition of $A_\alpha$ does not depend on $\beta$, and thus the product $M_\beta \otimes M_\beta \to M_\alpha$ descends to $A_\alpha \otimes A_\alpha \to A_\alpha$. 

The algebras $A_\alpha$ form a cofiltered system of algebras and the quotients $M_\alpha \twoheadrightarrow A_\alpha$ form a homomorphism 
\[A=\lim M_\alpha\to\lim A_\alpha\]
in $\Alg(\ProMod_R)$.
Let us show that this homomorphism is an isomorphism. It suffices to show that the underlying map of pro-modules is an isomorphism.
Equivalently, we need to show that, for all $\alpha$, there exists $\beta\ge\alpha$ such that the projection $M_\beta \to M_\alpha$ factors through $M_\beta \to A_\beta$. 
It suffices to take $\beta$ large enough so that the map $\mu^{(3)}_{\alpha\beta}$ is defined. This follows from the diagram 
\[\begin{tikzcd}
	{M_\gamma^{\otimes 3}} & {M_\beta} \\
	{M_\beta^{\otimes 3}} & {M_\alpha,}
	\arrow["{\mu^{(3)}_{\beta\gamma}}", from=1-1, to=1-2]
	\arrow["{(\varphi_{\beta\gamma})^{\otimes 3}}"', from=1-1, to=2-1]
	\arrow["{\mu^{(3)}_{\alpha\gamma}}"{description}, from=1-1, to=2-2]
	\arrow["{\varphi_{\alpha\beta}}", from=1-2, to=2-2]
	\arrow["{\mu^{(3)}_{\alpha\beta}}"', from=2-1, to=2-2]
\end{tikzcd}
\]
with $\gamma\ge\beta$ large enough. Indeed, since
\[A_\beta=M_\beta/\mu^{(3)}_{\beta\gamma}(M_\gamma\otimes\ker(\varphi_{\beta\gamma})\otimes M_\gamma),\]
we need to show that
\[\varphi_{\alpha\beta}(\mu^{(3)}_{\beta\gamma}(M_\gamma\otimes\ker(\varphi_{\beta\gamma})\otimes M_\gamma))=0.\]
But
\begin{multline*}
\varphi_{\alpha\beta}(\mu^{(3)}_{\beta\gamma}(M_\gamma\otimes\ker(\varphi_{\beta\gamma})\otimes M_\gamma))=\mu^{(3)}_{\alpha\beta}(\varphi_{\beta\gamma}^{\otimes 3}(M_\gamma\otimes\ker(\varphi_{\beta\gamma})\otimes M_\gamma))\\
=\mu^{(3)}_{\alpha\beta}(\varphi_{\beta\gamma}^{\otimes 3}(M_\beta\otimes 0\otimes M_\beta))=0.
\end{multline*}

Clearly, if $A$ is commutative (resp. pro-finite), then $A_\alpha$ is commutative (resp. finitely generated).
\end{proof}

\begin{remark}
    Over a field, Theorem \ref{theorem: algebras in pro} 
    is the linear dual of the \emph{Fundamental Theorem of Coalgebras}: a coalgebra over a field $k$ is the union of its finite-dimensional sub-coalgebras \cite[Theorem 2.2.1]{Swe69}.
    However, when $R$ is not a field, we do not know whether every flat Mittag-Leffler $R$-coalgebra is a filtered colimit of finitely generated coalgebras.
    Note that this statement does not follow from Theorem \ref{theorem: algebras in pro}.
    One positive result in this direction is that if $C$ is a flat Mittag-Leffler $R$-coalgebra,
    then every $C$-comodule is a union of finitely generated subcomodules \cite[Theorem 3.16]{BW03}.
\end{remark}

\begin{corollary} \label{corollary: ML proalgebras}
The embedding $\Pro(\Alg_R)\to\Alg(\ProMod_R)$ induces an equivalence between the following categories:
\begin{enumerate}
\item The category $\Pro(\Alg_R)^{\ML}$ of pro-$R$-algebras satisfying the Mittag-Leffler condition
and the category $\Alg((\ProMod_R)^{\ML})$ of algebras in pro-$R$-modules satisfying the Mittag-Leffler condition,

\item The category $\Pro(\Alg_R^f)^{\ML}$ of pro-finite pro-$R$-algebras satisfying the Mittag-Leffler condition and the category $\Alg((\ProMod_R)^{f,\ML})$ of algebras in pro-finite pro-$R$-modules satisfying the Mittag-Leffler condition,
\end{enumerate}
as well as in versions of these categories using commutative algebras.
\end{corollary}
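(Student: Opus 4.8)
The plan is to deduce the corollary formally from two ingredients already in hand: the embedding $\Pro(\Alg_R)\to\Alg(\ProMod_R)$ is fully faithful (recorded above, together with a description of its essential image), and Theorem~\ref{theorem: algebras in pro} pins down that essential image on the Mittag-Leffler part. I will spell out case (1); cases (2) and the commutative variants are identical, using parts (2) and (3) of Theorem~\ref{theorem: algebras in pro} in place of part (1), so I will only flag the extra remarks they need.

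\emph{Full faithfulness.} Since $\Pro(\Alg_R)^{\ML}$ and $\Alg((\ProMod_R)^{\ML})$ are full subcategories of $\Pro(\Alg_R)$ and $\Alg(\ProMod_R)$, it suffices to check that the fully faithful embedding carries the former into the latter. Here one uses that the embedding is compatible with the forgetful functors to $\ProMod_R$: the underlying pro-module of an algebra $\lim_\alpha A_\alpha$ with $A_\alpha\in\Alg_R$, computed in $\Alg(\ProMod_R)$, is the formal limit $\lim_\alpha A_\alpha$ taken in $\ProMod_R$. Hence the underlying pro-module of the image of a pro-algebra $A$ is just the underlying pro-module of $A$ itself, so if the latter is Mittag-Leffler (resp.\ pro-finite Mittag-Leffler) then so is the former. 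Commutativity is likewise preserved and commutative algebra objects sit fully faithfully on both sides, so the commutative cases reduce to the same check.

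\emph{Essential surjectivity.} Let $A$ be an algebra object of $(\ProMod_R)^{\ML}$, i.e.\ an algebra in $\ProMod_R$ whose underlying pro-module is Mittag-Leffler. By Theorem~\ref{theorem: algebras in pro}(1), $A$ may be represented as a cofiltered limit $A=\lim_\alpha A_\alpha$ of $R$-algebras with surjective transition maps; by the description of the essential image recalled just before that theorem, $A$ therefore lies in the image of $\Pro(\Alg_R)\to\Alg(\ProMod_R)$, coming from the pro-algebra $\{A_\alpha\}$. Surjectivity of the transition maps gives, via Lemma~\ref{lemma:mittag-leffler pro}, that $\{A_\alpha\}$ has Mittag-Leffler underlying pro-module, so $\{A_\alpha\}\in\Pro(\Alg_R)^{\ML}$; this is (1). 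For (2), if moreover $A$ is pro-finite, Theorem~\ref{theorem: algebras in pro}(2) lets us take the $A_\alpha$ finitely generated over $R$ while keeping the transition maps surjective (each $A_\alpha$ is a quotient of a term $M_\alpha$ of a surjective cofiltered system of modules), so $\{A_\alpha\}\in\Pro(\Alg_R^f)^{\ML}$; one also notes that the equivalence of (2) is simply the restriction of that of (1) to the pro-finite objects on either side, and similarly for the commutative versions using Theorem~\ref{theorem: algebras in pro}(3).

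I expect no real obstacle: the substantive work is already done in Theorem~\ref{theorem: algebras in pro}, and what remains is bookkeeping. The one point deserving care is that ``Mittag-Leffler'' is being applied to a pro-algebra in two a priori different senses --- that its underlying pro-module satisfies Definition~\ref{def: pro-mittag-leffer}, and that it admits a presentation by a surjective cofiltered system of honest $R$-algebras --- so I will take a sentence to observe, again via Lemma~\ref{lemma:mittag-leffler pro} and Theorem~\ref{theorem: algebras in pro}(1), that these coincide, so that the statement of the corollary is not circular.
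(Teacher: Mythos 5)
Your argument is correct and is exactly how the paper intends the corollary to be read: the paper gives no separate proof, treating it as immediate from the full faithfulness of $\Pro(\Alg_R)\to\Alg(\ProMod_R)$ (with its essential image description) together with Theorem~\ref{theorem: algebras in pro}, which is precisely the two-step reduction you carry out. Your closing remark reconciling the two senses of ``Mittag-Leffler'' for a pro-algebra is a worthwhile clarification but does not change the substance.
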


Let $\CAlg_R$ be the category of commutative (unital) $R$-algebras. Recall the following definition:
\begin{definition}\label{def:ind-schemes}
An \emph{ind-affine $R$-ind-scheme} is a functor $\CAlg_R \to \Sets$ of the form $\colim_{\alpha\in I} X_\alpha$ where $I$ is filtered, $X_\alpha \to \Spec R$ is an affine $R$-scheme, 
and the maps $X_\alpha \to X_\beta$ are closed embeddings. If we can choose $X_\alpha$ to be finite over $R$, the ind-scheme is \emph{ind-finite}.
\end{definition}

Recall that the pro-completion $\Pro(\CAlg_R)$ is by definition a subcategory
\[\Pro(\CAlg_R)\subset\Fun(\CAlg_R,\Sets)^{op}.\]

\begin{proposition}\label{prop:ind-schemes} 
Consider a functor $h:\CAlg_R\to\Sets$. 
\begin{enumerate}
    \item $h$ is representable by an ind-affine $R$-ind-scheme if and only if \[h\in(\Pro\CAlg_R)^{\ML}.\]

    \item $h$ is representable by an ind-finite $R$-ind-scheme if and only if \[h\in(\Pro\CAlg_R^f)^{\ML}.\]
\end{enumerate}
\end{proposition}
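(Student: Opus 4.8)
The plan is to reduce Proposition~\ref{prop:ind-schemes} to Corollary~\ref{corollary: ML proalgebras} by matching up the two descriptions of an ind-scheme: one via filtered colimits of affine schemes glued along closed embeddings, the other via the formal-limit description of $\Pro(\CAlg_R)$ as a subcategory of $\Fun(\CAlg_R,\Sets)^{op}$. The key point is that a closed embedding of affine schemes $X_\alpha \hookrightarrow X_\beta$ corresponds exactly to a surjection of coordinate rings $\OO(X_\beta) \twoheadrightarrow \OO(X_\alpha)$, and a filtered colimit of such $X_\alpha$ in $\Fun(\CAlg_R,\Sets)$ represents the functor $B \mapsto \colim_\alpha \Hom_{\CAlg_R}(\OO(X_\alpha), B)$, which by definition is the object $\lim_\alpha \OO(X_\alpha) \in \Pro(\CAlg_R)$ viewed in $\Fun(\CAlg_R,\Sets)^{op}$.

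First I would establish the dictionary. Given an ind-affine ind-scheme $h = \colim_{\alpha} X_\alpha$ with closed embeddings as transition maps, set $A_\alpha = \OO(X_\alpha)$; the transition maps $X_\alpha \to X_\beta$ being closed embeddings translates to the structure maps $A_\beta \to A_\alpha$ being surjective $R$-algebra homomorphisms (note the reindexing: a filtered colimit of schemes becomes a cofiltered limit of rings). The cofiltered system $\{A_\alpha\}$ is then an object of $\Pro(\CAlg_R)$ with surjective transition maps, hence lies in $\Pro(\CAlg_R)^{\ML}$ by Lemma~\ref{lemma:mittag-leffler pro}, and the functor it represents is precisely $h$. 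Conversely, given $A \in (\Pro\CAlg_R)^{\ML}$, use Theorem~\ref{theorem: algebras in pro}(1) (equivalently Corollary~\ref{corollary: ML proalgebras}(1)) to write $A = \lim_\alpha A_\alpha$ with $A_\alpha \in \CAlg_R$ and surjective transition maps; then $X_\alpha := \Spec A_\alpha$ are affine $R$-schemes, the maps $X_\alpha \to X_\beta$ are closed embeddings since the $A_\beta \to A_\alpha$ are surjective, and $h \simeq \colim_\alpha X_\alpha$ is an ind-affine ind-scheme. This proves (1).

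For part (2), I would observe that the same argument restricts: "$X_\alpha$ finite over $R$" is equivalent to "$A_\alpha$ a finitely generated $R$-module" (for a finitely presented, hence finitely generated as an algebra, commutative $R$-algebra, module-finiteness is what the word "finite" means), so the ind-finite condition on $h$ corresponds exactly to $A \in (\Pro\CAlg_R^f)^{\ML}$, invoking Corollary~\ref{corollary: ML proalgebras}(2) to pass between a pro-finite pro-algebra and a cofiltered limit of finitely generated $R$-algebras with surjective transition maps. One subtlety to address: in the definition of $\Pro(\CAlg_R^f)$ the objects of $\CAlg_R^f$ should be taken to be the finitely generated (as $R$-modules) commutative $R$-algebras, and one must make sure this matches the "finite over $R$" wording of Definition~\ref{def:ind-schemes}; this is routine.

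The main obstacle is not any single hard step but rather the bookkeeping of the two opposite-category conventions — colimits of geometric objects versus limits of algebraic ones — and verifying that a \emph{filtered} colimit of representable functors along closed embeddings genuinely coincides with the formal limit defining the corresponding pro-algebra, rather than just being abstractly isomorphic to it. Concretely, one must check that for the system at hand the colimit of functors $B \mapsto \colim_\alpha \Hom(A_\alpha, B)$ is already a sheaf-theoretically well-behaved functor (it is, since filtered colimits of sets are exact and each $\Hom(A_\alpha,-)$ is a functor to $\Sets$), and that the Mittag-Leffler condition is exactly what is needed so that Theorem~\ref{theorem: algebras in pro} applies to recover an honest cofiltered limit of \emph{algebras} (not merely pro-modules) with surjective transition maps. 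Once that identification is in place, both directions of both equivalences follow formally.
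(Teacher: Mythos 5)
Your proposal is correct and follows essentially the same route as the paper, whose entire proof is the one-line observation that one should rewrite the affine schemes $X_\alpha$ in Definition~\ref{def:ind-schemes} as $\Spec A_\alpha$, so that closed embeddings become surjections of coordinate rings and the filtered colimit of representable functors becomes the formal cofiltered limit defining a Mittag-Leffler pro-algebra. Your additional bookkeeping (invoking Lemma~\ref{lemma:mittag-leffler pro} for the surjective presentation and matching ``finite over $R$'' with ``finitely generated as an $R$-module'') is exactly the detail the paper leaves implicit.
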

\begin{proof}
It suffices to rewrite affine $R$-schemes $X_\alpha$ as $\Spec A_\alpha$ for $R$-algebras $A_\alpha$
in Definition~\ref{def:ind-schemes}.
\end{proof}

Combining Corollary~\ref{corollary: ML proalgebras} and Proposition~\ref{prop:ind-schemes}, we obtain an antiequivalence between
$\CAlg((\ProMod_R)^{\ML})$ and the category of ind-affine $R$-ind-schemes, which we denote by $\SpInd$.
Explicitly, given $A\in\CAlg((\ProMod_R)^{\ML})$, we have
\[\SpInd(A):\CAlg_R\to\Sets:S\mapsto\Hom_{\CAlg(\ProMod_R)}(A,S).\] 
By Theorem \ref{theorem: algebras in pro}, $A$ is an inverse limit of $R$-algebras $A_\alpha$ with surjective transition maps, so
\[ \SpInd(A) = \colim_\alpha \Spec(A_\alpha).\]

Moreover, $\SpInd(A)$ is ind-finite
if and only if $A\in\CAlg((\ProMod_R^f)^{\ML}).$
\begin{remark}
    If $A$ is an admissible topological ring, that is, $A$ is the completion of $(B,I)$ in $I$-adic topology where $B$ is a ring and $I \subseteq B$, then $\SpInd(A)$ agrees with $\mathrm{Spf}(A)$ in the sense of EGA.
\end{remark}
\begin{definition}\label{definition: coflat}
    An ind-affine $R$-ind-scheme $\SpInd A \to \Spec R$ is \emph{coflat} if $A$ is pro-projective as pro-$R$-module.
\end{definition}

\begin{remark}
    Since filtered colimits commute with finite limits in $\Sets$, the fiber product of ind-affine $R$-ind-schemes is computed by the tensor product of commutative algebras in $(\ProMod_R)^{\ML}$.
\end{remark}

\begin{remark}
    Recall that an ind-scheme is said to be $\aleph_0$ if it is equivalent to a colimit 
    of schemes under closed embeddings indexed by $\mathbb N$
    \cite[§5.2.1]{GR14_indschemes}.
    The proof of Theorem \ref{theorem: algebras in pro} shows that $\SpInd(A)\to \Spec R$ is $\aleph_0$ if and only if $A$ is $\aleph_0$ as a pro-$R$-module. If $A \in (\Pro\CAlg_R^f)^{\ML}$, then $A$ is $\aleph_0$ if and only if $A^\vee \in \Mod_R$ is a countably generated flat Mittag-Leffler $R$-module, thus projective by the theorem of Raynaud and Gruson. 
\end{remark}

\subsection{Pro-projective and pro-flat modules}

It is a well-known theorem in homological algebra that a finitely presented module is flat if and only if it is projective.
There is also a version of this theorem for pro-modules with an appropriate version of flatness for pro-modules.
This subsection will not be used in the sequel but may be of independent interest.
\begin{definition}
    A pro-module $N = \lim_\alpha N_\alpha\in \ProMod_R$
    is \emph{pro-flat}
    if for all $\alpha$ there exists $\beta \geq \alpha$ such that the induced natural transformation of functors
    \[ \Tor_1^R(N_\beta,-) \to \Tor_1^R(N_\alpha,-)\]
    is zero.

    An ind-affine ind-scheme $\SpInd(A) \to \Spec R$ is \emph{flat} if $A$ is a pro-flat pro-$R$-module.
\end{definition}

\begin{lemma}[\cite{Bou23}, Proposition 5.4.7]
    Suppose $N = \lim_\alpha N_\alpha$ is a pro-$R$-module where each $N_\alpha$ is finitely presented.
    Then $N$ is pro-flat if and only if $N$ is pro-projective.
\end{lemma}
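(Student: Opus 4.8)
The plan is to prove both directions by reducing to the structural characterizations of pro-projectivity and pro-flatness and then exploiting the finite presentation of the $N_\alpha$. The ``only if'' direction is immediate from definitions: if $N$ is pro-projective, then by Remark~\ref{remark: pro-projective conditions} for each $\alpha$ there exists $\beta \geq \alpha$ with $N_\beta \to N_\alpha$ factoring through a projective module $P$. Since any module map $N_\beta \to P \to N_\alpha$ induces on $\Tor_1^R(-,Q)$ a factorization through $\Tor_1^R(P,Q) = 0$, the transition map $\Tor_1^R(N_\beta,-) \to \Tor_1^R(N_\alpha,-)$ is zero, so $N$ is pro-flat.

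For the ``if'' direction, I would argue as follows. Fix $\alpha$. By pro-flatness, choose $\beta \geq \alpha$ so that $\Tor_1^R(N_\beta,-) \to \Tor_1^R(N_\alpha,-)$ vanishes. The goal is to produce $\gamma \geq \beta$ so that $N_\gamma \to N_\alpha$ factors through a projective (equivalently, by Remark~\ref{remark: pro-projective conditions}, so that $\Ext^1_R(N_\alpha,-) \to \Ext^1_R(N_\gamma,-)$ vanishes); it suffices to handle the single step and then one more composition. The key point is the following: if $f: N_\beta \to N_\alpha$ is a map of finitely presented modules which is ``$\Tor_1$-annihilating'' in the above sense, then $f$ factors through a flat — hence (being squeezed between finitely presented modules, after a further transition) through a finitely presented flat, i.e. projective — module. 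Concretely, write $N_\alpha$ as a quotient of a finite free module $F \to N_\alpha \to 0$ with finitely generated kernel $K$; the obstruction to lifting $f$ against $F \to N_\alpha$ lives in $\Ext^1_R(N_\beta, K)$, but the hypothesis only controls $\Tor_1$. The correct device is to use a projective resolution of $N_\alpha$ and the derived description: $f$ factors through a projective iff $f$ is annihilated under $\Ext^1_R(N_\alpha,-)$, and to convert the $\Tor$-vanishing into this $\Ext$-vanishing one presents $N_\beta$ by a finite free module and dualizes. This is exactly the mechanism behind Bou's Proposition~5.4.7, and I would cite \cite[Proposition 5.4.7]{Bou23} for the precise homological lemma: for finitely presented $M$, a map $M \to N$ is ``$\Tor_1$-zero'' (meaning it kills $\Tor_1^R(-,Q)$ for all $Q$) if and only if it factors through a finitely generated projective module.

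Granting that lemma, the proof concludes cleanly. Given $\alpha$, pick $\beta \geq \alpha$ with $\Tor_1^R(N_\beta,-) \to \Tor_1^R(N_\alpha,-)$ zero; since $N_\beta$ is finitely presented, the map $N_\beta \to N_\alpha$ is $\Tor_1$-zero, hence factors through a finitely generated projective module $P$. This exhibits, for every $\alpha$, a transition map $N_\beta \to N_\alpha$ factoring through a projective, which by Remark~\ref{remark: pro-projective conditions} is precisely the statement that $N$ is pro-projective.

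The main obstacle is the passage from the $\Tor_1$-vanishing hypothesis to an honest factorization through a projective module, i.e. the finitely presented homological lemma invoked above; everything else is a formal manipulation of pro-systems using Remark~\ref{remark: pro-projective conditions}. Since the excerpt allows us to cite \cite[Proposition 5.4.7]{Bou23} directly, in practice the write-up reduces to spelling out the two implications above and checking that the finite-presentation hypothesis on the $N_\alpha$ is what makes the cited statement applicable term by term.
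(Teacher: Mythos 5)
The paper does not actually prove this lemma: it is imported verbatim from \cite[Proposition 5.4.7]{Bou23}, so there is no in-paper argument to compare yours against. Your easy direction (pro-projective $\Rightarrow$ pro-flat) is correct and complete. Your hard direction is also structured correctly --- it reduces cleanly to the single-map statement that a morphism $f\colon M\to N$ with $M$ finitely presented which induces the zero map $\Tor_1^R(M,-)\to\Tor_1^R(N,-)$ must factor through a finitely generated projective, after which Remark~\ref{remark: pro-projective conditions} finishes the job (you do not even need the extra index $\gamma$; the $\beta$ furnished by pro-flatness already works). The problem is that you dispose of this key step by citing \cite[Proposition 5.4.7]{Bou23}, which is precisely the statement you are supposed to be proving, so as written the argument is circular; and the alternative mechanism you gesture at (``present $N_\beta$ by a finite free module and dualize'') does not obviously bridge the gap you yourself identify, namely that the lifting obstruction lives in $\Ext^1$ while the hypothesis controls $\Tor_1$.

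The missing idea is \emph{purity}. Choose a surjection $p\colon F\to N_\alpha$ from a free module with kernel $A$, and pull it back along $f\colon N_\beta\to N_\alpha$ to get $0\to A\to E\to N_\beta\to 0$. By naturality of the long exact sequence, the connecting map $\Tor_1(N_\beta,Q)\to A\otimes Q$ of the pulled-back sequence factors through $\Tor_1(f,Q)=0$, so $A\otimes Q\to E\otimes Q$ is injective for every $Q$; that is, the pulled-back sequence is pure exact. By the standard equivalence of the tensor and $\Hom$ characterizations of purity, $\Hom(P,E)\to\Hom(P,N_\beta)$ is then surjective for every finitely presented $P$; taking $P=N_\beta$ and lifting the identity splits the sequence, and the splitting composed with $E\to F$ lifts $f$ to $F$. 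Since $N_\beta$ is finitely generated, the lift lands in a finite free direct summand of $F$, giving the desired factorization through a finitely generated projective. Note that finite presentation of the \emph{source} $N_\beta$ is what drives this step, while the target can be arbitrary. With this substituted for the circular citation, your proof is complete.
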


When $R$ is Noetherian, there is no difference between finitely generated and finitely presented, yielding:

\begin{corollary}\label{cor: noetherian pro-flat vs pro-projective}
    If $R$ is Noetherian, then $N \in \ProMod_R^{f}$ is pro-flat if and only if $N$ is pro-projective.
\end{corollary}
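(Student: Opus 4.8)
The plan is to reduce the statement directly to the preceding Lemma (\cite{Bou23}, Proposition 5.4.7), the only nontrivial input being the standard fact that over a Noetherian ring finite generation and finite presentation coincide.

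First I would unwind the definition of $\ProMod_R^f$: an object $N$ lies in $\ProMod_R^f$ precisely when it admits a presentation $N \simeq \lim_\alpha N_\alpha$ as a cofiltered formal limit in which every $N_\alpha$ is a finitely generated $R$-module. Since $R$ is Noetherian, each such $N_\alpha$ is in fact finitely presented. Thus $N$ satisfies the hypothesis of the Lemma: it is the cofiltered limit of a system of finitely presented $R$-modules. Applying the Lemma to this presentation, $N$ is pro-flat if and only if $N$ is pro-projective, which is exactly the assertion of the corollary.

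The only point requiring a word of justification is that ``pro-flat'' and ``pro-projective'' are intrinsic to the pro-object $N$ and independent of the chosen presentation, so that the conclusion of the Lemma (phrased for one presentation) legitimately transports to the notions used in the corollary. For pro-projectivity this is recorded in Remark \ref{remark: pro-projective conditions}; for pro-flatness the same cofinality argument applies verbatim, since the defining condition is the vanishing of an induced map of functors $\Tor_1^R(N_\beta,-)\to\Tor_1^R(N_\alpha,-)$, which is stable under passage between cofinally equivalent index systems.

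I do not expect a genuine obstacle here: the content is entirely bookkeeping, with all homological substance already packaged in the cited Lemma. The corollary should be one or two lines once the reduction ``f.g.\ $\Rightarrow$ f.p.\ over Noetherian $R$'' is invoked.
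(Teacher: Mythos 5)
Your proposal is correct and matches the paper's argument exactly: the paper derives the corollary from the cited Lemma in one line by noting that over a Noetherian ring finitely generated modules are finitely presented. Your additional remark on presentation-independence of pro-flatness and pro-projectivity is sound bookkeeping that the paper leaves implicit.
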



\section{0-duality}

In this section, we relate Cartier duality (for sheaves of groups) to linear duality (between modules and pro-modules). The main result is Theorem~\ref{maintheorem: cartier} from the introduction, proved in Corollary \ref{corollary: cartier duality is an antiequivalence}.

\subsection{Duality of Hopf algebras and Cartier duality}
Let $\Shv$ be the category of sheaves of sets on the big fppf site of $\Spec R$. We identify the category of $R$-indschemes with a full subcategory of $\Shv$. 

Denote by $\Shvab$ the category of abelian group objects in $\Shv$. Enter the hero of the story:

\begin{definition}
    The \emph{Cartier dual} of $G\in\Shvab$ is 
    \[G^\vee = \Homs(G,\mathbb G_m)\in\Shvab.\]
\end{definition}

It is classical that Cartier duality induces an auto-equivalence of the category of finite locally free group schemes over any base \cite[Exposé VIIA, §3.3]{SGA3I}.
It is also classical that if $k$ is a field, then Cartier duality induces an equivalence of categories between affine commutative $k$-group schemes and commutative group ind-finite ind-schemes \cite{Car62, Die73}.
In both cases, the duality is implemented by linear duality of the underlying Hopf algebra. In the first case, this is ordinary linear duality of finite projective modules; in the second, this is linear duality between vector spaces and linearly compact topological vector spaces.

Proposition \ref{prop:linear duality monoidal} states that linear duality between flat Mittag-Leffler modules and pro-projective Mittag-Leffler pro-modules is symmetric monoidal.
Thus, the linear dual of a Hopf algebra is again a Hopf algebra.
\begin{corollary} 
\label{corollary: duality for Hopf algebras}
The symmetric monoidal antiequivalence of categories
\[\Mod_R^{fl, \ML}\simeq^{op}(\ProMod^{f}_R)^{pproj,\ML}\]
induces antiequivalence between categories of Hopf algebras in the two categories. The antiequivalence sends commutative Hopf algebras to cocommutative Hopf algebras and vice versa. \qed
\end{corollary}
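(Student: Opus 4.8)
The plan is to deduce this corollary formally from Proposition~\ref{prop:linear duality monoidal} together with standard facts about algebraic structures in symmetric monoidal categories. A Hopf algebra in a symmetric monoidal category $\mathcal C$ is by definition the data of a bialgebra (an algebra object that is simultaneously a coalgebra object, with compatible structure maps) together with an antipode; all of this structure is expressed purely in terms of the tensor product, the associators, symmetries, and the unit object. Since Proposition~\ref{prop:linear duality monoidal} provides a symmetric monoidal equivalence
\[
(\Mod_R^{fl,\ML})^{op}\simeq(\ProMod^f_R)^{pproj,\ML},
\]
any symmetric monoidal functor carries bialgebras to bialgebras and Hopf algebras to Hopf algebras, and an equivalence does so bijectively on isomorphism classes and equivalences of categories. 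The only subtlety is the word ``op'': a symmetric monoidal equivalence $F:\mathcal C^{op}\to\mathcal D$ is the same as a symmetric monoidal functor $\mathcal C\to\mathcal D^{op}$, and passing to the opposite category interchanges multiplication with comultiplication and unit with counit. Hence an algebra object in $\mathcal C$ is a coalgebra object in $\mathcal C^{op}$; a bialgebra, being self-dual data under this swap, goes to a bialgebra; and the antipode axiom is also stable under reversing all arrows, so Hopf algebras go to Hopf algebras.

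Concretely, I would proceed in three short steps. First, recall that for a symmetric monoidal category $\mathcal C$ the category of Hopf algebras $\mathrm{Hopf}(\mathcal C)$ depends functorially on the symmetric monoidal structure, and that $\mathrm{Hopf}(\mathcal C^{op})$ is obtained from $\mathrm{Hopf}(\mathcal C)$ by interchanging the algebra and coalgebra structures and reversing the antipode; in particular a commutative Hopf algebra in $\mathcal C$ becomes a cocommutative Hopf algebra in $\mathcal C^{op}$ and vice versa. Second, apply the symmetric monoidal equivalence of Proposition~\ref{prop:linear duality monoidal} to transport $\mathrm{Hopf}((\Mod_R^{fl,\ML})^{op})$ to $\mathrm{Hopf}((\ProMod^f_R)^{pproj,\ML})$, obtaining an equivalence of categories. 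Third, combine the two identifications: $\mathrm{Hopf}((\Mod_R^{fl,\ML})^{op})$ is (with algebra and coalgebra structures swapped) the category of Hopf algebras in $\Mod_R^{fl,\ML}$, and under the swap commutative goes to cocommutative; composing yields the asserted antiequivalence sending commutative Hopf algebras to cocommutative ones. One should also note that the underlying object of a Hopf algebra in $(\ProMod^f_R)^{pproj,\ML}$ is automatically a pro-finite pro-projective Mittag-Leffler pro-module, and dually, so the categories are exactly the ones named.

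The main thing to be careful about — rather than a genuine obstacle — is bookkeeping around the ``op'': one must check that the symmetric monoidal structure of Proposition~\ref{prop:linear duality monoidal}, which is phrased as a structure on the duality functor between $(\Mod_R^{fl})^{op}$ and $(\ProMod^f_R)^{pproj}$, restricts to the Mittag-Leffler subcategories (it does, since Proposition~\ref{prop:linear duality}(3) shows linear duality preserves the Mittag-Leffler condition and the tensor product of Mittag-Leffler pro-modules is again Mittag-Leffler), and that all the structural morphisms defining a Hopf algebra — in particular the antipode, which uses only the monoidal and symmetric structure — are transported correctly. Since no hard analysis is involved, the proof is genuinely short, and the statement may be left with a \qed as in the excerpt; if one wishes to spell it out, the content is entirely the formal discussion above.
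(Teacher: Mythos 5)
Your proposal is correct and is essentially the paper's own argument: the paper proves this corollary by the one-line observation that the duality of Proposition~\ref{prop:linear duality monoidal} is symmetric monoidal, so it transports Hopf algebras to Hopf algebras, with the passage to the opposite category exchanging multiplication with comultiplication and hence commutativity with cocommutativity. Your additional care about restricting to the Mittag-Leffler subcategories (via Proposition~\ref{prop:linear duality}(3) and closure of the Mittag-Leffler condition under tensor product) is a point the paper leaves implicit, but it is the same route.
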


Thus, if $A$ is a commutative cocommutative Hopf algebra in $\Mod_R^{fl,\ML}$, $A^\vee$ is a commutative cocommutative Hopf algebra in $\ProMod_R^{f,pp,\ML}$, and thus $\SpInd(A^\vee)$ is a commutative group ind-finite ind-scheme.

\begin{definition}
    An affine $R$-scheme $\Spec A\to\Spec R$ is \emph{flat Mittag-Leffler} if $A$ is a flat Mittag-Leffler $R$-module\footnote{Since the property of a module being flat Mittag-Leffler is preserved under arbitrary base change and is local in the fpqc topology, the notion naturally extends to affine morphisms of fpqc stacks.}. 
\end{definition}

\begin{remark}
In their seminal paper \cite{RG71} on Mittag-Leffler modules, Raynaud and Gruson gave criteria for a morphism of rings $R \to R'$ to be flat Mittag-Leffler. Here is one sufficient condition:
if $R\to R'$ be a smooth morphism of rings with geometrically integral fibers, then $R'$ is a projective $R$-module.\cite[p. 19 (3.3.1)]{RG71} 
\end{remark}

If $G =\Spec A \to \Spec R$ is a flat Mittag-Leffler affine commutative group scheme, then $A$ is a flat Mittag-Leffler Hopf algebra over $R$, and Corollary \ref{corollary: duality for Hopf algebras} applies. The following theorem shows that the dual Hopf algebra $A^\vee$ is naturally the pro-coordinate ring of the Cartier dual of $G$.

\begin{theorem}\label{theorem: perfect pairing for flat ML group}
    Let $G = \Spec A \to \Spec R$ be a flat Mittag-Leffler commutative group scheme over $\Spec R$. 
    Then there is a perfect bilinear pairing $G \times \SpInd(A^\vee) \to \mathbb G_m$; that is, a pairing that induces isomorphisms of Cartier duals $ \SpInd(A^\vee)\cong G^\vee$ and $G\cong \SpInd(A^\vee)^\vee$ in $\Shvab$.
\end{theorem}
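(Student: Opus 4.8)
The plan is to exhibit the pairing explicitly as the evaluation map and then reduce the perfectness to the linear duality results of §\ref{section: linear duality}. Write $A = \colim_\alpha F_\alpha$ as a filtered colimit of finite free $R$-modules, so that $A^\vee = \lim_\alpha F_\alpha^\vee$ by Proposition~\ref{prop:linear duality}(1), and $\SpInd(A^\vee) = \colim_\beta \Spec(B_\beta)$ where $A^\vee = \lim_\beta B_\beta$ is the presentation as a surjective limit of finitely generated (in fact finite) commutative $R$-algebras furnished by Theorem~\ref{theorem: algebras in pro}. The unit of linear duality $\unit_A\colon R \to A^\vee \otimes A$ (Remark~\ref{remark: existence of unit for duality}), viewed through its $R$-algebra structure, is the universal point and gives by adjunction a canonical bilinear pairing
\[
    P\colon G \times \SpInd(A^\vee) \longrightarrow \Gm,
\]
classified by the element $\unit_A \in ev(A^\vee \otimes A) = \Hom_R(A, A)$ corresponding to $\mathrm{id}_A$: concretely, on an $R$-algebra $S$, a point $g \in G(S) = \Hom_{\CAlg_R}(A,S)$ and a point $\chi \in \SpInd(A^\vee)(S) = \Hom_{\CAlg(\ProMod_R)}(A^\vee, S)$ are sent to $(\chi \otimes g)(\unit_A) \in S^\times$, using that $\unit_A$ is grouplike for the Hopf structures so that the pairing lands in units and is bilinear.

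Next I would check that $P$ is the Hopf-algebra pairing in disguise, i.e. that the induced map $\SpInd(A^\vee) \to G^\vee = \Homs(G,\Gm)$ is an isomorphism of fppf sheaves of abelian groups. Unwinding, for an $R$-algebra $S$ one has $G^\vee(S) = \Hom_{\mathrm{Hopf}_S}(S[x,x^{-1}], A \otimes_R S)$, i.e. grouplike elements of $A \otimes_R S$; on the other side $\SpInd(A^\vee)(S) = \Hom_{\CAlg(\ProMod_R)}(A^\vee, S) = \Hom_{\CAlg(\ProMod_S)}(A^\vee \otimes_R S, S)$, and since $A^\vee \otimes_R S = (A\otimes_R S)^\vee$ (linear duality commutes with base change — flatness and the Mittag-Leffler condition are preserved), this is the set of $S$-algebra maps $(A \otimes_R S)^\vee \to S$, which by Proposition~\ref{prop:unit} is $ev((A\otimes_R S)^{\vee\vee}) \supseteq$ the grouplike elements cut out by compatibility with comultiplication. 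The key point is that an $R$-module map $A^\vee \to S$ that respects the algebra structure corresponds exactly to a grouplike element of $A \otimes_R S$, because the multiplication on $A^\vee$ is dual to the comultiplication on $A$; this is a diagram chase using the unit $\unit_A$ and the naturality in Proposition~\ref{prop:unit}, carried out at the finite level $F_\alpha$ / $F_\alpha^\vee$ where everything is honest finite linear duality and then passed to the (co)limit. Sheafification causes no trouble since both functors are already sheaves (fiber products of ind-affine ind-schemes are computed on algebras, and $G^\vee$ is a sheaf by construction).

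The reverse direction, that the induced map $G \to \SpInd(A^\vee)^\vee = \Homs(\SpInd(A^\vee), \Gm)$ is an isomorphism, is symmetric in form but a little more delicate because $\SpInd(A^\vee)$ is an ind-scheme: an $S$-point of the Cartier dual is a homomorphism $\SpInd(A^\vee)_S \to \Gm$, i.e. compatibly a grouplike element of $B_\beta^\vee \otimes_R S$ for each $\beta$, equivalently a grouplike element of $\colim_\beta (B_\beta^\vee \otimes_R S)$. Now $\colim_\beta B_\beta^\vee = A^{\vee\vee} = A$ by biduality for flat Mittag-Leffler modules (Corollary~\ref{cor: linear duality equivalence}, which gives $A^{\vee\vee} \cong A$ compatibly with all structure), so the grouplike elements of $A \otimes_R S$ that arise this way are exactly $G(S)$ — here one uses that $A$ is flat Mittag-Leffler to invoke biduality, and that the Hopf structure on $A$ is recovered from that on $A^\vee$. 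I expect the main obstacle to be precisely this biduality bookkeeping in the ind-scheme direction: one must verify that the filtered system $\{B_\beta\}$ from Theorem~\ref{theorem: algebras in pro} has $\colim B_\beta^\vee \cong A$ as Hopf algebras (not merely as modules) and that grouplikeness is detected compatibly along the system, which requires knowing that the comodule/comultiplication data is eventually supported at finite stages. Everything else is a formal consequence of the symmetric monoidal linear duality of Proposition~\ref{prop:linear duality monoidal} and Corollary~\ref{corollary: duality for Hopf algebras}, reduced to the classical finite case stage by stage.
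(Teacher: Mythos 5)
Your construction of the pairing via the unit $\unit_A$, and your treatment of the first direction ($\SpInd(A^\vee)\cong G^\vee$ via $\Hom_{\CAlg(\ProMod_R)}(A^\vee,S)\cong\{\text{grouplike elements of }A\otimes_R S\}$), follow the paper's proof. The second direction, however, contains a genuine error. A homomorphism $\SpInd(A^\vee)_S\to\Gm$ is a compatible system of units $u_\beta\in(B_\beta\otimes_R S)^\times$, i.e.\ a grouplike element of the \emph{limit} $ev(A^\vee\otimes_R S)=\lim_\beta(B_\beta\otimes_R S)$ --- the completed ring of functions on the ind-scheme --- not of the \emph{colimit} $\colim_\beta(B_\beta^\vee\otimes_R S)=A\otimes_R S$ as you claim. (The closed subschemes $\Spec B_\beta$ are not subgroups, so there is no stage-by-stage identification of characters with grouplike elements of $B_\beta^\vee\otimes_R S$.) Grouplike elements of $A\otimes_R S$ are precisely $G^\vee(S)=\SpInd(A^\vee)(S)$, so your identification would "prove" that $\SpInd(A^\vee)$ is its own Cartier dual. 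Concretely, for $G=\Ga$ over a field of characteristic zero one has $\SpInd(A^\vee)=\hat{\Ga}$; a character $\hat{\Ga}_S\to\Gm$ is $e^{ax}\in S[[x]]=\lim_n S[x]/x^n$ with $a\in S$ arbitrary, recovering $\Ga(S)$, whereas the grouplike elements of $S[y]=A\otimes_R S$ are $e^{ay}$ with $a$ nilpotent, giving only $\hat{\Ga}(S)$.

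The repair is exactly the point your closing "biduality bookkeeping" worry gestures at but misdiagnoses: the delicate issue is not whether $\colim_\beta B_\beta^\vee\cong A$ as Hopf algebras (it does, by involutivity of the symmetric monoidal duality), but the pro-ind versus ind-pro asymmetry of Remark \ref{remark: existence of unit for duality}. The paper handles this direction with Proposition \ref{prop:unit}: composition with $\unit_A$ gives $\Hom_R(A,S)\cong ev(A^\vee\otimes_R S)$, and the identities \eqref{eq: hopf algebra dualization and the unit} show that $f\mapsto(1\otimes f)\unit_A$ carries algebra homomorphisms $A\to S$ bijectively onto grouplike elements of $ev(A^\vee\otimes_R S)$, yielding $G(S)\cong\Homs(\SpInd(A^\vee),\Gm)(S)$. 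With that substitution your argument becomes the paper's.
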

\begin{proof}
Let $G = \Spec A$ be a flat Mittag-Leffler commutative group $R$-scheme, so that $A$ is a flat Mittag-Leffler Hopf algebra.
By Theorem~\ref{theorem: algebras in pro}, a morphism
\[\Spec(A)\times\SpInd A^\vee\to\Gm\]
is the same as an element $g\in ev(A\otimes A^\vee)^\times$.


Consider $g=\unit_A\in ev(A\otimes A^\vee)$. Linear duality implies that $\unit_A$ is grouplike with respect to each tensor factor. That is, if $\nabla: A \otimes A \to A$ is the product, $\eta: R \to A$ is the unit, $\Delta: A \to A \otimes A$ is the coproduct, and $\epsilon: A \to R$ is the counit of the Hopf algebra $A$, then  
\begin{equation}\label{eq: hopf algebra dualization and the unit}
    \begin{aligned}
        (1 \otimes \Delta) \circ \unit_{A} &= ( \Delta^\vee\otimes 1) \circ \unit_{A \otimes A} \\
    (1 \otimes \eta) \circ \unit_R &= (\eta^\vee \otimes 1) \circ \unit_{A} \\
    (1 \otimes \nabla) \circ \unit_{A \otimes A} &= (\nabla^\vee \otimes 1) \circ \unit_{A} \\
    (1 \otimes \epsilon )\circ \unit_{A} &= (\epsilon^\vee \otimes 1) \circ \unit_R.
    \end{aligned}
\end{equation}
It follows that $\unit_A$ is invertible, thus giving a bilinear pairing 
\begin{equation}\label{eq:pairing}G\times\SpInd(A^\vee)\to\Gm.
\end{equation}

Let us show that the pairing \eqref{eq:pairing} is perfect. Let $B$ be an $R$-algebra.
By Remark \ref{remark: existence of unit for duality}, 
composing with $\unit_M$ induces the isomorphism $\Hom_R(M,-) \to ev(M^\vee \otimes -)$.
The equations \eqref{eq: hopf algebra dualization and the unit} further imply that $f \mapsto (1 \otimes f)\unit_{A}$
is a bijection
\begin{align*} 
(\Spec A)(B) &= \{f: A \to B \text{ algebra homomorphism}\} \\
&\cong \{g \in ev(A^\vee \otimes B) \text{ grouplike}\} \\
&= \Homs(\SpInd(A^\vee),\mathbb G_m)(B),
\end{align*}
where for the last equality we have again used Theorem~\ref{theorem: algebras in pro}.
Similary, by Remark \ref{remark: using the unit the wrong way}, 
composing with $\unit_M$ induces the isomorphism $\Hom_{\ProMod_R}(M^\vee,-) \to M \otimes -$ of functors on $\Mod_R$,
and $f \mapsto (f \otimes 1)\unit_{A}$ is a bijection
\begin{align*} 
    (\SpInd A^\vee)(B) &= 
    \{f : A^\vee \to B \text{ algebra homomorphism} \} \\
    &\cong \{g \in A \otimes B \text{ grouplike}\} \\
    &= \Homs(\Spec A,\Gm)(B).
\end{align*}
Thus, the pairing \eqref{eq:pairing} induces isomorphisms $(\Spec A)^\vee \cong \SpInd A^\vee$ and $\Spec A \cong (\SpInd A^\vee)^\vee$, as desired.
\end{proof}

\begin{corollary}\label{corollary: cartier duality is an antiequivalence}
    Cartier duality induces an involutive antiequivalence 
    between flat Mittag-Leffler affine commutative group schemes
    and coflat commutative group ind-finite ind-schemes.
\end{corollary}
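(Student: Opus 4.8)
The plan is to assemble Corollary~\ref{corollary: cartier duality is an antiequivalence} by combining the duality for Hopf algebras (Corollary~\ref{corollary: duality for Hopf algebras}) with the ind-scheme dictionary of §\ref{subsection: proalgebras} and the identification of $\SpInd(A^\vee)$ with the Cartier dual provided by Theorem~\ref{theorem: perfect pairing for flat ML group}. First I would record the translation: a flat Mittag-Leffler affine commutative group scheme over $R$ is the same as a commutative cocommutative Hopf algebra $A$ in $\Mod_R^{fl,\ML}$; a coflat commutative group ind-finite ind-scheme is, via the antiequivalence $\SpInd$ and Definition~\ref{definition: coflat}, the same as a commutative cocommutative Hopf algebra in $(\ProMod_R^f)^{pproj,\ML}$ (here one uses Proposition~\ref{prop:ind-schemes}, Corollary~\ref{corollary: ML proalgebras}, and the remark that fiber products of ind-affine ind-schemes correspond to tensor products in $(\ProMod_R)^{\ML}$, so that "group object" on one side matches "Hopf algebra object" on the other). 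With these two identifications in hand, Corollary~\ref{corollary: duality for Hopf algebras} directly supplies a contravariant equivalence $A \mapsto A^\vee$ between the two categories.

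Next I would check that this Hopf-algebra-level equivalence is the functor $G \mapsto G^\vee$ of Cartier duality. This is precisely the content of Theorem~\ref{theorem: perfect pairing for flat ML group}: for $G = \Spec A$ flat Mittag-Leffler, the canonical unit $\unit_A$ gives a perfect pairing $G \times \SpInd(A^\vee) \to \Gm$, hence a natural isomorphism $\SpInd(A^\vee) \cong G^\vee = \Homs(G,\Gm)$ in $\Shvab$. The naturality in $A$ follows because $\unit_A$ is natural in $A$ (Proposition~\ref{prop:unit} and Remark~\ref{remark: using the unit the wrong way}), so the isomorphism $\SpInd((-)^\vee) \cong (-)^\vee$ is an isomorphism of functors; in particular $G \mapsto G^\vee$ lands in coflat commutative group ind-finite ind-schemes and agrees with $G \mapsto \SpInd(A^\vee)$.

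Involutivity then comes from the second half of Theorem~\ref{theorem: perfect pairing for flat ML group}: the same pairing induces $G \cong \SpInd(A^\vee)^\vee$, and since linear duality on $(\Mod_R^{fl})^{op} \simeq (\ProMod_R^f)^{pproj}$ is involutive (Corollary~\ref{cor: linear duality equivalence}, with the double-dual isomorphism canonical), applying Cartier duality twice recovers $G$ up to canonical natural isomorphism. Putting this together: $G \mapsto G^\vee$ is an essentially surjective (every coflat commutative group ind-finite ind-scheme is $\SpInd(A^\vee)$ for $A = (A^\vee)^\vee$), fully faithful (Hom-sets match under the Hopf-algebra antiequivalence), contravariant functor which squares to the identity.

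The only genuine subtlety — the step I would treat most carefully — is verifying that "group object" is transported correctly under the antiequivalences on \emph{both} sides, i.e. that a commutative group ind-finite ind-scheme structure on $\SpInd(B)$ is equivalent to a commutative cocommutative Hopf algebra structure on $B \in (\ProMod_R^f)^{pproj,\ML}$. This requires that $\SpInd$ (equivalently, $B \mapsto \SpInd(B)$) carry the symmetric monoidal structure of $(\ProMod_R)^{\ML}$ to finite products of ind-affine ind-schemes, which is the remark following Proposition~\ref{prop:ind-schemes} combined with commutation of filtered colimits with finite limits in $\Sets$; one must also note that the coflatness condition (pro-projectivity of $B$) is exactly what Corollary~\ref{corollary: duality for Hopf algebras} outputs from a flat Mittag-Leffler $A$, and conversely. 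Everything else is a formal consequence of the equivalences already established.
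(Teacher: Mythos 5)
Your proposal is correct and follows exactly the route the paper intends: the corollary is stated without a separate proof precisely because it is the combination of Corollary~\ref{corollary: duality for Hopf algebras} (linear duality of Hopf algebras), the $\SpInd$ dictionary of \S\ref{subsection: proalgebras}, and Theorem~\ref{theorem: perfect pairing for flat ML group} identifying $\SpInd(A^\vee)$ with $\Homs(G,\Gm)$ and giving involutivity. Your extra care about transporting the group-object structure through the monoidal antiequivalence is a reasonable point to flag, but it is exactly the content of the remark on fiber products following Definition~\ref{definition: coflat}, so nothing is missing.
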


\begin{remark}\label{remark: derived Cartier duality?}
    In this paper, we have formulated Cartier duality only for classical commutative group schemes.
    The theory of linear duality for flat modules also makes sense in the derived context, as Lazard's theorem holds for connective modules over connective $\Einfty$-rings \cite[Theorem 7.2.2.15]{lurieha}.
    For a connective $\Einfty$-ring $R$, say that $N \in \Mod_R^{\leq 0}$ is \emph{flat Mittag-Leffler} if $N$ is flat as an $R$-module and $\pi_0(N)$ is Mittag-Leffler as a $\pi_0(R)$-module.
    Suppose now $R$ is eventually coconnective.
    Proposition \ref{prop: derived ind-scheme} below yields an antiequivalence between flat Mittag-Leffler commutative group schemes over $R$ and coflat ind-finite commutative group ind-schemes over $R$.
    Here ``commutative group'' means $\Einfty$-grouplike object, as in Lurie's theory of spectral abelian varieties \cite{Lur17}.

    This is not entirely satisfactory if one wants to work with simplicial commutative rings instead of connective $\Einfty$-rings. The difference is only felt away from characteristic zero. In the context of simplicial commutative rings, an $\Einfty$-grouplike object may not be the ``correct'' notion of derived group.
    A better notion may be sheaves with multiplicative polynomial transfers, as in work in progress by Kubrak, Mathew, Raksit, and Zavyalov. It would be interesting to know if Corollary~\ref{corollary: cartier duality is an antiequivalence} holds in this setting.
\end{remark}

\begin{example}
    Without the Mittag-Leffler condition, the Cartier dual of a flat commutative group scheme need not be representable by an ind-scheme.

    Let $R=\C[t]$ and let $A$ be the $R$-algebra $\Sym_{R}(\C[t,t^{-1}]\cdot u)= \C[t] \oplus u\C[t,t^{-1}][u].$
    Then $A$ is a commutative cocommutative Hopf algebra with coproduct defined by $\Delta u = u \otimes 1 + 1 \otimes u$ and counit defined by $\epsilon(u) = 0$. Thus $G=\Spec(A)$ is a flat affine commutative group scheme over $R$. The functor of points of $G$ sends an $R$-algebra $B$ to its colocalization  
    \[G(B)=\lim(\dots B \to B \to B),\]
    where all the maps are multiplication by $t$. 
    Let $G^\vee = \Homs(G,\mathbb G_m)$ be the Cartier dual of $G$. Then
    \[G^\vee(B)=\{g\in B\otimes_{\C[t]}A \mid g\text{ is grouplike}\}\cong\{x\in B\otimes_{\C[t]}\C[t,t^{-1}] \mid x \text{ is nilpotent}\},\]
    where the last isomorphism is given by $g=\exp(ux)$.
    
    We show that $G^\vee$ is not an ind-scheme. Indeed, suppose that $X = \colim_\alpha X_\alpha$ is an ind-scheme over $\Spec R$. Let $x \in X(B)$ be a $B$-point of $X$. The tangent space to $X$ at $x$ is the functor $T_{X,x}: \Mod_B \to \Sets$, 
    \[ T_{X,x}(M) = X(B\oplus M) \times_{X(B)} \{x\}.\]
    If $X_\alpha$ is a scheme, then $T_{X_\alpha,x} = \Hom_B(x^*\Omega_{X_\alpha/R},-)$,
    and if $X_\alpha \to X_\beta$ is a closed immersion,
    then the map $x^*\Omega_{X_\beta/R} \to x^*\Omega_{X_\alpha/R}$ is surjective.
    Thus, if $X = \colim_\alpha X_\alpha$ is an ind-scheme,
    then $T_{X,x}$ is a functor of the form $h^N$ for $N \in \ProMod_B^{\ML}$.\footnote{This is a slight generalization of Beilinson and Drinfeld's observation that the tangent sheaf to a formally smooth ind-scheme of finite type over a field is flat Mittag-Leffler \cite[Proposition 7.12.12]{BD91_Hitchin}.}   
    
    The tangent space of $G^\vee$ at the identity section is the functor $t_{\C[t,t^{-1}]}: M \mapsto M \otimes_{\C[t]} \mathbb{C}[t,t^{-1}]$.
    If $G^\vee$ were an ind-scheme,
    then the functor $t_{\C[t,t^{-1}]}$ would be isomorphic to $h^N$ for a Mittag-Leffler pro-module $N$.
    If so, Theorem \ref{thm: intersection of the categories} implies $\C[t,t^{-1}]$ is a flat Mittag-Leffler $\C[t]$-module.
    But $\C[t,t^{-1}]$ is not Mittag-Leffler,
    as $\C[t,t^{-1}] = \cup_i t^{-i}\C[t]$ and the dual inverse system $\{t^i\C[t]\}_{i \geq 0}$ does not stabilize.
    Thus $G^\vee$ is not an ind-scheme.
\end{example}

\subsection{Extensions of flat Mittag-Leffler group schemes}

The following lemma is a transitivity property for Mittag-Leffler morphisms.

\begin{lemma}[\cite{stacks-project}, Lemma 10.89.11]\label{lemma: mittag-leffler and restriction}
    Let $R'$ be a Mittag-Leffler $R$-algebra and $M$ be a flat Mittag-Leffler $R'$-module.
    Then $M$ is a Mittag-Leffler $R$-module.
\end{lemma}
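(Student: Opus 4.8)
The plan is to reduce the statement to the characterization of Mittag-Leffler modules in terms of comparison maps to products: an $R$-module $N$ is Mittag-Leffler if and only if for every family $(Q_\lambda)_{\lambda\in\Lambda}$ of $R$-modules the canonical map $N\otimes_R\prod_\lambda Q_\lambda\to\prod_\lambda(N\otimes_R Q_\lambda)$ is injective (\cite{RG71}; see also \cite{stacks-project}). Accordingly, fix a family $(Q_\lambda)_{\lambda\in\Lambda}$ of $R$-modules; I must show that the canonical map $c\colon M\otimes_R\prod_\lambda Q_\lambda\to\prod_\lambda(M\otimes_R Q_\lambda)$ is injective.

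First I would route the computation through $R'$ using the natural base-change identity $M\otimes_R Q\cong M\otimes_{R'}(R'\otimes_R Q)$. This exhibits $c$ as a composite of two maps. The first is obtained by applying the functor $M\otimes_{R'}(-)$ to the canonical map $c'\colon R'\otimes_R\prod_\lambda Q_\lambda\to\prod_\lambda(R'\otimes_R Q_\lambda)$ associated to the $R$-module $R'$: since $R'$ is Mittag-Leffler over $R$, $c'$ is injective, and since $M$ is flat over $R'$, the exact functor $M\otimes_{R'}(-)$ preserves this injectivity, giving an injection $M\otimes_R\prod_\lambda Q_\lambda=M\otimes_{R'}(R'\otimes_R\prod_\lambda Q_\lambda)\hookrightarrow M\otimes_{R'}\prod_\lambda(R'\otimes_R Q_\lambda)$. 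The second map is the canonical comparison map for the $R'$-module $M$ applied to the family of $R'$-modules $(R'\otimes_R Q_\lambda)_\lambda$, namely $M\otimes_{R'}\prod_\lambda(R'\otimes_R Q_\lambda)\to\prod_\lambda\bigl(M\otimes_{R'}(R'\otimes_R Q_\lambda)\bigr)=\prod_\lambda(M\otimes_R Q_\lambda)$, and it is injective because $M$ is Mittag-Leffler over $R'$. It then remains to check that the composite of these two injections is exactly $c$; this is a one-line diagram chase on elementary tensors, since both send $m\otimes(q_\lambda)_\lambda$ to $(m\otimes q_\lambda)_\lambda$. Once this is verified, $c$ is injective, so $M$ is Mittag-Leffler over $R$.

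I do not expect a genuinely hard step here; the main obstacle is purely bookkeeping — keeping straight which ring each tensor product and each product is formed over, and recognizing that flatness is needed \emph{over $R'$} (not over $R$) precisely in order to convert the injectivity supplied by ``$R'$ Mittag-Leffler over $R$'' into an input to which ``$M$ Mittag-Leffler over $R'$'' applies. If one prefers to avoid the general product characterization, the same three-step argument runs verbatim with $Q_\lambda=R$ for all $\lambda$, using instead the characterization that $N$ is Mittag-Leffler over $R$ iff $N\otimes_R\prod_\Lambda R\to\prod_\Lambda N$ is injective for every index set $\Lambda$; one should only note that $R'\otimes_R\prod_\Lambda R$ is not $\prod_\Lambda R'$ but merely maps into it, injectively, by the Mittag-Leffler hypothesis on $R'$.
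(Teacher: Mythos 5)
Your proof is correct, and since the paper offers no argument of its own for this lemma (it simply cites \cite[Lemma 10.89.11]{stacks-project}), the right comparison is with that source: your three-step factorization of the comparison map $M\otimes_R\prod_\lambda Q_\lambda\to\prod_\lambda(M\otimes_R Q_\lambda)$ through $M\otimes_{R'}\prod_\lambda(R'\otimes_R Q_\lambda)$, using the product-injectivity characterization of Mittag-Leffler modules, is exactly the proof given there. The hypotheses are used precisely where you say they are (Mittag-Leffler for $R'/R$, flatness of $M$ over $R'$ to preserve the resulting injection, Mittag-Leffler for $M$ over $R'$), so nothing further is needed.
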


In particular, the composition of two flat Mittag-Leffler morphisms of rings is again flat Mittag-Leffler.
From this transitivity and Raynaud and Gruson's descent theorem, we obtain that flat Mittag-Leffler commutative group schemes are closed under extensions in fppf sheaves:

\begin{lemma}\label{lemma: extension of ML groups}
    Let $G$ and $H$ be affine commutative group schemes over $\Spec R$
    Let 
    \[ 0 \to H \to E \to G \to 0\]
    be an extension of fppf sheaves of abelian groups on $\Spec R$.
    \begin{enumerate}
        \item $E$ is representable by an affine group scheme over $\Spec R$.
        \item Suppose $H\to \Spec R$ and $G \to \Spec R$ are both flat Mittag-Leffler. Then $E \to \Spec R$ is also flat Mittag-Leffler.
    \end{enumerate}
\end{lemma}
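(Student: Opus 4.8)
The plan is to extract everything from the torsor structure of $E \to G$ together with two descent statements. Write $G = \Spec A$, so that $A$ is a flat Mittag-Leffler $R$-algebra, and similarly write $H = \Spec \OO(H)$ with $\OO(H)$ flat Mittag-Leffler over $R$.

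\emph{Part (1).} The extension exhibits $E \to G$ as a torsor under $H_G := H \times_{\Spec R} G$ in the fppf topology. Since $G$ is affine, hence quasi-compact, the identity section of $G$ lifts to $E$ after base change along a single faithfully flat affine morphism $G' = \Spec A' \to G$; such a lift is a section of $E \times_G G' \to G'$ and therefore trivializes the torsor, giving an isomorphism of $G'$-schemes
\[ E \times_G G' \;\cong\; H_{G'} \;=\; H \times_{\Spec R} G'. \]
The right-hand side is affine over $G'$, being the base change of the affine morphism $H \to \Spec R$. Since the class of affine morphisms is fppf-local on the target, $E \to G$ is representable by an affine morphism of schemes; composing with the affine $G \to \Spec R$, we conclude $E = \Spec B$ is an affine $R$-scheme, with $A \to B$ the induced ring map. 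Being a group object in fppf sheaves that happens to be a scheme, $E$ is automatically a group scheme, which proves (1).

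\emph{Part (2).} Now I would first show that $B$ is a flat Mittag-Leffler $A$-module. Taking coordinate rings in the trivialization from Part (1) yields an isomorphism
\[ B \otimes_A A' \;\cong\; \OO(H) \otimes_R A'. \]
Since $\OO(H)$ is flat Mittag-Leffler over $R$ and this property is preserved under arbitrary base change, the right-hand side is flat Mittag-Leffler over $A'$, hence so is $B \otimes_A A'$. As $A \to A'$ is faithfully flat and the flat Mittag-Leffler property descends along faithfully flat ring maps, $B$ is flat Mittag-Leffler over $A$. (Alternatively one can run this descent directly along $A \to B$, using $E\times_G E \cong H\times_{\Spec R} E$; note $E \to G$ is faithfully flat since it becomes so after the cover $G'\to G$.) Finally, $A$ is in particular a Mittag-Leffler $R$-module, so Lemma~\ref{lemma: mittag-leffler and restriction}, applied to the flat Mittag-Leffler $A$-module $B$, gives that $B$ is Mittag-Leffler over $R$; and $B$ is flat over $R$ because $B$ is flat over $A$ and $A$ is flat over $R$. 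Thus $B$ is a flat Mittag-Leffler $R$-module, i.e. $E \to \Spec R$ is flat Mittag-Leffler.

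\emph{Expected main obstacle.} There is no genuinely hard step: the argument is an assembly of standard inputs. The two facts carrying the weight are fppf descent for the class of affine morphisms and faithfully flat descent — together with stability under base change — for the flat Mittag-Leffler condition on modules, the latter being exactly what is recorded in the discussion following the definition of flat Mittag-Leffler affine schemes and originally due to Raynaud and Gruson. The transitivity Lemma~\ref{lemma: mittag-leffler and restriction} is indispensable for the final step, since flatness composes freely but ``Mittag-Leffler over $A$'' only upgrades to ``Mittag-Leffler over $R$'' because $A$ is itself Mittag-Leffler over $R$. The only point requiring a little care is arranging that the torsor is trivialized by a \emph{single} affine fppf cover of $G$, which is legitimate because $G$ is quasi-compact.
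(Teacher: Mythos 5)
Your proposal is correct and follows essentially the same route as the paper: trivialize the $H$-torsor $E \to G$ over an fppf cover, use base-change stability and fpqc descent of the flat Mittag-Leffler property to get that $E \to G$ is flat Mittag-Leffler affine, and then conclude via the transitivity Lemma~\ref{lemma: mittag-leffler and restriction}. The only cosmetic difference is that the paper descends along the self-cover $E \to G$ itself (using $E \times_G E \cong H \times_{\Spec R} E$), which you already note as your parenthetical alternative.
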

\begin{proof}
    \begin{enumerate}
        \item This is a standard exercise in flat descent; see for example \cite[Proposition 17.4]{Oor66}.
        \item First, we show that the morphism $E \to G$ is flat Mittag-Leffler.
        Since $E \to G$ is a surjection of fppf sheaves, $E \to G$ is faithfully flat.
        The base change of $E \to G$ along $E \to G$ is $E \times_G E \to E$. Thus $E \times_G E$ is identified with $H \times_{\Spec R} E$ as an $E$-scheme via the diagonal section.
        Both flat and Mittag-Leffler morphisms ascend under base change \cite[Lemma 10.94.1]{stacks-project}. Thus $E \times_G E \cong H \times_{\Spec R} E \to E$ is flat Mittag-Leffler.
        As being flat Mittag-Leffler satisfies fpqc descent \cite[Lemma 10.95.1]{stacks-project}, $E \to G$ is flat Mittag-Leffler.

        Now $E \to G$ is flat Mittag-Leffler and $G \to \Spec R$ is flat Mittag-Leffler, so by Lemma \ref{lemma: mittag-leffler and restriction}, $E \to \Spec R$ is flat Mittag-Leffler. 
    \end{enumerate}
\end{proof}

\section{Geometric 1-duality}\label{section: geometric 1-dual}

In this section, we study the \emph{1-duality} functor
\[ (-)^D= \Hom(-,B\Gm),\] 
a homologically shifted analogue of the Cartier duality functor $\Hom(-,\Gm)$. 
Since $B\Gm$ is a stack and not a scheme, the 1-dual is defined on the category of commutative group stacks, the details of which will be reviewed in §\ref{subsection: cgs}.
Recall from the introduction that a pairing $\mathcal{X} \times \mathcal{Y} \to B\Gm$ of group stacks induces \emph{geometric 1-duality} if the induced maps $\cY \to \cX^D$ and $\cX \to \cY^D$ are equivalences.
Our main goal is to discuss to what extent geometric 1-duality holds for the pairs $(G, BG^\vee)$ and $(G^\vee, BG)$ when $G\to \Spec R$ is a flat Mittag-Leffler affine commutative group scheme. 
In both cases, there are examples of nonvanishing $\Extu^1(-,\Gm)$ in the fppf topology which shows that $(G,BG^\vee)$ and $(G^\vee,BG)$ need not always be geometrically dual. 
Computing the 1-dual of a group requires using a finer topology. In §\ref{subsection: 1-dual of G dual} we study the 1-dual of $G^\vee$ using the fpqc topology; in §\ref{subsection: h-torsors} we study the 1-dual of $G$ using the $h$-topology.


\subsection{Commutative group stacks and 1-duals}\label{subsection: cgs}

To discuss geometric 1-duality, we need some foundations of a theory of commutative group stacks. 
For our purposes, it suffices to work with Deligne's stricly commutative Picard stacks \cite[XVIII, §1.4]{SGA4III}. Duality of strictly commutative Picard stacks was recently studied in \cite{Bro21}, which may serve as a reference for basic properties of the duality. In modern terms, commutative group stacks may also be viewed as certain higher stacks with additional structure.

\begin{definition}[\cite{SGA4III}, XVIII, Definition 1.4.2]
    A \emph{Picard category} is a symmetric monoidal groupoid $(\mathcal{C}, +)$ such that 
    \begin{enumerate}
        \item every object $c \in \mathcal{C}$ is invertible;
        \item for all objects $c \in \mathcal{C}$, the commutativity constraint $c + c \overset{\sim}{\to} c+c$ is equal to the identity of $c+ c$.
    \end{enumerate}
\end{definition}
\begin{definition}[\cite{SGA4III}, XVIII, Definition 1.4.5]
    A \emph{commutative group stack} $\mathcal{X}$ over $\Spec R$ is a sheaf of Picard categories in the fppf topology over $\Spec R$.
\end{definition}

For example, $\cA \in \Shvab$ defines a commutative group stack with objects $\cA$ such that each object has trivial automorphism group, which we will also denote by $\cA$.
Further, the classifying stack $B\cA$, assigning $U \to \Spec R$ to the groupoid of $\cA$-torsors over $U$, is a commutative group stack.
More generally, suppose that $[A_{1} \to A_0]$ is a two-term complex in $\Shvab$.
Then there is a commutative group stack $[A_0/A_{1}]$ which is the sheafification of the assignment of groupoids $U \mapsto A_0(U)/A_1(U)$.

If $\mathcal{X}$ and $\mathcal{Y}$ are commutative group stacks, then $\Homs(\mathcal{X},\mathcal{Y})$ is the commutative group stack whose objects are strictly monoidal functors and whose morphisms are natural transformations compatible with the symmetric monoidal structure. We will need that if $[A_1 \to A_0]$ and $[B_1 \to B_0]$ are two-term complexes in $\Shvab$,
then $\Homs(A_0/A_1,B_0/B_1)$ is computed by the two-term complex 
\[\tau_{\leq 0}R\Hom([A_1 \to A_0],[B_1 \to B_0])
\]
\cite[XVIII, 1.4.18]{SGA4III}.

\begin{definition}
    If $\mathcal X$ is a commutative group stack, then the \emph{1-dual} $\mathcal{X}^D$ of $\mathcal X$ is defined by $\mathcal{X}^D = \Homs(\mathcal{X}, B\Gm)$.
\end{definition}

If $\cA \in \Shvab$, then the canonical pairing $\cA^\vee \times \cA \to \Gm$ induces pairings $\cA^\vee \times B\cA \to B\Gm$ and $\cA \times B\cA^\vee \to B\Gm$.
\begin{lemma}[\cite{Bro21}, Corollary 3.5]\label{lemma: 1-dual of classifying stack}
    If $\cA \in \Shvab$, then the pairing $\cA^\vee \times B\cA \to B\Gm$ induces an equivalence 
    \[\cA^\vee \overset{\sim}{\to} (B\cA)^D\]
\end{lemma}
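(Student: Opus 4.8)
The plan is to compute both sides of the claimed equivalence using the description of $\Homs$ in terms of truncated $R\Hom$ of two-term complexes from \cite[XVIII, 1.4.18]{SGA4III}, and then identify the induced map with an isomorphism of complexes. First I would write $B\cA$ as the two-term complex $[\cA \to 0]$ (placed in degrees $-1, 0$, so that $H^{-1} = \cA$), and $B\Gm$ as $[\Gm \to 0]$. Then $\Homs(B\cA, B\Gm)$ is computed by $\tau_{\leq 0} R\Hom([\cA \to 0],[\Gm \to 0])$. Since $[\cA \to 0]$ is $\cA$ shifted by $1$, this is $\tau_{\leq 0} R\Hom(\cA, \Gm)[-1] = \tau_{\leq 0}(R\Hom(\cA,\Gm)[-1])$, whose $H^0$ is $\Homs(\cA,\Gm) = \cA^\vee$ and whose $H^{-1}$ vanishes (the term in degree $-1$ of $R\Hom(\cA,\Gm)[-1]$ is $H^0 R\Hom(\cA,\Gm)$, but after applying $\tau_{\leq 0}$ the object is concentrated in degree $0$; more precisely $\tau_{\leq 0}$ of a complex with cohomology only in degrees $0$ and $1$ is just the $H^0$ in degree $0$). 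Hence $(B\cA)^D$ is the group \emph{scheme} (i.e. has trivial automorphisms) $\cA^\vee$.

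Next I would check that the map $\cA^\vee \to (B\cA)^D$ induced by the pairing $\cA^\vee \times B\cA \to B\Gm$ is the identity under this identification. The pairing is obtained from the canonical evaluation $\cA^\vee \times \cA \to \Gm$ by applying $B$ in the $\cA$-variable; equivalently, it is the image of the identity of $\cA^\vee$ under the natural map $\Homs(\cA, \Gm) \to \Homs(B\cA, B\Gm)$ coming from the functor $B$. At the level of complexes this natural map is the identity on $\tau_{\leq 0}$: applying $B$ is shifting by $1$, and $R\Hom$ is insensitive to a simultaneous shift of source and target. So the induced map $\cA^\vee \to (B\cA)^D$ is the canonical identification just constructed, in particular an isomorphism of sheaves of abelian groups, hence an equivalence of commutative group stacks.

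I expect the main obstacle to be bookkeeping rather than mathematics: being careful about cohomological degree conventions (whether $B\cA$ sits in degree $-1$ or $+1$, and correspondingly whether one uses $\tau_{\leq 0}$ or $\tau_{\geq 0}$), and making sure that the "$\tau_{\leq 0}R\Hom$" recipe of \cite{SGA4III} is applied with complexes normalized as in that reference. A secondary point requiring a word of care is the vanishing of the degree $-1$ cohomology of $(B\cA)^D$: this must be argued from the truncation, since $H^0 R\Hom(\cA, \Gm) = \Homs(\cA, \Gm)$ is generally nonzero — it is precisely $\cA^\vee$ — but it lands in degree $0$ after the shift, not degree $-1$, and the potential degree $-1$ contribution $H^{-1}R\Hom(\cA,\Gm) = 0$ vanishes for trivial reasons. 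Since this lemma is quoted directly from \cite[Corollary 3.5]{Bro21}, an alternative is simply to cite it and only spell out the compatibility of the stated pairing with Broshi's duality map, which is a one-line check once the $R\Hom$ description is in hand.
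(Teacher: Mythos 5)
Your proposal is correct and takes essentially the same route as the paper's one-line proof: both compute $(B\cA)^D$ as $\tau_{\leq 0}R\Hom(\cA[1],\Gm[1])\simeq\tau_{\leq 0}R\Hom(\cA,\Gm)\simeq \cA^\vee$, using the vanishing of negative Ext so that only $\Homs(\cA,\Gm)=\cA^\vee$ survives in degree $0$. The only slip is that you shift the source but not the target and so write $R\Hom(\cA,\Gm)[-1]$; since $B\Gm$ is $\Gm[1]$ as well, the two shifts cancel, which dissolves the degree-bookkeeping worry you flag at the end.
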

\begin{proof}
    Since $B\cA$ is computed by the two-term complex $[\cA \to 0]$, $\Homs(B\cA, B\Gm)$ is computed by the two-term complex $\tau_{\leq 0}R\Hom(\cA[1],\Gm[1]) \simeq \cA^\vee$.
\end{proof}

The map $B\mathcal A^\vee \to \mathcal A^D$ induced by the pairing is more complicated:

\begin{proposition}\label{prop: 1-dual and ext1-vanishing}
    If $\cA \in \Shvab$, then the map 
    \[ B\cA^\vee \to \cA^D\]
    induced by the pairing $\cA \times B\cA^\vee \to B\Gm$ is an equivalence if and only if 
    \[ \Extu^1(\cA, \Gm) = 0,\]
    where $\Extu^1$ is the internal Ext sheaf in $\Shvab$.
\end{proposition}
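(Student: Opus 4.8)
The plan is to compute both sides as truncations of derived mapping complexes in the derived category of abelian fppf sheaves and compare. The key observation is that, just as in the proof of Lemma \ref{lemma: 1-dual of classifying stack}, we should think of $\cA$ (placed in degree $0$) and $B\cA^\vee$ (represented by the two-term complex $[\cA^\vee \to 0]$, i.e. $\cA^\vee[1]$) as objects of the derived category, and use the formula $\Homs(\cX,B\Gm) \simeq \tau_{\leq 0} R\Hom(-, \Gm[1])$ for the internal Hom of commutative group stacks.

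First I would unwind the right-hand side: $\cA^D = \Homs(\cA,B\Gm)$ is computed by $\tau_{\leq 0}R\Hom(\cA,\Gm[1])$, which is the two-term complex $[\Homs(\cA,\Gm) \to \Extu^1(\cA,\Gm)] = [\cA^\vee \to \Extu^1(\cA,\Gm)]$, where $\cA^\vee$ sits in degree $0$ and $\Extu^1(\cA,\Gm)$ in degree $-1$ (using cohomological conventions so that $\tau_{\leq 0}$ retains $R^0\Hom$ and $R^1\Hom$ shifted). Next I would unwind the left-hand side: $B\cA^\vee$ corresponds to $\cA^\vee[1]$, so it is already of the form $[\cA^\vee \to 0]$ with $\cA^\vee$ in degree $-1$. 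The map $B\cA^\vee \to \cA^D$ induced by the pairing $\cA \times B\cA^\vee \to B\Gm$ corresponds, under these identifications, to the natural inclusion of the complex $[0 \to \cA^\vee]$ (i.e. $\cA^\vee$ placed in degree $-1$) into the complex $[\Extu^1(\cA,\Gm) \to \cA^\vee]$ (with $\cA^\vee$ in degree $-1$ and $\Extu^1$ in degree $-2$ after the appropriate reindexing), or more simply it is the canonical map from the $(-1)$-truncation to the full truncation $\tau_{\leq 0}$. The point is that $B\cA^\vee$ realizes only the $\Homs(\cA,\Gm) = \cA^\vee$ part of $R\Hom(\cA,\Gm[1])$, while $\cA^D$ also sees $\Extu^1(\cA,\Gm)$ in cohomological degree $-1$ of the group stack (equivalently, as the sheaf of automorphisms of the identity object, i.e. $\pi_1$ of the stack $\cA^D$).

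Concretely, a commutative group stack $\cX$ has invariants $\pi_0(\cX) = H^0$ of its defining two-term complex and $\pi_1(\cX) = H^{-1}$; a morphism of commutative group stacks is an equivalence iff it induces isomorphisms on $\pi_0$ and $\pi_1$. For $B\cA^\vee$ we have $\pi_0 = 0$ and $\pi_1 = \cA^\vee$. For $\cA^D = \tau_{\leq 0}R\Hom(\cA,\Gm[1])$ we have $\pi_0 = \Homs(\cA,\Gm) = \cA^\vee$... wait, this needs care: $R\Hom(\cA,\Gm[1])$ has $H^i = \Extu^{i+1}(\cA,\Gm)$, so $H^0 = \Extu^1(\cA,\Gm)$ and $H^{-1} = \Homs(\cA,\Gm) = \cA^\vee$. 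Hence I would record: $\pi_0(\cA^D) = \Extu^1(\cA,\Gm)$ and $\pi_1(\cA^D) = \cA^\vee$. The pairing-induced map $B\cA^\vee \to \cA^D$ is an isomorphism on $\pi_1$ (both are $\cA^\vee$, matched by the canonical pairing — this is essentially Lemma \ref{lemma: 1-dual of classifying stack} applied on automorphism groups, or can be checked directly from the construction of the pairing) and the map on $\pi_0$ is $0 \to \Extu^1(\cA,\Gm)$. Therefore the map is an equivalence iff $\Extu^1(\cA,\Gm) = 0$, which is exactly the claim.

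The main obstacle will be bookkeeping: carefully pinning down the cohomological degree conventions for commutative group stacks versus two-term complexes (there is a potential off-by-one between "$[A_1 \to A_0]$" conventions and the derived-category shift), and verifying that the pairing-induced map really does induce the canonical pairing identification on $\pi_1$ rather than some twist of it. For the latter, I would appeal to \cite[XVIII, 1.4.18]{SGA4III} together with the explicit description of the pairing $\cA \times B\cA^\vee \to B\Gm$ as arising from the evaluation $\cA \otimes^{\mathbb L} \cA^\vee \to \Gm$ (shifted), so that passing to $\tau_{\leq 0}R\Hom(\cA,-)$ recovers the unit of the $(\cA^\vee, \cA)$-adjunction on the level of sheaf-Hom; the $\pi_1$-compatibility is then formal, and the $\pi_0$-comparison is immediate since $B\cA^\vee$ contributes nothing in degree $0$.
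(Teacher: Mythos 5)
Your proposal is correct and follows essentially the same route as the paper: both compute $\pi_0(\cA^D)=\Extu^1(\cA,\Gm)$ and $\pi_1(\cA^D)=\cA^\vee$ from $\tau_{\leq 0}R\Hom(\cA,\Gm[1])$, observe that $B\cA^\vee$ has $\pi_0=0$ and $\pi_1=\cA^\vee$ with the pairing inducing the identity on $\pi_1$, and conclude. Your extra care about degree conventions and about why the map on $\pi_1$ is the canonical identification is reasonable but not developed further in the paper either, which simply asserts it.
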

\begin{proof}
    The 1-dual $\cA^D$ is computed by the complex $\tau_{\leq 0}R\Hom(\cA, \Gm[1])$. 
The homotopy sheaves of this stack are given by 
\begin{align*}
    \pi_0(\cA^D) &= \Extu^1(\cA,\Gm)\\
    \pi_1(\cA^D) &= \Homs(\cA,\Gm) = \cA^\vee.
\end{align*}
The map $B\cA^\vee \to \cA^D$ induced by the pairing $\cA \times B\cA^\vee \to B\Gm$ is the identity $\cA^\vee \to \cA^\vee$ on $\pi_1$. 
Since $\pi_0(B\cA^\vee) = 0$, the map $B\cA^\vee \to \cA^D$ is an equivalence if and only if $\Extu^1(\cA,\Gm) = 0$. 
\end{proof}

\begin{corollary}\label{corollary: when is A 1-dualizable}
    Suppose that $\cA \in \Shvab$.
    Then the pairing $\cA \times B\cA^\vee \to B\Gm$ is perfect
    if and only if $\Extu^1(\cA,\Gm) = 0$ and the canonical map $\cA \to \cA^{\vee\vee}$ is an isomorphism.
\end{corollary}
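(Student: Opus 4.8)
By definition, the pairing $\cA \times B\cA^\vee \to B\Gm$ is perfect precisely when both induced maps
\[
    B\cA^\vee \to \cA^D
    \qquad\text{and}\qquad
    \cA \to (B\cA^\vee)^D
\]
are equivalences. The plan is to analyze the two maps separately.

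For the first map, Proposition~\ref{prop: 1-dual and ext1-vanishing} already gives that $B\cA^\vee \to \cA^D$ is an equivalence if and only if $\Extu^1(\cA,\Gm) = 0$, so there is nothing further to prove here.

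For the second map, I would apply Lemma~\ref{lemma: 1-dual of classifying stack} with $\cA$ replaced by $\cA^\vee$: the canonical pairing $\cA^{\vee\vee} \times B\cA^\vee \to B\Gm$ induces an equivalence $\cA^{\vee\vee} \overset{\sim}{\to} (B\cA^\vee)^D$. The key point — and the only step requiring care — is to check that the pairing $\cA \times B\cA^\vee \to B\Gm$ of the statement is obtained from this canonical pairing by precomposition with the biduality map $\cA \to \cA^{\vee\vee}$; this is a formal compatibility between the two constructions of the canonical pairing (both are induced from the evaluation $\cA^\vee \times \cA \to \Gm$), which one verifies by unwinding definitions. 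Granting this, the map $\cA \to (B\cA^\vee)^D$ factors as
\[
    \cA \longrightarrow \cA^{\vee\vee} \overset{\sim}{\longrightarrow} (B\cA^\vee)^D,
\]
so it is an equivalence if and only if $\cA \to \cA^{\vee\vee}$ is an isomorphism.

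Combining the two analyses, the pairing is perfect if and only if $\Extu^1(\cA,\Gm) = 0$ and $\cA \to \cA^{\vee\vee}$ is an isomorphism, as claimed. I expect the only genuine content beyond citing earlier results to be the identification of the pairing $\cA \times B\cA^\vee \to B\Gm$ with the precomposition described above; everything else is a direct appeal to Proposition~\ref{prop: 1-dual and ext1-vanishing} and Lemma~\ref{lemma: 1-dual of classifying stack}.
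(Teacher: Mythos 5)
Your proposal is correct and follows essentially the same route as the paper: both split the perfectness condition into the two induced maps, handle $B\cA^\vee \to \cA^D$ via Proposition~\ref{prop: 1-dual and ext1-vanishing}, and handle $\cA \to (B\cA^\vee)^D$ by factoring the pairing through $\cA \to \cA^{\vee\vee}$ and invoking Lemma~\ref{lemma: 1-dual of classifying stack} applied to $\cA^\vee$. The compatibility you flag as the one step needing care is exactly the factorization the paper records explicitly, so there is no gap.
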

\begin{proof}
    By Proposition \ref{prop: 1-dual and ext1-vanishing}, the map $B\cA^\vee \to \cA^D$ is an equivalence if and only if $\Extu^1(\cA,\Gm)=0$.
    The pairing $\cA \times B\cA^\vee \to B\Gm$ factors as 
    \[ \cA \times B\cA^\vee \to \cA^{\vee\vee} \times B\cA^\vee \to B\Gm,\]
    where the first map is the product of the canonical $\cA \to \cA^{\vee\vee}$ and the identity, while the second map is formed by taking classifying stacks on $\cA^{\vee\vee} \times \cA^\vee \to \Gm$.
    Thus the induced map $\cA \to (B\cA^\vee)^D$ factors as 
    \begin{equation}\label{eq: induced map in perfect pairing}
        \cA \to \cA^{\vee\vee} \to (B\cA^\vee)^D.
    \end{equation}
    By Lemma \ref{lemma: 1-dual of classifying stack}, the second map in \eqref{eq: induced map in perfect pairing} is an equivalence, so $\cA \to (B\cA^\vee)^D$ is an equivalence if and only if $\cA \to \cA^{\vee\vee}$ is an isomorphism.
\end{proof}

\subsection{The 1-dual of a group ind-finite ind-scheme}\label{subsection: 1-dual of G dual}

Let us now find the 1-dual of a coflat commutative group ind-finite ind-scheme. As above $G \to \Spec R$ is a flat Mittag-Leffler affine commutative group scheme, and $G^\vee$ is our coflat commutative group ind-finite ind-scheme. 

Recall that the classifying stack $BG$ is the stack of $G$-torsors in the fppf topology. However, since $G$ is not assumed to be of finite presentation, it is natural to consider a larger class of torsors.

\begin{definition} A  \emph{$G$-torsor in fpqc topology} is a scheme $P\to\Spec R$ equipped with an action of $G$ such that the morphism $G\times_RP\to P\times_RP:(g,p)\mapsto (gp,p)$ is an isomorphism and the map $P\to\Spec R$ is surjective in the flat topology.
\end{definition}

By the same argument as Lemma \ref{lemma: extension of ML groups}, if $P$ is a $G$-torsor in fpqc topology, then $P$ is a flat Mittag-Leffler affine scheme over $\Spec R$. Denote by $B^{fpqc}G$ the moduli stack of 
$G$-torsors in fpqc topology. The sum of torsors turns $B^{fpqc}G$ into a commutative group stack.

Any $G$-torsor $P\in BG$ (that is, in the fppf topology) is automatically also a torsor in fpqc topology; thus, we get a morphism of commutative group stacks
\begin{equation}\label{eq:fppf to fpqc}
    BG\to B^{fpqc}G.
\end{equation}
The map identifies $B^{fpqc}G$ with the sheafification of $BG$ in the fpqc topology.

\begin{lemma}
The morphism \eqref{eq:fppf to fpqc} is fully faithful; if $G\to\Spec R$ is of finite presentation, then the morphism \eqref{eq:fppf to fpqc} is an equivalence. \qed
\end{lemma}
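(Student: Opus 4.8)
The plan is to handle the two assertions separately.

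For full faithfulness I would use the criterion that a morphism of stacks is fully faithful precisely when, for every scheme $U$ and every pair of objects $P_1,P_2$ of the source over $U$, the induced morphism of Isom-sheaves on $U$
\[
\underline{\mathrm{Isom}}_{BG}(P_1,P_2)\longrightarrow\underline{\mathrm{Isom}}_{B^{fpqc}G}(P_1,P_2)
\]
is an isomorphism. Since $B^{fpqc}G$ is a stack for the fpqc topology, the right-hand side is an fpqc sheaf, hence in particular an fppf sheaf, so the map may be checked fppf-locally on $U$; and since $P_1,P_2$ come from $BG$ they are fppf torsors, so after pulling back along a common fppf cover trivializing both we are reduced to $P_1=P_2=G$. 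In that case both Isom-sheaves are canonically identified with $G$ (a $G$-equivariant automorphism of the trivial torsor being left translation by a unique section of $G$), compatibly with the map, which is therefore an isomorphism. An equivalent and even shorter argument, which I would record as a remark: the Isom-presheaves of $BG$ are already representable by affine $R$-schemes, by fppf descent of affine morphisms, hence are already fpqc sheaves, so fpqc-sheafification leaves morphisms unchanged.

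For the second assertion, full faithfulness reduces matters to showing that every object of $B^{fpqc}G$ lies fpqc-locally in the essential image of $BG$; since finite presentation of $G$ is stable under base change, it is enough to show that an arbitrary $G$-torsor $P$ in the fpqc topology over $\Spec R$ is already an fppf torsor. By the analogue of Lemma~\ref{lemma: extension of ML groups} recorded above, $P\to\Spec R$ is flat and affine, and it is surjective by definition, hence a faithfully flat quasi-compact cover. The defining isomorphism $G\times_R P\to P\times_R P$, $(g,p)\mapsto(gp,p)$, identifies the second projection $P\times_R P\to P$ with the base change of $G\to\Spec R$ along $P\to\Spec R$, which is locally of finite presentation; as $P\times_R P\to P$ is also the base change of $P\to\Spec R$ along the fpqc cover $P\to\Spec R$, fpqc descent of the property ``locally of finite presentation'' forces $P\to\Spec R$ itself to be locally of finite presentation, hence an fppf cover. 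Since $P$ trivializes itself as a $G$-torsor, it is then an fppf torsor and so lies in $BG$; together with full faithfulness this yields the equivalence.

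The only genuinely external input is the fpqc descent of the property ``locally of finite presentation''; everything else is formal manipulation with torsors and sheafification. The step I expect to require the most care is the reduction to the trivial torsor in the full-faithfulness argument — it is legitimate exactly because the objects involved come from $BG$ and are therefore honestly fppf-locally trivial, so no refinement of the topology is needed at the level of covers; this is also the feature that fails for a general object of $B^{fpqc}G$ and hence is precisely what forces the finite-presentation hypothesis in the second part.
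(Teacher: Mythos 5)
Your proof is correct. The paper offers no argument for this lemma (it is stated with a terminal \qed), but what you write is exactly the intended reasoning: full faithfulness holds because the Isom-presheaves of $BG$ are representable by affine schemes (indeed, one can observe even more directly that $BG(U)\to B^{fpqc}G(U)$ is the inclusion of a full subgroupoid, since morphisms of torsors are the same equivariant isomorphisms in both settings), and your descent argument for the finite-presentation case — transporting ``locally of finite presentation'' from $P\times_RP\to P$ down along the fpqc cover $P\to\Spec R$ — is precisely the argument the authors themselves deploy at the end of the proof of Lemma~\ref{lemma: ind-triviality}, citing \cite[Lemma 10.126.2]{stacks-project}.
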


\begin{remark} Here is a larger class of affine group schemes $G$ for which the morphism
\eqref{eq:fppf to fpqc} is an isomorphism: suppose $G$ is representable as a cofiltered limit \[G=\lim G_\alpha,\]
where all $G_\alpha$ are of finite presentation and all connecting maps $G_\alpha\to G_\beta$ are surjective with kernel being a vector group.
Then any $P\in B^{fpqc}G$ is trivialized over the fppf cover 
\[P_{G_\alpha}=(G_\alpha\times P)/G\]
for any $\alpha$, therefore, $P\in BG$. 
For the authors, the main example of such $G$ is the group of positive formal loops with values in a smooth algebraic group scheme.
\end{remark}

\begin{lemma} \label{lemma: ind-triviality}
    Let $G \to \Spec R$ be a flat Mittag-Leffler affine commutative group scheme and 
    \[ 0 \to \Gm \to E \to G^\vee \to 0\]
    be an extension in $\Shvab$. 
    Then there is an fpqc cover $\Spec R' \to \Spec R$ such that the extension splits after base change to $\Spec R'$.
    If $G$ is of finite presentation over $R$, then $R'$ can be chosen to be of finite presentation over $R$.
\end{lemma}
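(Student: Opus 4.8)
The plan is to reduce the statement to a splitting criterion for extensions by $\Gm$ and then exploit the special structure of $G^\vee$ as an ind-finite ind-scheme. First I would recall that extensions of $G^\vee$ by $\Gm$ in $\Shvab$ are classified by $\Ext^1_{\Shvab}(G^\vee,\Gm)$, and that an extension splits over $\Spec R'$ precisely when its image in $\Ext^1_{\Shvab/R'}((G^\vee)_{R'},\Gm)$ vanishes. By Cartier duality (Corollary~\ref{corollary: cartier duality is an antiequivalence}) write $G = \Spec A$ with $A$ a flat Mittag-Leffler Hopf algebra and $G^\vee = \SpInd(A^\vee)$, so $G^\vee = \colim_\alpha \Spec(A^\vee_\alpha)$ with each $\Spec(A^\vee_\alpha)$ finite over $R$. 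The key point is that an extension of a commutative group scheme by $\Gm$ is the same data as a $\Gm$-torsor on $G^\vee$ together with a compatible group structure; passing to the colimit, a line bundle on $G^\vee$ is a compatible system of line bundles $\LL_\alpha$ on the finite $R$-schemes $\Spec(A^\vee_\alpha)$.

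The main step is then: find an fpqc cover $\Spec R' \to \Spec R$ over which the underlying $\Gm$-torsor $P$ of the extension becomes trivial. Since $P \to \Spec R$ is itself a $\Gm$-torsor over the group $G^\vee$ (so in particular $P \to G^\vee$ is affine and faithfully flat, being a $\Gm$-torsor), and $G^\vee \to \Spec R$ is ind-finite hence ind-affine, the composite $P \to \Spec R$ is ind-affine. I would take $\Spec R'$ to be (a suitable affine piece of) $P$ itself, or rather a faithfully flat affine cover extracted from it: restricting $P$ along the unit section $\Spec R \to G^\vee$ gives a $\Gm$-torsor over $\Spec R$, which is affine and faithfully flat, and over that cover the torsor $P$ acquires a section over the identity; translating by the group structure (using that $P$ is an extension, not merely a torsor) propagates this section over all of $G^\vee_{R'}$, so the extension splits after base change to $\Spec R'$. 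Concretely, $R' = A \otimes_R R$ with the $\Gm$-action — that is, $\Spec R'$ is the fiber of $P \to G^\vee$ over the unit — is the natural choice, and one checks it is faithfully flat over $R$ using that $P \to \Spec R$ is surjective flat by the torsor axiom and $\Gm$ is smooth affine.

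For the finite-presentation refinement: if $G$ is of finite presentation over $R$, then $A$ is a finitely presented Hopf algebra, hence (being flat Mittag-Leffler and finitely generated) finite projective, so $G^\vee$ is a genuine finite locally free group scheme and the classical Cartier duality applies; then the extension $0 \to \Gm \to E \to G^\vee \to 0$ has $E$ of finite presentation (Lemma~\ref{lemma: extension of ML groups}), the torsor $P$ is of finite presentation over $R$, and the cover $\Spec R'$ built above is of finite presentation over $R$. Alternatively one invokes a standard limit argument (EGA~IV, §8): the splitting is defined over some finitely presented subextension, so one may assume everything finitely presented from the start.

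The hard part will be justifying that the section obtained over the unit section actually extends to a \emph{group-compatible} trivialization of the whole extension over $\Spec R'$ — i.e. that the line bundle $\LL = P$ on $G^\vee_{R'}$ is trivial as a line bundle in a way respecting the multiplicative (theorem of the cube / biextension) structure. The cleanest route is to avoid extending sections by hand and instead argue that after base change to $\Spec R' = P|_{\text{unit}}$ the class in $\Ext^1$ dies: the pullback of the extension $E$ along $G^\vee_{R'} \to G^\vee$ has, by construction, a point of $E$ over the unit lifting the unit of $G^\vee$, and since $E \to G^\vee$ is a $\Gm$-torsor that is now trivialized at the identity and compatible with the group laws on both sides, the universal property of the unit together with rigidity for $\Gm$-torsors on a pointed group (the argument behind "$\Ext^1$ is detected at the identity after a flat cover", cf. the computation of $\Extu^1(-,\Gm)$) forces the extension to split. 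I would spell this out via the seesaw/rigidity principle for $\Gm$ rather than through an explicit cocycle.
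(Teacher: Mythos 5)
There is a genuine gap, and it is exactly at the point you flag as ``the hard part.'' Your proposed cover --- the fiber of $E \to G^\vee$ over the unit section of $G^\vee$ --- is just the kernel $\Gm \subset E$, which is already a \emph{trivial} $\Gm$-torsor over $\Spec R$ (it contains the identity of $E$). Base-changing along $\Spec R[t,t^{-1}] \to \Spec R$ gains nothing: the extension always has a point of $E$ over the unit of $G^\vee$, and the obstruction to splitting lives over the other points. The ``translation'' step is circular: to move a section from the unit of $G^\vee$ to a $T$-point $g$ of $G^\vee$ you must multiply by a lift of $g$ to $E(T)$, which is precisely what you are trying to produce. Rigidity/seesaw is not available either, since $G^\vee$ is an ind-finite ind-scheme rather than a proper variety, and there is no general principle that $\Ext^1(-,\Gm)$ is ``detected at the identity after a flat cover.'' Separately, your finite-presentation argument breaks: $G$ of finite presentation means $\OO(G)$ is a finitely presented $R$-\emph{algebra}, not a finitely generated $R$-module (e.g.\ $G=\Ga$), so you cannot conclude that $G^\vee$ is finite locally free.

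The correct cover is not the fiber over the unit but a \emph{twisted form of $G$}. Writing $E \to G^\vee$ as a $\Gm$-torsor with associated line bundle $\ell_E$, the pro-coordinate ring of $E$ decomposes as $\bigoplus_{n\in\ZZ} M^{(n)}$ with $M^{(n)} = \lim_\alpha \Gamma(G^\vee_\alpha, \ell_E^{\otimes n})$. One checks, using coflatness of $G^\vee$ and local triviality of $\ell_E$, that $M^{(1)}$ is a pro-finite pro-projective Mittag-Leffler pro-module; its linear dual $R' = (M^{(1)})^\vee$ is then a flat $R$-algebra, and the unit of linear duality $\unit_{M^{(1)}} : R \to M^{(1)} \otimes_R R'$ is a weight-one grouplike element, i.e.\ a retract of $\Gm \times_R \Spec R' \to E \times_R \Spec R'$. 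The scheme $\Spec R'$ is a $G$-torsor in the fpqc topology (it equals $G$ exactly when the extension is already split), and finite presentation of $R'$ over $R$ follows by fpqc descent from $\Spec R' \times_R \Spec R' \cong G \times_R \Spec R'$. Your instinct that ``$R'$ should be something like $A$'' is pointing at this object, but identifying it with the fiber of $E$ over the unit is where the argument fails.
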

\begin{proof}
    We will construct a retract of $\Gm \to E$ over some fpqc cover.
    The map $p:E\to G^\vee$ makes $E$ into a $\Gm$-torsor over the ind-scheme $G^\vee$. Denote by $\ell_E$ the corresponding line bundle on $G^\vee$. The pushforward $p_*\OO_E$ is a sheaf of algebras over $G^\vee$,
which can be written as
\[p_*\OO_E = \bigoplus_{n\in\ZZ}\ell_E^{\otimes n}.\]

Let us represent $G^\vee$ as filtered colimit of closed affine subschemes
\[G^\vee=\colim G^\vee_\alpha,\]
so that 
\[G^\vee=\SpInd(A)\]
for the pro-algebra
\[A=\lim A_\alpha,\qquad A_\alpha=\Gamma(G^\vee_\alpha,\OO_{G_\alpha^\vee}).\]
Put
\[M^{(n)}=\lim M^{(n)}_\alpha,\qquad M^{(n)}_\alpha=\Gamma(G^\vee_\alpha,\ell_E^{\otimes n}).\]
Since each $G_\alpha^\vee$ is affine, we have $M_\alpha^{(n)} = (M_\alpha^{(1)})^{\otimes n}$.
Therefore,
\[ E = \SpInd(\tilde A), \qquad \tilde A = \lim_\alpha\left(\bigoplus_{n \in \ZZ} M_{\alpha}^{(n)}\right).\]

From the definition, we see that for any $n$, $M^{(n)}$ is Mittag-Leffler. Moreover, we can check that $M^{(n)}$ is pro-projective as a pro-$R$-module:

By Remark \ref{remark: pro-projective conditions}, it suffices to show that for each $\alpha$, there exists $\beta \geq \alpha$ such that $\Ext^1_R(M_\alpha^{(n)}, -) \to \Ext^1_R(M_\beta^{(n)},-)$ is zero.
Pick $\beta$ so that the map $\Ext^1_R(A_\alpha,-) \to \Ext^1_R(A_\beta,-)$ is zero.
Since $\ell_E$ is locally free of rank 1, there is a 
Zariski open cover of $\Spec(A_\beta)$ on which $M_\alpha^{(n)}$ and $M_\beta^{(n)}$ are trivialized; the desired vanishing of $\Ext^1_R(M_\alpha^{(n)},- ) \to \Ext^1_R(M_\beta^{(n)},-)$ follows.

We can now apply the linear duality: the linear dual $(M^{(n)})^\vee$ is a flat 
commutative $R$-algebra.
Let $R' = (M^{(1)})^\vee$.
Then the unit $\unit_{M^{(1)}}: R \to M^{(1)} \otimes_R (M^{(1)})^\vee = M^{(1)} \otimes_R R'$ is a grouplike element of $ev(\tilde A \otimes R')$ of weight 1
and so defines a retract of 
\[\Gm\times_R\Spec(R') \to E\times_R\Spec(R').\] 

Let $P = \Spec R'$. Then $P \to \Spec R$ is an fpqc cover.
Moreover the $A$-module structure on $M^{(1)}$ makes $R'$ into an invertible $A^\vee$-comodule of rank 1. In geometric terms, $P \to \Spec R$ has the structure of a $G$-torsor in the fpqc topology.

If $G\to \Spec R$ is of finite presentation, we claim that $P\to\Spec R$ is also of finite presentation. Indeed, morphisms of finite presentation satisfy fpqc descent \cite[\href{https://stacks.math.columbia.edu/tag/00QQ}{Lemma 10.126.2}]{stacks-project} and $P \times_{\Spec R} P \to P$ is of finite presentation. 
\end{proof}

As a consequence, we obtain the vanishing of $\Extu^1(-,\Gm)$ with the dual of a finite presentation flat-Mittag-Leffler affine commutative group scheme:

\begin{corollary}\label{corollary: ext1 vanishing for dual of finite presentation G}
    Suppose that $G \to \Spec R$ is a flat Mittag-Leffler affine commutative group scheme of finite presentation. Then 
    \[ \Extu^1(G^\vee,\Gm) = 0\]
    (we emphasize that $\Extu^1$ is in the fppf topology). \qed
\end{corollary}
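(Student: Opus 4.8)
The plan is to reduce the statement to Lemma \ref{lemma: ind-triviality} by unwinding the definition of the internal $\Extu^1$. Recall that $\Extu^1(G^\vee, \Gm)$, the internal $\Ext^1$ of abelian fppf sheaves, is the sheafification of the presheaf that assigns to an affine $R$-scheme $U = \Spec S$ the group of Yoneda extensions $0 \to \Gm|_U \to E \to G^\vee|_U \to 0$ of abelian sheaves on the big fppf site of $S$. Since affine schemes generate the fppf topos, this sheaf vanishes if and only if every such extension, for every affine base $U$, becomes split after pullback along some fppf cover $V \to U$. So the goal is precisely to produce such a cover.

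Next I would observe that Cartier duality commutes with base change — $\Homs(G, \Gm)|_U \cong \Homs(G_U, \Gm)$, where $G_U = G \times_{\Spec R} \Spec S$ — so an extension as above is an extension of $\Gm$ by the Cartier dual of $G_U$, and $G_U \to \Spec S$ is again an affine commutative group scheme that is flat Mittag-Leffler and, crucially, of finite presentation, all these properties being stable under base change. Applying Lemma \ref{lemma: ind-triviality} over the base $\Spec S$ then produces an fpqc cover $\Spec S' \to \Spec S$, which by the finite-presentation clause of that lemma may be taken to be of finite presentation over $S$, such that the extension splits after base change to $\Spec S'$.

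The argument closes with the observation that a faithfully flat morphism of finite presentation is an fppf cover; hence $V = \Spec S' \to U$ is a legitimate fppf cover trivializing the extension, and therefore $\Extu^1(G^\vee, \Gm) = 0$. I do not expect a real obstacle here, since all the substance is in Lemma \ref{lemma: ind-triviality} (and, behind it, the linear duality of §\ref{section: linear duality}). The points that need care are bookkeeping: verifying that the finite-presentation hypothesis is exactly what lets us promote the fpqc cover of Lemma \ref{lemma: ind-triviality} to an fppf cover — this is where the restriction to finite presentation in the statement, and the parenthetical emphasis that $\Extu^1$ is taken fppf-locally, enter — and being careful that the internal $\Extu^1$ really is computed as the sheafification of the presheaf of Yoneda $\Ext^1$-groups, so that fppf-local splitting of all extensions over affine bases suffices to conclude.
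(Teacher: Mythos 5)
Your proposal is correct and is essentially the paper's own argument: the corollary is stated with an immediate ``\qed'' as a direct consequence of Lemma~\ref{lemma: ind-triviality}, whose finite-presentation clause is precisely what upgrades the fpqc splitting cover to an fppf cover. The bookkeeping you supply (compatibility of Cartier duality with base change, and the description of $\Extu^1$ as the sheafification of the presheaf of Yoneda extension groups over affine bases) is exactly the implicit content the paper leaves to the reader.
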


When $R$ is a field, Corollary \ref{corollary: ext1 vanishing for dual of finite presentation G} was previously known using the structure theorem for affine algebraic groups \cite[Lemma 1.14]{Rus13}. 
Corollary \ref{corollary: ext1 vanishing for dual of finite presentation G} also recovers that $\Ext^1(G,\Gm) = 0$ when $G \to \Spec R$ is a finite locally free group scheme \cite[Exposé VIII, Proposition 3.3.1]{SGA7I}.

As a further consequence, it follows that when $G\to \Spec R$ is of finite presentation, then the pairing $G^\vee \times BG \to B\Gm$ is perfect: 
\begin{corollary}
    If $G \to \Spec R$ is a flat Mittag-Leffler affine commutative group scheme of finite presentation, then the Cartier duality pairing $G \times G^\vee \to \Gm$ induces a perfect pairing $BG \times G^\vee \to B\Gm$: it induces equivalences 
    \begin{align*}
        G^\vee&\simeq(BG)^{D}\\
        BG&\simeq(G^\vee)^{D}.
    \end{align*} 
\end{corollary}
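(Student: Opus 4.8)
The plan is to read off both equivalences from the general results of §\ref{subsection: cgs} applied with $\cA = G^\vee$, feeding in the Ext-vanishing of Corollary \ref{corollary: ext1 vanishing for dual of finite presentation G}.

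First, the equivalence $G^\vee \simeq (BG)^D$ needs no finiteness hypothesis at all: it is exactly Lemma \ref{lemma: 1-dual of classifying stack} for $\cA = G$, the equivalence being induced by the Cartier pairing $G^\vee \times G \to \Gm$ after passing to classifying stacks in the $G$-factor.

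The substantive claim is that the companion map $BG \to (G^\vee)^D$ induced by the same pairing is also an equivalence, and I would obtain it from Proposition \ref{prop: 1-dual and ext1-vanishing} applied to $\cA = G^\vee$. By Theorem \ref{theorem: perfect pairing for flat ML group} the canonical biduality map $G \to (G^\vee)^\vee$ is an isomorphism (this is also the involutivity in Corollary \ref{corollary: cartier duality is an antiequivalence}), so the pairing $\cA \times B\cA^\vee \to B\Gm$ of the Proposition becomes, under this identification, the classifying-stack pairing $G^\vee \times BG \to B\Gm$ built from the Cartier pairing, and correspondingly the map ``$B\cA^\vee \to \cA^D$'' of the Proposition becomes $BG \to (G^\vee)^D$. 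Proposition \ref{prop: 1-dual and ext1-vanishing} then asserts that this map is an equivalence precisely when $\Extu^1(G^\vee,\Gm) = 0$, and this vanishing is Corollary \ref{corollary: ext1 vanishing for dual of finite presentation G}, valid since $G \to \Spec R$ is flat Mittag-Leffler, affine, commutative, and of finite presentation. With both induced maps now equivalences, the pairing $BG \times G^\vee \to B\Gm$ is perfect by definition, giving the two displayed equivalences. (Equivalently, one can invoke Corollary \ref{corollary: when is A 1-dualizable} for $\cA = G^\vee$ in one go: its two hypotheses are the same Ext-vanishing together with the isomorphism $G^\vee \to (G^\vee)^{\vee\vee}$, again supplied by the involutivity of Cartier duality.)

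The only place that calls for care — and thus the main, if modest, obstacle — is the identification of the pairings appearing in Lemma \ref{lemma: 1-dual of classifying stack} and Proposition \ref{prop: 1-dual and ext1-vanishing} with the classifying-stack version of the Cartier pairing of Theorem \ref{theorem: perfect pairing for flat ML group}: concretely, one must check that the canonical double-duality isomorphism $G \cong (G^\vee)^\vee$ intertwines the tautological evaluation pairing $(G^\vee)^\vee \times G^\vee \to \Gm$ with the Cartier pairing $G \times G^\vee \to \Gm$. This is the standard compatibility of evaluation pairings with biduality and presents no real difficulty.
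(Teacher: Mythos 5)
Your proposal is correct and follows exactly the paper's route: the paper's proof is literally ``combine Corollary \ref{corollary: ext1 vanishing for dual of finite presentation G} and Corollary \ref{corollary: when is A 1-dualizable}'' (the latter applied to $\cA = G^\vee$, with the biduality $G \cong (G^\vee)^\vee$ supplied by Theorem \ref{theorem: perfect pairing for flat ML group}). You have merely unpacked Corollary \ref{corollary: when is A 1-dualizable} into its two ingredients, Lemma \ref{lemma: 1-dual of classifying stack} and Proposition \ref{prop: 1-dual and ext1-vanishing}, which is precisely how that corollary is proved in the paper.
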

\begin{proof}
    Combine Corollary \ref{corollary: ext1 vanishing for dual of finite presentation G} and Corollary \ref{corollary: when is A 1-dualizable}.
\end{proof}
When $G$ is not necessarily of finite presentation, the Cartier duality pairing $G\times G^\vee\to \Gm$ induces a pairing
\begin{equation}\label{eq:1-pairing fpqc}
    B^{fpqc} G\times G^\vee\to B^{fpqc}\Gm= B\Gm.
\end{equation}
\begin{proposition}\label{prop: 1-pairing fpqc}
    For any flat Mittag-Leffler affine commutative group scheme $G \to \Spec R$, the pairing \eqref{eq:1-pairing fpqc} is perfect: it induces equivalences
\begin{align*}
    G^\vee&\simeq(B^{fpqc}G)^{D}\\
    B^{fpqc}G&\simeq(G^\vee)^{D}.
\end{align*} 
\end{proposition}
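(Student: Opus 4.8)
The plan is to establish the proposition as an fpqc-topology refinement of the bidualizability criterion behind Corollary~\ref{corollary: when is A 1-dualizable}, applied with $\cA=G^\vee$. The essential new ingredient is that, while $\Extu^1(G^\vee,\Gm)$ need not vanish in the fppf topology, it \emph{does} become negligible once one is allowed to split extensions over an fpqc cover, by Lemma~\ref{lemma: ind-triviality}; this is precisely why $B^{fpqc}G$, rather than $BG$, is the correct classifying stack here. So one proves the two asserted equivalences separately.

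\emph{Step 1: the equivalence $G^\vee\xrightarrow{\sim}(B^{fpqc}G)^D$.} Since $B\Gm$ is an fpqc stack and \eqref{eq:fppf to fpqc} exhibits $B^{fpqc}G$ as the fpqc sheafification of $BG$, restriction along \eqref{eq:fppf to fpqc} induces an equivalence $\Homs(B^{fpqc}G,B\Gm)\xrightarrow{\sim}\Homs(BG,B\Gm)$. By Lemma~\ref{lemma: 1-dual of classifying stack} (with $\cA=G$) the pairing identifies $\Homs(BG,B\Gm)$ with $G^\vee$, and unwinding the definitions shows that the resulting composite $G^\vee\to(B^{fpqc}G)^D$ is the map induced by \eqref{eq:1-pairing fpqc}.

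\emph{Step 2: the equivalence $B^{fpqc}G\xrightarrow{\sim}(G^\vee)^D$.} As in the proof of Proposition~\ref{prop: 1-dual and ext1-vanishing}, the 1-dual $(G^\vee)^D=\Homs(G^\vee,B\Gm)$ is computed by the two-term complex $\tau_{\leq 0}R\Hom(G^\vee,\Gm[1])$, so its homotopy sheaves are $\pi_1=\Homs(G^\vee,\Gm)$ and $\pi_0=\Extu^1(G^\vee,\Gm)$. By Theorem~\ref{theorem: perfect pairing for flat ML group} the Cartier pairing identifies $\pi_1=\Homs(G^\vee,\Gm)=(G^\vee)^\vee$ with $G$, matching $\pi_1(B^{fpqc}G)$. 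For $\pi_0$, apply Lemma~\ref{lemma: ind-triviality} over an arbitrary affine $U=\Spec S$ over $\Spec R$ — using that $G_S$ is again flat Mittag-Leffler and $(G^\vee)_S=(G_S)^\vee$ — to conclude that every extension of $G^\vee$ by $\Gm$ splits over an fpqc cover of the base; hence, computing $(G^\vee)^D$ and $B^{fpqc}G$ consistently in the fpqc topology (which is the natural setting for the pairing \eqref{eq:1-pairing fpqc}, as it factors through $B^{fpqc}\Gm=B\Gm$), one gets $\pi_0((G^\vee)^D)=\Extu^1_{\mathrm{fpqc}}(G^\vee,\Gm)=0=\pi_0(B^{fpqc}G)$. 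Since the map $B^{fpqc}G\to(G^\vee)^D$ induced by \eqref{eq:1-pairing fpqc} is then an isomorphism on $\pi_0$ and (by Theorem~\ref{theorem: perfect pairing for flat ML group}) on $\pi_1$, it is an equivalence. Alternatively one may write down an explicit inverse: $(G^\vee)^D$ is the stack whose $T$-points are extensions of $G^\vee_T$ by $\Gm$, and the construction in the proof of Lemma~\ref{lemma: ind-triviality} sending such an extension $E$ to the fpqc $G$-torsor $\Spec\bigl((M^{(1)})^\vee\bigr)$ is inverse to it, as one checks using the double-duality $(M^{(1)})^{\vee\vee}\cong M^{(1)}$ and the grouplikeness of $\unit$ established in the proof of Theorem~\ref{theorem: perfect pairing for flat ML group}.

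\emph{Expected main obstacle.} The delicate point is the $\pi_0$-level comparison in Step~2. One must keep track of the fact that the vanishing of $\Extu^1(G^\vee,\Gm)$ furnished by Lemma~\ref{lemma: ind-triviality} holds only after passage to an fpqc cover, and correspondingly that the pairing \eqref{eq:1-pairing fpqc} between $B^{fpqc}G$ and $G^\vee$ must be dualized in the fpqc topology; in the fppf topology $\Extu^1(G^\vee,\Gm)$ is generally nonzero and the analogous statement for $BG$ fails. Concretely, this amounts to checking that the correspondence ``fpqc $G$-torsor $\leftrightarrow$ multiplicative $\Gm$-torsor on $G^\vee$ $\leftrightarrow$ extension of $G^\vee$ by $\Gm$'' is an equivalence of stacks, whose non-formal half is exactly the content of the construction in the proof of Lemma~\ref{lemma: ind-triviality}.
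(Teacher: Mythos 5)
Your proposal is correct and follows essentially the same route as the paper: Step 1 is identical, and Step 2 is the paper's ``both sides are fpqc gerbes with the same band'' argument rewritten in terms of $\pi_0$ and $\pi_1$ computed in the fpqc topology, with Lemma~\ref{lemma: ind-triviality} supplying the fpqc-local triviality of extensions. The only point you elide is that $(G^\vee)^D$ --- which by the paper's conventions is the internal Hom in the \emph{fppf} topology --- is itself already an fpqc stack (because $G^\vee$ and $B\Gm$ are); without this observation, matching fpqc homotopy sheaves would only identify fpqc stackifications rather than the stacks themselves.
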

\begin{proof}
Since $B\Gm$ is a sheaf in the fpqc topology, the fpqc sheafification morphism \eqref{eq:fppf to fpqc} induces an isomorphism
\[(B^{fpqc}G)^D=\Homs(B^{fpqc}G, B\Gm)=\Homs(BG,B\Gm)=(BG)^D,\]
and the first statement follows from Lemma~\ref{lemma: 1-dual of classifying stack}.
(Of course, it can also be proved directly by working in the fpqc topology.)

For the second statement, we claim that both $B^{fpqc}G$ and $(G^\vee)^D$ are fpqc gerbes. For $B^{fpqc}G$ this is tautological. For \[(G^\vee)^D=\Homs(G^\vee,B\Gm),\] we first note that it is an fpqc stacks, because both $G^\vee$ and $B\Gm$ are fpqc stacks (for $G^\vee$ this is \cite[Chapter~2, Proposition~1.2.2]{GR17II}). Now Lemma~\ref{lemma: ind-triviality} implies it is a gerbe.

It remains to check that the morphism $B^{fpqc}G\to(G^\vee)^{D}$ induces an isomorphism on $\pi_1$. This follows from the Cartier duality between $G$ and $G^\vee$ being perfect.
\end{proof}

\begin{remark} In Proposition \ref{prop: 1-pairing fpqc}, the equivalence 
\[B^{fpqc}G\to(G^\vee)^{D}\]
factors through an equivalence of certain Hopf pro-algebras.

First, $B^{fpqc} G$ is equivalent to the moduli stack of extensions 
\[ 0 \to G\to E\to\ZZ\to 0\] 
in fpqc topology.
Letting $N^{(k)}$ be the ring of functions of $E \times_{\ZZ} k$,
we obtain a Hopf algebra in $\ProMod_R$ of the form
\begin{equation}\label{equation: hopf pro-algebra for extension by G}
    \prod_{k\in\ZZ} N^{(k)}
\end{equation}
such that each $N^{(k)}$ is a flat $R$-module.
It is commutative and cocommutative; the multiplication is compatible with the product and comultiplication satisfies
$\Delta(e_k)=\sum_{i+j=k}e_i\otimes e_j$
where $e_k\in N^{(k)}$ is the unit.
Since $G \to \Spec R$ is flat Mittag-Leffler and $E \times_{\ZZ} k$ is a $G$-homogeneous space, each $N^{(k)}$ is also flat Mittag-Leffler.

Linear duality provides an equivalence between Hopf pro-algebras of the form \eqref{equation: hopf pro-algebra for extension by G} and 
commutative cocommutative Hopf pro-algebras of the form
\begin{equation}\label{equation: hopf pro-algebra for extension of G dual}
    \bigoplus_{k\in\ZZ}M^{(k)}
\end{equation}
where each $M^{(k)}$ is a coflat pro-finite pro-$R$-coalgebra, and the comultiplication is compatible with the grading. The multiplication is graded and induces isomorphisms $M^{(i)} \otimes_{M^{(0)}} M^{(j)} \cong M^{(i+j)}$.

Hopf pro-algebras of the form \eqref{equation: hopf pro-algebra for extension of G dual} are equivalent to fppf extensions
\[ 
1 \to \Gm \to F \to G^\vee \to 0.
\]
For given such a Hopf pro-algebra, $M^{(1)}$ is an invertible $M^{(0)}$-module and thus defines a line bundle $F \to G^\vee$\footnote{Invertible objects in pro-modules are compact, and since $\Mod_{R'}$ is idempotent-complete, the subcategory of compact objects in $\ProMod_{R'}$ is exactly $\Mod_{R'}$. Thus the restriction of $M^{(1)}$ to any closed subscheme of $G^\vee$ is an invertible module.}. 
The map $F \to G^\vee$ is surjective in fppf topology since $F \to G^\vee$ is a line bundle.
The comultiplication and antipode make $F$ into a group extension of $G^\vee$ with kernel $\Gm$. 
Conversely, suppose that 
\[ 1 \to \Gm \to F \to G^\vee \to 0\]
is an extension in $\Shvab$. Then $p: F \to G^\vee$ makes $F$ into a $\Gm$-torsor over the ind-scheme $G^\vee$. Denote by $\ell_F$ the corresponding line bundle on $G^\vee$. The pushforward $p_*\OO_F$ is a sheaf of algebras over $G^\vee$,
which can be written as
\[p_*\OO_F = \bigoplus_{n\in\ZZ}\ell_F^{\otimes n}.\]
If $G^\vee = \colim_\alpha G^\vee_\alpha$, set
\[M^{(n)}_\alpha=\Gamma(G^\vee_\alpha,\ell_F^{\otimes n}), \qquad M^{(n)}=\lim M^{(n)}_\alpha.\]
The commutative group structure on $F$ gives rise to a cocommutative coalgebra structure on $\bigoplus_{n\in\ZZ} M^{(n)}$, which turns it into a Hopf algebra. Moreover, the comultiplication restricts to a coalgebra structure on each $M^{(n)}.$
From the definition, we see that for any $n$, $M^{(n)}$ is Mittag-Leffler.
Since each $G_\alpha^\vee$ is finite over $\Spec R$, each $M^{(n)}$ is pro-finite. Moreover, $M^{(n)}$ is pro-projective as a pro-$R$-module.
By Remark \ref{remark: pro-projective conditions}, it suffices to show that for each $\alpha$, there exists $\beta \geq \alpha$ such that $\Ext^1_R(M_\alpha^{(n)}, -) \to \Ext^1_R(M_\beta^{(n)},-)$ is zero.
Since $G^\vee \to \Spec R$ is coflat, there is $\beta \geq \alpha$ such that the map $\Ext^1_R(M^{(0)}_\alpha,-) \to \Ext^1_R(M^{(0)}_\beta,-)$ is zero.
Since $\ell_F$ is locally free of rank 1, there is a 
Zariski open cover of $\Spec(M^{(0}_\beta)$ on which $M_\alpha^{(n)}$ and $M_\beta^{(n)}$ are trivialized; the desired vanishing of $\Ext^1_R(M_\alpha^{(n)},- ) \to \Ext^1_R(M_\beta^{(n)},-)$ follows.
\end{remark}

The following example shows that when $G \to \Spec R$ is not necessarily of finite presentation, then the map $BG \to B^{fpqc}G$ need not be an equivalence:

\begin{example}\label{ex: ext1 with countable rank free group}
    Let $\Gamma$ be a free abelian group of infinite rank. We can consider $\Gamma$ as a group ind-finite ind-scheme. For any scheme $X$,
    \begin{equation}\label{eq:ext1}
        \Ext^1_X(\Gamma,\Gm)=\Hom_\ZZ(\Gamma,\Pic(X)).
    \end{equation}
    We claim that the fppf sheafification of this presheaf is non-zero. Thus, if $G = \Gamma^\vee$, Corollary \ref{corollary: when is A 1-dualizable} and Proposition \ref{prop: 1-pairing fpqc} show that the map $BG \to B^{fpqc}G$ of \eqref{eq:fppf to fpqc} is not an equivalence.    
    
    Let us take $X$ to be a smooth non-rational curve over $\C$. Then $\Pic(X)$ is not finitely generated, and we can choose an element of \eqref{eq:ext1} such that the image of $\Gamma$ in $\Pic(X)$ is not finitely generated. It now suffices to check the following claim:
    \begin{proposition} For $X$ as above, let $\pi:Y\to X$ be a fppf cover. Then the kernel of the map $\pi^*:\Pic(X)\to\Pic(Y)$
        is finitely generated.
    \end{proposition}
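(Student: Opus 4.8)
The plan is to reduce the assertion to a concrete statement about a single finite morphism of smooth projective curves. First note that it suffices to prove the claim after replacing $Y$ by an affine open $U\subseteq Y$ whose image contains the generic point $\eta$ of $X$, since a line bundle trivial on $Y$ is trivial on $U$; thus we may assume $Y$ is affine. Let $\bar X$ be the smooth projective model of $X$, so that $X$ is a dense open in $\bar X$. Since $\pi$ is an fppf cover, $Y\to X$ is surjective and locally of finite type, so the fiber $Y_\eta$ is a nonempty scheme locally of finite type over $\C(X)$; I choose a closed point $y\in Y_\eta$, so that $K:=\kappa(y)$ is a finite extension of $\C(X)$. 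Then $W:=\overline{\{y\}}\subseteq Y$ with its reduced structure is an integral curve dominating $X$, its normalization $C\to W$ is finite ($\C$ being excellent), and $C$ is a smooth integral curve with function field $K$ equipped with a morphism $\psi\colon C\to W\hookrightarrow Y\xrightarrow{\pi}X$. Passing to smooth projective models, $\psi$ extends to a finite morphism $\bar\psi\colon\bar C\to\bar X$ of smooth projective curves, of degree $d=[K:\C(X)]$. This construction depends only on the fixed point $y$, not on any line bundle.

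Next I would run the following argument with this fixed data. Let $L\in\ker(\pi^*\colon\Pic(X)\to\Pic(Y))$ and choose $\bar L\in\Pic(\bar X)$ restricting to $L$ on $X$, which is possible because restriction of line bundles from $\bar X$ to the open subcurve $X$ is surjective. Since $\psi$ factors through $Y$ and $\pi^*L$ is trivial, the bundle $\bar\psi^*\bar L$ becomes trivial on $C$; that is, $\bar\psi^*\bar L\in\ker(\Pic(\bar C)\to\Pic(C))$. This kernel is generated by the classes of the finitely many points of $\bar C\setminus C$, hence is finitely generated, and therefore so is its image $V$ under the divisor-class pushforward $\bar\psi_*\colon\Pic(\bar C)\to\Pic(\bar X)$, which is well defined since $\bar\psi_*\,\mathrm{div}(f)=\mathrm{div}(\mathrm{Nm}_{K/\C(X)}f)$. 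Because $\bar\psi_*\circ\bar\psi^*=[d]$ on $\Pic(\bar X)$, we get $[d]\bar L\in V$, so $\bar L$ lies in the subgroup $\Lambda:=\{M\in\Pic(\bar X):[d]M\in V\}$. This $\Lambda$ is finitely generated: it is an extension of the subgroup $V\cap[d]\Pic(\bar X)$ of the finitely generated group $V$ by $\Pic(\bar X)[d]$, and the latter is the $d$-torsion of $\Pic^0(\bar X)=\mathrm{Jac}(\bar X)(\C)$, hence finite (isomorphic to $(\ZZ/d\ZZ)^{2g}$, with $g$ the genus of $\bar X$). Since $L=\bar L|_X$ and $\bar L\in\Lambda$ for every such $L$, we conclude $\ker(\pi^*)$ is contained in the image of $\Lambda$ under $\Pic(\bar X)\to\Pic(X)$ — a fixed finitely generated subgroup of $\Pic(X)$ — and is therefore itself finitely generated.

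The step I expect to require the most care is the geometric reduction in the first paragraph: the key observation is that the closure in $Y$ of a closed point of the generic fiber $Y_\eta$ is automatically an integral curve finitely dominating $X$ and mapping to $Y$, which replaces the arbitrary fppf cover by a single finite morphism of smooth projective curves (in fact only dominance and local finiteness of type of $\pi$ are used, not flatness). Everything afterward is standard: surjectivity of restriction of line bundles to an open subcurve, finite generation of $\ker(\Pic(\bar C)\to\Pic(C))$, the norm pushforward on Picard groups with $\bar\psi_*\bar\psi^*=[d]$, and finiteness of the $d$-torsion of the Jacobian.
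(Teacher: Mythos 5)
Your proof is correct and follows essentially the same strategy as the paper's: choose a point of the generic fiber whose residue field is finite over $\C(X)$, let it trace out a curve in $Y$ giving a multisection of $\pi$ generically finite of some degree $d$ over $X$, and use the norm/pushforward to show the kernel is killed by $d$ up to contributions from finitely many boundary divisors. The only difference is packaging — the paper shrinks to a finite flat cover $V\to U$ of an open $U\subset X$ and applies the norm there, while you normalize and compactify to a finite morphism of smooth projective curves $\bar\psi:\bar C\to\bar X$ and use $\bar\psi_*\bar\psi^*=[d]$ — and both arguments ultimately rest on finite generation of the boundary-divisor subgroups and finiteness of the $d$-torsion of the Jacobian.
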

    Our argument is a simplification of a more general statement proved by J.~Starr on MathOverflow \cite{starrmo}.
    \begin{proof}
         Let $\xi\in X$ be the generic point. There exists a point $\eta\in \pi^{-1}(\xi)\subset Y$ whose residue field is a finite extension of that of $\xi$. Choose an irreducible locally closed subscheme $V\subset Y$ whose generic point is $\eta$. Shrinking $V$ if necessary, we can make it so $\pi$ gives a finite flat cover $\pi|_V:V\to U$ for an open subset $U\subset X$. Since the kernel of the restriction map $\Pic(X)\to\Pic(U)$ is
        finitely generated (by the prime divisors contained in the complement $X-U$), it suffices to check that the kernel of the pullback \[\pi^*:\Pic(U)\to\Pic(V)\] is finitely generated. However, the norm map shows that the kernel is annihilated by $\deg(\pi|_V)$, which implies the statement.
    \end{proof}
\end{example}

\subsection{The 1-dual of an affine group}
\label{subsection: h-torsors}

In this section, we study the 1-dual of $G$ when $G \to \Spec R$ is a flat Mittag-Leffler affine commutative group scheme. Once again, there may be extensions of $G$ by $\Gm$ which do not split locally in fppf topology, but split in some finer topology. In §\ref{subsection: 1-dual of G dual}, that finer topology was the fpqc topology, since $G \to \Spec R$ is an fpqc cover. In this section, we must use the $h$-topology, since $G^\vee \to \Spec R$ is an $h$-cover.

By Corollary \ref{corollary: when is A 1-dualizable}, the map $BG^\vee \to G^D$ induced by the pairing $G^\vee \times G \to \Gm$ is an equivalence if and only if $\Extu^1(G,\Gm) = 0$.
Here is an example showing that $\Extu^1(G,\Gm)$ need not be zero:
\begin{example}\label{ex: nontrivial extension of Ga by Gm}
    If $k$ is a field of characteristic zero, then $\Extu^1(\Ga,\Gm) \neq 0$ over $k$.
    The example is essentially due to Gabber \cite[Remark 2.2.16]{Ros23}. 
    Let $C = \Spec k[x,y]/(y^2 - x^3)$ be the cuspidal cubic; we will construct a point of $\Ext^1_C(\Ga,\mathbb G_m)$ that does not vanish fppf locally on $C$.\footnote{Gabber works with the tacnode $\Spec k[x,y] / (y^2 - x^4)$ instead, viewed as two copies of $\mathbb A^1$ glued along $\Spec k[\epsilon]/\epsilon^2$.} Our explanation is based on the same ideas, but is shortened by the use of results of de Jong.
    
    Consider the normalization map $\mathbb A^1 \to C$. We can view $C$ as $\mathbb A^1$ where the tangent vector is contracted to a point, that is, as the affine pushout 
    \[
\begin{tikzcd}[ampersand replacement=\&]
	{\Spec k[\epsilon]/\epsilon^2} \& {\mathbb A^1} \\
	{\Spec k} \& C
	\arrow[from=1-1, to=1-2]
	\arrow[from=1-1, to=2-1]
	\arrow[from=1-2, to=2-2]
	\arrow[from=2-1, to=2-2]
\end{tikzcd}.\]
    Define an extension $0 \to (\Gm)_C \to E \to (\Ga)_C \to 0$ as follows: take the trivial extension on $\mathbb A^1$, then glue it to itself along $\Spec k[\epsilon]/\epsilon^2$ by the automorphism
    \[ \begin{pmatrix} 1 & \phi \\ 0 & 1\end{pmatrix} : \Gm \times \mathbb G_a \to \mathbb G_m \times \mathbb G_a\]
    where $\phi \in \Hom_{k[\epsilon]/\epsilon^2}(\Ga,\mathbb G_m)$ is the homomorphism $\phi(x) = 1 + \epsilon x$.
    Taking the Cartier duals, we find an exact sequence
    \begin{equation}\label{eq: dual left exact sequence}
        0 \to (\hat{\mathbb G}_a)_C \to E^\vee \to \mathbb{Z}_C.
    \end{equation}
    Suppose towards contradiction that $(\mathbb G_m)_C \to E$ has a retract fppf locally on $C$. Then $E^\vee \to \mathbb{Z}_C$ has a section fppf locally on $C$, so $T := E^\vee \times_{{\mathbb Z}_C} 1 \to C$ has a section fppf locally on $C$. By \eqref{eq: dual left exact sequence}, $T \to C$ is a $\hat{\mathbb{G}}_a$-torsor over $C$.
    By a theorem of de Jong \cite[Remark 2.2.18]{Bhatt22}, $H^1_{fppf}(\Spec R,\hat{\mathbb{G}}_a) = 0$ for any $\mathbb{Q}$-algebra $R$.
    Thus $T\to C$ has a section, which implies that $0 \to (\hat{\mathbb{G}}_a)_C \to E^\vee \to \mathbb{Z}_C$ is a split extension, which implies that $0 \to (\Gm)_C \to E \to (\Ga)_C \to 0$ is a split extension.
    But this extension is not split since $\phi$ is a nontrivial homomorphism. We have reached the desired contradiction, so $[E]$ is a nontrivial point of $\Extu^1(\Ga,\Gm)(C)$.
    Further, \eqref{eq: dual left exact sequence} is not short exact, giving an example where Cartier duality is not exact.

\end{example}

While Example \ref{ex: nontrivial extension of Ga by Gm} shows that $\Extu^1(-,\Gm)$ is not always zero, there are some vanishing results known, which we collect below. 

\begin{proposition}[\cite{SGA7I}, Exposé VIII, Proposition 3.3.1]
    Suppose that $G \to \Spec R$ is a commutative group scheme which is either finite locally free or of multiplicative type. Then 
    \[ \Extu^1( G, \Gm) = 0.\]
\end{proposition}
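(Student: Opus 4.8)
The plan is to reduce, working fppf-locally on $\Spec R$, to the simplest group schemes of each type and to treat those directly. Two general facts are used throughout: $\Extu^1(-,\Gm)$ is a sheaf for the fppf topology, and it is additive, so that $\Extu^1(\prod_i G_i,\Gm)\cong\prod_i\Extu^1(G_i,\Gm)$ for a finite product of commutative group schemes. Hence we are free to pass to an fppf (or even étale, or Zariski) cover of $\Spec R$ and to split $G$ into factors. We also use, exactly as in Lemma~\ref{lemma: extension of ML groups}, that every extension of $G$ by $\Gm$ in $\Shvab$ is automatically representable by an affine commutative group scheme over $\Spec R$.

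First I would dispose of the finite locally free case, which is essentially a formal consequence of the machinery already in place. If $G$ is finite locally free, then its Cartier dual $G^\vee$ is again finite locally free and $G\cong G^{\vee\vee}$ by classical finite Cartier duality \cite[VIIA]{SGA3I}. But a finite locally free commutative group scheme is in particular a flat Mittag-Leffler affine commutative group scheme of finite presentation, since its coordinate ring is a finitely generated projective, hence flat Mittag-Leffler, $R$-module. Applying Corollary~\ref{corollary: ext1 vanishing for dual of finite presentation G} to $H:=G^\vee$ then gives $\Extu^1(G,\Gm)=\Extu^1(H^\vee,\Gm)=0$.

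For the multiplicative type case I would reduce to tori and their torsion subgroups. Fppf-locally on $\Spec R$, a group of multiplicative type is diagonalizable, $G\cong D(M)$ for a finitely generated abelian group $M$; decomposing $M$ into cyclic summands and using that $D$ sends direct sums to products, $G$ becomes a finite product of copies of $\Gm$ and of finite groups $\mu_n$. The factors $\mu_n$ are finite locally free, hence covered by the previous paragraph, so by additivity everything comes down to the single assertion $\Extu^1(\Gm,\Gm)=0$. Here is how I would argue this. Given an extension $0\to\Gm\to E\to\Gm\to 0$ in $\Shvab$, the group $E$ is a smooth, finitely presented, affine commutative group scheme whose geometric fibers are two-dimensional tori: a geometric fiber is a connected commutative affine group, hence decomposes as a torus times a unipotent part, and the unipotent part must be trivial since it would otherwise be contained in the kernel of the surjection onto $\Gm$, which is $\Gm$. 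By the structure theory of groups of multiplicative type \cite{SGA3I}, $E$ is itself a torus; its character sheaf is an étale-locally constant rank-two free abelian sheaf $\Lambda$, and the sub- and quotient copies of $\Gm$ yield a short exact sequence $0\to\underline{\ZZ}\to\Lambda\to\underline{\ZZ}\to 0$ of étale local systems. Étale-locally on $\Spec R$ this sequence splits because $\ZZ$ is free, and a splitting of $\Lambda$ translates, under the anti-equivalence $D$, into a splitting of $0\to\Gm\to E\to\Gm\to 0$. (One may instead invoke the classical computation that every symmetric $2$-cocycle $\Gm\times\Gm\to\Gm$ becomes a coboundary after localizing on the base, or take the statement on faith from \cite[Exposé VIII, Proposition 3.3.1]{SGA7I}.)

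The step I expect to be the main obstacle is precisely $\Extu^1(\Gm,\Gm)=0$, the rigidity of $\Gm$: the finite locally free case follows formally from Corollary~\ref{corollary: ext1 vanishing for dual of finite presentation G}, and the reduction of the multiplicative type case to $\mu_n$ and $\Gm$ is routine, but the torus $\Gm$ genuinely requires an external input. Within that step the technical care lies in descending to a base over which $E$ is recognizably a torus — for instance first reducing to a Noetherian $R$ by a standard limit argument, then to a strictly henselian local ring — and in checking that the resulting trivialization respects the two distinguished copies of $\Gm$ inside $E$.
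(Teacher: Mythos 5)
The paper does not actually prove this statement: it is quoted from \cite[Exposé VIII, Proposition 3.3.1]{SGA7I} as a collected known vanishing result, so any argument you supply is necessarily a different route. Your finite locally free case --- dualize and apply Corollary~\ref{corollary: ext1 vanishing for dual of finite presentation G} to $H=G^\vee$ --- is exactly the observation the authors themselves record in the sentence following that corollary, and it is correct and non-circular, since the corollary descends from Lemma~\ref{lemma: ind-triviality} without reference to SGA7. Your multiplicative type case is where the weight sits, and two points deserve flagging. First, decomposing into cyclic factors presumes the character group is finitely generated; the SGA3 notion of multiplicative type allows arbitrary character groups, and for $D(M)$ with $M$ not finitely generated your finite-additivity reduction does not apply, so your proof covers a priori less than the cited statement. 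Second, in the key step $\Extu^1(\Gm,\Gm)=0$, the assertion that the character sheaves form a \emph{short exact} sequence $0\to\ZZ\to\Lambda\to\ZZ\to 0$ is precisely the assertion that some character of $E$ restricts to the identity on the sub-$\Gm$, i.e.\ that the extension splits --- as written this begs the question. It is repairable: check surjectivity of $\Lambda\to\ZZ$ on geometric stalks, where $D$ is an exact anti-equivalence over an algebraically closed field, and use that exactness of \'etale-locally constant sheaves is detected on stalks. But that repair, together with the SGA3 theorem that a smooth affine group with torus fibers is a torus, means this route leans on classical inputs at least as heavy as the result itself; and your parenthetical fallback of citing \cite[Exposé VIII, Proposition 3.3.1]{SGA7I} for the $\Gm$ case is circular, since that is the statement being proved.
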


\begin{proposition}[\cite{Ros23}, Proposition 2.2.17]
\label{prop: ext1 vanishing in characteristic p}
    Let $k$ be a field of characteristic $p > 0$ and $G$ be a affine commutative group scheme of finite type over $k$.
    Then 
    \[ \Extu^1(G, \Gm) = 0.\]
\end{proposition}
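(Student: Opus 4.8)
The plan is to reduce, by dévissage, to the basic building blocks of affine commutative group schemes in characteristic $p$, two of which — the finite and multiplicative-type cases — are \cite[Exposé VIII, Proposition 3.3.1]{SGA7I}. First I would reduce to the case $k = \bar k$: the vanishing of an fppf sheaf can be tested after faithfully flat base change, and since $G$ is of finite presentation, a limit argument over the finite subextensions of $\bar k/k$ shows that $\Extu^1(G,\Gm) = 0$ follows once $\Extu^1(G_{\bar k},\Gm) = 0$. Over the (now perfect) field $k$, the classical structure theory yields a composition series of $G$ whose successive quotients are: finite group schemes (the component group $G/G^0$, and the infinitesimal part of $G^0$), a torus (the maximal subtorus of $(G^0)_{\mathrm{red}}$), and copies of $\Ga$ (from a filtration of the smooth connected unipotent part of $(G^0)_{\mathrm{red}}$, which is split because $k$ is perfect). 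Since $\Extu^1(-,\Gm)$ is the first derived functor of $\Homs(-,\Gm)$ it carries a short exact sequence to a long exact one, so it suffices to prove $\Extu^1(H,\Gm) = 0$ for $H$ finite, $H$ a torus, and $H = \Ga$; the first two are the cited result.

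The case $H = \Ga$ is the heart of the matter, and I would treat it via the Frobenius. Set $V = \Extu^1(\Ga,\Gm)$. Multiplication by $p$ on $\Ga$ is the zero endomorphism, so $p$ annihilates $V$. Let $F\colon \Ga \to \Ga$, $x \mapsto x^p$, be the Frobenius endomorphism, with kernel $\alpha_p$; from the fppf-exact sequence $0 \to \alpha_p \to \Ga \xrightarrow{F} \Ga \to 0$ and the already-established vanishing $\Extu^1(\alpha_p,\Gm) = 0$ (a finite group scheme) one reads off that $F^*\colon V \to V$ is surjective. On the other hand, the relative Frobenius of an extension $0 \to \Gm \to E \to \Ga \to 0$ gives a morphism of extensions $E \to E^{(p)}$ inducing $F$ on the $\Ga$-quotient and the $p$-th power homomorphism $[p]\colon \Gm \to \Gm$ on the $\Gm$-subgroup; by functoriality of extension classes $F^*[E^{(p)}] = [p]_*[E] = p\cdot[E] = 0$, so $F^*$ annihilates every Frobenius twist. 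Since $\Spec k$ is perfect, Frobenius twisting is a bijection on sections over it, so $F^*$ vanishes there; feeding this back into the surjectivity of $F^*$ (with some additional care over non-perfect test schemes) gives $V = 0$.

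I expect the step $H = \Ga$ to be the main obstacle, and within it the passage from "$F^*$ is surjective and kills all Frobenius twists" to "$V = 0$" over an arbitrary test scheme, where the absolute Frobenius is not an fppf cover, so realizing a general section of $V$ as a Frobenius twist needs a careful bootstrapping argument; this is the content of \cite[Proposition 2.2.17]{Ros23}. By contrast, the reduction to $k = \bar k$ and the dévissage are routine, granted the structure theory of affine commutative group schemes over a perfect field.
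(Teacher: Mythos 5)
The paper gives no proof of this proposition: it is quoted verbatim from Rosengarten (\cite[Proposition 2.2.17]{Ros23}), with only the remark that the proof uses the structure theory of algebraic groups. So there is no in-paper argument to compare against, and the question is whether your blind reconstruction stands on its own.

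The soft parts of your sketch are correct and are surely part of any proof: the descent to $k=\bar k$ via a limit argument over finite subextensions, the d\'evissage along a composition series into finite group schemes, a torus, and copies of $\Ga$ (using that the long exact sequence for $R\Homs(-,\Gm)$ reduces the claim to the graded pieces, the first two of which are \cite[Expos\'e VIII, Proposition 3.3.1]{SGA7I}), the identity $p\cdot[E]=[p]_{\Ga}^*[E]=0$, the surjectivity of $F^*$ from the sequence $0\to\alpha_p\to\Ga\to\Ga\to 0$, and the relation $F^*[E^{(p)}]=[p]_*[E]=0$. This correctly proves that the Ext \emph{group} $\Ext^1_U(\Ga,\Gm)$ vanishes whenever $U$ is the spectrum of a perfect field.

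The genuine gap is exactly where you flag it, and it is not a minor technicality but the entire content of the cited result. The proposition asserts vanishing of the Ext \emph{sheaf}, i.e.\ over every test scheme $U$ over $k$, including non-reduced and non-(semi)normal ones --- precisely the class of schemes on which the characteristic-zero counterexample of Example \ref{ex: nontrivial extension of Ga by Gm} (the cuspidal cubic) lives, where $\Pic(\mathbb A^1_U)\neq\Pic(U)$ and the naive cocycle computation breaks down. Over such $U$ the classes of the form $[E^{(p)}]$ are only those in the image of pullback along the absolute Frobenius of $U$, which is far from all of $V(U)$, so the chain ``$F^*$ surjective, $F^*$ kills twists, hence $V=0$'' does not close. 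Your proposal resolves this by citing \cite[Proposition 2.2.17]{Ros23} --- the statement being proved --- so as a self-contained argument it is circular at its decisive step. To complete it you would need the actual bootstrapping over imperfect and non-reduced bases (e.g.\ controlling $\Pic(\mathbb A^1_U)/\Pic(U)$ and the units $\Gamma(\mathbb A^1_U,\OO)^\times/\Gamma(U,\OO)^\times$ under Frobenius pullback), which is the substance of Rosengarten's proof.
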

Proposition \ref{prop: ext1 vanishing in characteristic p} shows that the behavior of $\Ga$ in Example \ref{ex: nontrivial extension of Ga by Gm} is exclusively a characteristic zero phenomenon.
The proof of Proposition \ref{prop: ext1 vanishing in characteristic p} makes use of the structure theorems for algebraic groups.

Even though $\Extu^1(G,\Gm)$ is not always zero, every extension of $G$ by $\Gm$ splits when pulled back to a geometric point:

\begin{lemma}\label{lemma: ext-vanishing on geometric points}
    Let $k$ be an algebraically closed field and $G \to \Spec k$ an affine commutative group scheme.
    Then \[ \Ext^1_k(G,\Gm) = 0.\]
\end{lemma}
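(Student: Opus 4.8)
The plan is to reduce the statement to the structure theory of affine commutative group schemes over the perfect field $k$, together with the vanishing of cohomology of étale group schemes over an algebraically closed field. Since $k$ is perfect, $G$ decomposes canonically as a product $G \cong M \times U$ with $M$ of multiplicative type and $U$ unipotent; this holds for \emph{all} affine commutative group schemes over $k$, not merely those of finite type, because the category of affine commutative group schemes over $k$ is the pro-completion of the category of algebraic ones and pro-completion preserves products. Consequently $\Ext^1_k(G,\Gm) \cong \Ext^1_k(M,\Gm) \oplus \Ext^1_k(U,\Gm)$, and it suffices to kill each summand.

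For the multiplicative part, $\Extu^1(M,\Gm) = 0$ by \cite[Exposé VIII, Proposition 3.3.1]{SGA7I}, so the low-degree terms of the local-to-global spectral sequence $H^p_{fppf}(\Spec k, \Extu^q(M,\Gm)) \Rightarrow \Ext^{p+q}_k(M,\Gm)$ give an isomorphism $\Ext^1_k(M,\Gm) \cong H^1_{fppf}(\Spec k, \Homs(M,\Gm))$. The Cartier dual $\Homs(M,\Gm)$ is an étale group scheme over $k$; since $k$ is algebraically closed its small étale site is trivial, and as fppf and étale cohomology agree for smooth group schemes one gets $H^1_{fppf}(\Spec k,\Homs(M,\Gm)) = H^1_{\et}(\Spec k,\Homs(M,\Gm)) = 0$.

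For the unipotent part I separate the two characteristics. If $\ch k = p > 0$, then $\Homs(U,\Gm) = 0$ and $\Extu^1(U,\Gm) = 0$ by Proposition \ref{prop: ext1 vanishing in characteristic p} (directly for $U$ of finite type; the general case reduces to this, since $\Homs(U,\Gm)$ already vanishes so there is no $\varprojlim^1$ obstruction in writing $U$ as a cofiltered limit of finite type unipotent quotients), so the same spectral sequence gives $\Ext^1_k(U,\Gm) = 0$. If $\ch k = 0$, then $U$ is smooth by Cartier's theorem and hence a vector group $U \cong \Ga^I = \mathbb A^I_k$ for a possibly infinite set $I$. An extension $0 \to \Gm \to E \to \Ga^I \to 0$ in $\Shvab$ exhibits $E$ as an affine scheme which is a $\Gm$-torsor over $\mathbb A^I_k$; as $k[x_i : i \in I]$ is a unique factorization domain, $\Pic(\mathbb A^I_k) = 0$, so the torsor is trivial, $E \cong \Gm \times \mathbb A^I_k$ as a scheme, and the group law is encoded by a pointed symmetric $2$-cocycle $\mathbb A^I_k \times \mathbb A^I_k \to \Gm$. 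Since $\OO^\times(\mathbb A^I_k \times \mathbb A^I_k) = k^\times$, the only such cocycle is trivial, so the extension splits and $\Ext^1_k(\Ga^I,\Gm) = 0$.

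The main obstacle is the unipotent case. In characteristic zero one needs both the classification of commutative unipotent groups as (possibly infinite-dimensional) vector groups and the elementary but essential computation that an arbitrary polynomial ring over a field has trivial Picard group and only constant units; in characteristic $p$ one leans on the structure-theoretic input behind Proposition \ref{prop: ext1 vanishing in characteristic p}. One also uses throughout that $H^1_{fppf}(\Spec k,-)$ vanishes on étale — and, in the dévissage, finite — group schemes over the algebraically closed field $k$, which is easy but genuinely uses that $k$ is algebraically closed.
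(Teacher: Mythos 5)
Your dévissage is a genuinely different strategy from the paper's, and most of it goes through, but the characteristic~$p$ unipotent case contains a concrete error. The claim that $\Homs(U,\Gm)=0$ for unipotent $U$ in characteristic $p>0$ is false: $\alpha_p$ and $\ZZ/p$ are unipotent (they appear in the unipotent factor of the multiplicative--unipotent decomposition over a perfect field), and $\Homs(\alpha_p,\Gm)\cong\alpha_p$, $\Homs(\ZZ/p,\Gm)\cong\mu_p$ are nonzero. This breaks the step of your argument in two places: the low-degree term $H^1_{fppf}(\Spec k,\Homs(U,\Gm))$ does not vanish for the reason you give (it does vanish, but because $H^1_{fppf}$ of a finite, or ind-finite, commutative group scheme over an algebraically closed field is zero --- a fact you would need to invoke explicitly), and your reduction from finite type $U$ to a general affine $U=\lim U_i$ rests entirely on this false vanishing. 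Even granting the correction, the passage from $\Extu^1(U_i,\Gm)=0$ to $\Extu^1(U,\Gm)=0$ (or from $\Ext^1(U_i,\Gm)$ to $\Ext^1(U,\Gm)$) for a cofiltered limit is not a formal ``no $\varprojlim^1$'' statement and needs an actual argument that an extension of $U$ by $\Gm$ descends to some algebraic quotient $U_i$. The multiplicative case and the characteristic-zero unipotent case ($U\cong\prod_J\Ga$, $\Pic(k[x_j])=0$, units are constants, so the symmetric $2$-cocycle is trivial) are fine.

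For comparison, the paper avoids all structure theory and all case analysis. Given an extension $0\to\Gm\to E\to G\to 0$, it uses that $E$ is an affine commutative group scheme, picks a weight-$1$ function $f\in k[E]$ for the translation action of $\Gm$ (nonzero because $k[E]\to k[\Gm]$ is surjective), encloses $f$ in a finite-dimensional $E$-subrepresentation, and extracts an irreducible subquotient of weight $1$; since $E$ is commutative and $k$ is algebraically closed, that irreducible is one-dimensional, i.e.\ a character $E\to\Gm$ restricting to the identity on $\Gm$, which splits the extension. This uniform representation-theoretic argument is shorter, works in all characteristics at once, and sidesteps both the infinite-type decomposition and the limit issues that your approach has to confront. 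If you want to keep your route, you must repair the characteristic~$p$ case along the lines indicated above.
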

\begin{proof}
    Suppose that $0 \to \Gm \to E \to G \to 0$ is an extension in $\Shvab$.
    By Lemma \ref{lemma: extension of ML groups}, $E$ is an affine commutative group scheme.
    Since the map $\Gm \to E$ is injective, there exists a function $f \in k[E]$ of weight 1.
    There exists a finite-dimensional $E$-subrepresentation $V \subseteq k[E]$ containing $f$.
    Taking a composition series, there exists an irreducible subquotient of $V$ on which $\Gm$ acts by weight 1.
    As $k$ is algebraically closed and $E$ is commutative, this irreducible representation of $E$ is one-dimensional.
    Thus, we have constructed a character $E \to \Gm$ whose restriction to $\Gm$ is the identity, that is, a splitting of our extension.
\end{proof}

Lemma \ref{lemma: ext-vanishing on geometric points} suggests that the 1-dual of an affine group scheme $G$ classifies families of $G^\vee$-homogeneous spaces which are surjective on geometric points. This leads to the following definition:

\begin{definition}\label{def: h-torsor}
    Let $S$ be a scheme and let $\mathcal G \to S$ be a group ind-finite ind-scheme over $S$.
    A $\mathcal{G}$-torsor in $h$-topology is 
    an ind-finite ind-scheme $\mathcal{P} \to S$ 
    equipped with an action of $\mathcal G$ such that 
    \begin{enumerate}
        \item the morphism $\mathcal G \underset{S}{\times} \mathcal{P} \to \mathcal{P} \underset{S}\times \mathcal{P}$ given by $(g,p) \mapsto (gp,p)$ is an isomorphism;
        \item the morphism $\mathcal{P} \to S$ is surjective in $h$-topology.
    \end{enumerate}
\end{definition}
The notion of $h$-topology is reviewed in the Appendix, §\ref{subs: h-top}.
In the Noetherian setting, an ind-finite morphism is surjective in $h$-topology if and only if it is surjective on geometric points (Lemma \ref{lemma: ind-finite noetherian surjective on geometric pts}).

\begin{example}
    In Example \ref{ex: nontrivial extension of Ga by Gm},
    the morphism $T = E^\vee \times_{\ZZ_C}1 \to C$ over the cuspidal cubic $C$ is a $\hat{\mathbb{G}}_a$-torsor in $h$-topology, as the map $T \to C$ is surjective on geometric points. Example \ref{ex: nontrivial extension of Ga by Gm} shows that $T\to C$ is not fppf-locally trivial.
\end{example}

\begin{remark}
    Under mild assumptions, the condition that $\mathcal P \to S$ is an ind-finite ind-scheme is automatic. More precisely, 
    suppose that $S$ is Noetherian and admits a dualizing complex,
    and $\mathcal G \to S$ is a coflat group ind-finite ind-scheme.
    If $\mathcal P \to S$ is a prestack satisfying conditions (1) and (2) of Definition \ref{def: h-torsor}, then $\mathcal{P} \to S$ is ind-finite ind-schematic.
    Since $\mathcal{P} \to S$ is surjective in $h$-topology, there is an $h$-cover $Y \to S$ such that $\mathcal P|_Y$ admits a section, and thus $\mathcal{P}|_Y$ is representable by a coflat ind-finite ind-scheme over $Y$.
    By $h$-descent for $\IndCoh$ \cite[Chapter 5, Lemma 3.3.6]{GR17I}, the distribution coalgebra for $\mathcal{P}|_Y$ descends to $S$ (see Proposition \ref{prop: distributions via omega}).
\end{remark}

Let $\mathcal G$ be a commutative group ind-finite ind-scheme over $S$. Given $\mathcal G$-torsors $\mathcal{P}$
and $\mathcal{P}'$ in $h$-topology, the sum $\mathcal{P} + \mathcal{P}' := \mathcal{P} \times_{\mathcal{G}} \mathcal{P}'$ is also a $\mathcal G$-torsor in $h$-topology.
Thus $\mathcal{G}$-torsors in $h$-topology form a Picard category.

\begin{lemma}
    Suppose that $\mathcal G \to S$ is a coflat group ind-finite ind-scheme.
    Then assigning $U \to S$ to the groupoid of coflat $\mathcal G_U$-torsors in $h$-topology is a sheaf in the fppf topology.
\end{lemma}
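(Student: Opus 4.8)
The plan is to reduce the statement to ordinary fpqc descent for quasi‑coherent sheaves, by passing from a torsor to its distribution coalgebra. Since the assignment $U \mapsto \{\text{coflat } \mathcal G_U\text{-torsors in the } h\text{-topology}\}$ is a Zariski stack and sends disjoint unions to products, it suffices to check the fppf descent condition for a single faithfully flat, finitely presented morphism $U' = \Spec B' \to \Spec B = U$ over $S$; write $U'^{\bullet}$ for the associated \v{C}ech nerve. I would first dispatch the full‑faithfulness half: for coflat $\mathcal G_U$‑torsors $\mathcal P_1,\mathcal P_2$ in the $h$‑topology, the presheaf $V \mapsto \{\mathcal G_V\text{-equivariant isomorphisms } \mathcal P_1|_V \xrightarrow{\sim} \mathcal P_2|_V\}$ is a subpresheaf of $\Homs_U(\mathcal P_1,\mathcal P_2)$; since ind‑schemes are fpqc sheaves \cite[Chapter~2, Proposition~1.2.2]{GR17II}, so is $\Homs_U(\mathcal P_1,\mathcal P_2)$, and hence so is the subpresheaf of equivariant isomorphisms. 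A fortiori it is an fppf sheaf, which is exactly full faithfulness of the descent functor on morphisms.

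For effectivity of descent data I would translate the problem into linear algebra. By Corollary~\ref{corollary: ML proalgebras}, Proposition~\ref{prop:ind-schemes}, and the linear duality of Corollary~\ref{cor: linear duality equivalence}, a coflat ind‑finite ind‑scheme $\mathcal P \to \Spec B$ is the same datum as a commutative coalgebra object $\mathcal D(\mathcal P) := C^\vee$ of $\Mod_B^{fl,\ML}$ (the distribution coalgebra; cf.\ Proposition~\ref{prop: distributions via omega}), where $C = \lim_\alpha C_\alpha$ is the pro‑coordinate ring of $\mathcal P$; the crucial point is that although $C$ is a pro‑object, its dual $C^\vee$ is an honest module. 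Dualizing the structures of Definition~\ref{def: h-torsor} and using that linear duality is symmetric monoidal (Proposition~\ref{prop:linear duality monoidal}), a coflat $\mathcal G_B$‑torsor in the $h$‑topology becomes: a coalgebra $D \in \Mod_B^{fl,\ML}$; a module structure over the distribution Hopf algebra $H_B := \mathcal D(\mathcal G_B)$ compatible with the coalgebra structure; the property that the canonical map $H_B \otimes_B D \to D \otimes_B D$ (dual to the map $(g,p)\mapsto(gp,p)$) is an isomorphism; and the property that $\mathcal P := \SpInd(D^\vee) \to \Spec B$ is surjective in the $h$‑topology. This dictionary is functorial in $B$ and stable under base change, and identifies our prestack of torsors with the prestack of such linear‑algebraic data.

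Granting the dictionary, I would carry out the descent in four steps. First, given a descent datum of such objects along $U' \to U$, its underlying quasi‑coherent sheaf descends to a quasi‑coherent sheaf $D$ on $U$ because $\QCoh$ satisfies fpqc descent, and $D$ is again flat Mittag‑Leffler since both conditions are fpqc‑local \cite[Lemmas~10.94.1, 10.95.1]{stacks-project}. Second, the coalgebra structure and the $H$‑module structure descend — note $H_{\OO_{U'}} = H_{\OO_U}\otimes_{\OO_U}\OO_{U'}$, so the latter is descent of a module over a globally defined Hopf algebra — because fpqc descent for $\QCoh$ is an equivalence of symmetric monoidal $1$‑categories preserving tensor products and small limits and colimits, hence induces equivalences on the categories of coalgebra objects and of $H$‑modules; moreover the two isomorphism conditions descend because the property of a map being an isomorphism is fpqc‑local. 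Third, dualizing back, $D^\vee$ is a commutative pro‑algebra in $(\ProMod_{\OO_U}^f)^{pproj,\ML}$ carrying a $\mathcal G_U$‑action, so $\mathcal P := \SpInd(D^\vee)$ is a coflat ind‑finite ind‑scheme over $U$ with a $\mathcal G_U$‑action for which $(g,p)\mapsto(gp,p)$ is an isomorphism. Fourth, $\mathcal P \to U$ is surjective in the $h$‑topology: $\mathcal P|_{U'} \cong \mathcal P'$ is an $h$‑cover of $U'$ and $U' \to U$ is fppf, hence an $h$‑cover, so $\mathcal P|_{U'} \to U$ is an $h$‑cover that factors through $\mathcal P \to U$, forcing $\mathcal P \to U$ to be an $h$‑cover. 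Hence the descent datum is effective, and the lemma follows.

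The only step carrying real content is the dictionary of the second paragraph: identifying coflat $\mathcal G_U$‑torsors in the $h$‑topology with the stated linear‑algebraic data, compatibly with base change — in particular checking that the $\mathcal G$‑action dualizes to the $H_B$‑module structure and that the torsor isomorphism of Definition~\ref{def: h-torsor} dualizes to $H_B \otimes_B D \xrightarrow{\sim} D \otimes_B D$. Once this dictionary is in place the descent involves no further geometric input: it is a combination of fpqc descent for quasi‑coherent sheaves, the fpqc‑locality of the flat Mittag‑Leffler condition, and the elementary fact that fppf covers are $h$‑covers.
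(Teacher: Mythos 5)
Your proposal is correct and takes essentially the same route as the paper: both pass through linear duality to identify coflat ind-finite ind-schemes with flat Mittag-Leffler (distribution) coalgebras, invoke fpqc descent for flat Mittag-Leffler modules (\cite[Lemma 10.95.1]{stacks-project}) to descend the underlying object and its algebraic structures, and observe that surjectivity in the $h$-topology descends because fppf covers are $h$-covers. Your write-up is more detailed about the dictionary and the full-faithfulness half, but the mathematical content coincides with the paper's proof.
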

The nontrivial part of the statement is that we need to check that being representable by a coflat ind-finite ind-scheme and surjectivity in $h$-topology both descend.
\begin{proof}
    It suffices to prove the claim when $S = \Spec R$ is affine.
    First, we show that coflat ind-finite ind-schemes satisfy fppf descent.
    From §\ref{subsection: proalgebras}, coflat ind-finite ind-schemes are of the form $\SpInd A^\vee$ where $A$ is a Mittag-Leffler coalgebra over $R$. By \cite[Lemma 10.95.1]{stacks-project}, flat Mittag-Leffler modules satisfy faithfully flat descent.

    Now suppose $R \to R'$ is a faithfully flat morphism of rings. By descent, we find that the category of coflat $\mathcal{G}$-spaces over $\Spec R$ is equivalent to the totalization of the category of coflat $\mathcal{G}$-spaces over $\Spec (R')^{\otimes \bullet}$.
    Let $\mathcal{P} \to \Spec R$ be a coflat $\mathcal{G}$-space,
    and let $\mathcal{P}' \to \Spec R'$ be its pullback along $\Spec R' \to \Spec R$.
    The condition that $\mathcal{G} \times \mathcal{P} \to \mathcal{P} \times \mathcal{P}$ is an isomorphism can be checked after base change to $\Spec R'$.
    Since fppf covers are $h$-covers, it is immediate that $\mathcal{P} \to \Spec R$ is surjective in $h$-topology if and only if $\mathcal{P}' \to \Spec R'$ is.
\end{proof}


\begin{definition}
    If $\mathcal G \to S$ is a coflat commutative group ind-finite ind-scheme, then $B^{h}\mathcal G$ is the commutative group stack of coflat $\mathcal G$-torsors in $h$-topology.
\end{definition}

\begin{theorem}
    Let $G \to S$ be an flat Mittag-Leffler affine commutative group scheme 
    over locally Noetherian $S$.
    Then 
    \[ G^D \simeq B^{h}G^\vee.\]
\end{theorem}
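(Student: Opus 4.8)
The plan is to construct an explicit morphism of commutative group stacks $\Phi\colon B^{h}G^\vee\to G^D=\Homs(G,B\Gm)$ and to check that it induces isomorphisms on $\pi_0$ and on $\pi_1$; a morphism of commutative group stacks with this property is an equivalence. Since the statement is local over $S$ (as in the Conventions), I may assume $S=\Spec R$ with $R$ Noetherian. To build $\Phi$, over a test scheme $U$ let $\mathcal P$ be a coflat $G^\vee$-torsor in the $h$-topology and let $\mathcal P^{+n}$ be its $n$-fold sum, with $\mathcal P^{+0}$ the trivial torsor $G^\vee_U$ and negative powers the inverse torsor. The disjoint union $\widetilde{\mathcal P}:=\bigsqcup_{n\in\ZZ}\mathcal P^{+n}$, with multiplication $\mathcal P^{+n}\times\mathcal P^{+m}\to\mathcal P^{+(n+m)}$ given by the sum of torsors, is a coflat commutative group ind-finite ind-scheme over $U$ fitting in an exact sequence $0\to G^\vee_U\to\widetilde{\mathcal P}\to\ZZ\to 0$. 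Applying $\Homs(-,\Gm)$ and using that $R\Homs(\ZZ,-)=\mathrm{id}$ (so $\Homs(\ZZ,\Gm)=\Gm$ and $\Extu^1(\ZZ,\Gm)=0$), together with the identifications $\Homs(\widetilde{\mathcal P},\Gm)=\widetilde{\mathcal P}^\vee$ and $\Homs(G^\vee,\Gm)=G$ coming from Theorem \ref{theorem: perfect pairing for flat ML group} and biduality, one gets an honest extension $1\to\Gm\to\widetilde{\mathcal P}^\vee\to G_U\to 1$, i.e. a point $\Phi(\mathcal P):=\widetilde{\mathcal P}^\vee$ of $G^D(U)$. These constructions are natural in $U$ and take sums of torsors to Baer sums, so $\Phi$ is a morphism of commutative group stacks.

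Step 1: $\Phi$ is an isomorphism on $\pi_1$. We have $\pi_1(B^{h}G^\vee)=G^\vee$ (automorphisms of the trivial torsor) and $\pi_1(G^D)=\Homs(G,\Gm)=G^\vee$ (Proposition \ref{prop: 1-dual and ext1-vanishing} and Theorem \ref{theorem: perfect pairing for flat ML group}), and unwinding the construction $\Phi$ induces the identity between them; this is exactly the perfectness of the Cartier pairing $G\times G^\vee\to\Gm$.

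Step 2: $\Phi$ is an isomorphism on $\pi_0$, where $\pi_0(G^D)=\Extu^1(G,\Gm)$ (Proposition \ref{prop: 1-dual and ext1-vanishing}). This is the heart of the proof. For surjectivity, a section of $\Extu^1(G,\Gm)$ over $U$ is, fppf-locally on $U$, an honest extension $1\to\Gm\to E\to G\to 1$ in $\Shvab$; by Lemma \ref{lemma: extension of ML groups}, $E$ is a flat Mittag-Leffler affine commutative group scheme, so left-exactness of $\Homs(-,\Gm)$ and $\Gm^\vee=\ZZ$ give an exact sequence of group ind-finite ind-schemes $0\to G^\vee\to E^\vee\to\ZZ$. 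Set $\mathcal P:=E^\vee\times_{\ZZ}\{1\}$; subtraction on $E^\vee$ makes $\mathcal P$ a $G^\vee$-pseudotorsor, and writing $\OO(E)=\bigoplus_n\ell_E^{\otimes n}$ for the $\Gm$-weight decomposition of the associated line bundle $\ell_E$ over the affine scheme $G$, linear duality identifies $\mathcal P\cong\SpInd((\ell_E)^\vee)$. Since $\ell_E$ is invertible over the flat Mittag-Leffler algebra $\OO(G)$, the argument of Lemma \ref{lemma: ind-triviality} shows $(\ell_E)^\vee$ is pro-finite, pro-projective, and Mittag-Leffler, so $\mathcal P$ is coflat ind-finite. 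By Lemma \ref{lemma: ext-vanishing on geometric points} every such extension splits over a geometric point, so $E^\vee\to\ZZ$ is surjective on geometric points; hence by Lemma \ref{lemma: ind-finite noetherian surjective on geometric pts} the map $\mathcal P\to U$ is an $h$-cover, and $\mathcal P\in B^{h}G^\vee(U)$. Moreover $\widetilde{\mathcal P}=\bigsqcup_n(E^\vee\times_\ZZ\{n\})=E^\vee$, so $\Phi(\mathcal P)\cong E$. Since coflat $h$-torsors form an fppf sheaf and $\Phi$ is fully faithful (Step 1 together with the analogous local computation), these local torsors glue to a section of $\pi_0(B^{h}G^\vee)$ over $U$ mapping to the given one. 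For injectivity, the assignment $E\mapsto E^\vee\times_\ZZ\{1\}$ is inverse to $\Phi$ on objects up to canonical isomorphism, by involutivity of Cartier duality (Corollary \ref{corollary: cartier duality is an antiequivalence}) and biduality $G^{\vee\vee}=G$; hence $\Phi(\mathcal P)\cong\Phi(\mathcal P')$ fppf-locally forces $\mathcal P\cong\mathcal P'$ fppf-locally.

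The main obstacle is Step 2's surjectivity, where one must control simultaneously: (i) the $h$-local triviality of $\mathcal P=E^\vee\times_\ZZ\{1\}$, which rests on the geometric-point splitting of Lemma \ref{lemma: ext-vanishing on geometric points} and the comparison between $h$-surjectivity and surjectivity on geometric points for ind-finite morphisms over a Noetherian base; (ii) the coflatness of $\mathcal P$, obtained from the linear-duality analysis of the graded pieces of $\OO(E)$ as in Lemma \ref{lemma: ind-triviality}; and (iii) the passage from fppf-local extensions to a genuine section of $\pi_0(B^{h}G^\vee)$, which uses both that coflat $h$-torsors form an fppf sheaf and that $\Phi$ is fully faithful, so that the descent data is unobstructed. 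One should also take care that $\Phi$ is a morphism of stacks — natural in $U$ and symmetric monoidal — and not merely a construction on objects.
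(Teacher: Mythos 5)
Your overall architecture --- build a morphism $B^{h}G^\vee \to G^D$ and check it on $\pi_0$ and $\pi_1$, with Lemma \ref{lemma: ext-vanishing on geometric points} and Lemma \ref{lemma: ind-finite noetherian surjective on geometric pts} carrying the weight in the $\pi_0$-surjectivity step --- is the same as the paper's, and your Steps 1 and 2 essentially reproduce the paper's verification. The difference is in how the map itself is built: the paper's $\Theta$ sends a torsor $\mathcal P$ and a point $g\in G(U)$ to the line bundle obtained by descending the cocycle $g(c)\in\Gm(\mathcal P\times\mathcal P)$ along the $h$-cover $\mathcal P\to U$, using $h$-descent for perfect complexes (Theorem \ref{theorem: h-descent}); you instead pass through the associated $\ZZ$-extension $\widetilde{\mathcal P}$ and Cartier-dualize.

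There is, however, a genuine gap at exactly that step. The sequence $0\to G^\vee_U\to\widetilde{\mathcal P}\to\ZZ\to 0$ is \emph{not} exact in $\Shvab$ (the fppf topology, which is where $G^D$, $\Homs$ and $\Extu^1$ live): surjectivity of $\widetilde{\mathcal P}\to\ZZ$ over the section $1$ would mean that $\mathcal P\to U$ admits sections fppf-locally, i.e.\ that $\mathcal P$ is an fppf torsor. For the torsors the theorem is actually about --- e.g.\ $T\to C$ in Example \ref{ex: nontrivial extension of Ga by Gm} --- this fails; the sequence is exact only in the $h$-topology. Consequently the long exact sequence for $R\Homs(-,\Gm)$ together with $\Extu^1(\ZZ,\Gm)=0$ does not yield the surjectivity of $\widetilde{\mathcal P}^\vee\to G_U$, and your definition of $\Phi(\mathcal P)$ as an ``honest extension'' is unjustified as written. (If that argument worked, it would apply verbatim to fppf classifying stacks and show $BG^\vee\simeq G^D$, contradicting $\Extu^1(\Ga,\Gm)\neq 0$.) The conclusion is still true, but it must be proved by hand: writing $\widetilde{\mathcal P}=\SpInd\bigl(\prod_n A_n\bigr)$ with $A_n$ the pro-coordinate ring of $\mathcal P^{+n}$, the dual $A_1^\vee$ is an invertible $\OO(G_U)$-module, so $\widetilde{\mathcal P}^\vee\to G_U$ is the complement of the zero section of a line bundle and hence fppf (indeed Zariski-locally) surjective with kernel $\Gm$; this is precisely the graded-Hopf-pro-algebra analysis in the Remark following Proposition \ref{prop: 1-pairing fpqc}. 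With that repair your construction of $\Phi$ goes through, and the remainder of your proof matches the paper's.
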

\begin{proof}
    First we define a map $\Theta: B^{h}G^\vee \to \Homs(G, B\Gm)$.
    Suppose that $U \to S$ is a Noetherian scheme and $\mathcal{P} \to U$ is a $G^\vee$-torsor in $h$-topology.
    Since $\mathcal{P} \times_U \mathcal{P} \cong G^\vee \times \mathcal{P}$, 
    $\mathcal{P}$ defines a cocycle $c \in G^\vee(\mathcal{P} \times \mathcal{P})$.
    Given $g \in G(U)$, applying $g$ to $c$ gives a cocyle $g(c) \in \Gm(\mathcal{P} \times \mathcal{P})$.
    Since $\mathcal{P} \to U$ is surjective in $h$-topology, Theorem \ref{theorem: h-descent} implies perfect complexes descend along $\mathcal{P} \to U$, so in particular line bundles descend along $\mathcal{P} \to U$.
    Thus $g(c)$ defines a line bundle $\Theta(\mathcal{P})(g) \in B\Gm(U)$.
    Similarly, if $\mathcal{P} \to \mathcal{P}'$ is an isomorphism of $G^\vee$-torsors over $U$, then $\Theta(\mathcal{P} \to \mathcal{P}')(g)$ is the isomorphism of line bundles induced by descent along $\mathcal{P} \to U$. 

    We claim that $\Theta$ is an equivalence.
    First, observe that the automorphism group of the trivial $G^\vee$-torsor is $G^\vee$.
    Tracing the definitions shows that $\pi_1(\Theta): \pi_1(B^{h}G^\vee) \to \pi_1(\Homs(G,B\Gm)) = G^\vee$ is an isomorphism.
    
    Now consider the map $\pi_0(\Theta): \pi_0(B^{h}G^\vee) \to \pi_0(\Homs(G, B\Gm)) = \Extu^1(G, \Gm).$
    First, we show that $\pi_0(\Theta)$ is surjective.
    If $0 \to \Gm \to E \to G \to 0$ is an extension of fppf sheaves over $U$, 
    then by Lemma \ref{lemma: extension of ML groups}, $E$ is an flat Mittag-Leffler affine commutative group scheme.
    Taking the Cartier dual gives a left-exact sequence
    \[0 \to G^\vee \to E^\vee \to \ZZ_U. \]
    Define $\mathcal{P} = E^\vee \times_{\ZZ_U} 1_U$.
    The map $G^\vee \times \mathcal{P} \to \mathcal{P} \times_U \mathcal{P}$ is an isomorphism since the sequence above is left-exact.
    We claim that $\mathcal{P} \to U$ is surjective in $h$-topology.
    Since $E^\vee \to U$ is coflat ind-finite ind-schematic, so is $\mathcal{P}$.
    First we show $\mathcal{P} \to U$ is surjective on geometric points.
    If $x: \Spec k \to U$ is a geometric point, then
    by Lemma \ref{lemma: ext-vanishing on geometric points}, $\Gm \to E_x$ admits a left inverse $f: E_x \to \Gm$, i.e. a point $f \in E^\vee(k)$ mapping to $1_x \in \ZZ_x$. Thus, $\mathcal{P} \to U$ is surjective on geometric points.
    Since $U$ is Noetherian and $\mathcal{P} \to U$ is ind-finite, by Lemma \ref{lemma: ind-finite noetherian surjective on geometric pts}, $\mathcal{P} \to U$ is surjective in $h$-topology.

    Now we show that $\pi_0(\Theta)$ is injective.
    Suppose that $\mathcal{P}$ is an $G^\vee$-torsor in $h$-topology over $U$ such that $\Theta(\mathcal{P}): G \to B\Gm$ is homotopic to zero.
    If $0 \to \Gm \to E \to G \to 0$ is the associated group extension, this means that there is a splitting $E \to \Gm$.
    Taking the Cartier dual gives a section $\ZZ_U \to E^\vee$ of $0 \to G^\vee \to E^\vee \to \ZZ_U$. As $\mathcal{P} = E^\vee \times_{\ZZ_U} 1$,
    we see that $\mathcal{P} \to U$ has a section, as desired.
\end{proof}




\subsection{Extensions of group ind-finite ind-schemes by affine group schemes}

In his paper introducing Cartier duality for affine group schemes over a field \cite{Car62}, Cartier made the following table of analogies between Cartier duality and Pontryagin duality (where we have updated the terminology):
\begin{center}
\begin{tabular}{|c|c|}
\hline 
    Affine group schemes & Compact groups \\
    Ind-finite group ind-schemes & Discrete groups \\
    Cartier duality & Pontryagin duality \\ \hline
\end{tabular}
\end{center}
Cartier continued, 
\begin{quote}
    Pour l'instant, nous ne savons pas définir les objets, analogues dans la correspondence ci-dessus, aux groupes commutatives localement compacts qui ne sont ni discrets, ni compacts. La question semble importante pour certaines applications géometriques.  \cite[89]{Car62}
\end{quote}

We still do not know a geometric description for the entire class of fppf sheaves $\mathcal A$ such that the pairing $\mathcal A \times \mathcal A^\vee \to \Gm$ is perfect. However, using the fpqc-local triviality of Lemma \ref{lemma: ind-triviality}, we can extend the Cartier duality equivalence to sheaves which are extensions of an ind-finite scheme by an affine scheme (under our usual hypotheses on the coordinate rings). This class is self-dual, and corresponds under Cartier's table of analogies to extensions of discrete groups by compact groups. 

\begin{proposition}\label{prop: duality for extension of ind-finite by affine}
    Let $G$ and $H$ be flat Mittag-Leffler affine commutative group schemes over $R$.
    \begin{enumerate}
        \item If 
        \[ 0 \to H \to E \to G^\vee \to 0\]
        is an extension of fpqc sheaves, then the sequence 
        \[
            0 \to G \to E^\vee \to H^\vee \to 0
        \]
        is exact, and 
        the natural map $E \to E^{\vee\vee}$ is an isomorphism;
        \item Cartier duality induces an equivalence of groupoids
        \[ \EXT_{fpqc}(G^\vee,H) \simeq \EXT_{fpqc}(H, G^\vee)\]
        by sending $E \mapsto E^\vee$.
    \end{enumerate}
\end{proposition}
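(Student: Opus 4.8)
The plan is to derive both statements formally from two ingredients: the left-exactness of the contravariant functor $\Homs(-,\Gm)$ on $\Shvab$, and the fpqc-local splitting provided by Lemma~\ref{lemma: ind-triviality}. Although $E$ is automatically ind-affine — $E\to G^\vee$ being an $H$-torsor, hence an affine morphism, over the ind-affine $G^\vee$, with flat Mittag-Leffler pro-coordinate ring by the computation in the proof of Lemma~\ref{lemma: ind-triviality} — the entire argument below takes place in the category of fpqc sheaves of abelian groups.

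For part (1), apply $\Homs(-,\Gm)$ to $0 \to H \to E \to G^\vee \to 0$. Using the identifications $\Homs(G^\vee,\Gm) \cong G$ and $\Homs(H,\Gm) = H^\vee$ from Theorem~\ref{theorem: perfect pairing for flat ML group}, left-exactness yields an exact sequence $0 \to G \to E^\vee \to H^\vee$. The real content is that $E^\vee \to H^\vee$ is an epimorphism of fpqc sheaves: it suffices to show that, fpqc-locally on the base, every character $\chi\colon H \to \Gm$ extends to a character of $E$, and the obstruction to such an extension is the class of the pushout extension $0 \to \Gm \to \chi_\ast E \to G^\vee \to 0$, which splits after an fpqc base change by Lemma~\ref{lemma: ind-triviality}. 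This establishes exactness of $0 \to G \to E^\vee \to H^\vee \to 0$. Running the same argument for the extension $E^\vee$ — now invoking Lemma~\ref{lemma: ind-triviality} with $H$ in place of $G$, the relevant pushout being an extension of $H^\vee$ by $\Gm$ — produces an exact sequence $0 \to H \to E^{\vee\vee} \to G^\vee \to 0$. Finally, using the involutivity of Cartier duality (Corollary~\ref{corollary: cartier duality is an antiequivalence}) to identify $(G^\vee)^{\vee\vee}$ with $G^\vee$ and $H^{\vee\vee}$ with $H$, naturality of the biduality transformation $\mathrm{id} \Rightarrow (-)^{\vee\vee}$ turns the canonical map $E \to E^{\vee\vee}$ into a morphism of short exact sequences which is the identity on $H$ and on $G^\vee$; the five lemma then forces $E \to E^{\vee\vee}$ to be an isomorphism.

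For part (2), part (1) shows that $E \mapsto E^\vee$ carries the groupoid whose objects are extensions $0 \to H \to E \to G^\vee \to 0$ (and whose morphisms are isomorphisms of extensions) to the corresponding groupoid of extensions $0 \to G \to E^\vee \to H^\vee \to 0$, and this assignment is a functor because $\Homs(-,\Gm)$ is one. The same construction with the roles of $G$ and $H$ interchanged is a functor in the opposite direction, and the isomorphism $E \to E^{\vee\vee}$ of part (1), being natural in $E$, identifies the two composites with the respective identity functors. Hence $E \mapsto E^\vee$ is an equivalence of groupoids.

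The only step that is not a formal manipulation is the right-exactness in part (1): surjectivity of $E^\vee \to H^\vee$ is exactly the exactness of Cartier duality on this class of extensions, and this genuinely fails for more general sheaves (Example~\ref{ex: nontrivial extension of Ga by Gm} gives a nonsplit extension of $\Ga$ by $\Gm$ that remains nonsplit fpqc-locally, so $\Extu^1(\Ga,\Gm) \neq 0$). It is precisely here that Lemma~\ref{lemma: ind-triviality} — the fpqc-local triviality of the $\Gm$-torsors attached to extensions of a coflat group ind-finite ind-scheme by $\Gm$, playing the role of the (finiteness-hypothesis-free, fpqc) vanishing of $\Extu^1(G^\vee,\Gm)$ — is indispensable. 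A secondary point that deserves care is checking that the biduality map $E \to E^{\vee\vee}$ really is a morphism of the two displayed short exact sequences, compatibly with the canonical identifications $(G^\vee)^{\vee\vee} \cong G^\vee$ and $H^{\vee\vee} \cong H$: this is naturality of biduality, but one should verify that the identifications used on the sub-object and on the quotient of the bottom row agree with those coming from Theorem~\ref{theorem: perfect pairing for flat ML group}.
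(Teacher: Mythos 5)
Your proposal is correct and follows essentially the same route as the paper: apply $\Homs(-,\Gm)$ to the extension, use Lemma~\ref{lemma: ind-triviality} to kill the obstruction in $\Extu^1_{fpqc}(G^\vee,\Gm)$ (and, for the biduality, in $\Extu^1_{fpqc}(H^\vee,\Gm)$), then conclude via Theorem~\ref{theorem: perfect pairing for flat ML group} and the five lemma. Your write-up is merely more explicit than the paper's about the connecting homomorphism being the pushout extension and about the second dualization step; the content is identical.
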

\begin{proof}
    \begin{enumerate}
        \item Applying $R\Homs_{fpqc}(-,\Gm)$ to the extension gives a long exact sequence 
        \[
\begin{tikzcd}[ampersand replacement=\&]
	0 \& G \& {E^\vee} \& {H^\vee} \& {\Extu^1_{fpqc}(G^\vee, \Gm)} \& \cdots
	\arrow[from=1-1, to=1-2]
	\arrow[from=1-2, to=1-3]
	\arrow[from=1-3, to=1-4]
	\arrow[from=1-4, to=1-5]
	\arrow[from=1-5, to=1-6]
\end{tikzcd}
        \]
        By Lemma \ref{lemma: ind-triviality}, $\Extu^1_{fpqc}(G^\vee,\Gm) = 0$, so the sequence $0 \to G \to E^\vee \to H^\vee \to 0$ is exact, as desired.
        By Theorem \ref{theorem: perfect pairing for flat ML group}, the canonical maps $H \to H^{\vee\vee}$ and $G^\vee \to G^{\vee\vee\vee}$ are isomorphisms, so the map $E \to E^{\vee\vee}$ is also an isomorphism.
        \item Follows from (1).
    \end{enumerate}
\end{proof}

\begin{remark}
    In Proposition \ref{prop: duality for extension of ind-finite by affine}, both $\EXT_{fpqc}(G^\vee,H)$ and $\EXT_{fpqc}(H^\vee,G)$ can be identified with the groupoid of biextensions of $G^\vee \times H^\vee$ by $\Gm$.
\end{remark}

\begin{example}\label{ex: cc pairing}
    Let $E = L\Gm$ be the loop group of $\Gm$, that is, the Weil restriction of $\Gm$ along $R \to R\lseries{t}$.
    The loop group sits in a short exact sequence 
    \[ 0 \to L^+\Gm \to L \Gm \to Gr_{\Gm} \to 0\]
    where $L^+\Gm$ is the Weil restriction of $\Gm$ along $R \to R\series{t}$, and $Gr_{\Gm}$ is the affine Grassmannian of $\Gm$.
    The group $L^+\Gm$ is an extension of $\Gm$ by a tower of extensions of vector groups and thus is flat Mittag-Leffler affine, while $Gr_{\Gm}$ is ind-finite and coflat. 
    By Proposition \ref{prop: duality for extension of ind-finite by affine}, $E \to E^{\vee\vee}$ is an isomorphism; in fact $E$ is self-dual thanks to the Contou-Carrère pairing \cite{Con94}.
\end{example}

\begin{remark}
    For the geometric Langlands program, it is important to consider the Ran space version of Example \ref{ex: cc pairing}: let $\Sigma$ be a smooth curve over a base $S$, and let $\mathrm{Ran}_\Sigma$ be the Ran space of $\Sigma$, defined as the prestack $\colim_I \Sigma^I$ where the colimit runs over nonempty finite sets with surjective maps. 
    There is a group $L_{\mathrm{Ran}}\Gm$ over $\mathrm{Ran}_\Sigma$ together with a subgroup $L^+_{\mathrm{Ran}}\Gm$ and quotient $Gr_{\mathrm{Ran},\Gm}$. 
    By \cite[Lemma 5.1.2]{CH21}, $L^+_{\mathrm{Ran}}\Gm$ is flat Mittag-Leffler over $\mathrm{Ran}_\Sigma$. It is expected that $Gr_{\mathrm{Ran},\Gm}$ is coflat ind-finite over $\mathrm{Ran}_\Sigma$ and is the Cartier dual of $L^+_{\mathrm{Ran}}\Gm$. This is slightly stronger than the argument given in \cite[Theorem 5.2.1]{CH21}. Further, it is expected that $L_{\mathrm{Ran}}\Gm$ is Cartier self-dual under the Ran space version of the Contou-Carrère pairing, defined in \cite[(4.4.1)]{CH21} and \cite[§3.1]{BBE02}.
\end{remark}


\section{Categorical 1-duality}
\label{section: categorical 1-duality}

Let $G\to S$ be a flat Mittag-Leffler affine commutative group scheme.
In this section, we discuss to what extent categorical 1-duality, that is, an equivalence of derived categories of sheaves, holds for the pairs $(G, BG^\vee)$ and $(G^\vee, BG)$.
A prototype of such a statement is that if $A$ is a finite-dimensional coalgebra over a field, the category of $A$-comodules is equivalent to the category of $A^\vee$-modules\footnote{For finite-dimensional $A$, the equivalence between $A$-modules and $A^\vee$-comodules holds at the level of unbounded derived categories, see \cite[Lemma 2.4]{SST25}.}.
However, our groups $G$ and $G^\vee$ are not necessarily finite, so this argument does not necessarily apply. 

Both $G$ and $BG^\vee$ are examples of \emph{generalized 1-motives}: group stacks of the form $\tilde A/H^\vee$ where $H$ is an affine group scheme and $\tilde A$ is an extension of an abelian scheme by an affine group scheme.
Over a field of characteristic zero, Laumon introduced a Fourier-Mukai transform on bounded derived categories of generalized 1-motives of finite type \cite{Lau96}.
As we are concerned with Cartier duality for affine group schemes, we restrict our attention to 1-motives of the form $G/H^\vee$ where $G$ and $H$ are flat Mittag-Leffler affine over a general base ring $R$.
In §\ref{subsection: FM for 1-motives} we construct an equivalence of unbounded derived categories of sheaves between $G/H^\vee$ and $H/G^\vee$.
As in Laumon's work, these categories turn out to be module categories over certain associative algebras.
Even though statements can be formulated only in terms of quasi-coherent sheaves, the proofs use ind-coherent sheaves, which will be reviewed in §\ref{subsection: indcoh}.
Only for this reason, we must assume that the base ring is Noetherian admitting a dualizing complex and that $G$ and $H$ are of finite type.
For duality between $G$ and $BG^\vee$, we prove slightly stronger theorems in §\ref{subsection: FM for BG dual}.

Categorical duality for $(G^\vee, BG)$ is perhaps less familiar. Again care must be taken with normalizing derived categories correctly, and our most general result in §\ref{subsection: reps of G} constructs a duality of bounded below derived categories. These categories turn out to be comodule categories for certain coalgebras.

\begin{remark}
    We only compare categories of modules to categories of modules, and categories of comodules to categories of comodules.
\end{remark}

The main results of this section are:
\begin{itemize}
    \item Theorem \ref{maintheorem: fourier for 1-motives}, the Fourier-Mukai equivalence for 1-motives, proved in Theorem \ref{theorem: FM for 1-motives};
    \item Theorem \ref{maintheorem: fourier for BG^vee}, the Fourier-Mukai equivalence for $BG^\vee$, proved in Theorems \ref{theorem: FM for B of G dual} and \ref{theorem: FM for B of G dual is symmetric monoidal};
    \item Theorem \ref{maintheorem: fourier for BG}, the Fourier-Mukai equivalence for $BG$, proved in Theorem \ref{theorem: FM for BG}.
\end{itemize}

\subsection{Spectral prestacks, ind-schemes, and ind-coherent sheaves}\label{subsection: indcoh}

In order to use the machinery of ind-coherent sheaves, we must take a detour through derived algebraic geometry. However, all of the geometric objects involved in our main theorems will turn out to be classical.

Let $\DCAlg$ be the $\infty$-category of connective $\Einfty$-rings. A \emph{spectral prestack} is by definition a functor $\mathcal X : \DCAlg \to \Spc$. 
A \emph{spectral scheme} is a spectral prestack admitting a Zariski open cover by representable functors. Hereafter we will write ``scheme'' for spectral scheme and ``prestack'' for spectral prestack.

If $S = \Spec A$ is an affine scheme, then for each $n \geq 0$ there is the truncation $\t{n}S = \Spec \t{n}A$.
A prestack $\mathcal X$ is said to be \emph{convergent} if the natural map $\mathcal{X}(S) \to \lim_n \mathcal{X}(\t{n}S)$ is an equivalence for all affine schemes $S$ \cite[Chapter 1, §0.1.1]{GR17II}.

\begin{definition}
    A prestack is an \emph{ind-affine ind-scheme} if 
    \begin{itemize}
        \item $\mathcal X$ is convergent, and
        \item $\mathcal X$ is a filtered colimit of affine schemes under closed embeddings.
    \end{itemize}
\end{definition}
For $R \in \DCAlg$, let $\IndSch_R^{indaff}$ be the $\infty$-category of ind-affine ind-schemes over $\Spec R$.
The Yoneda embedding $\DCAlg_R^{op} \to \PreStk$ extends to a fully faithful functor $\Pro(\DCAlg_R)^{op} \to \PreStk$.
Since every ind-affine ind-scheme is a filtered colimit of affine schemes, ind-affine ind-schemes form a full subcategory of $\Pro(\DCAlg)^{op}$.
The next proposition identifies that full subcategory.

\begin{proposition}\label{prop: derived ind-scheme}
    Suppose that $A \in \Pro(\DCAlg_R)$ is a pro-$R$-algebra, and let $\mathcal{X} = \Hom(A,-) \in \PreStk_R$ be the associated spectral prestack.
    Then $\mathcal{X}$ is an ind-affine ind-scheme if and only if 
    \begin{itemize}
        \item the prestack $\mathcal{X}$ is convergent, and
        \item the pro-module $\pi_0(A)$ satisfies the Mittag-Leffler condition.
    \end{itemize}
\end{proposition}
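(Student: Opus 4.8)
This is the spectral counterpart of Proposition~\ref{prop:ind-schemes}, and the plan is to bootstrap from the classical statement (Proposition~\ref{prop:ind-schemes}, equivalently Theorem~\ref{theorem: algebras in pro}) applied to $\pi_0(A)$.

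The implication from left to right is formal. If $\mathcal X$ is an ind-affine ind-scheme it is convergent by definition; writing $\mathcal X\simeq\colim_\alpha\Spec B_\alpha$ with the transition maps closed embeddings, full faithfulness of $\Pro(\DCAlg_R)^{op}\to\PreStk_R$ gives $A\simeq\lim_\alpha B_\alpha$ in $\Pro(\DCAlg_R)$, and applying $\pi_0$ termwise identifies $\pi_0(A)$ with the pro-system $\{\pi_0(B_\alpha)\}$. A closed embedding of spectral schemes is surjective on $\pi_0$, so this pro-system has surjective transition maps and is therefore Mittag-Leffler by Lemma~\ref{lemma:mittag-leffler pro}.

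Conversely, suppose $\mathcal X$ is convergent and $\pi_0(A)$ is Mittag-Leffler. Choose a presentation $A\simeq\lim_\alpha C_\alpha$ with $C_\alpha\in\DCAlg_R$, so that $\mathcal X=\colim_\alpha\Spec C_\alpha$ as a filtered colimit of affine schemes in $\PreStk_R$; the goal is to replace this by a presentation whose transition maps are closed embeddings. I would proceed in two stages. Since $\pi_0(A)=\lim_\alpha\pi_0(C_\alpha)$ is Mittag-Leffler, first pass to the cofinal subsystem of stable images $J_\alpha:=\operatorname{im}\big(\pi_0(C_\gamma)\to\pi_0(C_\alpha)\big)$ for $\gamma\gg\alpha$, which are $R$-subalgebras with surjective transition maps; running the construction of Theorem~\ref{theorem: algebras in pro} for $\Einfty$-algebras, realize each $J_\alpha$ as $\pi_0$ of a connective $\Einfty$-algebra $\tilde C_\alpha$ that is a derived quotient of some $C_{\gamma(\alpha)}$ — so $\Spec\tilde C_\alpha\hookrightarrow\Spec C_{\gamma(\alpha)}$ is a closed embedding — that receives a map from $A$ and whose transition maps $\tilde C_{\alpha'}\to\tilde C_\alpha$ are closed embeddings. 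Second, check $A\simeq\lim_\alpha\tilde C_\alpha$ in $\Pro(\DCAlg_R)$; granting this, $\mathcal X=\colim_\alpha\Spec\tilde C_\alpha$ together with the assumed convergence is exactly the definition of an ind-affine ind-scheme.

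The main obstacle is the second stage. The $\tilde C_\alpha$ are genuinely derived closed subschemes of the $C_{\gamma(\alpha)}$, and proving $A\simeq\lim_\alpha\tilde C_\alpha$ requires both the Mittag-Leffler diagram chase of Theorem~\ref{theorem: algebras in pro}, now carried out for $\Einfty$-algebras (to see that cofinally many transition maps $C_\gamma\to C_{\gamma(\alpha)}$ factor through $\tilde C_\alpha$), and control of the higher homotopy groups of $\lim_\alpha\tilde C_\alpha$ — which is where the convergence hypothesis is used, and without which the equivalence fails. A more conceptual alternative is to climb the Postnikov tower: $\t 0\mathcal X$ is a classical ind-affine ind-scheme by Proposition~\ref{prop:ind-schemes}, each $\t n\mathcal X$ is a square-zero extension of $\t{n-1}\mathcal X$ by a module built from the pro-module $\pi_n(A)$ and hence is again an ind-affine ind-scheme, and $\mathcal X=\lim_n\t n\mathcal X$ by convergence; since a filtered colimit of affine schemes automatically carries deformation theory, one may equally invoke a recognition criterion for ind-schemes with inputs (i) convergence and (ii) $\t 0\mathcal X$ a classical ind-scheme, ind-affineness then following because a spectral scheme is affine as soon as its classical truncation is.
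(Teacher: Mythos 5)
Your proposal is correct, and your ``more conceptual alternative'' at the end is precisely the paper's argument: the paper shows that $\pi_0(A)$ being Mittag-Leffler makes the classical truncation $\mathcal X^{cl}$ a classical ind-affine ind-scheme (via Theorem~\ref{theorem: algebras in pro}), observes that $\mathcal X$ is convergent and, being a filtered colimit of affines, admits connective deformation theory, and then invokes the Gaitsgory--Rozenblyum recognition criterion \cite[Chapter 2, Corollary 1.3.13]{GR17II} to conclude that $\mathcal X$ is an ind-scheme, with ind-affineness following from that of $\mathcal X^{cl}$. Your first route --- directly rebuilding a presentation with closed embeddings by running the construction of Theorem~\ref{theorem: algebras in pro} at the level of $\Einfty$-algebras --- is left incomplete at exactly the step you flag (identifying $A$ with $\lim_\alpha\tilde C_\alpha$ while controlling higher homotopy), and the paper avoids this entirely by working only with $\pi_0(A)$ and delegating the derived bookkeeping to the recognition criterion; your forward direction matches the paper's.
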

\begin{proof}
    If $\mathcal X$ is an ind-scheme, then $\mathcal X$ is convergent by definition.
    Also, $\mathcal{X}$ is a filtered colimit of affine schemes along closed immersions, so $A$ is equivalent to a pro-system with surjective transition maps on $\pi_0$. Thus $\pi_0(A)$ satisfies the Mittag-Leffler condition.

    Now for the converse. If $\pi_0(A)$ satisfies the Mittag-Leffler condition, then by \ref{theorem: algebras in pro}, it is equivalent to a cofiltered limit of commutative algebras with surjective transition maps.
    Thus the classical truncation $\mathcal{X}^{cl}$ is an ind-affine ind-scheme.
    As $\mathcal X$ is convergent and $\mathcal X$ is a filtered colimit of affine schemes, $\mathcal X$ admits connective deformation theory. By \cite[Chapter 2, Corolary 1.3.13]{GR17II}, $\mathcal X$ is an ind-scheme.
    Since $\mathcal X^{cl}$ is ind-affine, so is $\mathcal X$ \cite[Chapter 2, Definition 1.6.3]{GR17II}.
\end{proof}

Suppose now that $R$ is classical, and consider let $\IndSch_R^{indf,fl}$ be the category of classical ind-finite ind-schemes that are coflat over $R$. Every $X\in\IndSch^{indf,fl}_R$ can be written as $\SpInd(A_X)$ for $A_X\in\Pro(\CAlg^f_R)^{\ML}$. If in addition $X$ is coflat, we can use the linear duality and associate to $X$ a commutative coalgebra $A_X^\vee$ that is flat and Mittag-Leffler. We call $A_X^\vee$ the \emph{distribution coalgebra of $X$}.

\begin{proposition}\label{prop: base change is classical}
    Let $R \to R'$ be a morphism of classical rings.
    Let $X \to \Spec R$ be a coflat ind-finite ind-scheme, and let 
    \[ X' = X \underset{\Spec R}{\times} \Spec R' \]
    be the derived base change of $X$ along $R \to R'$.
    Then $X' \to \Spec R'$ is a classical coflat ind-finite ind-scheme over $S$. Moreover, $A_{X'} = \OO_X \otimes_R R'$ and $A_X^\vee = A_X^\vee \otimes_R R'$.
\end{proposition}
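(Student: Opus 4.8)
The plan is to translate the statement into one about the pro-coordinate ring $A_X$ and its linear dual, and then to use Proposition~\ref{prop: derived ind-scheme} together with the fact that coflatness kills the higher $\Tor$ of a base change. First I would fix a presentation: by Theorem~\ref{theorem: algebras in pro}, write $X=\SpInd(A_X)$ with $A_X=\lim_i A_i$, the $A_i$ finite commutative $R$-algebras and the transition maps $\varphi_{ij}\colon A_j\twoheadrightarrow A_i$ ($j\ge i$) surjective; then $A_X\in(\ProMod_R^f)^{pproj,\ML}$ and $C:=A_X^\vee\in\Mod_R^{fl,\ML}$ is the distribution coalgebra. Coflatness means $A_X$ is pro-projective, so for every $i$ there is $j\ge i$ with $\varphi_{ij}$ factoring through a projective $R$-module. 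Since $\SpInd$ and fiber products commute with filtered colimits, the derived base change is $X'=\colim_i\Spec(A_i\otimes_R^L R')$; write $B_i:=A_i\otimes_R^L R'$, so $X'=\Hom(\lim_i B_i,-)$ with $\lim_i B_i\in\Pro(\DCAlg_{R'})$.

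Next I would run Proposition~\ref{prop: derived ind-scheme}. The prestack $X'$ is convergent, being a pullback of the convergent prestacks $X$, $\Spec R$, $\Spec R'$, and $\pi_0(\lim_i B_i)=\lim_i(A_i\otimes_R R')$ has surjective transition maps, hence is Mittag-Leffler; so $X'$ is an ind-affine ind-scheme with classical truncation ${}^{cl}X'=\SpInd\bigl(\lim_i(A_i\otimes_R R')\bigr)$. This is ind-finite (each $A_i\otimes_R R'$ is a finite $R'$-module) and coflat (if $\varphi_{ij}$ factors through a projective $R$-module $P$, then $\varphi_{ij}\otimes_R R'$ factors through the projective $R'$-module $P\otimes_R R'$). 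To match it with the asserted formula I would compare the associated endofunctors: base change preserves flat Mittag-Leffler modules, so $C\otimes_R R'\in\Mod_{R'}^{fl,\ML}$, and for every $Q\in\Mod_{R'}$ one has
\[
\Hom_{\ProMod_{R'}}\bigl(\textstyle\lim_i(A_i\otimes_R R'),\,Q\bigr)=\colim_i\Hom_{R'}(A_i\otimes_R R',Q)=\colim_i\Hom_R(A_i,Q)=h^{A_X}(Q)=C\otimes_R Q=h^{(C\otimes_R R')^\vee}(Q)
\]
naturally in $Q$; by full faithfulness of $h$ on pro-modules this gives $\lim_i(A_i\otimes_R R')\cong(C\otimes_R R')^\vee$, compatibly with the (co)algebra structures by the uniqueness in Proposition~\ref{prop:linear duality monoidal}. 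Hence $A_{X'}=A_X\otimes_R R'$ and $A_{X'}^\vee=C\otimes_R R'=A_X^\vee\otimes_R R'$.

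The remaining point, which I expect to be the main obstacle, is that $X'$ is already classical, i.e.\ $X'={}^{cl}X'$. Here coflatness enters as pro-vanishing of higher $\Tor$: given $i$, pick $j\ge i$ with $\varphi_{ij}\colon A_j\to A_i$ factoring through a projective (hence flat) $R$-module $P$; applying $-\otimes_R^L R'$, the transition map $B_j\to B_i$ factors as a map of $R'$-modules through $P\otimes_R^L R'=P\otimes_R R'$, which is discrete, so $\pi_n(B_j)\to\pi_n(B_i)$ is zero for all $n\ge 1$. Thus $\{\tau_{\ge 1}B_i\}_i$ is the zero pro-object in $\Pro(\Mod_{R'})$, and the natural cofiber sequences $\tau_{\ge 1}B_i\to B_i\to\pi_0 B_i$ show that the truncations exhibit an equivalence of pro-$R'$-modules $\lim_i B_i\xrightarrow{\ \sim\ }\lim_i\pi_0 B_i$. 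Since this equivalence is induced by algebra maps, it upgrades to an equivalence of pro-$\Einfty$-$R'$-algebras, so $A_X\otimes_R^L R'\simeq\lim_i(A_i\otimes_R R')$ is a pro-object with discrete terms; therefore $X'$ is classical and equal to ${}^{cl}X'=\SpInd\bigl(\lim_i(A_i\otimes_R R')\bigr)$, a classical coflat ind-finite ind-scheme over $\Spec R'$. The one input I would want to record carefully is that a map of pro-$\Einfty$-algebras which is an equivalence on underlying pro-modules is an equivalence (the multiplication, unit, and higher coherences transport along a pro-module equivalence); everything else is bookkeeping.
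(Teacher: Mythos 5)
Your overall strategy coincides with the paper's: identify $X'$ with the formal spectrum of the derived base change of the pro-algebra $A_X$, use coflatness to see that this pro-object has no higher homotopy, and then read off the coordinate and distribution (co)algebras by linear duality. The only real variation is in how discreteness is obtained: you factor the transition maps $\varphi_{ij}$ through projectives so that the pro-object $\{\tau_{\geq 1}B_i\}$ has null transition maps, whereas the paper invokes Proposition~\ref{prop:coLazard} to present the underlying pro-module of $A_X$ as a Mittag-Leffler limit of \emph{finite free} modules, so that the derived tensor product is discrete termwise. Both mechanisms work, and your bookkeeping for the formulas $A_{X'}=A_X\otimes_R R'$ and $A_{X'}^\vee=A_X^\vee\otimes_R R'$ is fine.

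The step you defer to the end --- ``a map of pro-$\Einfty$-algebras which is an equivalence on underlying pro-modules is an equivalence'' --- is precisely where the paper's proof does its real work, and your parenthetical justification (``the coherences transport along a pro-module equivalence'') is not a proof. The difficulty is that $\Pro(-)$ applied to a conservative functor need not be conservative, so conservativity of the forgetful functor $\DCAlg_{R'}\to\Mod_{R'}$ does not formally pass to pro-categories. The paper's device is to check classicality of each truncation $\t{n}X'$ separately: $n$-truncated connective modules form an $(n{+}1,1)$-category, so Proposition~\ref{prop: proalg to algpro is fully faithful} applies and embeds $\Pro(\Alg)$ fully faithfully into $\Alg(\Pro(\Mod))$, where the forgetful functor to $\Pro(\Mod)$ \emph{is} conservative; convergence of $X'$ then assembles the truncations (this is also why the truncation step appears at the start of the paper's proof rather than being an afterthought). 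Your argument goes through once you insert this reduction, but as written the key input is asserted rather than established, and it is not covered by the cited result for the untruncated $\infty$-category of connective $R'$-modules.
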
 
The proof of Proposition \ref{prop: base change is classical} is based on the following result:
\begin{proposition}[\cite{AS25}, Proposition 2.16]
\label{prop: proalg to algpro is fully faithful}
    Let $0 \leq n < \infty$ and let $\mathcal C$ be a symmetric monoidal $(n,1)$-category.
    Then the functor $\Pro(\Alg_{\Einfty}(\mathcal C)) \to \Alg_{\Einfty}(\Pro(\mathcal C))$ obtained by pro-extension of the canonical functor $\Alg_{\Einfty}(\mathcal C) \to \Alg_{\Einfty}(\Pro(\mathcal C))$ is fully faithful.
\end{proposition}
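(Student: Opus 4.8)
The plan is to reduce full faithfulness of $\Phi\colon\Pro(\Alg_{\Einfty}(\mathcal C))\to\Alg_{\Einfty}(\Pro(\mathcal C))$ to a single mapping‑space comparison and then to settle that comparison by a truncation argument in which the hypothesis $n<\infty$ is essential.

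\textbf{Setup.} Let $\iota\colon\mathcal C\hookrightarrow\Pro(\mathcal C)$ be the canonical embedding. The symmetric monoidal structure on $\Pro(\mathcal C)$ compatible with cofiltered limits (equivalently, Day convolution on $\Ind(\mathcal C^{op})$ transported through the opposite) makes $\iota$ symmetric monoidal and fully faithful, and its tensor product preserves cofiltered limits in each variable. A fully faithful symmetric monoidal functor induces a fully faithful functor on $\Einfty$‑algebras — an $\Einfty$‑algebra map between objects in the essential image is, through the operadic description, a point of a limit of mapping spaces between tensor powers, all of which already lie in $\mathcal C$ — so $\iota_*\colon\Alg_{\Einfty}(\mathcal C)\to\Alg_{\Einfty}(\Pro(\mathcal C))$ is fully faithful. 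The forgetful functor $\Alg_{\Einfty}(\Pro(\mathcal C))\to\Pro(\mathcal C)$ creates limits and $\Pro(\mathcal C)$ has cofiltered limits, so $\Alg_{\Einfty}(\Pro(\mathcal C))$ has cofiltered limits and $\iota_*$ extends along $\Alg_{\Einfty}(\mathcal C)\hookrightarrow\Pro(\Alg_{\Einfty}(\mathcal C))$ to the cofiltered‑limit‑preserving functor $\Phi$.

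\textbf{Reduction.} By the relative form of the criterion for full faithfulness of a pro‑extension (the dual of \cite[Proposition 5.3.5.11]{Lur09}), and since $\iota_*$ is already fully faithful, it remains to check that every $\iota_*(A)$, $A\in\Alg_{\Einfty}(\mathcal C)$, is cocompact relative to the essential image of $\iota_*$; unwinding, this says that for every cofiltered system $\{B_\beta\}$ in $\Alg_{\Einfty}(\mathcal C)$ the canonical map
\[
\colim_\beta\Map_{\Alg_{\Einfty}(\mathcal C)}(B_\beta,A)\longrightarrow\Map_{\Alg_{\Einfty}(\Pro(\mathcal C))}\bigl(\lim_\beta\iota_*(B_\beta),\ \iota_*(A)\bigr)
\]
is an equivalence. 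The comparison of general mapping spaces reduces to this case because a map out of a pro‑object into a cofiltered limit is computed termwise in the target and because pro‑mapping spaces are filtered colimits of the mapping spaces of the components.

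\textbf{The truncation argument.} Since $\mathcal C$ is an $(n,1)$‑category and both limits and filtered colimits preserve $(n-1)$‑truncatedness, $\Pro(\mathcal C)=\Ind(\mathcal C^{op})^{op}$ is again an $(n,1)$‑category, hence every mapping space in $\Alg_{\Einfty}(\Pro(\mathcal C))$ and in $\Alg_{\Einfty}(\mathcal C)$ is $(n-1)$‑truncated. I would write $\Map_{\Alg_{\Einfty}(\mathcal D)}(P,A)$, for a symmetric monoidal $(n,1)$‑category $\mathcal D$, as the totalization of a cosimplicial space built from the underlying mapping spaces $\Map_{\mathcal D}(-,A)$ out of tensor and symmetric powers of $P$ — the cobar construction for the commutative operad, or the bar resolution $P\simeq|T^{\bullet+1}_{\mathcal D}P|$ by the free $\Einfty$‑algebra monad when $\mathcal D$ has the requisite colimits. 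As this cosimplicial space is $(n-1)$‑truncated, its totalization agrees with the finite partial totalization $\Tot_n$ (a limit over $\Delta_{\leq n}$), and the residual symmetric‑group coherences are limits over the groupoids $B\Sigma_k$, whose nerves have finitely many simplices in each degree. Thus $\Map_{\Alg_{\Einfty}(\Pro(\mathcal C))}(\lim_\beta\iota_*(B_\beta),\iota_*(A))$ is assembled by finitely many finite‑limit operations from the spaces $\Map_{\Pro(\mathcal C)}((\lim_\beta\iota_*(B_\beta))^{\otimes k},\iota_*(A))$. Because the monoidal structure on $\Pro(\mathcal C)$ preserves cofiltered limits, $(\lim_\beta\iota_*(B_\beta))^{\otimes k}=\lim_\beta\iota_*(B_\beta^{\otimes k})$ and this space equals $\colim_\beta\Map_{\mathcal C}(B_\beta^{\otimes k},A)$, a filtered colimit. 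Since finite limits commute with filtered colimits in $\Spc$, the $\colim_\beta$ passes outside all of these operations, and running the identical assembly over $\mathcal C$ identifies the result with $\colim_\beta\Map_{\Alg_{\Einfty}(\mathcal C)}(B_\beta,A)$; with the reduction step, this proves full faithfulness of $\Phi$.

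\textbf{Main obstacle.} The crux is the finiteness claim just used: the coherence datum of an $\Einfty$‑algebra map is genuinely an infinite limit, and one must show that $(n-1)$‑truncation of the input collapses it to a finite one. The real difficulty is the free $\Einfty$‑algebra functor's infinite coproduct $\bigoplus_k\Sym^k$ (equivalently, the infinitely many objects of $\mathrm{Fin}_*$): mapping out of a free algebra is \emph{a priori} an infinite product, which does not commute with filtered colimits, so one must argue — via a coskeletal or connectivity filtration, or using a cosimplicial model of the mapping space whose terms involve only honest tensor powers of the source — that only boundedly many of these contribute once the target is $(n-1)$‑truncated. This is precisely the place where $n<\infty$ is used; for $n=\infty$ the statement fails.
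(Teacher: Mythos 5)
First, a point of reference: the paper does not prove this statement itself — it is quoted from \cite{AS25}, Proposition 2.16 — so there is no in-paper argument to compare yours against; I can only assess your proposal on its own terms. Your architecture is the right one: the reduction of full faithfulness of the pro-extension to the relative cocompactness of each $\iota_*(A)$ is correct (both sides of the general mapping-space comparison are an outer $\lim_\beta$ of the single-target case), the observation that $\Pro(\mathcal C)$ is again a symmetric monoidal $(n,1)$-category whose tensor product preserves cofiltered limits is correct and is what lets you identify $(\lim_\beta\iota_*B_\beta)^{\otimes k}$ with $\lim_\beta\iota_*(B_\beta^{\otimes k})$, and the endgame — commute a \emph{finite} limit of spaces past a filtered colimit — is exactly where the proof should land.

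The gap is at the step you yourself flag as the crux, and flagging it is not the same as closing it. You need the claim that $\Map_{\Alg_{\Einfty}(\mathcal D)}(P,A)$, for $\mathcal D$ an $(n,1)$-category, is assembled by \emph{finitely many finite} limits from the spaces $\Map_{\mathcal D}(P^{\otimes k},A)$. The model you choose to justify this — the bar resolution $P\simeq|T^{\bullet+1}P|$ by the free $\Einfty$-algebra monad — does not deliver it, for two reasons. First, $T$ involves the coproduct $\coprod_k\Sym^k$, i.e.\ colimits (including homotopy $\Sigma_k$-quotients) that a bare symmetric monoidal $(n,1)$-category is not assumed to possess, so the resolution need not exist in $\mathcal C$ or $\Pro(\mathcal C)$. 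Second, even granting it, $(n-1)$-truncatedness of the cosimplicial terms only collapses the totalization to $\Tot_n$; it does nothing about the infinite product $\Map(T^sP,A)=\prod_k\Map(\Sym^k(\cdots),A)$ sitting \emph{inside each cosimplicial degree}, and infinite products do not commute with the filtered colimit $\colim_\beta$ you need to extract. Your closing paragraph says ``one must argue \dots that only boundedly many of these contribute,'' which is precisely the lemma the whole proof rests on and which is never proved. A correct treatment has to replace this model by one in which truncatedness genuinely bounds the arity of the data — e.g.\ a direct unwinding of $\Einfty$-algebra maps in an $(n,1)$-category into a finite coherence diagram involving only $P^{\otimes k}$ for $k\leq n+2$ (as one checks by hand for $n=1,2$), or a cofinality argument replacing the infinite operadic index category by a finite subcategory over which limits of $(n-1)$-truncated diagrams agree. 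Until one of these is supplied, the proof is incomplete at its central step.
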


\begin{proof}[Proof of Proposition \ref{prop: base change is classical}]
    As the class of convergent prestacks is closed under limits, $X'$ is convergent. 
    So, to show that $X'$ is classical, it suffices to show $\t{n}X'$ is classical for all $n$.
    Since $X$ is a coflat ind-finite ind-scheme, $X = \Hom_R(A_X,-)$ for some classical pro-algebra $A_X$.
    By Proposition \ref{prop: proalg to algpro is fully faithful}, the hom space $\Hom_R(A_X,-)$ may be computed in $\Einfty$-algebras in pro-$R$-modules.
    Thus, the base change $\t{n}X'$ is equivalent to $\Hom_{R'}(A_X', -)$ where $A_X' = A_X \overset{L}{\otimes}_R R'$ is considered as an $\Einfty$-algebra in pro-$R'$-modules.
    Since the underlying pro-module of $A_X$ is equivalent to a Mittag-Leffler system of finite free $R$-modules, $A_X \overset{L}{\otimes}_R R'$ is a discrete pro-$R'$-module, so $\t{n}X'$ is classical.

    Theorem \ref{theorem: algebras in pro} implies that $A_{X}'$ is equivalent as a pro-algebra to a pro-system of classical finitely generated $R'$-modules with surjective transition maps.
    Hence $X'$ is a classical coflat ind-finite ind-scheme,
    and the coordinate and distribution algebras of $X'$ are the base change along $R \to R'$ of those of $X$.
\end{proof}


Now we review the relevant categories of sheaves. Given a prestack $\mathcal X$, the $\infty$-category of quasi-coherent sheaves on $\mathcal X$ is defined to be
\[ \QCoh(\mathcal X) = \lim_{\Spec B \to \mathcal X} D(B)\]
where $D(B)$ is the $\infty$-category of $B$-modules in spectra \cite[Chapter 3, §1]{GR17I}.
When dealing with sheaves on ind-schemes, we will also need the notion of ind-coherent sheaves, as developed by Gaitsgory and Rozenblyum \cite{GR17I,GR17II}, which has better properties with respect to base change.

\begin{remark}
    The text \cite{GR17I,GR17II} defines $\IndCoh$ for prestacks locally almost of finite type (laft) over a field of characteristic zero. 
    However, the properties of $\IndCoh$ we need also hold with the same definitions for laft prestacks over any Noetherian base ring $R$ admitting a dualizing complex.
\end{remark}

Let $R$ be a Noetherian commutative ring.
For an $R$-scheme $X$ locally almost of finite type, by definition $\IndCoh(X)$ is the ind-completion of the full subcategory of $\QCoh(X)$ of bounded complexes with coherent cohomology.
The assignment $X \rightsquigarrow \IndCoh(X)$ is functorial in $A$ with respect to the shriek-pullback: given $f: X \to Y$, we have 
\[ f^!: \IndCoh(Y) \to \IndCoh(X)\]
which is left adjoint to $f_*^{\IndCoh}$ when $f$ is an open immersion, and right adjoint to $f_*^{\IndCoh}$ when $f$ is proper.
If $\mathcal X$ is a laft prestack over $R$, then the $\infty$-category of ind-coherent sheaves on $\mathcal X$ is by definition
\[ \IndCoh(\mathcal X) = \lim_{\Spec A \to \mathcal X \text{ laft}} \IndCoh(\Spec A).\]
Now assume that $R$ admits a dualizing complex $\omega_R$.
The dualizing object for $\pi: \mathcal X \to \Spec R$ is by definition $\omega_X = \pi^! \omega_R$. The dualizing object $\omega_{\mathcal X}$ is the tensor unit of $\IndCoh(\mathcal X)$.
If $f: \mathcal X \to \mathcal Y$ is ind-proper, then $f^!$ has left adjoint $f_*^{\IndCoh}$, for which we will also write $f_!$.

If $\pi: \mathcal X \to S= \Spec R$ is a coflat ind-finite ind-scheme, there is another description of the distribution coalgebra arising from functoriality of $\IndCoh(-)$. 
For any $\mathcal X \to S$ locally almost of finite type, we have the functor $\Upsilon_{\mathcal X}: \QCoh(\mathcal X) \to \IndCoh(\mathcal X)$ given by tensoring with $\omega_{\mathcal X}$.
If $\mathcal X$ is eventually coconnective, then $\Upsilon_{\mathcal X}$ is fully faithful \cite[Lemma 10.3.4]{Gai13_indcoh}.
Since $\pi^{\IndCoh}_\ast$ is left adjoint to the monoidal functor $\pi^!$, $\pi^{\IndCoh}_\ast$ is oplax monoidal, so $\pi^{\IndCoh}_{\ast}(\omega_{\mathcal X})$ naturally has the structure of a coalgebra.

\begin{proposition} \label{prop: distributions via omega}
Let $R$ be a Noetherian ring admitting a dualizing complex and let $\pi:X\to S=\Spec R$ be a coflat ind-finite ind-scheme.
Then 
$\pi_*^{\IndCoh}(\omega_X)=\Upsilon(A_X^\vee)$, where $A^\vee\in\QCoh(S)^\heartsuit$ is the distribution coalgebra of $\pi:X\to S$. 
\end{proposition}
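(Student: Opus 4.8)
The plan is to reduce to the defining colimit presentation of the ind-scheme and then invoke Grothendieck duality for finite morphisms. Write $X = \SpInd(A_X)$ with $A_X = \lim_\alpha A_\alpha$, where by Theorem~\ref{theorem: algebras in pro} the $A_\alpha$ may be taken to be finite $R$-algebras with surjective transition maps; then $X = \colim_\alpha X_\alpha$ for $X_\alpha = \Spec A_\alpha$, with closed embeddings $\iota_{\alpha\beta}\colon X_\alpha \hookrightarrow X_\beta$ and $\iota_\alpha\colon X_\alpha \hookrightarrow X$, and I set $\pi_\alpha = \pi\circ\iota_\alpha$. Since $X$ is ind-finite over the Noetherian base, it is laft, and by Gaitsgory--Rozenblyum $\IndCoh(X) = \colim_\alpha \IndCoh(X_\alpha)$ along the pushforwards $(\iota_{\alpha\beta})_*^{\IndCoh}$; these have right adjoints, namely the $!$-pullbacks, so every object is the colimit of $(\iota_\alpha)_*^{\IndCoh}\iota_\alpha^!$ applied to itself. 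Applying this to $\omega_X = \pi^!\omega_R$ and using $\iota_\alpha^!\omega_X = (\pi\iota_\alpha)^!\omega_R = \omega_{X_\alpha}$ gives $\omega_X \simeq \colim_\alpha (\iota_\alpha)_*^{\IndCoh}\omega_{X_\alpha}$, hence, since $\pi_*^{\IndCoh}$ preserves this colimit and $\pi_*^{\IndCoh}(\iota_\alpha)_*^{\IndCoh} = (\pi_\alpha)_*^{\IndCoh}$,
\[ \pi_*^{\IndCoh}(\omega_X) \simeq \colim_\alpha (\pi_\alpha)_*^{\IndCoh}(\omega_{X_\alpha}).\]

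Next I would evaluate each term. The morphism $\pi_\alpha$ is finite, hence proper, so $(\pi_\alpha)_*^{\IndCoh}$ agrees with the ordinary pushforward on coherent objects, and Grothendieck duality for a finite morphism yields $(\pi_\alpha)_*\,\pi_\alpha^!\,\omega_R \simeq R\fancyHom_{\OO_S}((\pi_\alpha)_*\OO_{X_\alpha}, \omega_R)$. Thus $(\pi_\alpha)_*^{\IndCoh}(\omega_{X_\alpha}) \simeq \mathbb{D}_S(A_\alpha)$, the Grothendieck dual of $A_\alpha \in \Coh(S)^\heartsuit$, and $\pi_*^{\IndCoh}(\omega_X) \simeq \colim_\alpha \mathbb{D}_S(A_\alpha)$, the transition maps being $\mathbb{D}_S$ applied to the surjections $A_\beta \to A_\alpha$.

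The remaining step --- identifying this colimit with $\Upsilon(A_X^\vee)$ --- I would carry out by changing the pro-presentation. Since $R$ admits a dualizing complex, $\mathbb{D}_S\colon \Coh(S)^{op} \to \Coh(S)$ preserves coherence, so composing with $\Coh(S) \hookrightarrow \IndCoh(S)$ and ind-extending gives a filtered-colimit-preserving functor $\Pro(\Coh(S))^{op} \to \IndCoh(S)$ sending $\lim_\alpha B_\alpha$ to $\colim_\alpha\mathbb{D}_S(B_\alpha)$; in particular $\colim_\alpha\mathbb{D}_S(A_\alpha)$ depends only on $A_X$ regarded as an object of $\Pro(\Coh(S))$. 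Since $X$ is coflat, $A_X$ is pro-projective and pro-finite, so Proposition~\ref{prop:coLazard} lets me rewrite $A_X \simeq \lim_\beta F_\beta$ with the $F_\beta$ free of finite rank --- an equivalence in $\ProMod_R^f = \Pro(\Coh(S)^\heartsuit)$ and hence in $\Pro(\Coh(S))$. Then $\pi_*^{\IndCoh}(\omega_X) \simeq \colim_\beta \mathbb{D}_S(F_\beta) \simeq \colim_\beta\bigl(F_\beta^\vee \otimes_{\OO_S}\omega_R\bigr)$, using $R\fancyHom_{\OO_S}(F_\beta, \omega_R) \simeq \fancyHom_{\OO_S}(F_\beta, \OO_S)\otimes_{\OO_S}\omega_R$ for $F_\beta$ finite free. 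Finally, $\Upsilon = (-)\otimes_{\OO_S}\omega_R$ commutes with colimits, and $\colim_\beta F_\beta^\vee = A_X^\vee$ by Proposition~\ref{prop:linear duality}(2), so $\pi_*^{\IndCoh}(\omega_X) \simeq \Upsilon(A_X^\vee)$.

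The main obstacle I anticipate is the bookkeeping in the last paragraph: showing $\colim_\alpha \mathbb{D}_S(A_\alpha)$ depends only on the pro-object $A_X$ (which is precisely where the dualizing-complex hypothesis enters, through coherence-preservation of $\mathbb{D}_S$), and checking that the free presentation from Proposition~\ref{prop:coLazard}, established in $\ProMod_R$, remains an equivalence after passing to the $\infty$-categorical $\Pro(\Coh(S))$; once these are granted, matching the transition maps so that the colimit becomes $A_X^\vee = \colim_\beta F_\beta^\vee$ is routine. A secondary point requiring care is the input from \cite{GR17I,GR17II}: the identification $\IndCoh(X) = \colim_\alpha \IndCoh(X_\alpha)$ and the resulting presentation $\omega_X \simeq \colim_\alpha (\iota_\alpha)_*^{\IndCoh}\omega_{X_\alpha}$ of the dualizing sheaf.
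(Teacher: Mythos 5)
Your proof is correct and takes essentially the same route as the paper's, whose entire proof is the one-line remark that the statement ``reduces to the case when $X\to S$ is finite, in which case it follows from Serre duality.'' You have simply filled in the details: the colimit presentation $\pi_*^{\IndCoh}(\omega_X)\simeq\colim_\alpha(\pi_\alpha)_*^{\IndCoh}(\omega_{X_\alpha})$, Grothendieck duality for the finite maps $\pi_\alpha$, and the identification of the resulting colimit with $\Upsilon(A_X^\vee)$ via a finite-free pro-presentation of $A_X$, which is exactly where coflatness and the dualizing complex are used.
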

\begin{proof} 
    The proof reduces to the case when $X \to S$ is finite, in which case it follows from Serre duality.
\end{proof}

The assignment $X \rightsquigarrow \pi_!\omega_X$ is functorial in the following way:
suppose that $f: X \to Y$ is a morphism in $\IndSch_R^{indf,cofl}$.
As $f^! \omega_Y = \omega_X$,
we have a natural morphism
$ (\pi_X)_!\omega_X \simeq (\pi_Y)_! f_! f^!\omega_Y \to (\pi_Y)_!\omega_Y$,
which is thus a morphism $\Upsilon(A_X^\vee) \to \Upsilon(A_Y^\vee)$.
\begin{lemma}\label{lemma: functoriality of distribution coalgebra}
    The map $\Upsilon(A_X^\vee) \to \Upsilon(A_Y^\vee)$ is linearly dual to $f^*: A_Y \to A_X$.
\end{lemma}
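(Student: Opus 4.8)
The plan is to reduce to the case where $X$ and $Y$ are finite over $S=\Spec R$, where the assertion becomes the naturality of Grothendieck--Serre duality for finite morphisms. Since $S$ is classical, hence eventually coconnective, $\Upsilon_S\colon\QCoh(S)\to\IndCoh(S)$ is fully faithful \cite[Lemma 10.3.4]{Gai13_indcoh}, so it suffices to show that the morphism $(\pi_X)_!\omega_X\to(\pi_Y)_!\omega_Y$ agrees, under the identifications of Proposition \ref{prop: distributions via omega}, with $\Upsilon_S$ applied to the linear dual $A_X^\vee\to A_Y^\vee$ of $f^*\colon A_Y\to A_X$. Write $X\simeq\colim_\alpha X_\alpha$ and $Y\simeq\colim_\delta Y_\delta$ with $X_\alpha=\Spec A_\alpha$, $Y_\delta=\Spec C_\delta$ finite over $R$ and closed-immersion transition maps. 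Each $X_\alpha$ is affine, so the composite $X_\alpha\to X\xrightarrow{f}Y$ factors through some $Y_{\delta(\alpha)}$, and (passing to a cofinal subsystem, with the $f_\alpha$ compatible with the transition maps) $f=\colim_\alpha f_\alpha$ for morphisms $f_\alpha\colon X_\alpha\to Y_{\delta(\alpha)}$ of finite $R$-schemes, dual to $R$-algebra maps $f_\alpha^*\colon C_{\delta(\alpha)}\to A_\alpha$.

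Next I would show the whole construction is the filtered colimit of its finite-level pieces. In the presentation $\IndCoh(X)=\colim_\alpha\IndCoh(X_\alpha)$ one has $\mathrm{id}_{\IndCoh(X)}\simeq\colim_\alpha(\iota_\alpha)_\ast^{\IndCoh}\iota_\alpha^!$ along the closed immersions $\iota_\alpha\colon X_\alpha\hookrightarrow X$, hence $\omega_X\simeq\colim_\alpha(\iota_\alpha)_\ast^{\IndCoh}\omega_{X_\alpha}$ (and similarly for $Y$). Since $(\pi_X)_!=(\pi_X)_\ast^{\IndCoh}$ preserves filtered colimits and $(\pi_{X_\alpha})_\ast^{\IndCoh}=(\pi_X)_\ast^{\IndCoh}(\iota_\alpha)_\ast^{\IndCoh}$, we get $(\pi_X)_!\omega_X\simeq\colim_\alpha(\pi_{X_\alpha})_!\omega_{X_\alpha}$, compatibly with the identifications $(\pi_{X_\alpha})_!\omega_{X_\alpha}\simeq R\Hom_R(A_\alpha,\omega_R)$ coming from the proof of Proposition \ref{prop: distributions via omega}; likewise for $Y$. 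Tracing definitions, the map $(\pi_X)_!\omega_X\to(\pi_Y)_!\omega_Y$ built from the counit $f_!f^!\to\mathrm{id}$ is the filtered colimit over $\alpha$ of the maps built from $(f_\alpha)_!f_\alpha^!\to\mathrm{id}$. So it remains to (i) identify each finite-level map and (ii) reassemble.

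For (i): if $g\colon\Spec B\to\Spec C$ is a morphism of finite $R$-schemes over $S$, dual to $g^*\colon C\to B$, then Grothendieck duality for the finite morphisms $\pi_B$, $\pi_C$ gives $(\pi_B)_!\omega_{\Spec B}\simeq R\Hom_R(B,\omega_R)$ and $(\pi_C)_!\omega_{\Spec C}\simeq R\Hom_R(C,\omega_R)$, under which the counit $g_!g^!(-)\to(-)$ is evaluation against $1\in g_\ast\OO_{\Spec B}$; this is part of the standard duality formalism (cf. \cite{GR17I}). Unwinding this shows the induced map $R\Hom_R(B,\omega_R)\to R\Hom_R(C,\omega_R)$ is precomposition by $g^*$, i.e.\ $R\Hom_R(g^*,\omega_R)$. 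For (ii): pick presentations $A_X\simeq\lim_\gamma F_\gamma$ and $A_Y\simeq\lim_\epsilon E_\epsilon$ by finite free $R$-modules (Proposition \ref{prop:coLazard}). Extend the contravariant functor $R\Hom_R(-,\omega_R)$ to pro-$R$-modules by $R\Hom_R(\lim_\gamma F_\gamma,\omega_R):=\colim_\gamma R\Hom_R(F_\gamma,\omega_R)$; this is well defined on the pro-module, sends $A_X$ to $\colim_\gamma F_\gamma^\vee\otimes_R\omega_R\simeq\Upsilon_S(A_X^\vee)$, and sends $f^*\colon A_Y\to A_X$ to $\Upsilon_S$ of the linear dual of $f^*$, by the description of linear duality in Proposition \ref{prop:linear duality}. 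Since $\{A_\alpha\}$ and $\{F_\gamma\}$ represent the same pro-$R$-module, this extended value computes $\colim_\alpha R\Hom_R(A_\alpha,\omega_R)$ and the extended map computes $\colim_\alpha R\Hom_R(f_\alpha^*,\omega_R)$; combining (i) and (ii) gives the lemma.

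The step I expect to be the main obstacle is the bookkeeping in (ii): reconciling the algebra presentations $\{A_\alpha\}$, $\{C_\delta\}$ that realize $X$, $Y$ as ind-schemes with the finite-free presentations $\{F_\gamma\}$, $\{E_\epsilon\}$ that define the distribution coalgebras, and checking that the identification in Proposition \ref{prop: distributions via omega} is compatible with all these filtered colimits and with the counit morphisms---together with pinning down the naturality of finite Grothendieck duality in precisely the form used in (i).
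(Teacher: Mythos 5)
Your proposal is correct and follows essentially the same route as the paper's proof: reduce to the finite case, where the counit of the $(g_!,g^!)$-adjunction identifies the map $R\Hom_R(B,\omega_R)\to R\Hom_R(C,\omega_R)$ as precomposition with $g^*$ (hence, via hom-tensor adjunction, as the linear dual of $g^*$ tensored with $\omega_S$), and then pass to the filtered colimit over the finite closed subschemes of $X$. The bookkeeping you flag as the main obstacle is real but routine, and the paper dispatches it in one sentence (``Taking the colimit over $X$ gives the result'').
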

\begin{proof}
    Suppose $f: X \to Y$ is a finite morphism, so that $f^! = \Hom_{\OO_Y}(\OO_X,-)$. Our morphism $(\pi_X)_! \omega_X \to (\pi_Y)_!\omega_Y$ is identified with 
    \[ \Hom_{(\pi_Y)_*\OO_Y}((\pi_X)_*\OO_X, \Hom_{\OO_S}((\pi_Y)_*\OO_Y, \omega_S)) \to \Hom_{\OO_S}((\pi_Y)_*\OO_Y, \omega_S),\]
    induced by $f^*: (\pi_Y)_*\OO_Y \to (\pi_X)_*\OO_X$.
    Under hom-tensor adjunction, this corresponds to $((\pi_X)_*\OO_X)^\vee \otimes \omega_S \to ((\pi_Y)_*\OO_Y)^\vee \otimes \omega_S$
    dual to $f^*$. Taking the colimit over $X$ gives the result for ind-finite $X$.
\end{proof}

\subsection{Fourier-Mukai transforms for affine 1-motives}
\label{subsection: FM for 1-motives}

Throughout this section, $R$ is a classical Noetherian ring admitting a dualizing complex and $S = \Spec R$.

\begin{definition}\label{definition: laumon groupoid}
    A \emph{Laumon groupoid} over $R$ is a groupoid 
    $X_1\rightrightarrows X_0\to S$ such that 
    \begin{enumerate}
        \item $X_0\to S$ is flat and affine of finite type;
        \item Both maps $X_1\to X_0$ are ind-finite ind-schematic and coflat. 
    \end{enumerate}
\end{definition}

\begin{example}[Affine 1-motives]
    Suppose that $G$ and $H$ are flat Mittag-Leffler affine commutative group schemes of finite type, and $H^\vee \to G$ is a homomorphism.
    Then the action groupoid $H^\vee \times G \rightrightarrows G$ is a Laumon groupoid with quotient stack $[G/H^\vee]$.
    We will call such a quotient stack an \emph{affine 1-motive}.
\end{example}

\begin{example}[de Rham groupoid]
    Assume $R$ is a $\mathbb{Q}$-algebra.
    Suppose that $X \to S$ is a smooth affine morphism.
    Let $(X \times X)^{\wedge}_X$ be the completion of $X \times X$ along the diagonal.
    Then both the projections $(X \times X)^{\wedge}_X \rightrightarrows X$ are ind-finite ind-schematic. Since the jet bundles of $X \to S$ are locally free, the projections are also coflat. 
    Thus $(X \times X)^{\wedge}_X \rightrightarrows X$ is a Laumon groupoid, with quotient stack the \emph{de Rham stack} $X_{dR}$ of $X$.
\end{example}

\begin{remark}
As in Remark \ref{remark: derived Cartier duality?}, it is natural to wonder whether Definition \ref{definition: laumon groupoid} has an $\infty$-categorical counterpart. 
    Definition \ref{definition: laumon groupoid} is sufficient for 1-categorical work but not higher (for example, the de Rham stack of a singular morphism is not of this form).
    The advantage of Definition \ref{definition: laumon groupoid} is that the resulting associative algebras are classical.
    We'll wait for others to develop the derived version.
\end{remark}

If $G$ and $H$ are flat Mittag-Leffler affine commutative group schemes over $S$, then by Theorem \ref{theorem: perfect pairing for flat ML group}, a homomorphism $H^\vee \to G$ is equivalent to a bilinear pairing $G^\vee \times H^\vee \to \Gm$, which in turn is equivalent to a homomorphism $G^\vee \to H$.
Thus, if $G^\vee \times H^\vee \to \Gm$ is a bilinear pairing, then there are two affine 1-motives $\mathcal X = [G/H^\vee]$ and $\mathcal Y= [H/G^\vee]$ arising as quotients of Laumon groupoids. Our goal is to construct a Fourier-Mukai transform relating their derived categories of sheaves.

Given a Laumon groupoid $X_1 \rightrightarrows X_0$ over $R$,
consider the quotient stack $\cX:=[X_0/X_1]$, and consider the natural projection
\[q:X_0\to\cX.\]
\begin{proposition} \label{prop: shriek pullback is monadic}
    The functor $q^!:\IndCoh(\cX)\to\IndCoh(X_0)$ is monadic.
\end{proposition}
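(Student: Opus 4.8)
The plan is to verify the hypotheses of the Barr--Beck--Lurie monadicity theorem \cite{lurieha} for $q^!$. The basic input is the descent presentation
\[
    \IndCoh(\cX)\;\simeq\;\Tot\big(\IndCoh(X_\bullet)\big),
\]
where $X_\bullet$ is the simplicial ind-scheme given by the nerve of the groupoid $X_1\rightrightarrows X_0$ (so that $X_n=X_1\times_{X_0}\cdots\times_{X_0}X_1$ with $n$ factors, $X_0$ affine of finite type over $S$), the coface functors are the $!$-pullbacks along the face maps of $X_\bullet$, and $q^!$ is evaluation at $[0]$. Since $\IndCoh$ on prestacks is defined by right Kan extension from affine schemes, it carries the colimit $\cX=\colim_{\Delta^{op}}X_\bullet$ to the corresponding limit of categories; the one point needing care is that $\IndCoh^!$ sees this colimit, i.e.\ that $\IndCoh^!$ satisfies descent along the surjection $q\colon X_0\to\cX$, and this is where coflatness of the legs is first used, as it makes $q$ behave like a flat cover.

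Granting the presentation, I would first record two geometric facts. (a) Every face map of $X_\bullet$ is ind-finite, ind-schematic and coflat: after the shear isomorphism each is a base change of one of the two legs $X_1\to X_0$, and these three properties are preserved under base change (coflatness by Proposition~\ref{prop: base change is classical}). (b) The $!$-pullback along an ind-finite coflat morphism preserves colimits: writing such a morphism as a filtered colimit of finite morphisms, it is enough to treat a finite coflat $f\colon Y\to Z$, and then $f_*\OO_Y$ is finitely presented and flat over $\OO_Z$, hence finite locally free, so $f^!\simeq f^*(-)\otimes\omega_{Y/Z}$ with $\omega_{Y/Z}=(f_*\OO_Y)^\vee$ an invertible sheaf, which preserves colimits; one then passes to the colimit. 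By (a) and (b) all coface functors preserve colimits, so the cosimplicial diagram $\IndCoh(X_\bullet)$ lands in $\mathrm{Pr}^L$; as $\mathrm{Pr}^L\to\mathrm{Cat}_\infty$ preserves limits, $\IndCoh(\cX)$ is the limit taken in $\mathrm{Pr}^L$, hence presentable, and every evaluation functor --- in particular $q^!$ --- preserves all colimits. So $\IndCoh(\cX)$ admits colimits of $q^!$-split simplicial objects and $q^!$ preserves them.

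It remains to produce a left adjoint for $q^!$ and to check that $q^!$ is conservative. Each coface functor is a $!$-pullback along an ind-proper morphism, so it admits a left adjoint (the $\IndCoh$-pushforward), and these left adjoints satisfy Beck--Chevalley by proper base change for $\IndCoh$ \cite{GR17I}; by the standard adjointability argument for such cosimplicial diagrams, $q^!$ then admits a left adjoint $q_!\simeq q_*^{\IndCoh}$. For conservativity, let $F\in\IndCoh(\cX)$ with $q^!F=0$; for each $[n]$, the map $[0]\to[n]$ in $\Delta$ induces a morphism $X_n\to X_0$, and the coherence data of $F$ identifies its $[n]$-component with the $!$-pullback of $q^!F=0$ along that morphism, so all components of $F$ vanish and $F=0$ in $\lim_\Delta\IndCoh(X_\bullet)$. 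Barr--Beck--Lurie now shows $q^!$ is monadic.

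The main obstacle is exactly the pair of inputs used to get started: the descent presentation $\IndCoh(\cX)\simeq\Tot(\IndCoh(X_\bullet))$ and the fact that $!$-pullback along an ind-finite coflat morphism preserves colimits. These are where the Noetherian and dualizing-complex hypotheses and the coflatness condition in Definition~\ref{definition: laumon groupoid} genuinely enter, and where one must be careful about which variant of $\IndCoh$ and which results of Gaitsgory--Rozenblyum are invoked; the rest of the argument is formal.
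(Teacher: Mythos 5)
Your overall strategy is the same as the paper's: identify $\IndCoh(\cX)$ with $\Tot\,\IndCoh(X_\bullet)$ and then verify monadicity of evaluation at $[0]$. The paper simply delegates the second step to \cite[Chapter 3, Proposition 3.3.3(a)]{GR17II} (monadicity for groupoids with ind-schematic, ind-proper legs), which is exactly the Barr--Beck--Lurie verification you carry out by hand; your left-adjoint-via-adjointability and conservativity arguments are correct and are what that citation encapsulates. Two points where your write-up goes astray, though neither is fatal. First, the step where you reduce ``$f^!$ preserves colimits for $f$ ind-finite coflat'' to the case of a finite coflat morphism with $f_*\OO_Y$ finite locally free is not justified: coflatness says the \emph{pro-system} of function algebras is pro-projective, which by Remark~\ref{remark: pro-projective conditions} only means that transition maps cofinally factor through projectives; the individual terms $Y_\alpha\to X_0$ of an ind-presentation need not be finite locally free, so an ind-finite coflat morphism is not in general a filtered colimit of finite locally free morphisms. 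Fortunately the claim you are trying to prove is automatic: $\IndCoh^!$ is by construction a functor to cocomplete stable categories with \emph{continuous} functors, so $f^!$ is exact and continuous, hence preserves all colimits, for any morphism of laft prestacks. Second, no descent argument is needed to get $\IndCoh(\cX)\simeq\Tot\,\IndCoh(X_\bullet)$: the paper treats $\cX=[X_0/X_1]$ as the colimit of the simplicial prestack, and $\IndCoh^!$ takes prestack colimits to limits because it is right Kan extended from affines. In particular, coflatness of the legs plays no role in this proposition at all — it enters only later, in Proposition~\ref{prop:monad on QCoh}, to control the resulting monad on the quasi-coherent subcategory.
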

\begin{proof} 
    Since $\IndCoh^!$ is right Kan-extended from affines, it takes colimits to limits, so $\IndCoh(\cX) \simeq \Tot \IndCoh(X_\bullet)$ where $X_\bullet = (X_1)^{\times \bullet}_{X_0}$.
    Under this identification, $q^!: \IndCoh(\cX) \to \IndCoh(X_0)$ is equivalent to the tautological map $\Tot \IndCoh(X_\bullet) \to \IndCoh(X_0)$.
    As the maps $X_1 \rightrightarrows X_0$ are ind-schematic and ind-proper, \cite[Chapter 3, Proposition 3.3.3(a)]{GR17II} shows that $\IndCoh(\cX) \to \IndCoh(X_0)$ is monadic.
\end{proof}

Denote the corresponding monad by $P$; by base change, the underlying functor $\IndCoh(X_0)\to\IndCoh(X_0)$
is given by $t_*^{\IndCoh}s^!$ for the structure maps $s,t:X_1\rightrightarrows X_0$.

\begin{proposition} \label{prop:monad on QCoh}
The monad $P$ preserves $\Upsilon_{X_0}(\QCoh(X_0))\subset\IndCoh(X_0)$.
\end{proposition}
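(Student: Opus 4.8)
The plan is to reduce the statement to the single object $\OO_{X_0}$ and then use continuity. Recall that $X_0\to S$ is affine, so $X_0=\Spec B$ with $B$ of finite type over $R$; in particular $X_0$ is Noetherian and admits the dualizing complex $\omega_{X_0}=\pi_{X_0}^!\omega_R$, where $\pi_{X_0}\colon X_0\to S$ is the structure map. Since $X_0$ is classical, hence eventually coconnective, the functor $\Upsilon_{X_0}\colon\QCoh(X_0)\to\IndCoh(X_0)$ is fully faithful \cite[Lemma 10.3.4]{Gai13_indcoh}; it is moreover exact and colimit‑preserving, being given by tensoring with $\omega_{X_0}$. As $\OO_{X_0}$ is a compact generator of $\QCoh(X_0)$, it follows that the essential image of $\Upsilon_{X_0}$ is precisely the localizing subcategory $\langle\omega_{X_0}\rangle\subset\IndCoh(X_0)$ generated by $\omega_{X_0}=\Upsilon_{X_0}(\OO_{X_0})$ (i.e.\ the smallest full stable subcategory closed under colimits and containing $\omega_{X_0}$). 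Hence it will suffice to establish two things: (i) the underlying endofunctor of $P$ is exact and preserves colimits; and (ii) $P(\omega_{X_0})$ lies in the essential image of $\Upsilon_{X_0}$. Granting these, $P$ carries $\langle\omega_{X_0}\rangle$ into $\langle P(\omega_{X_0})\rangle$, which by (ii) is contained in $\Upsilon_{X_0}(\QCoh(X_0))$, as desired.

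For (i), write $P=t_*^{\IndCoh}\circ s^!$ for the structure maps $s,t\colon X_1\rightrightarrows X_0$. Both are ind‑proper (ind‑finite), so $t_*^{\IndCoh}$ is a left adjoint, with right adjoint $t^!$, and is therefore exact and colimit‑preserving; and $s^!$ is exact, being a right adjoint of $s_*^{\IndCoh}$. The point that must be checked is that $s^!$ is colimit‑preserving: since $s$ is ind‑finite, $s_*^{\IndCoh}$ sends coherent (= compact) objects of $\IndCoh(X_1)$ to coherent objects of $\IndCoh(X_0)$, so its right adjoint $s^!$ preserves filtered colimits, and then, being exact, preserves all colimits. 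Therefore $P$ is exact and colimit‑preserving.

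For (ii), I will use that the groupoid lives over $S$, so that $\pi_{X_0}\circ s=\pi_{X_0}\circ t=:\pi_{X_1}$. By functoriality of $!$‑pullback this gives $s^!\omega_{X_0}=\pi_{X_1}^!\omega_R=\omega_{X_1}=t^!\omega_{X_0}$, whence
\[
    P(\omega_{X_0})=t_*^{\IndCoh}\,s^!\omega_{X_0}=t_*^{\IndCoh}\omega_{X_1}.
\]
Now $t\colon X_1\to X_0$ is a coflat ind‑finite ind‑scheme over the Noetherian base $X_0$, which admits a dualizing complex, so Proposition \ref{prop: distributions via omega}, applied with $X_0$ in place of $S$, identifies $t_*^{\IndCoh}\omega_{X_1}$ with $\Upsilon_{X_0}(A_{X_1/X_0}^\vee)$, where $A_{X_1/X_0}^\vee\in\QCoh(X_0)^\heartsuit$ is the (relative) distribution coalgebra of $t$, a flat Mittag‑Leffler $\OO_{X_0}$‑module. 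In particular $P(\omega_{X_0})\in\Upsilon_{X_0}(\QCoh(X_0))$, which is (ii). Combining (i) and (ii) as explained in the first paragraph completes the argument; one also sees, as a byproduct, that under $\Upsilon_{X_0}$ the monad $P$ corresponds to an endofunctor of $\QCoh(X_0)$ of the form $(-)\otimes_{\OO_{X_0}}\mathcal A$ for a suitable $\OO_{X_0}$‑bimodule $\mathcal A$ with underlying module $A_{X_1/X_0}^\vee$.

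I expect the only genuinely delicate point to be the colimit‑preservation of $s^!$ in step (i): a priori $s^!$ is merely a right adjoint, and it is the ind‑finiteness of $s$ — forcing $s_*^{\IndCoh}$ to preserve compactness/coherence — that makes $s^!$ continuous. Everything else is bookkeeping with Proposition \ref{prop: distributions via omega} and with the generation of $\QCoh(X_0)$ by $\OO_{X_0}$.
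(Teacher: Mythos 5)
Your proof is correct and follows essentially the same route as the paper's: reduce, via the fact that $\Upsilon_{X_0}(\QCoh(X_0))$ is generated under colimits by $\omega_{X_0}$, to showing $t_*^{\IndCoh}s^!\omega_{X_0}\in\Upsilon_{X_0}(\QCoh(X_0))$, and then conclude by Proposition~\ref{prop: distributions via omega}. You simply make explicit the steps the paper leaves implicit (continuity and exactness of $P$, and the identification $s^!\omega_{X_0}=\omega_{X_1}$), all of which are handled correctly.
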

\begin{proof}
Since $X_0$ is affine, $\Upsilon(\QCoh(X_0))$ is generated by $\omega_{X_0}$, so it suffices to verify that
\[t_*^{\IndCoh}s^!(\omega_{X_0})\in\Upsilon(\QCoh(X_0)).\]
This follows from Proposition~\ref{prop: distributions via omega}.
\end{proof}
By Proposition \ref{prop:monad on QCoh}, $q^!$ restricts to a monadic functor from the full subcategory spanned by objects $E \in \IndCoh(\mathcal X)$ such that $q^!E \in \Upsilon_{\mathcal X_0}(\QCoh_{X_0})$. This turns out to be exactly $\QCoh(\mathcal X)$:

\begin{proposition}\label{prop: pullback detects QCoh}
    For $E \in \IndCoh(\mathcal X)$, $E$ is in the essential image of $\Upsilon_{\mathcal X}: \QCoh(\mathcal X) \to \IndCoh(\mathcal X)$ if and only if $q^!E$ is in the essential image of $\Upsilon_{X_0}: \QCoh(X_0) \to \IndCoh(X_0)$.
\end{proposition}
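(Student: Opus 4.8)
The plan is to descend the statement along the Čech nerve of the projection $q\colon X_0\to\mathcal X$ and reduce it to the full faithfulness of $\Upsilon$ on each term of that nerve. The ``only if'' direction is a one-line consequence of the projection formula for $!$-pullback: if $E\simeq\Upsilon_{\mathcal X}(\mathcal F)$ then $q^!E\simeq q^!\omega_{\mathcal X}\otimes q^*\mathcal F=\omega_{X_0}\otimes q^*\mathcal F=\Upsilon_{X_0}(q^*\mathcal F)$, which lies in the essential image of $\Upsilon_{X_0}$. The content of the proposition is the converse.

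For the converse I would take the Čech nerve $X_\bullet=(X_1)^{\times\bullet}_{X_0}$ of $q$, so that $X_p=X_0\times_{\mathcal X}\cdots\times_{\mathcal X}X_0$ with $p+1$ factors. Exactly as in the proof of Proposition~\ref{prop: shriek pullback is monadic}, $\IndCoh(\mathcal X)\simeq\Tot\IndCoh(X_\bullet)$ with structure maps the $!$-pullbacks, under which $q^!$ is the projection onto the $[0]$-term; likewise $\QCoh(\mathcal X)\simeq\Tot\QCoh(X_\bullet)$ with the $*$-pullbacks. Every transition map $g\colon X_q\to X_p$ of the Čech nerve is a composite of projections along $X_1\to X_0$, hence ind-finite ind-schematic and coflat, so the projection formula yields natural equivalences $g^!\Upsilon_{X_p}\simeq\Upsilon_{X_q}g^*$; these assemble $\Upsilon_{X_\bullet}$ into a natural transformation of cosimplicial $\infty$-categories whose totalization is $\Upsilon_{\mathcal X}$.

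The argument then rests on two points. First, every $X_p$ is classical: $X_0$ and $X_1$ are, and since $X_1\to X_0$ is coflat the derived fibre products defining $X_p$ agree with the classical ones; hence each $X_p$ is eventually coconnective and $\Upsilon_{X_p}$ is fully faithful by \cite[Lemma 10.3.4]{Gai13_indcoh}. Second, each $g^!$ carries the essential image of $\Upsilon_{X_p}$ into that of $\Upsilon_{X_q}$, again via $g^!\Upsilon_{X_p}\simeq\Upsilon_{X_q}g^*$. Granting these: given $E\in\IndCoh(\mathcal X)$ with $q^!E\simeq\Upsilon_{X_0}(\mathcal F_0)$, present $E$ as a descent datum $(E_p)$ with $E_0=q^!E$; for each of the $p+1$ vertex projections $\pi\colon X_p\to X_0$ one has $E_p\simeq\pi^!E_0\simeq\Upsilon_{X_p}(\pi^*\mathcal F_0)$, so $E_p\simeq\Upsilon_{X_p}(\mathcal F_p)$ for an essentially unique $\mathcal F_p\in\QCoh(X_p)$; and for a transition map $g$ the equivalence $g^!E_p\simeq E_q$ becomes $\Upsilon_{X_q}(g^*\mathcal F_p)\simeq\Upsilon_{X_q}(\mathcal F_q)$, which full faithfulness of $\Upsilon_{X_q}$ turns into $g^*\mathcal F_p\simeq\mathcal F_q$. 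The system $(\mathcal F_p)$ then defines $\mathcal F\in\QCoh(\mathcal X)$ with $\Upsilon_{\mathcal X}(\mathcal F)\simeq E$, as desired.

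The main obstacle is purely a coherence matter: it is easy to see that each $E_p$ is termwise in the image of $\Upsilon_{X_p}$, but one must assemble these into a single quasi-coherent descent datum. This is handled by full faithfulness of \emph{all} the $\Upsilon_{X_p}$ (not merely $\Upsilon_{X_0}$) together with the standard fact that a levelwise fully faithful transformation of $\Delta$-indexed diagrams, with essential image stable under the transition functors, induces on totalizations a fully faithful functor whose essential image consists of those descent data that are levelwise — equivalently, by the vertex projections, at the $[0]$-term — in the image. The stability hypothesis is supplied by the projection formula, and is the only genuinely geometric input beyond the monadicity of $q^!$.
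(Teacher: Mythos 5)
Your proof is correct and follows essentially the same route as the paper: totalize over the \v{C}ech nerve, observe that $q^!E$ being in the image of $\Upsilon_{X_0}$ forces each level to lie in the image of $\Upsilon_{X_p}$, and conclude using that each $X_p$ is classical (the paper cites Proposition~\ref{prop: base change is classical} for this) so that every $\Upsilon_{X_p}$ is fully faithful. Your write-up merely makes explicit the coherence step that the paper leaves implicit in the phrase ``$E$ is equivalent to an object in the image of $\Tot\Upsilon_{X_\bullet}$''.
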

\begin{proof}
    Again let $X_\bullet = (X_1)^{\times \bullet}_{X_0}$.
    Since $\IndCoh^!$ and $\QCoh^*$ are right Kan extended from affine schemes, they take colimits to limits, so 
    $\IndCoh(\mathcal{X}) \simeq \Tot \IndCoh(X_\bullet)$ and $\QCoh(\mathcal X) \simeq \Tot \QCoh(X_\bullet)$.
    Consider the commutative diagram 
    \[
\begin{tikzcd}[ampersand replacement=\&]
	{\QCoh(\mathcal{X})} \& {\Tot \QCoh(X_\bullet)} \& {\QCoh(X_0)} \\
	{\IndCoh(\mathcal{X})} \& {\Tot \IndCoh(X_\bullet)} \& {\IndCoh(X_0)}
	\arrow["\sim", from=1-1, to=1-2]
	\arrow["{\Upsilon_{\mathcal X}}", from=1-1, to=2-1]
	\arrow[from=1-2, to=1-3]
	\arrow["{\Upsilon_{X_\bullet}}", from=1-2, to=2-2]
	\arrow["{\Upsilon_{X_0}}", from=1-3, to=2-3]
	\arrow["\sim", from=2-1, to=2-2]
	\arrow[from=2-2, to=2-3]
\end{tikzcd}
    .\]
    The composite of the rows is the pullback with respect to $q$.
    If $E \in \IndCoh(\mathcal X)$ and $q^!E $ is in the essential image of $\Upsilon_{X_0}$, then the pullback of $E$ to each $X_\bullet$ is in the image of $\Upsilon_{X_\bullet}$.
    By Proposition \ref{prop: base change is classical}, each $X_n$ is classical, so $\Upsilon_{X_n}$ is fully faithful for all $n$.
    Thus $E$ is equivalent to an object in the image of $\Tot \Upsilon_{X_\bullet}$, as desired.
\end{proof}


\begin{theorem} \label{theorem: Laumon groupoid monad is algebra}
Let $X_1 \rightrightarrows X_0$ be a Laumon groupoid over $S$ and let $q: X_0 \to \mathcal X = [X_0/X_1]$ be the quotient map. 
\begin{enumerate}
    \item The functor $q^*: \QCoh(\cX) \to \QCoh(X_0)$ is monadic. The associated monad $P^{\QCoh}$ is the restriction of $P$ along $\Upsilon_{X_0}: \QCoh(X_0) \to \IndCoh(X_0)$.
    \item 
There exists an associative $S$-algebra $A_1$, together with a morphism $A_0\to A_1$, where $X_0=\Spec(A_0)$, such that $P^{\QCoh}$ is the monad $A_1\otimes_{A_0}-$. Moreover, $A_1$ as a right (resp. left) $A_0$-module is identified with the distribution algebra of $s:X_1\to X_0$ (resp. $t:X_1\to X_0$). In particular, $A_1$ is flat as a right and as a left $A_0$-module.
    \item $\QCoh(\cX)$ is equivalent to the category of $A_1$-modules in $\QCoh(S)$.
\end{enumerate}
\end{theorem}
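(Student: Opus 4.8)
The plan is to deduce all three parts from the monadicity of $q^!$ already established in Proposition~\ref{prop: shriek pullback is monadic} by transporting it along the functors $\Upsilon$, and then to identify the resulting monad on $\QCoh(X_0)=D(A_0)$ by Serre duality along the two legs $s,t$ of the groupoid.

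For \textit{(1)}, I would start from the monadic functor $q^!\colon\IndCoh(\cX)\to\IndCoh(X_0)$, with monad $P$ whose underlying endofunctor is $t_*^{\IndCoh}s^!$, and exploit the commuting square $\Upsilon_{X_0}\circ q^*\simeq q^!\circ\Upsilon_{\cX}$ --- which follows from the projection formula $q^!(\omega_{\cX}\otimes-)\simeq q^*(-)\otimes q^!\omega_{\cX}$ together with $q^!\omega_{\cX}=\omega_{X_0}$ --- and the fact that $\Upsilon_{\cX}$ and $\Upsilon_{X_0}$ are fully faithful (each term $X_n=(X_1)^{\times n}_{X_0}$ of the simplicial resolution is classical by Proposition~\ref{prop: base change is classical}, exactly as in the proof of Proposition~\ref{prop: pullback detects QCoh}). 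Since $P$ preserves $\Upsilon_{X_0}(\QCoh(X_0))$ by Proposition~\ref{prop:monad on QCoh}, it restricts to a monad $P^{\QCoh}$ on $\QCoh(X_0)$; restricting the monadic equivalence $\IndCoh(\cX)\simeq\Mod_P(\IndCoh(X_0))$ to the full subcategory of modules whose underlying object lies in $\Upsilon_{X_0}(\QCoh(X_0))$ identifies $\QCoh(\cX)\subseteq\IndCoh(\cX)$ with $\Mod_{P^{\QCoh}}(\QCoh(X_0))$ by Proposition~\ref{prop: pullback detects QCoh}, compatibly with the forgetful functor, which under the square is $q^*$. This is precisely the statement that $q^*$ is monadic with monad $P^{\QCoh}$.

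For \textit{(2)}, I would first check that $P^{\QCoh}$ preserves small colimits: although $s^!$ need not preserve colimits on all of $\IndCoh(X_1)$, the composite $s^!\circ\Upsilon_{X_0}\simeq\Upsilon_{X_1}\circ s^*$ does (projection formula and $s^!\omega_{X_0}=\omega_{X_1}$), while $t_*^{\IndCoh}$ is a left adjoint and $\Upsilon_{X_0}$ restricts to an equivalence onto a colimit-closed subcategory. By the Eilenberg--Watts theorem for stable $\infty$-categories, a colimit-preserving endofunctor of $D(A_0)$ has the form $A_1\otimes_{A_0}-$ with $A_1:=P^{\QCoh}(A_0)$ an $(A_0,A_0)$-bimodule, and the monad structure upgrades $A_1$ to an associative $A_0$-ring, $A_0\to A_1$ being the monad unit at $A_0$. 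For the left module structure, $A_1=\Upsilon_{X_0}^{-1}t_*^{\IndCoh}(s^!\omega_{X_0})=\Upsilon_{X_0}^{-1}t_*^{\IndCoh}(\omega_{X_1})$, which by the relative form of Proposition~\ref{prop: distributions via omega} over $X_0$ is the distribution coalgebra of $t\colon X_1\to X_0$; hence $A_1$ is, as a left $A_0$-module, the distribution algebra of $t$, flat over $A_0$ by coflatness of $t$ and linear duality. For the right module structure I would present $X_1=\colim_\alpha X_{1,\alpha}$ with $X_{1,\alpha}=\Spec B_\alpha$ finite over $X_0$ via $s$ --- hence also via $t$, since a quasi-compact closed subscheme of the ind-scheme $X_1$ factors through a term of any presentation --- deduce $P^{\QCoh}(M)\simeq\colim_\alpha(t_\alpha)_*^{\QCoh}(s_\alpha^*M)$ from $\omega_{X_1}\simeq\colim_\alpha(\iota_\alpha)_*^{\IndCoh}\omega_{X_{1,\alpha}}$ and the compatibility $\Upsilon\circ f_*\simeq f_*^{\IndCoh}\circ\Upsilon$ for proper $f$, and then match the transition maps of this colimit (coming from the counits $(\iota_{\alpha\beta})_*^{\IndCoh}\iota_{\alpha\beta}^!\to\mathrm{id}$ applied to dualizing complexes) with those of the ind-system $\colim_\alpha\Hom_{A_0}(B_\alpha,A_0)$ computing the distribution algebra of $s$; this identifies $A_1$ as a right $A_0$-module with the distribution algebra of $s$, again flat. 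An alternative I would consider is to verify that $(t,s)\colon X_1\to X_0\times_R X_0$ is coflat and ind-finite ind-schematic and to apply Proposition~\ref{prop: distributions via omega} over $X_0\times_R X_0$, recovering both module structures simultaneously by restriction along the two maps $A_0\to A_0\otimes_R A_0$, trading the colimit bookkeeping for a coflatness verification.

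Finally, \textit{(3)} is then formal: the equivalence $\QCoh(\cX)\simeq\Mod_{P^{\QCoh}}(\QCoh(X_0))=\Mod_{A_1\otimes_{A_0}-}(D(A_0))$ combines with the standard identification of modules over an extension-of-scalars monad with modules over the larger ring to give $\QCoh(\cX)\simeq\mathrm{LMod}_{A_1}(D(R))=\Mod_{A_1}(\QCoh(S))$. The step I expect to be the main obstacle is the identification of the \emph{right} $A_0$-module structure on $A_1$ in Part \textit{(2)}: this is the one place where the asymmetry between $s^!$ on the source leg and $t_*^{\IndCoh}$ on the target leg must be confronted, and it requires making the relative Serre-duality identifications along $s$ and $t$ explicit and keeping careful track of the resulting twists; by contrast Part \textit{(1)} is a formal transport of structure and Part \textit{(3)} is bookkeeping.
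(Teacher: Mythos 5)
Your proposal follows the same route as the paper: part (1) is obtained by transporting the monadicity of $q^!$ (Proposition \ref{prop: shriek pullback is monadic}) through $\Upsilon$ using Propositions \ref{prop:monad on QCoh} and \ref{prop: pullback detects QCoh}, part (2) identifies the kernel of $P^{\QCoh}$ as a discrete flat bimodule given by distributions on $X_1$, and part (3) passes down to $\QCoh(S)$. The only difference is one of detail: the paper's proof of (2) is a single sentence (the kernel lies in $\QCoh(X_0\times_S X_0)^\heartsuit$), whereas you spell out the Eilenberg--Watts step and the identification of the left and right $A_0$-module structures via $t$ and $s$, which is a correct and faithful expansion of what the paper leaves implicit.
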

\begin{proof} 
    \begin{enumerate}
        \item By Propositions \ref{prop: shriek pullback is monadic} and \ref{prop:monad on QCoh},
        $q^!$ restricts to a monadic functor on the full subcategory of $\IndCoh(\cX)$ spanned by those $E$ such that $q^!E \in \Upsilon_{X_0}(\QCoh(X_0))$. By Proposition \ref{prop: pullback detects QCoh}, that category is exactly $\Upsilon_{X_0}(\QCoh(X_0))$.
        As $q^!\Upsilon_{\cX} \simeq \Upsilon_{X_0} q^*$, the claim follows.
        \item It suffices to notice that the kernel of the functor underlying the monad $P^{\QCoh}$ lies in $\QCoh(X_0\times_S X_0)^\heartsuit$.
        \item  Let $\pi: X_0 \to S$ be the structure map.
        Both of the functors $q^*: \QCoh(\cX) \to \QCoh(X_0)$ and $\pi_*: \QCoh(X_0) \to \QCoh(S)$ are monadic and continuous.  
        Thus, the composition is conservative, admits a left adjoint, and is continuous. Thus, the composite $\pi_*q^*$ is monadic.
        The underlying endofunctor of the composite monad is given by $A_1 \otimes_{A_0} (A_0 \otimes -) \simeq A_1 \otimes -$. \qedhere
    \end{enumerate}
\end{proof}

Now suppose we are given flat Mittag-Leffler affine commutative group schemes $G=\Spec A$ and $H=\Spec B$ over $S$ and a bilinear pairing $G^\vee \times H^\vee \to \Gm$.
By Proposition \ref{prop: mapping flat into coflat and bilinear}, a map $G^\vee \times H^\vee \to \Gm$ is encoded by $u: A \otimes_R B \to R$.
The pairing is bilinear if and only if $u$ is a Hopf pairing, i.e. the following diagrams commute:
\[
\begin{tikzcd}[ampersand replacement=\&]
	{A \otimes A \otimes B} \& {A \otimes B} \& {A \otimes B \otimes B} \& {A \otimes B} \\
	{A \otimes A \otimes B \otimes B} \& R \& {A \otimes A \otimes B \otimes B} \& R \\
	A \&\& B \\
	{A \otimes B} \& R \& {A \otimes B} \& R
	\arrow["{\nabla_A \otimes 1}", from=1-1, to=1-2]
	\arrow["{1 \otimes \Delta_B}"', from=1-1, to=2-1]
	\arrow["u", from=1-2, to=2-2]
	\arrow["{1 \otimes \nabla_B}", from=1-3, to=1-4]
	\arrow["{\Delta_A \otimes 1}"', from=1-3, to=2-3]
	\arrow["u", from=1-4, to=2-4]
	\arrow["{u \otimes u}"', from=2-1, to=2-2]
	\arrow["{u \otimes u}"', from=2-3, to=2-4]
	\arrow["{1 \otimes \eta_B}"', from=3-1, to=4-1]
	\arrow["{\epsilon_A}", from=3-1, to=4-2]
	\arrow["{\eta_A \otimes 1}"', from=3-3, to=4-3]
	\arrow["{\epsilon_B}", from=3-3, to=4-4]
	\arrow["u"', from=4-1, to=4-2]
	\arrow["u"', from=4-3, to=4-4]
\end{tikzcd}
\]
Given the data of the Hopf pairing $u$, there is an explicit description of the algebra $A_1$ associated to $[G/H^\vee]$.

\begin{definition}
    Let $A$ and $B$ be Hopf algebras equipped with a Hopf pairing $u: A \otimes_R B \to R$.  
    The \emph{smash product} $A \sharp_u B$ is defined to be the associative $R$-algebra with underlying module $A \otimes_R B$ and product 
    \[ (a \sharp b) (a' \sharp b') = \sum u(a_{(1)}' \otimes b_{(1)})aa_{(2)}' \sharp b_{(2)} b',\]
    where we have used Sweedler notation $\Delta(-) =\sum (-)_{(1)} \otimes (-)_{(2)}$.
\end{definition}
This definition is a special case of the smash product of a Hopf algebra on its module algebra \cite[155]{Swe69}. When $H^\vee$ is a formal Lie group over a field of characteristic zero, Laumon introduced the same associative algebra in \cite[§6.1]{Lau96}.

\begin{example}
    The Cartier dual of the additive group $\Ga^\vee$ is the
    group ind-finite ind-scheme $\Ga^{\hat{\sharp}}$ formed by the PD nilpotent envelope of $0 \in \Ga$.
    A point of $\Ga^{\hat{\sharp}}$ is a sequence of divided powers $a^{(n)}$ satisfying $a^{(n)}a^{(m)} = \binom{n+m}{n} a^{(n+m)}$ and $a^{(n)} = 0$ for $n \gg 0$. In characteristic zero, this is just the formal additive group.
    Let $u: \Ga^\vee \times \Ga^\vee \to \Gm$ be the exponential pairing 
    \[ u((a^{(n)}), (b^{(n)})) = \sum_{n=0}^\infty n! a^{(n)} b^{(n)}.\]
    The smash product $\OO_{\Ga} \sharp_u \OO_{\Ga}$ has underlying vector space $R[x] \otimes_R R[y]$.
    Viewed as a function on $R[x]\otimes R[y]$, $u$ is given by $\sum_{n=0}^\infty n! \partial_x^{(n)} \otimes \partial_y^{(n)}$
    where $\partial_x^{(n)}$ and $\partial_y^{(n)}$ are dual bases to powers of $x$ and $y$.
    The multiplication on the smash product is given by the relation 
    \[ yx = u(x,y) 1 + u(x,1)y  +u(1,y)x + u(1,1)x\otimes y = xy + 1,\]
    recovering the familiar Weyl algebra relation $yx = xy+1$. 
\end{example}

\begin{proposition} \label{prop: quotient stack and smash product}
    Let $R$ be a Noetherian ring admitting a dualizing complex.
    Suppose $G=\Spec A$ and $H= \Spec B$ are flat Mittag-Leffler affine commutative group schemes of finite type over $R$.
    Suppose $u: G^\vee \times H^\vee \to \Gm$ is a bilinear pairing.
    Then $\QCoh([H/G^\vee])$ is equivalent to the category of $A\sharp_u B$-modules in $\QCoh(S)$.
\end{proposition}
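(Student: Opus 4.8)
The plan is to present $[H/G^\vee]$ as the quotient stack of a Laumon groupoid, apply Theorem \ref{theorem: Laumon groupoid monad is algebra}, and then identify the resulting associative $R$-algebra with the smash product $A \sharp_u B$.

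First I would produce the groupoid from the pairing. By Theorem \ref{theorem: perfect pairing for flat ML group} (using $H \cong H^{\vee\vee}$), the bilinear pairing $u \colon G^\vee \times H^\vee \to \Gm$ is the same datum as a homomorphism $\phi \colon G^\vee \to H$. Letting $G^\vee$ act on $H$ through $\phi$ and multiplication in $H$, we get the action groupoid $X_1 = G^\vee \times_S H \rightrightarrows X_0 = H$ with quotient stack $[H/G^\vee]$. I would check this is a Laumon groupoid in the sense of Definition \ref{definition: laumon groupoid}: $X_0 = \Spec B \to S$ is flat, affine, and of finite type by hypothesis; the source map $s$ is the base change of $G^\vee \to S$ along $H \to S$, hence coflat, ind-finite, and ind-schematic since $G^\vee$ is so (Corollary \ref{corollary: cartier duality is an antiequivalence}) and these properties are stable under base change (Proposition \ref{prop: base change is classical}); and the target map $t$ is carried to $s$ by the shear automorphism $(g,h) \mapsto (g, \phi(g)h)$ of $X_1$, so it too is coflat, ind-finite, and ind-schematic.

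Next, Theorem \ref{theorem: Laumon groupoid monad is algebra} furnishes an associative $R$-algebra $A_1$ together with a ring map $B \to A_1$ such that $\QCoh([H/G^\vee]) \simeq \Mod_{A_1}(\QCoh(S))$, and identifies $A_1$ as a right $B$-module with the distribution algebra of $s$. Since $s$ is the base change of $G^\vee \to S$ along $H \to S$, Proposition \ref{prop: base change is classical} shows that this distribution algebra is $A \otimes_R B$ as a right $B$-module; here I use that the distribution coalgebra of $G^\vee$ is $A$, the linear dual of its coordinate pro-algebra $A^\vee$. Thus $A_1 \cong A \otimes_R B$ as a right $B$-module — matching the underlying right $B$-module of $A\sharp_u B$ — and under this identification the structure map $B \to A_1$ is $b \mapsto 1 \otimes b$.

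It remains to match the algebra structures, and I expect this to be the main obstacle. The multiplication on $A_1$ is the monad multiplication, which in the groupoid picture is induced by the composition map $X_1 \times_{s,X_0,t} X_1 \to X_1$ of the action groupoid, transported through $\Upsilon$ and Serre duality via Proposition \ref{prop: distributions via omega} together with the functoriality of distribution coalgebras (Lemma \ref{lemma: functoriality of distribution coalgebra}). Unwinding the composition law $(g_1,h)\cdot(g_2,h') = (g_1 g_2, h')$, the two $A$-factors multiply by the algebra structure of $A$, the two $B$-factors by $\nabla_B$, and the cross term is governed by the $A$-module-algebra structure on $B$ attached to the action $G^\vee \times H \to H$; in coordinates that structure is $B \xrightarrow{\Delta_B} B \otimes_R B \xrightarrow{\phi^* \otimes 1} A^\vee \otimes_R B$, and since $\phi^*$ is adjoint to $u$ under linear duality (Proposition \ref{prop: mapping flat into coflat and bilinear}) it is the action $a \triangleright b = \sum u(a \otimes b_{(1)}) b_{(2)}$. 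This is exactly the recipe in the definition of $A\sharp_u B$, so the identification $A_1 \cong A \otimes_R B$ upgrades to an isomorphism of $R$-algebras $A_1 \cong A \sharp_u B$, whence $\QCoh([H/G^\vee]) \simeq \Mod_{A\sharp_u B}(\QCoh(S))$. To keep the Sweedler bookkeeping finite, one can first reduce to the case where $G^\vee$ is finite by writing $G^\vee = \colim_\alpha G^\vee_\alpha$ and $A = \colim_\alpha A_\alpha$ — the monad, and hence $A_1$, commutes with this filtered colimit — in which case the distribution algebras are ordinary linear duals and the computation becomes the classical comparison of the smash product of a finite Hopf algebra with the smash product defined via a Hopf pairing.
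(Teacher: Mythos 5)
Your proposal is correct and follows essentially the same route as the paper: realize $[H/G^\vee]$ as the quotient of the action Laumon groupoid $G^\vee\times H\rightrightarrows H$, invoke Theorem \ref{theorem: Laumon groupoid monad is algebra} to get $A_1$ with $A_1\cong A\otimes_R B$ as a right $B$-module, and then identify the twisted multiplication coming from the target map with the smash-product formula by dualizing the coaction $(u^\vee\otimes 1)\Delta$. The only differences are expository: you spell out the verification of the Laumon-groupoid axioms (via the shear automorphism) and the base-change identification of the distribution algebra, which the paper treats as evident, and your closing reduction to finite pieces of $G^\vee$ is harmless but unnecessary since the Sweedler sums are already finite.
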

\begin{proof}
    The groupoid associated to $G/H^\vee$ is $H^\vee \times G \rightrightarrows G$, where $s$ is projection onto the second factor and $t$ is the homomorphism $H^\vee \to G$, followed by multiplication on $G$.
    Let $A_0 \to A_1$ be as in Theorem \ref{theorem: Laumon groupoid monad is algebra}. 
    By definition, $A_0 = B$. We will show that $A_1$ is the smash product $A \sharp_u B$.

    The module of distributions of $s$ is evidently $A_1 = A \otimes B$ as a right $B$-module,
    and the inclusions $B \to A_1$ and $A \to A_1$ are ring homomorphisms.
    The product is determined by the left $B$-module structure on $A \otimes B$ induced by $t: H^\vee \times G \to G$. 
    On (pro-)rings of functions, $t$ is given by $(u^\vee \otimes 1)\Delta_G: B \to A^\vee \otimes B$, where $u^\vee: B \to A^\vee$
    is adjoint to $u: B \otimes A \to R$. Thus the left pro-module structure on $A^\vee \otimes B$ is given by
    $(u^\vee \otimes 1)\Delta_G$, followed by multiplication in $A^\vee \otimes B$.
    In formulas, for $b, b' \in B$ and $\xi \in A^\vee$, we have
    \[ b \cdot (\xi \otimes b') = \sum \Delta_H^\vee(u^\vee(b_{(1)})\otimes \xi) \otimes (b_{(2)}b').\]
    Taking duals with respect to the right $B$-action,
    the left $B$-module structure on $A \otimes B$ is given for $b,b' \in B$ and $a' \in A$ by
    \[ b \cdot (a' \otimes b') = \sum u(a'_{(1)}\otimes b_{(1)})a'_{(2)} \otimes b_{(2)}b',\]
    as desired.
\end{proof}

\begin{theorem}\label{theorem: FM for 1-motives}
    Let $R$ be a Noetherian ring admitting a dualizing complex.
    Let $G=\Spec A$ and $H=\Spec B$ be flat Mittag-Leffler affine commutative group schemes of finite type over $R$ and let $u: G^\vee \times H^\vee \to \Gm$ be a bilinear pairing.
    Then there is an equivalence of categories
    \[ \QCoh([G/H^\vee]) \simeq \QCoh([H/G^\vee]).\]
\end{theorem}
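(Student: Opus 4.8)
The plan is to reduce the theorem to a comparison of two smash products, using Proposition~\ref{prop: quotient stack and smash product} twice. By that proposition, $\QCoh([H/G^\vee])$ is the category of (left) $A\sharp_u B$-modules in $\QCoh(S)$. Applying the same proposition with the roles of $G$ and $H$ interchanged — regarding $u$ as the bilinear pairing $H^\vee\times G^\vee\to\Gm$, which induces the homomorphism $H^\vee\to G=(G^\vee)^\vee$ underlying the action groupoid $H^\vee\times G\rightrightarrows G$ — identifies $\QCoh([G/H^\vee])$ with the category of (left) $B\sharp_{u^{\mathrm{op}}}A$-modules in $\QCoh(S)$, where $u^{\mathrm{op}}(b\otimes a)=u(a\otimes b)$. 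So it suffices to compare $A\sharp_u B$ and $B\sharp_{u^{\mathrm{op}}}A$ as $R$-algebras.

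Because $A$ and $B$ are commutative rings, the interchange map $b\sharp a\mapsto a\sharp b$ is an isomorphism of $R$-algebras $B\sharp_{u^{\mathrm{op}}}A\overset{\sim}{\to}(A\sharp_u B)^{\mathrm{op}}$: comparing the two product formulas, the only discrepancy is that the ``old'' tensor factors appear on the opposite side of the ``new'' ones, and commutativity of $A$ and $B$ removes it. Thus $\QCoh([G/H^\vee])$ is identified with the category of right $A\sharp_u B$-modules in $\QCoh(S)$, and it remains to match left and right $A\sharp_u B$-modules. For that I would produce an anti-automorphism of $A\sharp_u B$. Using that $A$ and $B$ are commutative cocommutative Hopf algebras — so that the antipode $S_A$ is an involutive algebra automorphism of $A$ and $\Delta\circ S_A=(S_A\otimes S_A)\circ\Delta$ — the candidate is the $R$-linear map
\[ \phi(a\sharp b)=\sum u(S_A(a_{(1)})\otimes b_{(1)})\,S_A(a_{(2)})\sharp b_{(2)}, \]
which restricts to $S_A$ on $A$ and to the identity on $B$. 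One checks $\phi(xy)=\phi(y)\phi(x)$ by a Sweedler-notation computation invoking the Hopf-pairing identities (in particular $u(S_A(a)\otimes b)=u(a\otimes S_B(b))$) and cocommutativity; restriction along $\phi$, composed with the identifications above, yields $\QCoh([G/H^\vee])\simeq\QCoh([H/G^\vee])$. In the motivating case $G=H=\Ga$ with the exponential pairing, $A\sharp_u B$ is the Weyl algebra and $\phi$ is (a twist of) its classical transpose anti-automorphism, which is reassuring.

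I expect the only genuine work to be the verification that $\phi$ is an anti-homomorphism — a routine but slightly intricate manipulation of iterated coproducts. Everything else is formal transport along the equivalences already established in Proposition~\ref{prop: quotient stack and smash product} and Theorem~\ref{theorem: Laumon groupoid monad is algebra}. One could alternatively package the last two steps as the single assertion that $A\sharp_u B\cong B\sharp_{u^{\mathrm{op}}}A$ as $R$-algebras (an ``$R$-matrix conjugation''-type isomorphism for smash products by a Hopf pairing), but factoring it through the opposite algebra keeps the commutativity and antipode inputs clearly separated.
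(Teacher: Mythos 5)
Your proposal is correct and follows essentially the same route as the paper: both reduce via Proposition~\ref{prop: quotient stack and smash product} to an isomorphism of smash products, and both build it as the composite of the swap anti-isomorphism (using commutativity of $A$ and $B$) with an antipode-twisted anti-involution. The only cosmetic difference is that you twist by the antipode of $A$ where the paper uses the antipode of $B$ (its map $\psi_2(b\sharp a)=a\iota(b)$), and the "routine but slightly intricate" Sweedler computation you defer is exactly the one the paper writes out.
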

\begin{proof}
    By Proposition \ref{prop: quotient stack and smash product}, $\QCoh([G/H^\vee])$ and $\QCoh([H/G^\vee])$ are equivalent to the categories of $B \sharp_u A$-modules and $A \sharp_u B$-modules in $\QCoh(S)$, respectively.
    Thus, it suffices to show these two algebras are isomorphic.

    Let $\iota$ be the antipode of $B$.
    Consider $\phi: A \sharp_u B \to B \sharp_u A$ 
    defined by $\phi(a \sharp b) = a \iota(b)$.
    We will show $\phi$ is a composition of two $R$-algebra antiisomorphisms and thus is an $R$-algebra isomorphism.     
    First consider $\psi_1: A\sharp_u B \to B \sharp_u A$
    defined by $\psi_1(a \sharp b) = b \sharp a$. The map $\psi_1$ is plainly an isomorphism of $R$-modules; it is an antiautomorphism since 
    \begin{align*}
        \psi_1( (a \sharp b)(a' \sharp b') ) &= \sum u(a'_{(1)}\otimes b_{(1)}) b_{(2)}b' \sharp aa'_{(2)}  \\
        &= \sum u(a'_{(1)} \otimes b_{(1)}) b' b_{(2)} \sharp a'_{(2)} a \\
        &= (b' \sharp a')(b \sharp a) = \psi_1(a' \sharp b')\psi_1(a \sharp b),
    \end{align*}
    where we have used that $A$ and $B$ are commutative.
    Now consider $\psi_2: B \sharp_u A \to B \sharp A$
    defined by $\psi_2(b \sharp a) = a\iota(b)$.
    Since $\psi_2$ is linear with respect to the left action of $B$ and the right action of $A$,
    to check that $\psi_2$ is an antihomomorphism, it suffices to check whether $\psi_2(ab) = \psi_2(b)\psi_2(a)$ when $a \in A$ and $b \in B$.
    Now 
    \begin{align*}
        \psi_2(ab) &= \sum u(a_{(1)}\otimes b_{(1)}) a_{(2)}\iota b_{(2)}  \\
            &= \sum u(a_{(1)} \otimes b_{(1)}) u(a_{(2)} \otimes \iota b_{(2)}) \iota b_{(3)} \sharp a_{(3)}.
    \end{align*}
    Since $u$ is induced by a bilinear pairing $G^\vee \times H^\vee \to \Gm$,
    $u(x_{(1)}\otimes y)u(x_{(2)}\otimes z) = u(x\otimes yz)$.
    But $\sum b_{(1)}\iota b_{(2)} = \eta \epsilon(b)$ (the antipode axiom of a Hopf algebra), so
    \begin{align*}
        \psi_2(ab) &= \sum u(a_{(1)} \otimes \eta \epsilon(b_{(1)}) \iota b_{(2)}\sharp a_{(2)} \\
        &= \sum \epsilon(a_{(1)}) \epsilon(b_{(1)}) \iota b_{(2)} \sharp a_{(2)} \\
        &= \iota b \sharp a = \psi_2(b)\psi_2(a).
    \end{align*}
    Also, $\psi_2^2(a) = a$ for $a \in A$ and $\psi_2^2(b) = \iota ^2(b) = b$ for $b \in B$, so since $\psi_2$ is an antihomomorphism, $\psi_2$ is an involution. 

    Thus $\phi = \psi_2 \psi_1: a \sharp b \mapsto a\iota (b) = \sum u(a_{(1)} \otimes \iota b_{(1)}) \iota b_{(2)} \sharp a_{(2)}$ is an $R$-algebra isomorphism, as desired.
\end{proof}

\subsection{Representations of an ind-finite group}\label{subsection: FM for BG dual}

Theorem \ref{theorem: FM for 1-motives} gives an equivalence $\QCoh([G/H^\vee]) \simeq \QCoh([H/G^\vee])$ when $G$ and $H$ are flat Mittag-Leffler of finite type.
In the special case that $H$ is trivial, 
it is possible to write the equivalence as an integral transform only using $*$-pullback and $*$-pushforward; further, the hypothesis that $G$ is of finite type over the base may be removed.

Given a flat Mittag-Leffler affine commutative group scheme $G$, the canonical pairing $can: G \times G^\vee \to \mathbb G_m$ induces a morphism $G \times BG^\vee \to B\mathbb G_m$, which is in particular a line bundle $\mathcal L_G$ on $G \times BG^\vee$.
This line bundle can be understood via descent along $G \to G \times BG^\vee$ as the trivial line bundle on $G$ equipped with the descent datum $can$.
Our Fourier-Mukai transform will be the integral transform with kernel $\mathcal L_G$.

Recall that for a Noetherian affine scheme $S = \Spec R$,
$\IndCoh(S)$ is a $\QCoh(S)$-module category \cite[Chapter 4, §1.2.9]{GR17I}.
If $R$ admits a dualizing complex, then this module action is just the $\IndCoh$-tensor product with $\Upsilon$.
Since $\IndCoh(S)$ is a $\QCoh(S)$-module category, one can form the relative Lurie tensor product
\[ \IndCoh(S) \tensor{\QCoh(S)} \QCoh(G).\]
\begin{lemma}
	\label{lemma: monadicity of relative tensor product}
	Assume $\pi: G \to S$ is affine.
	Then the functor 
	\[ 1 \otimes \pi_*: \IndCoh(S) \underset{\QCoh(S)}{\otimes} \QCoh(G) \to \IndCoh(S)\]
	is monadic and exhibits $\IndCoh(S)\otimes_{\QCoh(S)} \QCoh(G)$ as the category of $\pi_*\OO_G$-modules in $\IndCoh(S)$.
\end{lemma}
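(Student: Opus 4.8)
The plan is to reduce the statement to the standard description of $\QCoh$ along an affine morphism, combined with the behaviour of categories of modules over an algebra object under relative tensor products of presentable $\infty$-categories.

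First I would recall that, since $\pi: G\to S$ is affine, the algebra object $\mathcal A:=\pi_*\OO_G\in\CAlg(\QCoh(S))$ satisfies $\QCoh(G)\simeq\Mod_{\mathcal A}(\QCoh(S))$ as $\QCoh(S)$-module categories, in a way that carries $\pi_*$ to the forgetful functor and $\pi^*$ to the free-module functor $\mathcal A\tensor{\OO_S}-$; see \cite[Chapter 3]{GR17I}.

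Next I would invoke the identification of module categories under base change: for a presentable symmetric monoidal $\infty$-category $\mathcal C$, a presentable $\mathcal C$-module category $\mathcal M$, and an algebra $\mathcal A\in\Alg(\mathcal C)$, there is a canonical equivalence
\[ \mathcal M\tensor{\mathcal C}\Mod_{\mathcal A}(\mathcal C)\;\overset{\sim}{\to}\;\Mod_{\mathcal A}(\mathcal M) \]
\cite[Theorem 4.8.4.6]{lurieha}, obtained by applying $\mathcal M\tensor{\mathcal C}(-)$ to the forgetful functor $\Mod_{\mathcal A}(\mathcal C)\to\mathcal C$; in particular it intertwines $1\otimes(\text{forget})$ with the forgetful functor $\Mod_{\mathcal A}(\mathcal M)\to\mathcal M$, once $\mathcal M\tensor{\mathcal C}\mathcal C$ is identified with $\mathcal M$. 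Applying this with $\mathcal C=\QCoh(S)$, $\mathcal M=\IndCoh(S)$ and $\mathcal A=\pi_*\OO_G$, and composing with the equivalence of the previous step, I obtain an equivalence
\[ \IndCoh(S)\tensor{\QCoh(S)}\QCoh(G)\;\simeq\;\Mod_{\pi_*\OO_G}(\IndCoh(S)) \]
under which $1\otimes\pi_*$ becomes the forgetful functor.

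Finally I would note that this forgetful functor is monadic: it admits the left adjoint $M\mapsto\pi_*\OO_G\tensor{\OO_S}M$, is conservative, and preserves colimits, so the monadic Barr--Beck--Lurie theorem \cite[Theorem 4.7.3.5]{lurieha} applies and identifies $\Mod_{\pi_*\OO_G}(\IndCoh(S))$ with modules over the induced monad, namely $\pi_*\OO_G\tensor{\OO_S}-$ acting on $\IndCoh(S)$. Together these establish all the assertions of the lemma. The only step carrying content beyond citation is the naturality in the middle paragraph --- that the equivalence of \cite[Theorem 4.8.4.6]{lurieha} sends $1\otimes\pi_*$ to the forgetful functor --- but this is immediate from the construction of that equivalence, so I do not anticipate a genuine obstacle.
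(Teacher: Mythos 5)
Your proposal is correct and follows essentially the same route as the paper, which simply cites \cite[Chapter 1, Corollary 8.5.7]{GR17I} --- the Gaitsgory--Rozenblyum analogue of \cite[Theorem 4.8.4.6]{lurieha} identifying $\mathcal M\otimes_{\mathcal C}\Mod_{\mathcal A}(\mathcal C)$ with $\Mod_{\mathcal A}(\mathcal M)$. You have merely spelled out the intermediate steps (the affine identification $\QCoh(G)\simeq\Mod_{\pi_*\OO_G}(\QCoh(S))$ and the Barr--Beck--Lurie conclusion) that the paper leaves implicit.
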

\begin{proof}
    Follows from \cite[Chapter 1, Corollary 8.5.7]{GR17I}
\end{proof}


Let $p: BG^\vee \to S$ and $\pi: G \to S$ be the structure maps, and let $q: S \to BG^\vee$ be the quotient map.
Let $\tilde p, \tilde \pi, \tilde q$ be defined so that the following squares are pullback squares:
\[
\begin{tikzcd}[ampersand replacement=\&]
	G \& {G \times BG^\vee} \& G \\
	S \& {BG^\vee} \& S
	\arrow["{\tilde q}", from=1-1, to=1-2]
	\arrow["\pi", from=1-1, to=2-1]
	\arrow["{\tilde p}", from=1-2, to=1-3]
	\arrow["{\tilde \pi}", from=1-2, to=2-2]
	\arrow["\pi", from=1-3, to=2-3]
	\arrow["q"', from=2-1, to=2-2]
	\arrow["p"', from=2-2, to=2-3]
\end{tikzcd}
\]

\begin{definition}
    For $G \to S$ a flat Mittag-Leffler affine commutative group scheme,
    the \emph{Fourier-Mukai transform} for $G$ is the functor 
    \begin{align*}
        \Phi_G  &: \IndCoh(S) \tensor{\QCoh(S)} \QCoh(G) \to \IndCoh(BG^\vee) \\
        \Phi_G &= \tilde \pi_* (\mathcal L_G \otimes \tilde p^!(-)).
    \end{align*}
\end{definition}
Interpreting the formula for $\Phi_G$ requires some care. The map $p$ induces a functor
\[ \tilde p^!: \IndCoh(S) \tensor{\QCoh(S)} \QCoh(G) \to \IndCoh(BG^\vee) \tensor{\QCoh(BG^\vee)} \QCoh(G \times BG^\vee)\]
given by $p^!$ in the first tensor factor and $\tilde p^*$ in the second.
After tensoring with $\mathcal{L}_G \in \QCoh(G \times BG^\vee)$, the functor $\tilde \pi_*$ then acts on the second tensor factor.

\begin{theorem}\label{theorem: FM for B of G dual}
    Let $R$ be a Noetherian ring and $G \to S =\Spec R$ be a flat Mittag-Leffler affine commutative group scheme.
    Then $\Phi_G$ is an equivalence of categories. Moreover, $\Phi_G$ fits into a commutative square 
    \[
\begin{tikzcd}[ampersand replacement=\&]
	{\IndCoh(S) \tensor{\QCoh(S)} \QCoh(G)} \& {\IndCoh(BG^\vee)} \\
	{\IndCoh(S)} \& {\IndCoh(S)}
	\arrow["{\Phi_G}", from=1-1, to=1-2]
	\arrow["{\pi_*}"', from=1-1, to=2-1]
	\arrow["{q^!}", from=1-2, to=2-2]
	\arrow["id"', from=2-1, to=2-2]
\end{tikzcd}
\]
where the horizontal functors are equivalences and the vertical functors are monadic.
\end{theorem}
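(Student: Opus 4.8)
The plan is to realize both $\IndCoh(S)\tensor{\QCoh(S)}\QCoh(G)$ and $\IndCoh(BG^\vee)$ as the category of modules over the \emph{same} monad on $\IndCoh(S)$, and then to check that $\Phi_G$ is a morphism of monadic adjunctions over $\IndCoh(S)$ that induces an equivalence of monads.

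First I would produce the two vertical functors as monadic functors. On the left, Lemma~\ref{lemma: monadicity of relative tensor product} says that $\pi_*\colon \IndCoh(S)\tensor{\QCoh(S)}\QCoh(G)\to\IndCoh(S)$ is monadic and identifies its source with the category of $\pi_*\OO_G$-modules in $\IndCoh(S)$; since $\pi_*\OO_G=A$ with its commutative ring structure, the monad is $\perp_L\simeq A\otimes_{\OO_S}(-)$. On the right, $BG^\vee=[S/G^\vee]$ is the quotient of the Laumon groupoid $G^\vee\rightrightarrows S$ whose two structure maps are both the projection $\pi_{G^\vee}\colon G^\vee\to S$ (which is coflat and ind-finite ind-schematic because $G^\vee$ is a coflat commutative group ind-finite ind-scheme by Corollary~\ref{corollary: cartier duality is an antiequivalence}), while $S$ is flat affine of finite type over $S$. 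So Proposition~\ref{prop: shriek pullback is monadic} shows $q^!$ is monadic, with monad $\perp_R$ whose underlying endofunctor is $(\pi_{G^\vee})^{\IndCoh}_*(\pi_{G^\vee})^!$ by base change along the groupoid; the projection formula and Proposition~\ref{prop: distributions via omega} rewrite this as $\Upsilon(A_{G^\vee}^\vee)\otimes^{\IndCoh}(-)$. The coordinate pro-algebra of $G^\vee=\SpInd(A^\vee)$ is $A^\vee$, so its distribution coalgebra $A_{G^\vee}^\vee$ is $(A^\vee)^\vee\cong A$ by involutivity of linear duality (Corollary~\ref{cor: linear duality equivalence}); unwinding the groupoid composition shows that the monad structure on $\perp_R$ is the convolution algebra of distributions on $G^\vee$, which under linear duality is again the commutative ring $A$. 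Thus both categories are identified with $A$-modules in $\IndCoh(S)$.

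Next I would check that the square commutes, that is, $q^!\circ\Phi_G\simeq\pi_*$. Chasing the two pullback squares relating $\tilde q,\tilde p,\tilde\pi$ to $q,p,\pi$ and applying base change for $\IndCoh$ (legitimate since $\tilde\pi$ is the affine morphism obtained from $\pi$ by base change) gives $q^!\tilde\pi_*(\mathcal L_G\otimes\tilde p^!(-))\simeq\pi_*\,\tilde q^!(\mathcal L_G\otimes\tilde p^!(-))$. By construction $\mathcal L_G$ is the descent of $\OO_G$ along $\tilde q\colon G\to G\times BG^\vee$ equipped with the descent datum $can$, so $\tilde q^*\mathcal L_G\simeq\OO_G$; together with $\tilde p\circ\tilde q=\mathrm{id}_G$ and the compatibility of $!$-pullback with the $\QCoh$-module structure on $\IndCoh$, the expression $\tilde q^!(\mathcal L_G\otimes\tilde p^!(-))$ collapses to $(-)$, so that $q^!\Phi_G\simeq\pi_*$.

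Finally I would conclude. The commuting square produces a map of monads $\perp_L\to\perp_R$; by the first step both are $A\otimes_{\OO_S}(-)$ for the ring $A$, and such a map is determined by a single $R$-algebra endomorphism of $A$, which must be $\mathrm{id}_A$ because the descent datum $can$ defining $\mathcal L_G$ is exactly the canonical pairing through which the $\pi_*\OO_G$-action on the source matches the distribution-algebra action on $\IndCoh(BG^\vee)$. As $\Phi_G$ is built from colimit-preserving functors ($p^!$, $\tilde p^*$, tensoring with a line bundle, and $\tilde\pi_*$ for the affine morphism $\tilde\pi$), the relative form of the Barr--Beck--Lurie theorem---a morphism of monadic adjunctions over a fixed base inducing an equivalence of monads is an equivalence---gives that $\Phi_G$ is an equivalence, which together with the previous paragraph proves the theorem. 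I expect the main obstacle to be this last step: one must track the \emph{monad} structures, not merely the underlying endofunctors, through base change and Proposition~\ref{prop: distributions via omega}, and check that $\mathcal L_G$ induces the identity identification of the two $A$-actions rather than some twist (for instance by the antipode); the explicit description of $\mathcal L_G$ via the canonical pairing is what makes this go through. A secondary technical point is the validity of $\IndCoh$ base change for the possibly non-finite-type affine morphism $\pi$, which is exactly why the theorem is phrased with $\IndCoh(S)\tensor{\QCoh(S)}\QCoh(G)$ rather than a naive $\IndCoh(G)$.
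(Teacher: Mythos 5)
Your proposal follows essentially the same route as the paper: establish monadicity of the two vertical functors, verify the commutativity $q^!\Phi_G\simeq\pi_*$ via base change and the canonical trivialization of $\tilde q^*\mathcal L_G$, and conclude by showing the induced map of monads on $\IndCoh(S)$ is an equivalence (the paper cites the same relative Barr--Beck--Lurie statement, \cite[Lemma 3.25]{HM21_monads}). The identification of both monads with $A\otimes(-)$ and the role of the canonical pairing in matching the two $A$-actions is exactly the content of the paper's final computation, where the monad map applied to $M\in\Coh(S)$ is traced to the linear-duality isomorphism $\Hom_R(A^\vee,M)\simeq A\otimes M$; you correctly flag this as the crux, though you assert rather than carry out the check that no twist (e.g.\ by the antipode) intervenes.

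One genuine mismatch with the stated hypotheses: you identify the monad $q^!q_!$ by invoking Proposition~\ref{prop: distributions via omega} and the functor $\Upsilon$, both of which require $R$ to admit a dualizing complex, whereas Theorem~\ref{theorem: FM for B of G dual} assumes only that $R$ is Noetherian (the dualizing complex enters only in Corollary~\ref{cor: qcoh of BG dual}). The paper avoids this by computing $q^!q_!\simeq r_!r^!$ directly via ind-proper base change along $r:G^\vee\to S$, writing $G^\vee=\colim_\alpha Y_\alpha$ and identifying $r_!r^!$ with the ind-extension of $\colim_\alpha\Hom(\OO_{Y_\alpha},-)=\Hom(A^\vee,-)$ on $\Coh(S)$, with no appeal to $\omega_S$. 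Your argument as written therefore proves the theorem only under the extra dualizing-complex hypothesis; replacing the $\Upsilon$-based identification with this direct computation repairs it.
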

\begin{proof}
    By Lemma \ref{lemma: monadicity of relative tensor product}, 
    \[\pi_*: \IndCoh(S) \underset{\QCoh(S)}{\otimes} \QCoh(G) \to \IndCoh(S)\] 
    is monadic. Since $q: S \to BG^\vee$ is ind-finite schematic, $q^!: \IndCoh(BG^\vee) \to \IndCoh(S)$ is monadic \cite[Chapter 3, Proposition 3.3.3]{GR17II}.
    
    Now we show that the square commutes.
    Since $\pi$ is affine, $\pi_*$ satisfies base change for $\QCoh$, so
    $q^! \tilde \pi_* \simeq \pi_* \tilde q^!$
    as functors 
    \[\IndCoh(BG^\vee) \tensor{\QCoh(BG^\vee)} \QCoh(G \times BG^\vee) \to \IndCoh(S).\] 
    Thus
    \[ q^! \Phi_G \simeq \pi_*(q^*\mathcal L_G \otimes \tilde q^!\tilde p^!(-)) \simeq \pi_*(q^*\mathcal L_G \otimes -).\]
    The line bundle $q^*\mathcal L_G$ has a canonical section, via our description of $\mathcal L_G$ by descent, which shows that $q^! \Phi_G \simeq \pi_*$, as desired.

    The equivalence $q^! \Phi_G \simeq \pi_*$ induces a morphism of monads 
    \begin{equation}\label{eq: the morphism of monads}
         q^!q_! \to \pi_*\pi^*
    \end{equation}
    defined by 
    \[ q^!q_! \to q^!q_!\pi_*\pi^* \simeq q^!q_!q^!\Phi_G\pi^* \to q^!\Phi_G\pi^* \simeq \pi_*\pi^*,\]
    where the first arrow is induced by the unit $1 \to \pi_*\pi^*$
    and the second arrow is induced by the counit $q_!q^! \to 1$.
    The functor $\Phi_G$ is an equivalence of categories if and only if \eqref{eq: the morphism of monads} is an equivalence \cite[Lemma 3.25]{HM21_monads}.
    
    We will show \eqref{eq: the morphism of monads} is an equivalence by computing $q^!q_!$ by base change.
    Suppose $G = \Spec A$ and let $r: G^\vee \to S$ be the structure map. Since $q$ is ind-finite ind-schematic, base change gives
    \[ q^!q_! \simeq r_!r^!.\]
    Write $G^\vee = \colim_\alpha Y_\alpha$ as a filtered colimit of finite $S$-schemes $r_\alpha: Y_\alpha \to S$.
    Then $ r_! r^! = \colim_\alpha ( r_\alpha)_!( r_\alpha^!)$.
    Since $ r_\alpha$ is a finite morphism, $( r_\alpha)^!$ is the ind-extension of the functor $\Hom(\OO_{ Y_\alpha}, -)$ on $\Coh(S)$.
    Hence, $r_!r^!$ is the ind-extension of 
    \[ \colim_\alpha \Hom(\OO_{Y_\alpha},-) = \Hom(A^\vee,-): \Coh(S) \to \IndCoh(S).\]
    Under this description, \eqref{eq: the morphism of monads} applied to some $M \in \Coh(S)$ is the composition
    \[
\begin{tikzcd}[ampersand replacement=\&]
	{\Hom_{R}(A^\vee,M)} \& {\Hom_{R}(A^\vee, A\otimes M)} \\
	\& {\Hom_{R}(A^\vee, \pi_*\tilde q^!\mathcal L \otimes M)} \& {\pi_*\tilde q^!\mathcal L \otimes M} \& {A \otimes M}
	\arrow[from=1-1, to=1-2]
	\arrow["\simeq", from=1-2, to=2-2]
	\arrow["{(\ast)}", from=2-2, to=2-3]
	\arrow["\simeq", from=2-3, to=2-4]
\end{tikzcd}
\]
where $(\ast)$ is induced by the counit $q_!q^! \to 1$.
Now for $F \in \IndCoh(BG^\vee)$, the map $q^!q_!q^!F \to q^!F$ is induced under the base change isomorphism $r_!r^!q^!F \to q^!q_!q^!F$ by the descent datum $r^!q^!F \to r^!q^!F$.
As the descent datum for $\mathcal L$ is multiplication by the canonical pairing $can \in \Gamma(G \times G^\vee, \OO)^\times$,
our map $\Hom_{R}(A^\vee,M) \to A \otimes M$ is exactly the isomorphism $\Hom_{R}(A^\vee,M) \simeq A \otimes M$ defining linear duality for flat Mittag-Leffler modules. Thus \eqref{eq: the morphism of monads} is an equivalence, proving the claim.
\end{proof}

Theorem \ref{theorem: FM for B of G dual} describes $\IndCoh(BG^\vee)$ as modules over the algebra of functions on $G$ in $\IndCoh(S)$.
When $R$ admits a dualizing complex, this description also respects the subcategories of quasi-coherent sheaves embedded via $\Upsilon$. 

\begin{corollary}
    \label{cor: qcoh of BG dual}
    Let $R$ be a Noetherian ring admitting a dualizing complex, $S = \Spec R$, and  $G \to S$ be a flat Mittag-Leffler affine commutative group scheme.
    Then the Fourier-Mukai transform for $G$ induces an equivalence 
    \[\tilde \pi_* ( \mathcal L_G \otimes \tilde p^*(-)): \QCoh(G) \overset{\sim}{\to} \QCoh(BG^\vee)\] 
    fitting into a commutative square 
\[
\begin{tikzcd}[ampersand replacement=\&]
	{\QCoh(G)} \& {\QCoh(BG^\vee)} \\
	{\QCoh(S)} \& {\QCoh(S)}
	\arrow["\sim", from=1-1, to=1-2]
	\arrow["{\pi_*}", from=1-1, to=2-1]
	\arrow["{q^*}", from=1-2, to=2-2]
	\arrow["id"', from=2-1, to=2-2]
\end{tikzcd}
\]
where the vertical maps are monadic.
\end{corollary}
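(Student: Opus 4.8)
The plan is to obtain this by restricting the equivalence $\Phi_G$ of Theorem~\ref{theorem: FM for B of G dual}, together with its commuting square, to the full subcategories of quasi-coherent sheaves sitting inside the relevant ind-coherent categories via $\Upsilon$. Concretely, I would show that $\Phi_G$ carries the copy of $\QCoh(G)$ inside $\IndCoh(S)\otimes_{\QCoh(S)}\QCoh(G)$ onto the copy of $\QCoh(BG^\vee)$ inside $\IndCoh(BG^\vee)$, and then that the restricted functor is the claimed integral transform.

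The first step is to describe these two subcategories intrinsically. Since $S$ is a classical affine scheme, $\Upsilon_S$ is fully faithful and $\QCoh(S)$-linear, and by Lemma~\ref{lemma: monadicity of relative tensor product} the category $\IndCoh(S)\otimes_{\QCoh(S)}\QCoh(G)$ is the category of $\pi_\ast\OO_G$-modules in $\IndCoh(S)$; the full subcategory of those whose underlying $\IndCoh(S)$-object lies in $\Upsilon_S(\QCoh(S))$ is then exactly $\pi_\ast\OO_G$-modules in $\QCoh(S)$, i.e.\ $\QCoh(G)$. On the other side, $BG^\vee = [S/G^\vee]$ is the quotient of the Laumon groupoid $G^\vee \rightrightarrows S$ --- note that $X_0 = S$ is trivially flat affine of finite type over $S$ and that $G^\vee\to S$ is coflat ind-finite ind-schematic, so no finite-type hypothesis on $G$ is needed --- hence Proposition~\ref{prop: pullback detects QCoh} applies and identifies $\Upsilon_{BG^\vee}(\QCoh(BG^\vee))$ with $\{\,E \in \IndCoh(BG^\vee) : q^!E \in \Upsilon_S(\QCoh(S))\,\}$, with $\Upsilon_{BG^\vee}$ fully faithful. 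Now the commuting square of Theorem~\ref{theorem: FM for B of G dual} says precisely $q^!\Phi_G \simeq \pi_\ast$, so $\Phi_G$ matches $\{\,M : \pi_\ast M \in \Upsilon_S(\QCoh(S))\,\} = \QCoh(G)$ with $\{\,E : q^!E \in \Upsilon_S(\QCoh(S))\,\} = \QCoh(BG^\vee)$, yielding the equivalence. Monadicity of the restricted vertical functors is then immediate: $\pi_\ast\colon \QCoh(G)\to\QCoh(S)$ is monadic because $\pi$ is affine, and $q^\ast\colon \QCoh(BG^\vee)\to\QCoh(S)$ is monadic by Theorem~\ref{theorem: Laumon groupoid monad is algebra}(1) applied to the groupoid $G^\vee\rightrightarrows S$ (its algebra $A_1$ is the distribution algebra of $G^\vee\to S$, namely $\OO_G$ itself, recovering $\QCoh(BG^\vee)\simeq\QCoh(G)$ module-theoretically).

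The remaining point --- and the one I expect to require the most care --- is checking that the restricted functor is literally $\tilde\pi_\ast(\mathcal L_G \otimes \tilde p^\ast(-))$. The functor $\Phi_G$ is built from $p^!$ on the $\IndCoh(S)$-factor, $\tilde p^\ast$ on the $\QCoh(G)$-factor, the line-bundle twist by $\mathcal L_G$, and the ind-proper pushforward $\tilde\pi_\ast$. On the $\QCoh$-factor nothing changes, since $\tilde p^\ast$ already appears there; the work is to see that $p^!$ precomposed with $\Upsilon_S$ lands in $\Upsilon_{BG^\vee}(\QCoh(BG^\vee))$ and is intertwined there with $p^\ast$ --- which one deduces from $p\circ q = \mathrm{id}_S$, the resulting identity $q^!p^!\Upsilon_S\simeq\Upsilon_S$, and Proposition~\ref{prop: pullback detects QCoh} together with the conservativity of $q^!$ --- and then to combine this with the compatibility of $\Upsilon$ with line-bundle twists and with ind-proper pushforward. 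This is routine bookkeeping of how $\Upsilon$ interacts with $!$- versus $\ast$-pullback along the non-representable, ind-proper, relative-dimension-zero morphism $p\colon BG^\vee\to S$ (equivalently along the torsor $q$), but it is where the relative dualizing twists must cancel, and one should confirm that no spurious shift or determinant line bundle survives.
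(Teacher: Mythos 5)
Your proposal is correct and follows essentially the same route as the paper: the published proof is a two-line application of Proposition~\ref{prop: pullback detects QCoh} to the Laumon groupoid $G^\vee \rightrightarrows S$, using $q^!\Phi_G \simeq \pi_*$ from Theorem~\ref{theorem: FM for B of G dual} to see that $\Phi_G$ restricts to the $\Upsilon$-images of the quasi-coherent subcategories. Your additional bookkeeping --- identifying the $\QCoh$-subcategory of $\IndCoh(S)\otimes_{\QCoh(S)}\QCoh(G)$ via Lemma~\ref{lemma: monadicity of relative tensor product} and verifying that the restricted functor is literally $\tilde\pi_*(\mathcal L_G\otimes\tilde p^*(-))$ via the compatibility of $\Upsilon$ with affine pushforward and $p^!\omega_S = \omega_{BG^\vee}$ --- is left implicit in the paper but is correct and harmless.
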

\begin{proof}
    By Proposition \ref{prop: pullback detects QCoh}, $E \in \IndCoh(BG^\vee)$ is in the essential image of $\Upsilon_{BG^\vee}$ if and only if $q^!F$ is in the essential image of $\Upsilon_S$. Thus  
    \[\Phi_G: \IndCoh(S) \tensor{\QCoh(S)} \QCoh(G) \simeq \IndCoh(BG^\vee)\] 
    restricts to the desired equivalence.
\end{proof}

\begin{remark}
    Note that the equivalence of Corollary \ref{cor: qcoh of BG dual} involves composing the left adjoint functor $\tilde p^*$ with the right adjoint functor $\tilde \pi_*$. Thus, \emph{a priori}, the functor $\tilde \pi_*(\mathcal L_G \otimes \tilde p^*(-))$ is neither a left nor a right adjoint, and so there is not an obvious candidate for the inverse functor.

    This is related to our difficulty in writing down the equivalence of Theorem \ref{theorem: FM for 1-motives} as an integral transform.
    The correct functor for integrating along $G/H^\vee \to S$ should be $\pi_*(q^*)^L$ where $q: G \to G/H^\vee$ is the quotient map, $(q^*)^L$ is left adjoint to $q^*$, and $\pi: G \to S$ is the struture map. However, such a functor cannot be expressed using the usual functoriality on $\QCoh$ or $\IndCoh$, and it is not clear what kind of base change properties such functors have.
\end{remark}    

\subsection{Representations of an affine group scheme}
\label{subsection: reps of G}

In this section, let $R$ be a Noetherian ring and let $S = \Spec R$.

\begin{remark} We require $R$ to be Noetherian so that we can use the descent for ind-coherent sheaves due to D.~Gaitsgory and N.~Rozenblyum. In particular, we do not require $R$ to have a dualizing complex. It is possible that the results hold without any assumptions on the base. 
\end{remark}

If $G \to S$ is a flat Mittag-Leffler affine commutative group scheme, 
we could also ask for a Fourier-Mukai transform relating sheaves on $BG$ to sheaves on $G^\vee$.
This is outside of the paradigm of Theorem \ref{theorem: FM for 1-motives},
where the relevant categories were realized as modules over certain associative algebras.
Instead, both categories of sheaves on $BG$ and $G^\vee$ are realized as comodules over coalgebras.


For general $G$, our Fourier-Mukai transform will be defined on $\IndCoh(G^\vee)^{\geq 0}$. By definition, $\IndCoh(G^\vee)$ is equipped with a $t$-structure such that $F \in \IndCoh(G^\vee)^{\geq 0}$ if and only if $i^!F \in \IndCoh(Y)^{\geq 0}$ whenever $i: Y \to G^\vee$ is a closed embedding where $X$ is a spectral scheme of almost finite type. Given a presentation $G^\vee = \colim_\alpha Y_\alpha$ where $i_\alpha: Y_\alpha \to G^\vee$ are closed subschemes, $F \in \IndCoh(G^\vee)^{\geq 0}$ if and only if $i_\alpha^!F \in \IndCoh(Y_\alpha)^{\geq 0}$ by \cite[Chapter 3, Lemma 1.2.3]{GR17II}. In particular, it suffices to consider classical test schemes. 
Thus, $\IndCoh(G^\vee)^{\geq 0} = \lim_\alpha \IndCoh(Y_\alpha)^{\geq 0}$ along $!$-pullbacks.

Moreover, the $!$-pullback functor admits an exact left adjoint (the $\ast$-pushforward), and so in this situation
\[ \IndCoh(G^\vee)^{\geq 0} = \colim_\alpha \IndCoh(Y_\alpha)^{\geq 0}\]
along $\ast$-pushforwards \cite[Chapter 1, Proposition 2.5.7]{GR17I}. This is similar to \cite[Chapter 3, Corollary 1.1.6]{GR17II}.
Note also that $\IndCoh(Y_\alpha)^{\geq 0} \simeq \QCoh(Y_\alpha)^{\geq 0}$ since $Y_\alpha$ is a scheme \cite[Chapter 4, Proposition 1.2.2]{GR17I}.

\begin{definition}
    Let $G \to S$ be a flat Mittag-Leffler group scheme.
    The Fourier-Mukai transform for $G^\vee$ is the functor 
    \[ \Phi'_{G^\vee}: \IndCoh(G^\vee)^{\geq 0} \to \QCoh(BG)^{\geq 0}\]
    defined as follows:
    let $\mathcal L_{G^\vee}$ be the tautological line bundle on on $G^\vee \times BG$.
    As $G^\vee \to S$ is ind-finite, write $G^\vee = \colim_\alpha Y_\alpha$ where $Y_\alpha \to S$ is finite; then
    \[ 
    \Phi'_{G^\vee} := \colim_\alpha (\tilde \pi_\alpha)_*(\mathcal L_{G^\vee} \otimes \tilde p_\alpha^*(-)) : \IndCoh(G^\vee)^{\geq 0} = \colim_\alpha \QCoh(Y_\alpha)^{\geq 0} \to \QCoh(BG)^{\geq 0},
\]
where $\tilde p_\alpha$ and $\tilde \pi_\alpha$ are as below:
\[
\begin{tikzcd}[ampersand replacement=\&]
	{Y_\alpha} \& {Y_\alpha \times BG} \& {Y_\alpha} \\
	S \& BG \& S
	\arrow["{\tilde q_\alpha}", from=1-1, to=1-2]
	\arrow["{\pi_\alpha}"', from=1-1, to=2-1]
	\arrow["{\tilde p_\alpha}", from=1-2, to=1-3]
	\arrow["{\tilde \pi_\alpha}"', from=1-2, to=2-2]
	\arrow["{\pi_\alpha}"', from=1-3, to=2-3]
	\arrow["q", from=2-1, to=2-2]
	\arrow["p", from=2-2, to=2-3]
\end{tikzcd}
.
\]
\end{definition}

\begin{remark}
Let $\pi: G^\vee \to S$ be the structure map and $\tilde \pi: G^\vee \times BG \to BG$ be its base change to $BG$.
We write $\pi_! = \colim_\alpha (\pi_\alpha)_*$ and $\tilde \pi_! = \colim_\alpha (\tilde \pi_\alpha)_*$. Since the morphisms in question are (ind)-finite, there is no difference between $\ast$ and $!$. In the notation of \cite{GR17II}, these functors are $\pi_\ast^{\mathrm{IndCoh}}, \tilde \pi_\ast^{\mathrm{IndCoh}}$.

However, the definition of the functor $\Phi'_{G^\vee}$ is not written in terms of these functors in order to avoid discussing $\IndCoh(G^\vee \times BG)^{\geq 0}$, whose definition \emph{a priori} involves both limits and colimits of categories of ind-coherent sheaves. Note also that $\tilde{p}_\alpha^*$ takes $\QCoh(Y_\alpha)^{\geq 0}$ into $\QCoh(Y_\alpha \times BG)^{\geq 0}$, as can be checked by pulling back along the flat cover $Y_\alpha \to Y_\alpha \times BG$.
\end{remark}

By flat descent, $q^*: \QCoh(BG) \to \QCoh(S)$ is comonadic. 
To show that the Fourier-Mukai transform is an equivalence, our first step will be to show that $\IndCoh(G^\vee)^{\geq 0}$ is comonadic over $\QCoh(S)^{\geq 0}$.
By the Barr-Beck-Lurie theorem, a right adjoint functor is comonadic if it is conservative and preserves cosimplicial totalizations.
If the functor happens to be left t-exact with respect to some t-structure, it turns out that it must preserve bounded-below totalizations:

\begin{lemma}\label{lemma: totalizations in right-complete categories}
    Let $\mathcal{C}$ and $\mathcal{D}$ be stable $\infty$-categories, both right-complete with respect to some t-structures whose hearts are abelian.
    Let $F: \mathcal{C} \to \mathcal{D}$ be an exact and left t-exact functor.
    Let $X^\bullet$ be a cosimplicial object in $\mathcal C^{\geq 0}$.
    Then 
    \begin{enumerate}
        \item the totalization $\Tot(X^\bullet)$ exists.
        \item $F$ preserves the totalization of $X^\bullet$, that is, the natural map 
    \[ F(\Tot X^\bullet) \to \Tot F(X^\bullet)\]
    is an equivalence.
    \end{enumerate}
\end{lemma}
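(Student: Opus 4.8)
The plan is to realize $\Tot X^\bullet$ as the limit of the tower of partial totalizations and to control both its existence and its interaction with $F$ through the connectivity estimates forced by the hypothesis that $X^\bullet$ is levelwise coconnective. For each $n\ge 0$ I would set $\Tot_n X^\bullet:=\lim_{\Delta_{\le n}}X^\bullet$, the partial totalization over the $n$-truncated cosimplicial diagram: this is a finite limit, hence exists in the stable category $\mathcal C$ and is preserved by the exact functor $F$, and $\Tot X^\bullet\simeq\lim_n\Tot_n X^\bullet$ whenever the latter limit exists. The structural input is the standard description of the layers of this tower: the fiber of $\Tot_n X^\bullet\to\Tot_{n-1}X^\bullet$ is $\Omega^n$ applied to the $n$-th conormalized cochain object $N^n(X^\bullet)=\fib\bigl(X^n\to M^n(X^\bullet)\bigr)$, $M^n$ the $n$-th matching object, which is a finite limit built from $X^0,\dots,X^n$. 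Since $\mathcal C^{\ge 0}$ is closed under $\Omega$ and under extensions, hence under finite limits, we get $N^n(X^\bullet)\in\mathcal C^{\ge 0}$, so
\[ \fib\bigl(\Tot_n X^\bullet\to\Tot_{n-1}X^\bullet\bigr)\in\mathcal C^{\ge n}. \]
In particular $H^i(\Tot_n X^\bullet)$ is independent of $n$ once $n$ is large compared to $i$.

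Next I would invoke right-completeness of the $t$-structure on $\mathcal C$: because the layers of the tower $\{\Tot_n X^\bullet\}$ become arbitrarily coconnective, $\lim_n\Tot_n X^\bullet$ exists and for every $m$ the natural map $\tau^{\le m}\bigl(\lim_n\Tot_n X^\bullet\bigr)\to\tau^{\le m}\Tot_N X^\bullet$ is an equivalence for $N\gg m$ — concretely, the limit is assembled from the compatible system of its truncations, each of which is a finite limit. This proves (1). Applying the same analysis to the cosimplicial object $FX^\bullet$, which lies in $\mathcal D^{\ge 0}$ since $F$ is left $t$-exact, shows that $\Tot(FX^\bullet)$ exists; moreover, iterating the layer estimate, $\fib\bigl(\Tot X^\bullet\to\Tot_N X^\bullet\bigr)\in\mathcal C^{\ge N+1}$ and $\fib\bigl(\Tot(FX^\bullet)\to\Tot_N(FX^\bullet)\bigr)\in\mathcal D^{\ge N+1}$.

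For (2) I would examine the canonical comparison $\alpha\colon F(\Tot X^\bullet)\to\Tot(FX^\bullet)$ one cohomological degree at a time. Applying the exact, left $t$-exact functor $F$ to $\fib(\Tot X^\bullet\to\Tot_N X^\bullet)\in\mathcal C^{\ge N+1}$ gives $\fib(F\Tot X^\bullet\to F\Tot_N X^\bullet)\in\mathcal D^{\ge N+1}$, so $H^i(F\Tot X^\bullet)\cong H^i(F\Tot_N X^\bullet)$ for $i\le m$ once $N\gg m$; on the other side $H^i(\Tot(FX^\bullet))\cong H^i(\Tot_N(FX^\bullet))\cong H^i(F\Tot_N X^\bullet)$ for $i\le m$ and $N\gg m$, the last isomorphism because $F$ preserves the finite limit $\Tot_N$. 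Under these identifications $\alpha$ induces the identity on $H^i$ for $i\le m$; letting $m$ vary, $H^i(\alpha)$ is an isomorphism in every degree, and since $\mathcal D$ is right-complete a map inducing an isomorphism on all cohomology is an equivalence. Hence $\alpha$ is an equivalence.

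The one step I expect to require genuine care is the appeal to right-completeness in the second paragraph: extracting from it that a tower whose layers become arbitrarily coconnective admits a limit computable "one truncation at a time", and that this limit really does compute $\Tot X^\bullet$ (rather than just some formal limit of partial totalizations). Everything else is bookkeeping with the exactness and left $t$-exactness of $F$, together with the standard identification of the totalization tower via matching objects and conormalized cochains, which is where the assumption $X^\bullet\in\mathcal C^{\ge 0}$ enters.
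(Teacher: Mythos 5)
Your proof is essentially the paper's argument in different clothing: the tower of partial totalizations $\Tot_n X^\bullet$ with $\fib(\Tot_n \to \Tot_{n-1}) \simeq \Omega^n N^n(X^\bullet) \in \mathcal{C}^{\geq n}$ is exactly the tower $Y(-n)$ that the paper extracts from Lurie's $\infty$-categorical Dold--Kan correspondence (HA 1.2.4.1 and 1.2.4.5), and both proofs then use right-completeness in the same two ways. The only point to tighten is your last sentence: ``a map inducing an isomorphism on all cohomology is an equivalence'' is not what right-completeness gives you (that shape of statement needs left-completeness or boundedness); what right-completeness kills is $\bigcap_N \mathcal{D}^{\geq N}$, and your own estimates already show $\fib(\alpha)$ lies there, since $\alpha$ is compatible with the projections to $F\Tot_N X^\bullet \simeq \Tot_N(FX^\bullet)$ whose fibers on both sides are in $\mathcal{D}^{\geq N+1}$. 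So conclude via coconnectivity of $\fib(\alpha)$ rather than via cohomology objects, and the argument is complete.
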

\begin{proof}
    Lurie's $\infty$-categorical Dold-Kan correspondence \cite[Theorem 1.2.4.1]{lurieha} gives an equivalence $X^\bullet \rightsquigarrow DK(X^\bullet)$ from the category of cosimplicial objects in $\mathcal C$ to the category of cofiltered objects in $\mathcal C$. The functor is given by
    \[ DK(X^\bullet) = \cdots \to Y(-2) \to Y(-1) \to Y(0)\]
    where $Y(-k)$ is the limit of the $k$-coskeleton of $X$.
    Further, $\Tot(X^\bullet) \simeq \lim_n Y(-n)$, in the sense that one limit exists if the other does, and they are equivalent.
    
    Let $X^\bullet$ be a cosimplicial object of $\mathcal C$ and $DK(X^\bullet) = (Y(-\bullet))$. By \cite[Proposition 1.2.4.5]{lurieha},
    the map $Y(-n) \to Y(-n+1)$ has fiber in $\mathcal C^{\geq n}$,
    so by right-completeness, the limit $\lim_n Y(-n) \simeq \Tot(X^\bullet)$ exists.
    The Dold-Kan correspondence is natural with respect to exact functors, so we have a commutative square
\[\begin{tikzcd}[ampersand replacement=\&]
	{\Tot F(X^\bullet)} \& {F\Tot(X^\bullet)} \\
	{\lim_n F(Y(-n))} \& {F(\lim_n Y(-n))}
	\arrow[from=1-1, to=1-2]
	\arrow["\simeq", from=1-1, to=2-1]
	\arrow["\simeq"', from=1-2, to=2-2]
	\arrow[from=2-1, to=2-2]
\end{tikzcd}\]
To show that $F$ preserves the totalization of $X^\bullet$, it suffices to show that the bottom arrow is an isomorphism. 
    Since $F$ is left $t$-exact and both $\mathcal C$ and $\mathcal D$ are right-complete, it suffices to prove the claim after applying the truncation $\tau^{\leq m}$.
    Thus we want to show 
    \[ \lim_n F(\tau^{\leq m}Y(-n)) \to F(\lim_n \tau^{\leq m}Y(-n))\]
    is an equivalence for all $m$.
    Since $Y(-n) \to Y(-n+1)$ has fiber in $\mathcal C^{\geq n}$, $\tau^{\leq m}Y(-n)$ is constant for $n \geq m$, so $F$ preserves the limit $\lim_n \tau^{\leq m} Y(-n)$.
\end{proof}

\begin{theorem}\label{theorem: comodules over distribution coalgebra}
    Let $R$ be a Noetherian ring and $S = \Spec R$.
    Let $\pi: X \to S$ be a coflat ind-finite ind-schematic morphism.
    Then $\pi_!: \IndCoh(X)^{\geq 0} \to \IndCoh(S)^{\geq 0}$ is comonadic.
    The resulting comonad $\pi_!\pi^!$ is identified with tensoring with the distribution coalgebra $A^\vee_X$ of $X$.
\end{theorem}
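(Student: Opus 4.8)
The plan is to deduce comonadicity from the Barr--Beck--Lurie theorem and then compute the comonad by base change, matching its structure with that of the coalgebra $A_X^\vee$. Write $X = \colim_\alpha Y_\alpha$ with $r_\alpha : Y_\alpha \to S$ finite, so that $A_X = \lim_\alpha A_\alpha$, $A_\alpha = \Gamma(Y_\alpha, \OO)$, is the coordinate pro-algebra; coflatness says $A_X$ is pro-projective. Since $\pi$ is ind-finite, it is ind-proper, so $\pi_! = \pi_*^{\IndCoh}$ has right adjoint $\pi^!$; both functors are left t-exact ($\pi_!$ is even t-exact, being a filtered colimit of pushforwards along affine morphisms), so they restrict to an adjunction $\pi_! \dashv \pi^!$ between $\IndCoh(X)^{\geq 0}$ and $\IndCoh(S)^{\geq 0}$. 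For conservativity of $\pi_!$ on $\IndCoh(X)^{\geq 0}$ one uses the presentation $\IndCoh(X)^{\geq 0} = \colim_\alpha \IndCoh(Y_\alpha)^{\geq 0}$ along the fully faithful $*$-pushforwards: every object lives at some finite stage, where $\IndCoh(Y_\alpha)^{\geq 0} \simeq \QCoh(Y_\alpha)^{\geq 0}$ and $\pi_!$ restricts to $(r_\alpha)_*$, which is conservative because $r_\alpha$ is affine. Finally $\pi_!$ is exact and left t-exact, so by Lemma \ref{lemma: totalizations in right-complete categories} it preserves totalizations of all cosimplicial objects of $\IndCoh(X)^{\geq 0}$, in particular the $\pi_!$-split ones; Barr--Beck--Lurie then yields comonadicity.

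For the comonad, base change (as in the proof of Theorem \ref{theorem: FM for B of G dual}) identifies its underlying endofunctor of $\IndCoh(S)^{\geq 0}$ with $\colim_\alpha (r_\alpha)_*^{\IndCoh} r_\alpha^!$, which on $\Coh(S)$ sends $M \mapsto \colim_\alpha R\Hom_R(A_\alpha, M)$. Here coflatness is essential: since $A_X$ is pro-projective and each $A_\alpha$ is a finite $R$-module, Remark \ref{remark: pro-projective conditions} together with Lazard's theorem lets us choose, for each $\alpha$, some $\beta \geq \alpha$ through which $A_\beta \to A_\alpha$ factors via a finite free $R$-module $F$; as $\Ext^{>0}_R(F,-) = 0$, the higher $\Ext$ terms are killed in the filtered colimit and we are left with $\colim_\alpha \Hom_R(A_\alpha, M) = h^{A_X}(M) = A_X^\vee \otimes_R M$, the last identity being the definition of linear duality (Theorem \ref{thm: intersection of the categories}, Corollary \ref{cor: linear duality equivalence}). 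Since $A_X^\vee$ is flat, $A_X^\vee \otimes_R -$ is t-exact and commutes with filtered colimits, and so does $\pi_!\pi^!$ ($\pi^!$ because its components $r_\alpha^!$ are continuous, $r_\alpha$ being finite; $\pi_!$ because it is continuous); as both also preserve totalizations of cosimplicial objects valued in the heart (Lemma \ref{lemma: totalizations in right-complete categories}) and every object of $\IndCoh(S)^{\geq 0} \simeq \QCoh(S)^{\geq 0}$ is such a totalization, the identification of endofunctors extends from $\Coh(S)$ to all of $\IndCoh(S)^{\geq 0}$.

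It remains to match the comonad structure. Each quotient $A_\beta \twoheadrightarrow A_\alpha$ of $R$-algebras induces a morphism of comonads $R\Hom_R(A_\alpha,-) \to R\Hom_R(A_\beta,-)$, and $\pi_!\pi^!$ is the filtered colimit of these; on each term the comultiplication and counit are restriction along the multiplication $\mu_\alpha : A_\alpha \otimes_R A_\alpha \to A_\alpha$ and the unit $\eta_\alpha : R \to A_\alpha$ (which one reads off from the groupoid $Y_\alpha \times_S Y_\alpha \rightrightarrows Y_\alpha$ and its diagonal). Passing to the colimit and using that $\lim_\alpha (A_\alpha \otimes_R A_\alpha) = A_X \otimes A_X$ (tensor product in $\ProMod_R$) is again pro-projective pro-finite, with linear dual $A_X^\vee \otimes_R A_X^\vee$ (Proposition \ref{prop:linear duality monoidal}), the comultiplication becomes tensoring with the linear dual $\mu^\vee : A_X^\vee \to A_X^\vee \otimes_R A_X^\vee$ of the pro-algebra multiplication --- which is precisely the comultiplication of the distribution coalgebra $A_X^\vee$ --- and the counit becomes $\epsilon_{A_X^\vee} \otimes 1$; compare Lemma \ref{lemma: functoriality of distribution coalgebra}. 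Hence $\pi_!\pi^!$ is tensoring with the coalgebra $A_X^\vee$, and consequently $\IndCoh(X)^{\geq 0}$ is identified with the category of $A_X^\vee$-comodules in $\QCoh(S)^{\geq 0}$.

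The step I expect to be the main obstacle is controlling the interchange of the filtered colimit over $\alpha$ with the $!$-pullbacks and the t-structures --- especially the vanishing of the higher $\Ext$ terms in the colimit, which is exactly where pro-projectivity (coflatness) enters and which genuinely fails for non-coflat $X$, and the upgrade of the comonad identification from coherent test objects to all of $\IndCoh(S)^{\geq 0}$ while keeping track of the comonad structure. A secondary point is confirming that the categories involved meet the right-completeness hypothesis of Lemma \ref{lemma: totalizations in right-complete categories}, which is what legitimizes the Barr--Beck--Lurie verification on the bounded-below categories.
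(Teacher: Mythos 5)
Your proof is correct and follows essentially the same route as the paper's: conservativity of $\pi_!$ from the colimit presentation $\IndCoh(X)^{\geq 0}=\colim_\alpha\IndCoh(Y_\alpha)^{\geq 0}$, preservation of totalizations via Lemma \ref{lemma: totalizations in right-complete categories}, Barr--Beck--Lurie, and identification of $\pi_!\pi^!$ with $A_X^\vee\otimes-$ via linear duality. The extra detail you supply --- the vanishing of higher $\Ext$ terms in the filtered colimit via pro-projectivity, and the explicit matching of the comonad structure with the coalgebra structure on $A_X^\vee$ --- is exactly what the paper compresses into the phrase ``by linear duality,'' and is where coflatness is genuinely used.
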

\begin{proof}
    Write $X = \colim X_\alpha$ as a colimit of schemes finite over $S$ where $i_\alpha: X_\alpha \to X$ is the inclusion and $\pi_\alpha: X_\alpha \to S$ is the structure morphism.
    We have $\IndCoh(X)^{\geq 0} = \colim_\alpha \IndCoh(X_\alpha)^{\geq 0}$ under the functors $(i_\alpha)_!$,
    and $\pi_! = \colim_\alpha (\pi_\alpha)_*$
    Each $(\pi_\alpha)_*$ is conservative on $\IndCoh^{\geq 0}$, so $\pi_!$ is conservative.
    Since $\pi$ is ind-schematic, \cite[3, Lemma 1.4.9]{GR17I}, $\pi_!: \IndCoh(X) \to \IndCoh(S)$ is left t-exact. By Lemma \ref{lemma: totalizations in right-complete categories}, $\pi_!$ preserves totalizations in $\IndCoh(X)^{\geq 0}$.
    By the Barr-Beck-Lurie theorem \cite[Theorem 4.7.3.5]{lurieha}, the functor $\pi_!: \IndCoh(X)^{\geq 0} \to \IndCoh(S)^{\geq 0}$ is comonadic.
    The functor $\pi_!\pi^!$ is given by mapping out of the pro-algebra of functions on $X$, which by linear duality is equivalent to tensoring with the distribution coalgebra $A^\vee_X$ (as defined in §\ref{subsection: indcoh}).
\end{proof}

\begin{theorem}\label{theorem: FM for BG}
    Let $R$ be a Noetherian ring and $S = \Spec R$.
    Let $G \to S$ be a flat Mittag-Leffler affine commutative group scheme.
    The Fourier-Mukai transform $\Phi'_{G^\vee}: \IndCoh(G^\vee)^{\geq 0} \to \QCoh(BG)^{\geq 0}$ is an equivalence fitting into a commutative diagram 
    \[
    \begin{tikzcd}[ampersand replacement=\&]
	{\IndCoh(G^\vee)^{\geq 0}} \& {\QCoh(BG)^{\geq 0}} \\
	{\QCoh(S)^{\geq 0}} \& {\QCoh(S)^{\geq 0}}
	\arrow["{\Phi'_{G^\vee}}"', from=1-1, to=1-2]
	\arrow["{\pi_!}", from=1-1, to=2-1]
	\arrow["{q^*}"', from=1-2, to=2-2]
	\arrow["{=}", from=2-1, to=2-2]
\end{tikzcd}
    \]
where the vertical arrows are comonadic.
\end{theorem}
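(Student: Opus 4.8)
The plan is to dualize the proof of Theorem~\ref{theorem: FM for B of G dual}: realize both $\IndCoh(G^\vee)^{\geq 0}$ and $\QCoh(BG)^{\geq 0}$ as comodule categories over $\QCoh(S)^{\geq 0}$ via the vertical functors, and show $\Phi'_{G^\vee}$ matches the underlying comonads. For the left edge, Theorem~\ref{theorem: comodules over distribution coalgebra} applied to $\pi:G^\vee\to S$ makes $\pi_!:\IndCoh(G^\vee)^{\geq 0}\to\IndCoh(S)^{\geq 0}=\QCoh(S)^{\geq 0}$ comonadic, with comonad $\pi_!\pi^!$ given by tensoring with the distribution coalgebra of $G^\vee$; since $G^\vee=\SpInd(A^\vee)$, that coalgebra is $(A^\vee)^\vee=A$ by involutivity of linear duality (Corollary~\ref{cor: linear duality equivalence}). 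For the right edge, $q:S\to BG$ is affine, so $q^*$ and $q_*$ are $t$-exact, and restricting to connective objects the comonadic adjunction $(q^*,q_*)$ from flat descent shows $q^*:\QCoh(BG)^{\geq 0}\to\QCoh(S)^{\geq 0}$ is comonadic with comonad $q^*q_*\simeq A\otimes_R-$ (the Hopf algebra $A$ as a coalgebra). Thus both comonads are abstractly $A\otimes_R-$.

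Next I would check $\Phi'_{G^\vee}$ is a functor over $\QCoh(S)^{\geq 0}$, i.e.\ $q^*\circ\Phi'_{G^\vee}\simeq\pi_!$. Writing $G^\vee=\colim_\alpha Y_\alpha$ as in the definition $\Phi'_{G^\vee}=\colim_\alpha(\tilde\pi_\alpha)_*(\mathcal L_{G^\vee}\otimes\tilde p_\alpha^*(-))$ and applying $q^*$, base change along the affine morphisms $\pi_\alpha$ gives $q^*(\tilde\pi_\alpha)_*\simeq(\pi_\alpha)_*\tilde q_\alpha^*$; since $\tilde q_\alpha^*\tilde p_\alpha^*\simeq\mathrm{id}$ and $\tilde q_\alpha^*\mathcal L_{G^\vee}$ is the trivial line bundle on $Y_\alpha$ with its canonical trivialization (the descent datum defining $\mathcal L_{G^\vee}$), this yields $q^*\Phi'_{G^\vee}\simeq\colim_\alpha(\pi_\alpha)_*=\pi_!$. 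As $\Phi'_{G^\vee}$ is assembled from $\ast$-pushforwards along affine morphisms, $\ast$-pullbacks, and filtered colimits, it preserves colimits and admits a right adjoint, so the comonad comparison applies: by the comonadic form of \cite[Lemma~3.25]{HM21_monads}, $\Phi'_{G^\vee}$ is an equivalence if and only if the induced comonad morphism $\pi_!\pi^!\to q^*q_*$ is one.

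It then remains to compute this comonad morphism, which I expect to reduce almost verbatim to the final computation in Theorem~\ref{theorem: FM for B of G dual}. Computing $\pi_!\pi^!$ by base change along $G^\vee=\colim_\alpha Y_\alpha$ presents it as the ind-extension of $M\mapsto\colim_\alpha\Hom_R(\mathcal O_{Y_\alpha},M)=\Hom_R(A^\vee,M)$, and unwinding the unit/counit definition of the comonad morphism shows that the resulting map to $q^*q_*=A\otimes_R-$ is the one induced by the descent datum of $\mathcal L_{G^\vee}$, i.e.\ multiplication by the canonical pairing $can\in\Gamma(G\times G^\vee,\mathcal O)^\times$. Hence it is exactly the linear duality isomorphism $\Hom_R(A^\vee,M)\simeq A\otimes_R M$ of Corollary~\ref{cor: linear duality equivalence} and in particular an equivalence; this simultaneously produces the commuting square with the asserted comonadicity.

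I expect the main obstacle to be the bookkeeping that runs the comonadic Barr--Beck machinery on the \emph{bounded-below} categories rather than the final computation: one needs $\pi_!$ to stay comonadic after truncation (this is Theorem~\ref{theorem: comodules over distribution coalgebra}, whose proof invokes Lemma~\ref{lemma: totalizations in right-complete categories} to control totalizations of cosimplicial objects in $\IndCoh(G^\vee)^{\geq 0}$), and one must verify that $\Phi'_{G^\vee}$—a priori neither a left nor a right adjoint, since it composes the left adjoint $\tilde p_\alpha^*$ with the right adjoint $(\tilde\pi_\alpha)_*$—nevertheless preserves colimits so that the monad/comonad formalism is available.
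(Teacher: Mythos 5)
Your proposal is correct and follows essentially the same route as the paper's proof: base change along the affine maps $\pi_\alpha$ to get $q^*\Phi'_{G^\vee}\simeq\pi_!$, comonadicity of $\pi_!$ via Theorem~\ref{theorem: comodules over distribution coalgebra} and of $q^*$ via faithfully flat descent, and identification of the induced comonad comparison with the linear duality isomorphism $A\otimes M\simeq\Hom_R(A^\vee,M)$. The only (immaterial) divergence is the direction in which you write the comonad morphism, and your explicit remarks on colimit-preservation of $\Phi'_{G^\vee}$ and $t$-exactness of $q^*,q_*$ are points the paper leaves implicit.
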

\begin{proof}
    First, we construct the commutative square of functors.
    Since $\tilde \pi_\alpha: Y_\alpha \times BG \to BG$ is schematic quasicompact, we have the base change isomorphism
    \[ q^* (\tilde \pi_\alpha)_\ast \simeq (\pi_\alpha)_* (\tilde q_\alpha)^*\]
    \cite[Chapter 3, Proposition 2.2.2]{GR17I}.
    Thus, \[ q^*(\tilde \pi_\alpha)_*(\mathcal L_{G^\vee} \otimes \tilde p_\alpha^*) \simeq (\pi_\alpha)_* (\tilde q_\alpha^*\mathcal L_{G^\vee} \otimes \tilde q_\alpha^* \tilde p_\alpha^*) \simeq (\pi_\alpha)_*.\] Taking the colimit over $\alpha$ gives $q^*\Phi'_{G^\vee} \simeq \pi_!$.
    
    Since $G \to S$ is faithfully flat, $S \to BG$ is faithfully flat, so by faithfully flat descent, $q^*$ is comonadic \cite[Chapter 3, Corollary 1.3.5]{GR17I}.
    By Theorem \ref{theorem: comodules over distribution coalgebra}, $\pi_!: \IndCoh(G^\vee)^{\geq 0} \to \IndCoh(S)^{\geq 0}$ is comonadic.
    As in the proof of Theorem \ref{theorem: FM for B of G dual}, the equivalence $q^*\Phi'_{G^\vee} \simeq \pi_!$ induces a morphism 
    \[ q^*q_* \to \pi_!\pi^!,\]
    and we wish to show this morphism is an equivalence.
    If $G = \Spec A$, then 
    $q^*q_*$ is the functor $M \mapsto M \otimes A$ while 
    $\pi_!\pi^!$ is the functor $M \mapsto \Hom(A^\vee, M)$. As in Theorem \ref{theorem: FM for B of G dual}, the induced morphism $M \otimes A \to \Hom(A^\vee, M)$ is exactly the linear duality defining the pro-module $A^\vee$, and so is an equivalence. Thus $\Phi'_{G^\vee}$ is an equivalence.
\end{proof}

It is natural to ask whether the equivalence of Theorem \ref{theorem: FM for BG} can be extended from bounded below to unbounded derived categories.
Note that $\QCoh(BG)$ is left-complete \cite[1, Lemma 2.6.2]{GR17I}. However, since $\IndCoh(G^\vee)$ is rarely left-complete, Theorem \ref{theorem: FM for BG} cannot generally be extended to an equivalence of bounded derived categories.

If $\cX$ is the completion of a Noetherian affine scheme along a closed subscheme, then the following lemma describes the left completion of $\IndCoh(\cX)^+$.

\begin{lemma}
    \label{lemma: left completion of indcoh of completion}
    Let $X$ be a Noetherian affine scheme, $Y \subseteq X$ a closed subscheme.
    Let $\hat i: X^{\wedge}_Y \to X$ be the inclusion of the completion of $X$ along $Y$ into $X$. 
    
    \begin{enumerate}
        \item $\hat{i}_*: \QCoh(X_Y^\wedge) \to \QCoh(X)_Y$ is an equivalence;
        \item the functors 
    \[ 
        \IndCoh(X^\wedge_Y)^{\geq 0} \overset{\hat{i}_*}{\to} \IndCoh(X)^{\geq 0}_Y \overset{\Psi_X}{\to} \QCoh(X)^{\geq 0}_Y 
    \]
    are equivalences and exhibit $\QCoh(X_Y^\wedge) \simeq \QCoh(X)_Y$ as the left completion of $\IndCoh(X^\wedge_Y)^+$.
    \end{enumerate}
\end{lemma}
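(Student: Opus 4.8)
The plan is to deduce part (1) from a completion statement over the Noetherian ring $A$ (writing $X=\Spec A$, $Y=V(I)$), and then to obtain part (2) by feeding that into the comparison between $\IndCoh$ and $\QCoh$ on the Noetherian scheme $X$ and on its finite thickenings $V(I^n)$. For (1): since $X^\wedge_Y=\colim_n\Spec(A/I^n)$, by definition $\QCoh(X^\wedge_Y)$ is the limit of the categories $D(A/I^n)$ along the base-change functors $-\otimes_A^L A/I^n$; the functor $\hat{i}^*$ is $M\mapsto(M\otimes_A^L A/I^n)_n$ and its right adjoint $\hat{i}_*$ is $(M_n)\mapsto\lim_n M_n$. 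Thus (1) is the assertion that $\hat{i}_*$ is fully faithful with essential image $\QCoh(X)_Y$, i.e. that $\hat{i}^*\hat{i}_*\simeq\mathrm{id}$; since $A$ is Noetherian this is precisely the completion theorem identifying $\lim_n D(A/I^n)$ with the corresponding subcategory of $D(A)$ (see \cite[§7.3]{lurieha}, or \cite{GR17II}).

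For (2): First, by the basic properties of $\IndCoh$ on formal completions \cite{GR17II} (see also \cite{Gai13_indcoh}), the pushforward $\hat{i}_*\colon\IndCoh(X^\wedge_Y)\to\IndCoh(X)$ is fully faithful with essential image the subcategory $\IndCoh(X)_Y$ of objects supported on $Y$. Each thickening $V(I^n)\hookrightarrow X$ is finite, so $\hat{i}_*$ is a filtered colimit of $t$-exact functors; in particular it is $t$-exact and right adjoint to a left $t$-exact $\hat{i}^!$ with $\hat{i}^!\hat{i}_*\simeq\mathrm{id}$, whence it restricts to an equivalence $\IndCoh(X^\wedge_Y)^{\geq 0}\xrightarrow{\sim}\IndCoh(X)^{\geq 0}_Y$. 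Next, $X$ is a Noetherian scheme, so $\Psi_X\colon\IndCoh(X)^{\geq 0}\to\QCoh(X)^{\geq 0}$ is an equivalence \cite[Chapter 4, Proposition 1.2.2]{GR17I}, and it restricts to an equivalence onto $\QCoh(X)^{\geq 0}_Y$; composing yields the displayed chain of equivalences. For the left-completion clause, I would observe that $\Psi$ commutes with pushforward along the finite maps $V(I^n)\hookrightarrow V(I^{n+1})$, so it induces a functor $\Psi_{X^\wedge_Y}$ which is the colimit of the $\Psi_{V(I^n)}$; since each $\Psi_{V(I^n)}$ is an equivalence on eventually coconnective objects (each $V(I^n)$ being a Noetherian scheme) and $\IndCoh(X^\wedge_Y)^+\simeq\colim_n\IndCoh(V(I^n))^+$ along the $t$-exact pushforwards, $\Psi_{X^\wedge_Y}$ is an equivalence $\IndCoh(X^\wedge_Y)^+\xrightarrow{\sim}\QCoh(X^\wedge_Y)^+$. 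Since $\QCoh(X^\wedge_Y)\simeq\QCoh(X)_Y$ is left-complete (it is assembled from the left-complete categories $D(A/I^n)$) and receives a $t$-exact functor from $\IndCoh(X^\wedge_Y)^+$ that is an equivalence on the eventually coconnective part, the universal property exhibits it as the left completion of $\IndCoh(X^\wedge_Y)^+$.

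The main obstacle is this last step: pinning down the two $t$-structures in play and the precise meaning of ``left completion of $\IndCoh(X^\wedge_Y)^+$'', and matching them. Concretely, one must verify that $\Psi$ genuinely yields a well-behaved comparison functor between $\IndCoh(X^\wedge_Y)$ and $\QCoh(X^\wedge_Y)$ which is an equivalence on eventually coconnective objects — reconciling the presentation of $\IndCoh(X^\wedge_Y)$ as a colimit along $*$-pushforwards (equivalently a limit along $!$-pullbacks) with that of $\QCoh(X^\wedge_Y)$ — and that the target is left-complete with the expected $t$-structure and that its subcategory $\QCoh(X)_Y$ is the one appearing in (1). I expect this to reduce to Serre duality on each finite thickening $V(I^n)$ (cf. Proposition \ref{prop: distributions via omega}) together with the agreement of $\IndCoh$ and $\QCoh$ on eventually coconnective objects over a Noetherian scheme, but the bookkeeping with supports, completions, and truncations is where the real work lies.
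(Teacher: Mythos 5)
Your overall route for part (2) is the same as the paper's: quote Gaitsgory--Rozenblyum for the equivalence $\hat{i}_*:\IndCoh(X^\wedge_Y)\to\IndCoh(X)_Y$ and its $t$-exactness, then use that $\Psi_X$ is an equivalence on coconnective objects over the Noetherian affine scheme $X$ and restricts to the supported subcategories. Two issues, though. First, in part (1) you describe $\hat{i}_*$ as $(M_n)\mapsto\lim_n M_n$ and assert its essential image is $\QCoh(X)_Y$. Throughout the lemma $\QCoh(X)_Y$ denotes the \emph{torsion} subcategory (objects set-theoretically supported on $Y$) --- it must, since in part (2) it is matched under $\Psi_X$ with $\IndCoh(X)_Y$, which is generated by coherent sheaves supported on $Y$. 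But the essential image of the inverse-limit functor is the $I$-\emph{complete} subcategory (e.g.\ $\lim_n A/I^n=\hat A$ is not torsion), which is equivalent to, yet not equal to, $\QCoh(X)_Y$ inside $\QCoh(X)$. So the functor realizing (1) is not the naive inverse limit, and your appeal to ``the completion theorem'' conflates the two sides of the torsion/completion equivalence; the paper avoids this by citing \cite[Proposition 7.1.3]{GR14_indschemes} directly. (Also, for ind-proper $\hat i$ the IndCoh pushforward $\hat i_*$ is the \emph{left} adjoint of $\hat i^!$, not its right adjoint, though this slip does not affect your $t$-exactness argument.)

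Second, the left-completion clause, which you yourself flag as ``where the real work lies,'' is the genuine gap, and your proposed route runs into exactly the difficulty the paper warns about. The canonical $t$-structure on $\QCoh(X^\wedge_Y)=\lim_n D(A/I^n)$ is \emph{not} the $t$-structure for which the displayed chain is an equivalence on coconnective parts; the relevant one is the ``inductive'' $t$-structure on $\QCoh(X)_Y$, i.e.\ the restriction of the standard $t$-structure on $\QCoh(X)$ to the supported subcategory (see \cite[\S 7.3]{GR14_indschemes} and the remark following the lemma). With that choice, left-completeness is immediate and requires no bookkeeping: $\QCoh(X)$ is left-complete because $X$ is affine, hence so is $\QCoh(X)_Y$ with the induced $t$-structure; the comparison with the canonical $t$-structure on $\QCoh(X^\wedge_Y)$ (they differ but have the same left completion) is then a separate remark. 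Your alternative --- that $\lim_n D(A/I^n)$ is left-complete because it is ``assembled from left-complete categories'' --- would need the transition functors $-\otimes_{A/I^{n+1}}A/I^n$ to be $t$-exact, which they are not (they have higher Tor), so as stated it does not close the gap.
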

\begin{proof}
\begin{enumerate}
    \item This is \cite[Proposition 7.1.3]{GR14_indschemes}.
    \item By \cite[Proposition 7.4.5]{GR14_indschemes}, 
    \[ \hat{i}_*: \IndCoh(X^\wedge_Y) \to \IndCoh(X)_Y\]
    is an equivalence.
    By \cite[Lemma 7.4.8]{GR14_indschemes}, $\hat{i}_*$ is $t$-exact.
    Since $X$ is an affine scheme,
    the functor $\Psi_X: \IndCoh(X) \to \QCoh(X)$ defined as the ind-extension of the inclusion $\Coh(X) \to \QCoh(X)$ is an equivalence on the subcategories of coconnective objects \cite[Chapter 4, Proposition 1.2.2]{GR17I}.
    Evidently $\Psi_X$ further restricts to an equivalence between subcategories supported on $Y$.
    Since $X$ is an affine scheme, $\QCoh(X)$ is left complete, and thus $\QCoh(X)_Y$ with the induced $t$-structure is also left-complete.\qedhere
\end{enumerate}
\end{proof}
\begin{remark}
    The $t$-structure on $\QCoh(X)_Y$ used in the proof of Lemma \ref{lemma: left completion of indcoh of completion} is \emph{not} the same as the canonical $t$-structure on $\QCoh(X_Y^\wedge)$. Instead, it is what Gaitsgory and Rozenblyum call the \emph{inductive} $t$-structure. See \cite[§7.3]{GR14_indschemes}.
    It is not hard to show that these two $t$-structures have the same left-completion.
\end{remark}

\begin{theorem}
    \label{theorem: FM for dual formal Lie group}
    Let $R$ be a Noetherian ring and $S = \Spec R$.
    Let $G \to S$ be the Cartier dual of a formal Lie group.
    Then $\Phi'_{G^\vee}$ extends to an equivalence 
    \[\QCoh(G^\vee) \overset{\sim}{\to} \QCoh(BG)\]
    fitting into a commutative diagram
    \[
\begin{tikzcd}[ampersand replacement=\&]
	{\QCoh(G^\vee)} \& {\QCoh(BG)} \\
	{\QCoh(S)} \& {\QCoh(S)}
	\arrow["{\Phi'_{G^\vee}}", from=1-1, to=1-2]
	\arrow["{\pi_!}", from=1-1, to=2-1]
	\arrow["{q^*}", from=1-2, to=2-2]
	\arrow["{=}", from=2-1, to=2-2]
\end{tikzcd}
    \]
\end{theorem}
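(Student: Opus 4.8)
The plan is to obtain the unbounded equivalence from Theorem~\ref{theorem: FM for BG} by left-completing. First I would assemble the inputs. Since $G$ is the Cartier dual of a formal Lie group, $G^\vee$ is a formal Lie group: it is coflat and ind-finite, and $G$ is a flat Mittag-Leffler affine commutative group scheme by Corollary~\ref{corollary: cartier duality is an antiequivalence}, so Theorem~\ref{theorem: FM for BG} applies and yields an equivalence $\Phi'_{G^\vee}\colon \IndCoh(G^\vee)^{\geq 0} \xrightarrow{\sim} \QCoh(BG)^{\geq 0}$ sitting over $\QCoh(S)^{\geq 0}$ via $\pi_!$ and $q^*$. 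Moreover, the underlying ind-scheme of $G^\vee$ is the formal completion $X^{\wedge}_Y$ of a smooth affine $S$-scheme $X$ along a closed subscheme $Y \cong S$ --- for instance $X = \mathbb{A}^d_S$ and $Y$ its origin when $G^\vee = \SpInd\!\left(R\series{x_1,\dots,x_d}\right)$ --- and $X$ is Noetherian affine, so Lemma~\ref{lemma: left completion of indcoh of completion} applies to $G^\vee$.

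Next I would upgrade $\Phi'_{G^\vee}$ to bounded-below categories. It is built levelwise from the flat pullback $\tilde p_\alpha^*$, tensoring with the line bundle $\mathcal L_{G^\vee}$, and the affine pushforward $(\tilde\pi_\alpha)_*$, each of which is left $t$-exact and commutes with shifts; hence the same formula defines a left $t$-exact functor $\IndCoh(G^\vee)^{\geq -m} \to \QCoh(BG)^{\geq -m}$ for every $m$. It is an equivalence for $m = 0$ by Theorem~\ref{theorem: FM for BG}, and being shift-equivariant it is an equivalence for all $m$; thus it extends to an equivalence $\Phi'_{G^\vee}\colon \IndCoh(G^\vee)^+ \xrightarrow{\sim} \QCoh(BG)^+$ fitting into the bounded-below analogue of the square of Theorem~\ref{theorem: FM for BG}.

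Then I would pass to left completions. By Lemma~\ref{lemma: left completion of indcoh of completion}(2), together with the observation recorded in the Remark after it that the canonical and inductive $t$-structures on $\QCoh(G^\vee)$ have the same left completion, the left completion of $\IndCoh(G^\vee)^+$ is $\QCoh(X)_Y \simeq \QCoh(G^\vee)$. On the other side, $\QCoh(BG)$ is left complete by \cite[Chapter~1, Lemma~2.6.2]{GR17I}, hence is the left completion of $\QCoh(BG)^+$. Since left completion is functorial, the equivalence of the previous paragraph passes to left completions and produces the desired equivalence $\QCoh(G^\vee) \xrightarrow{\sim} \QCoh(BG)$, restricting to $\Phi'_{G^\vee}$ on bounded-below objects. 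For the commutative square I take $\pi_!\colon \QCoh(G^\vee) \to \QCoh(S)$ to be $q^*$ postcomposed with this equivalence --- equivalently, the composite $\QCoh(G^\vee) \simeq \QCoh(X)_Y \xrightarrow{\pi_{X\ast}} \QCoh(S)$, which is left $t$-exact and agrees with $\pi_!$ on bounded-below objects --- so the square commutes by construction.

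The hard part is the bookkeeping with $t$-structures rather than any new geometry: one must check that $\Phi'_{G^\vee}$ is genuinely left $t$-exact (so that it extends to the $+$-categories and shifts), and --- the more delicate point --- that the left completion supplied by Lemma~\ref{lemma: left completion of indcoh of completion} is $\QCoh(G^\vee)$ with its usual $t$-structure, not merely with the inductive $t$-structure of \cite[§7.3]{GR14_indschemes}. This last point is precisely the content of the Remark after that lemma, which it would remain to justify.
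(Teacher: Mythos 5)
Your proposal is correct and follows essentially the same route as the paper: identify $G^\vee$ with the completion of a (trivial) finite-rank vector bundle along its zero section, apply Lemma~\ref{lemma: left completion of indcoh of completion} to recognize $\QCoh(G^\vee)$ as the left completion of $\IndCoh(G^\vee)^+$, and pass the equivalence of Theorem~\ref{theorem: FM for BG} to left completions using that $\QCoh(BG)$ is already left complete. The paper's proof is terser --- it leaves the shift/left-$t$-exactness bookkeeping and the comparison of the inductive and canonical $t$-structures implicit (the latter only in the Remark following the lemma) --- whereas you spell these steps out explicitly; nothing in your argument diverges from the paper's.
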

\begin{proof}
    Since $G^\vee \to S$ is a formal Lie group, 
    $G^\vee$ is of the form $X_Y^\wedge$ as an ind-scheme, where $X \to S$ is a finite rank vector bundle and $Y$ is the zero section.
    Lemma \ref{lemma: left completion of indcoh of completion} shows that $\QCoh(G^\vee)$ is the left-completion of $\IndCoh(G^\vee)^+$. 
    Thus, the equivalence of Theorem \ref{theorem: FM for BG} canonically extends to an equivalence of left completions.
\end{proof}

Theorem \ref{theorem: FM for dual formal Lie group} is a version of \cite[Theorem A]{SST25}.

\subsection{Symmetric monoidal structures}\label{subsection: symmetric monoidal structures}

We would like to know whether the Fourier-Mukai transform carries the structure of a symmetric monoidal functor exchanging convolution and the tensor product. Our categories in question are monadic, respectively comonadic, over the base category $\QCoh(S)$, respectively $\QCoh(S)^+$, as established by Theorem \ref{theorem: FM for B of G dual}, respectively \ref{theorem: FM for BG}.
So we need to describe the monoidal structure in terms of these monads or comonads.

If $\mathcal C$ is a symmetric monoidal 1-category equipped with a monad $T: \mathcal C \to \mathcal C$, then Moerdijk showed that symmetric monoidal structures on $\mathcal C^T \to \mathcal C$ are equivalent to an oplax symmetric monoidal structure on the functor $T$, that is, morphisms 
\[T(X \otimes Y) \to T(X) \otimes T(Y), \qquad T(1_{\mathcal C}) \to 1_{\mathcal C} \]
satisfying certain coherences \cite{Moe02}. If $T = A \otimes -$ where $A$ is an associative algebra in $\mathcal C$, then the data above is exactly a pair of morphisms $A \to A \otimes A$, $A \to 1_{\mathcal C}$ making $A$ into a bialgebra.
A homotopy-coherent upgrade of this statement was recently proved by Heine:

\begin{proposition}[\cite{Hei25}, Theorem 6.30]
\label{prop: symmetric monoidal structures on monads}
    Let $\Op_\infty^{\Einfty}$ be the $(\infty,2)$-category of symmetric monoidal categories, and let $\Fun([1],\Op_\infty^{\Einfty})^{\mathrm{R}}$ be the full subcategory of morphisms in $\Op_\infty^{\Einfty}$ which are right adjoint.
    There is an adjunction $$ \End: \Fun([1],\Op_\infty^{\Einfty})^{\mathrm{R}}\rightleftarrows \mathrm{Mon}(\Op_\infty^{\Einfty})^{op}: \Alg,$$ where $\Alg$ sends a monad to its Eilenberg-Moore object, and $\End$ sends a right adjoint morphism to its monad. The functor $\Alg$ is fully faithful and the essential image is the full subcategory of monadic functors.
\end{proposition}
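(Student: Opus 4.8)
Since Proposition \ref{prop: symmetric monoidal structures on monads} is quoted from \cite{Hei25}, we only indicate the strategy one would follow to prove it. The plan is to bootstrap the statement from the non-monoidal Barr--Beck--Lurie theorem. First I would record the underlying statement for plain $\infty$-categories: there is an adjunction $\End \dashv \Alg$ between $\Fun([1],\mathrm{Cat}_\infty)^{\mathrm{R}}$, the $\infty$-category of right adjoint functors, and $\mathrm{Mon}(\mathrm{Cat}_\infty)^{op}$, the $\infty$-category of monads with composition as the monoidal structure; here $\Alg$ sends a monad $T$ on $\mathcal C$ to its Eilenberg--Moore $\infty$-category, $\End$ sends a right adjoint $G$ with left adjoint $F$ to the monad $GF$, and $\Alg$ is fully faithful with essential image the monadic functors. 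This is a repackaging of the material in \cite[Chapter 4]{lurieha}.

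Next I would upgrade each ingredient to the symmetric monoidal setting, using that the forgetful $(\infty,2)$-functor $\Op_\infty^{\Einfty}\to\mathrm{Cat}_\infty$ is conservative and creates limits. Two lifts are needed. First, if $G\colon\mathcal D\to\mathcal C$ is a right adjoint in $\Op_\infty^{\Einfty}$, i.e.\ a lax symmetric monoidal functor whose left adjoint $F$ exists, then $F$ is oplax symmetric monoidal by the calculus of mates, and hence the monad $T=GF$ carries a canonical oplax (``opmonoidal'') symmetric monoidal structure; this produces $\End$ on objects. Second, given such an opmonoidal monad $T$ on $\mathcal C$, its Eilenberg--Moore object is a limit in $\Op_\infty^{\Einfty}$, so $\Alg(T)$ inherits a symmetric monoidal structure for which the forgetful functor $\Alg(T)\to\mathcal C$ is strong symmetric monoidal; this produces $\Alg$ on objects. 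Finally one checks that $\End$ and $\Alg$ assemble into an adjunction and that $\Alg$ is fully faithful onto the symmetric monoidal monadic functors; each of these assertions can be tested after applying the forgetful functor to $\mathrm{Cat}_\infty$, where it reduces to what was recorded above.

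The main obstacle is the homotopy coherence. Classically this is Moerdijk's theorem \cite{Moe02}, whose proof is a finite diagram chase matching the multiplication and unit of $T$ against its opmonoidal structure; $\infty$-categorically one must first set up the notions --- the $(\infty,2)$-category $\Op_\infty^{\Einfty}$, ``opmonoidal monad,'' the mate calculus, and the adjunction $\End\dashv\Alg$ itself --- so that all of these coherences become automatic, and this is the substance of \cite{Hei25}. For our applications it suffices to extract the following special case: if $A$ is an associative algebra in a symmetric monoidal $\infty$-category $\mathcal C$, then symmetric monoidal structures on the monadic functor $\Mod_A(\mathcal C)\to\mathcal C$ are the same datum as bialgebra structures on $A$, equivalently opmonoidal structures on the monad $A\otimes-$. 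We invoke this in §\ref{subsection: FM for BG dual} with $\mathcal C=\IndCoh(S)$ and $A=\pi_*\OO_G$, whose Hopf algebra structure supplies the bialgebra structure needed to make $\Phi_G$ symmetric monoidal.
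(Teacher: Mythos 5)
The paper states this proposition as a direct citation of \cite{Hei25} and offers no proof of its own, so there is nothing internal to compare your argument against; your sketch follows the expected route (reduce to the non-monoidal Barr--Beck--Lurie statement, lift via doctrinal adjunction and mates, and let \cite{Hei25} supply the homotopy coherence), which matches the paper's own framing via Moerdijk's theorem.

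One step in your sketch is wrong as written, and it is precisely where the variance matters for the application. You take $G$ to be a \emph{lax} symmetric monoidal right adjoint, obtain an oplax structure on its left adjoint $F$ by mates, and then assert that the monad $T = GF$ is canonically opmonoidal. But the composite of a lax monoidal $G$ with an oplax monoidal $F$ carries no canonical oplax (or lax) structure: the oplax structure of $F$ gives $GF(X\otimes Y)\to G(FX\otimes FY)$, and to continue to $GFX\otimes GFY$ you need $G$ to be oplax --- equivalently, by doctrinal adjunction, you need the lax structure on $G$ to be invertible, i.e. $G$ strong symmetric monoidal. So the $1$-morphisms of $\Op_\infty^{\Einfty}$ in the statement must be taken to be oplax (or strong) symmetric monoidal functors for $\End$ to land in opmonoidal monads; with merely lax morphisms the construction of $\End$ on objects does not go through, and the monad you would naturally produce is a \emph{lax} monoidal one, corresponding for $T=A\otimes-$ to the multiplication of $A$ rather than the comultiplication that the paper needs. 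This slip is harmless in the intended applications --- the right adjoints $q^!$ and $\pi_*$ appearing in §\ref{subsection: symmetric monoidal structures} are strong symmetric monoidal, as is the forgetful functor from modules over a bialgebra --- and your concluding identification of symmetric monoidal structures on $\Mod_A(\mathcal C)\to\mathcal C$ with opmonoidal structures on $A\otimes-$, i.e. bialgebra structures on $A$, is the correct takeaway.
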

As a consequence of Proposition \ref{prop: symmetric monoidal structures on monads}, 
we see that if $A_1$ and $A_2$ are cocommutative bialgebras in $\mathcal C$, then the space of symmetric monoidal functors $A_1\mathrm{-mod} \to A_2\mathrm{-mod}$ over $\mathcal C$ is equivalent to the space of cocommutative bialgebra morphisms $A_2 \to A_1$.

\begin{theorem}\label{theorem: FM for B of G dual is symmetric monoidal}
    Let $R$ be a Noetherian ring admitting a dualizing complex and $G \to S = \Spec R$ be a flat Mittag-Leffler affine commutative group scheme.
    The Fourier-Mukai equivalence
    \[ \Phi_G: \QCoh(G) \tensor{\QCoh(S)} \IndCoh(S) \to \IndCoh(BG^\vee)\]
    carries a symmetric monoidal structure with respect to convolution on $G$ and tensor product in $\IndCoh(BG^\vee)$.
\end{theorem}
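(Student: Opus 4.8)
The plan is to apply Heine's Proposition~\ref{prop: symmetric monoidal structures on monads} to the commutative square of Theorem~\ref{theorem: FM for B of G dual}, in which both vertical functors to $\IndCoh(S)$ are monadic and the bottom functor is the identity. First I would equip $\IndCoh(S)$ with the $\otimes^!$-symmetric monoidal structure (tensor unit $\omega_S$; since $R$ admits a dualizing complex this is just $(\QCoh(S),\otimes_R)$ transported along the symmetric monoidal equivalence $\Upsilon_S$), the relative tensor product $\QCoh(G)\otimes_{\QCoh(S)}\IndCoh(S)$ with the convolution structure (base change of the $\QCoh(S)$-linear monoidal category $(\QCoh(G),\ast)$ along the monoidal functor $\Upsilon_S$), and $\IndCoh(BG^\vee)$ with the pointwise tensor product. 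Next I would check that the vertical functors are strong symmetric monoidal: for $\pi_*$ this is the K\"unneth isomorphism $\pi_*\bigl(m_{G,*}(\mathcal F\boxtimes\mathcal G)\bigr)\simeq\pi_*\mathcal F\otimes\pi_*\mathcal G$ for the flat affine morphism $\pi$ together with the identification of units through $\pi\circ e=\mathrm{id}_S$ for the unit section $e$; for $q^!$ it follows from flat descent along $q\colon S\to BG^\vee$ (equivalently, from the corresponding statement for the strong symmetric monoidal pullback $q^*$ in $\QCoh$, which is compatible with $q^!$ up to the relative dualizing complex). By Heine's Proposition these data are equivalent to oplax symmetric monoidal structures on the two monads $\pi_*\pi^*$ and $q^!q_!$ on $\IndCoh(S)$; since both monads are identified (via Lemma~\ref{lemma: monadicity of relative tensor product} and the computation in the proof of Theorem~\ref{theorem: FM for B of G dual}) with $A\otimes_R(-)$ for $A=\pi_*\OO_G=\OO(G)$, what we obtain in each case is a cocommutative bialgebra structure on $A$.

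The heart of the argument is to identify these two bialgebra structures and to upgrade the monad isomorphism $q^!q_!\xrightarrow{\sim}\pi_*\pi^*$ of Theorem~\ref{theorem: FM for B of G dual} to an isomorphism of cocommutative bialgebras. In both cases the underlying associative algebra is $A$ with its commutative multiplication $\nabla_A$: for the convolution side this is immediate, and for the $BG^\vee$ side it is exactly the statement that linear duality is symmetric monoidal (Proposition~\ref{prop:linear duality monoidal}), so that the monad structure on $\Hom_R(\OO(G^\vee),-)$ (composition through $\Delta_{\OO(G^\vee)}=(\nabla_A)^\vee$) is carried to $\nabla_A$ under $\Hom_R(A^\vee,-)\simeq A\otimes_R(-)$. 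The coalgebra structures likewise agree: unwinding the oplax structure of $\pi_*\pi^*$, the comultiplication on $A$ is $m_G^*=\Delta_A$, the Hopf comultiplication; and unwinding the oplax structure of $q^!q_!$ through descent along $q$ shows the comultiplication is the map classifying the tensor product of $G^\vee$-representations, which is again $\Delta_A$ (dual to $\nabla_{\OO(G^\vee)}=(\Delta_A)^\vee$, using Lemma~\ref{lemma: functoriality of distribution coalgebra}). Crucially, that the monad isomorphism respects these comultiplications is the multiplicativity of the Fourier--Mukai kernel: the canonical isomorphism $(m_G\times\mathrm{id})^*\mathcal L_G\simeq p_{13}^*\mathcal L_G\otimes p_{23}^*\mathcal L_G$ on $G\times_S G\times BG^\vee$, which is nothing but the bilinearity of $\mathrm{can}\colon G\times G^\vee\to\Gm$ — the input to Theorem~\ref{theorem: perfect pairing for flat ML group}. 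Granting this, the monad isomorphism is an isomorphism of cocommutative bialgebras (in fact the identity of $A$), so by the full faithfulness of $\Alg$ in Heine's adjunction $\End\dashv\Alg$ it promotes the equivalence $\Phi_G$ of $A$-module categories over $\IndCoh(S)$ to a symmetric monoidal equivalence carrying convolution to the tensor product of $\IndCoh(BG^\vee)$.

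The main obstacle is the coherence bookkeeping of the middle step: producing the oplax symmetric monoidal structures on the monads, and verifying that the monad isomorphism of Theorem~\ref{theorem: FM for B of G dual} — built from the linear-duality isomorphism $\Hom_R(A^\vee,M)\simeq A\otimes_R M$ and the descent datum of $\mathcal L_G$ — is compatible with them not merely object-wise but as a morphism in $\mathrm{Mon}(\Op_\infty^{\Einfty})^{op}$. All of the underlying $1$-categorical computations are the routine Hopf-algebra identities (bilinearity of $\mathrm{can}$, the antipode axiom, associativity/coassociativity), but assembling them into the required homotopy-coherent datum is what takes work. Finally, the $\QCoh$-version of the statement needed for Theorem~\ref{maintheorem: fourier for BG^vee} follows by restricting the above equivalence along $\Upsilon$, using Proposition~\ref{prop: pullback detects QCoh} to see that the essential images of $\QCoh(G)$ and $\QCoh(BG^\vee)$ are identified and are closed under the monoidal operations.
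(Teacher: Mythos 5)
Your proposal is correct and follows essentially the same route as the paper: invoke Heine's Proposition~\ref{prop: symmetric monoidal structures on monads} to translate the symmetric monoidal structures on the two monadic functors $\pi_*$ and $q^!$ into cocommutative bialgebra structures on the monad $A\otimes_R(-)$, identify the convolution-side comultiplication as $\Delta_A$ and the $BG^\vee$-side comultiplication as the one dual to the product on $\OO(G^\vee)$, and conclude because Cartier duality exchanges exactly these. The paper's proof is terser (it does not spell out the K\"unneth/descent verifications or the multiplicativity of the kernel $\mathcal L_G$), but the logical content is the same.
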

\begin{proof}
    Let $G = \Spec A$.
    The functor 
    $\pi_*: \QCoh(G) \to \QCoh(S)$
    is symmetric monoidal with respect to convolution and the functor $q^!: \IndCoh(BG^\vee) \to \IndCoh(S)$
    is symmetric monoidal with respect to tensor product.
    By Proposition \ref{prop: symmetric monoidal structures on monads}, these symmetric monoidal structures correspond to cocommutative coalgebra structures on the monads $A \otimes -$ and $\Upsilon(A_{G^\vee}) \otimes^! -$, respectively.
    The convolution monoidal structure on $\QCoh(G)$ corresponds to the diagonal $\Delta: A \to A \otimes A$, 
    while the tensor product on $\IndCoh(BG^\vee)$ corresponds to the canonical coproduct on $\Upsilon$ linearly dual to the product on functions on $G^\vee$.
    The Fourier-Mukai transform corresponds to the isomorphism 
    \[ A \cong A_{G^\vee}\]
    of algebras in $\QCoh(S)$,
    so it suffices to endow this isomorphism with the structure of a commutative coalgebra morphism. 
    But Cartier duality exactly exchanges the coproduct on $A$ with the product on functions on $G^\vee$, as desired.
\end{proof}

\begin{remark}\label{remark: symmetric monoidal structure and BG}
In fact, Heine proved a version of Proposition \ref{prop: symmetric monoidal structures on monads} for much more general $(\infty,2)$-categories $\mathcal C$, including $(\Op_{\infty}^{\Einfty})^{op}$ \cite[Theorem 6.30]{Hei25}. Thus, if $A_1$ and $A_2$ are commutative bialgebras in $\mathcal C$, then the space of symmetric monoidal functors $A_1\mathrm{-comod} \to A_2\mathrm{-comod}$ is equivalent to the space of commutative bialgebra morphisms $A_1 \to A_2$ in $\mathcal C$.

    Unfortunately, this does not apply in the setting of Theorem \ref{theorem: FM for BG} since $\QCoh(S)^{\geq 0}$ is not a symmetric monoidal category. Presumably, this technical difficulty can be resolved by choosing an appropriate renormalization of categories. However, in the case of the Cartier dual of a formal Lie group, our Fourier-Mukai equivalence extends to an equivalence over $\QCoh(S)$, so we find that when $G$ is the Cartier dual of a formal Lie group, $\Phi': \QCoh(G^\vee) \to \QCoh(BG)$ upgrades to a symmetric monoidal functor. 
\end{remark}

%


\subsection{Naturality of the Fourier-Mukai transform}

\begin{remark}
    It is natural to expect that the Fourier-Mukai transforms constructed in this paper are natural, that is, upgrade to functors from the category of flat Mittag-Leffler affine commutative group schemes to the arrow category $(\stCats)^{\to}$ of stable cocomplete $\infty$-categories with continuous functors.
    For concreteness, let us consider the Fourier-Mukai transform 
    \[ \Phi_G = \tilde \pi_*\left(\mathcal L_G \otimes \tilde p^*-\right): \QCoh(G) \to \QCoh(BG^\vee).\]
    $\Phi_G$ is the composite of two functors $- \boxtimes \OO(1): \QCoh(G) \to \QCoh(G \times B\mathbb G_m)$ and push-pull along the correspondence 
\begin{equation}\label{eq: FM correspondence}
\begin{tikzcd}[ampersand replacement=\&]
	{G \times BG^\vee} \& {G \times B\Gm} \\
	{BG^\vee}
	\arrow[from=1-1, to=1-2]
	\arrow[from=1-1, to=2-1]
\end{tikzcd}
\end{equation}
where the map $G \times BG^\vee \to G$ is projection onto the first factor, while $G \times BG^\vee \to B\Gm$ is the pairing corresponding to $\mathcal L_G$.
Gaitsgory and Rozenblyum showed that the assignment $\cX \rightsquigarrow \QCoh(\cX)$ upgrades to a functor 
\[ \mathrm{Corr}_{sch, all} \to \mathrm{Cat_\infty}^{\mathrm{st,cocompl}}_{\mathrm{cont}}\]
where $\mathrm{Corr}_{sch,all}$ is the category of correspondences of prestacks, where vertical arrows are schematic \cite[Chapter 5, §5.3]{GR17I}. 
Thus, to show that $\Phi_G$ upgrades to the desired functor, it suffices to show that the assignment of $G$ to the correspondence \eqref{eq: FM correspondence} upgrades to a functor from flat Mittag-Leffler affine commutative group schemes to $\mathrm{Corr}^\to$, the arrow category of correspondences of prestacks.

Such an upgrade would automatically provide an upgrade of $\Phi_G$ to a symmetric monoidal transformation exchanging convolution with tensor product as in §\ref{subsection: symmetric monoidal structures}.
\end{remark}

\appendix 

\section{Ind-finite descent for perfect complexes}

\subsection{h-topology}\label{subs: h-top}

\begin{definition}[\cite{Cho25}, Definition 2.1-2.2]\label{defn: v-cover}
    A morphism $f: A \to B$ of discrete rings is a \emph{$v$-cover} if whenever $V$ is a valuation ring and $A \to V$ is a morphism of rings, there exists an extension of valuation rings $V \subseteq W$ and a morphism $B \to W$ such that the obvious diagram is commutative: 
    \[
\begin{tikzcd}[ampersand replacement=\&]
	A \& B \\
	V \& W
	\arrow[from=1-1, to=1-2]
	\arrow[from=1-1, to=2-1]
	\arrow[dashed, from=1-2, to=2-2]
	\arrow["\subseteq", dashed, from=2-1, to=2-2]
\end{tikzcd}
.\]
    The morphism $f$ is an \emph{$h$-cover} if it is of finite presentation and a $v$-cover.

    A morphism $f: A \to B$ of connective $\Einfty$-rings is a $v$-cover (resp. $h$-cover) if $\pi_0(f): \pi_0(A) \to \pi_0(B)$ is a $v$-cover (resp. $h$-cover).
\end{definition}

By \cite[Lemma 2.5]{Cho25}, $h$-covers of rings are closed under composition and pushouts.
By \cite[Remark 2.5]{Ryd10}, fppf covers are $h$-covers, and finite morphisms surjective on geometric points are $h$-covers.

\begin{definition}\label{def: surjective in h-topology}
A morphism of prestacks $f: \cX \to \mathcal Y$ is \emph{surjective in $h$-topology} if whenever $y: \Spec A \to \mathcal Y$, there exists an $h$-cover $\phi: \Spec B \to \Spec A$ such that $\phi^*y$ is in the essential image of $f$.
\end{definition}

Since surjective finite morphisms are $h$-covers, it is natural to ask whether surjective ind-finite morphisms are $h$-covers.
\begin{lemma}
    \label{lemma: ind-finite noetherian surjective on geometric pts}
    Suppose that $\mathcal Y$ is a locally Noetherian prestack and that $f: \cX \to \mathcal Y$ is an ind-finite ind-schematic morphism of prestacks which is surjective on geometric points.
    Then $f$ is surjective in $h$-topology.
\end{lemma}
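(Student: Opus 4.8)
The plan is to reduce to the case $\mathcal Y = \Spec R_0$ affine (so that $R_0$ is Noetherian), to extract from the ind-finite system over $\Spec R_0$ a single finite surjective $R_0$-scheme $Z$, to observe that such a $Z$ is an $h$-cover, and then to obtain the $h$-cover required by Definition~\ref{def: surjective in h-topology} over an arbitrary test object by base-changing $Z$.

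Surjectivity in the $h$-topology is local on the target --- one refines any test object $\Spec A \to \mathcal Y$ by a single affine $h$-cover adapted to a cover of $\mathcal Y$ by Noetherian affines, and uses that $h$-covers are closed under composition and finite disjoint unions (Definition~\ref{defn: v-cover}, \cite[Lemma 2.5]{Cho25}) --- so I may assume $\mathcal Y = \Spec R_0$ with $R_0$ Noetherian. Then $\mathcal X$ is an ind-finite ind-scheme over $\Spec R_0$; write $\mathcal X = \colim_\alpha Z_\alpha$ with each $Z_\alpha \to \Spec R_0$ finite --- hence of finite presentation, as $R_0$ is Noetherian --- and with the transition maps compatible over $\Spec R_0$. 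Each image $\mathrm{Im}(Z_\alpha) \subseteq \Spec R_0$ is then closed, and these form an increasing family.

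The crux is to find one index for which $Z_\alpha \to \Spec R_0$ is surjective. A $\kappa$-point of $\mathcal X = \colim_\alpha Z_\alpha$ factors through some $Z_\alpha$, so the hypothesis that $f$ is surjective on geometric points gives $\bigcup_\alpha \mathrm{Im}(Z_\alpha) = \Spec R_0$. Since $\Spec R_0$ is Noetherian it has finitely many irreducible components, with generic points $\eta_1, \dots, \eta_n$; picking an index $\alpha$ above indices $\alpha_j$ with $\eta_j \in \mathrm{Im}(Z_{\alpha_j})$ --- possible because the indexing category is filtered --- makes $\mathrm{Im}(Z_\alpha)$ a closed subset containing every $\eta_j$, hence all of $\Spec R_0$. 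Put $Z := Z_\alpha$. Now $Z \to \Spec R_0$ is finite, surjective, and of finite presentation; a finite surjective morphism is surjective on geometric points (if $z \in Z$ lies over $x$ then $\kappa(z) \otimes_{\kappa(x)} k$ is a nonzero finite $k$-algebra, hence has a $k$-point, for any algebraically closed $k \supseteq \kappa(x)$), so $Z \to \Spec R_0$ is an $h$-cover by \cite[Remark 2.5]{Ryd10}.

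Finally, for an arbitrary $y : \Spec A \to \mathcal Y = \Spec R_0$, set $\Spec B := Z \times_{\Spec R_0} \Spec A$ (affine, since $Z \to \Spec R_0$ is) and let $\phi : \Spec B \to \Spec A$ be the first projection. Finiteness, surjectivity on geometric points, and finite presentation are all stable under base change, so $\phi$ is again an $h$-cover, and the second projection $\Spec B \to Z \hookrightarrow \mathcal X$ is a lift of $\phi^{*}y$ along $f$, because the two composites $\Spec B \to \Spec R_0$ agree by the fibre-product property. Hence $\phi^{*}y$ lies in the essential image of $f$, which proves the lemma. I expect the main obstacle to be the third step, the extraction of the surjective $Z_\alpha$: this is exactly where local Noetherianity of $\mathcal Y$ enters, via finiteness of the set of irreducible components, and some such hypothesis is genuinely necessary --- otherwise the images $\mathrm{Im}(Z_\alpha)$ may exhaust $\Spec R_0$ only in the colimit and no single $Z_\alpha$ is an $h$-cover. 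The other steps are routine manipulations with $h$-covers and base change.
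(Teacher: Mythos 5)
Your proof is correct and takes essentially the same route as the paper's: both fix a Noetherian affine test object, base-change $\cX$ to an ind-finite ind-scheme over it, and use the finitely many generic points of the Noetherian base to extract a single finite closed subscheme surjecting onto the base, which is then an $h$-cover by \cite{Ryd10}. The only cosmetic differences are that the paper first reduces to an integral base via $\coprod_i \Spec(A/\mathfrak p_i)\to\Spec A$ and invokes going-up where you keep all components and use closedness of images of finite morphisms, and that your opening reduction should be phrased not as covering the prestack $\mathcal Y$ by Noetherian affines but as working with one Noetherian test object $\Spec A\to\mathcal Y$ at a time --- which is what your argument actually uses.
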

\begin{proof}
    Let $A$ be a Noetherian ring and $\Spec A \to \mathcal Y$ be an $A$-point.
    Let $\cX' = \cX \times_{\mathcal Y} \Spec A$.    
    Then $\cX' \to \Spec A$ is an ind-finite ind-scheme surjective on geometric points.
    Let $\mathfrak{p}_1,\ldots, \mathfrak{p}_n$ be the minimal primes of $A$.
    Then $\coprod_i \Spec(A / \mathfrak p_i) \to \Spec A$ is a finite cover surjective on geometric points, hence an $h$-cover.
    Thus we may assume that $\Spec A$ is integral.
    Let $\eta$ be the generic point of $A$.
    Since $\cX' \to \Spec A$ is surjective on geometric points, 
    there exists a closed subscheme $\Spec B \to \cX'$ finite over $\Spec A$ such that $\eta$ is in the image of $\Spec B$.
    By the going-up theorem, $\Spec B \to \Spec A$ is surjective on geometric points, and thus is an $h$-cover, as desired.    
\end{proof}

\subsection{Descent}

Our main concern with the $h$-topology is descent for perfect complexes.
In the presence of a dualizing complex, this follows from descent for $\IndCoh$.
Halpern-Leistner and Preygel established a descent theorem for $h$-covers of locally Noetherian derived stacks \cite{HP23}. 
Chough established the following version for general affine morphisms:

\begin{theorem}[\cite{Cho25}, Theorem 1.11]
\label{theorem: h-descent for affine morphisms}
    Let $A \to B$ be an $h$-cover of connective $\Einfty$-rings.
    Then for all morphisms $A \to A'$, if $B' = B \Lotimes{A} A'$, the induced functor
    \[ \Perf(A') \to \Tot \Perf((B')^{\bullet})\]
    is an equivalence of $\infty$-categories, where $(B')^\bullet$ is the \v{C}ech nerve of $A' \to B'$.
\end{theorem}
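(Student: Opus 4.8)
Since this is a cited theorem from \cite{Cho25}, a proof sketch should follow the standard structure of $h$-descent arguments. The plan is to reduce to two cases that generate all $h$-covers: fppf covers and finite surjective morphisms on geometric points (proper/finite covers). First I would recall that by the structure theory of $h$-covers (following Rydh, \cite[Remark 2.5]{Ryd10}), every $h$-cover can be refined by a composition of a Zariski cover, an fppf cover, and a finite surjective morphism; since $\Perf$ satisfies fppf descent by classical faithfully flat descent (or Lurie's $\Einfty$-version), the real content is descent along finite surjective morphisms.

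The key step is to establish descent for a finite surjective morphism $A \to B$ of connective $\Einfty$-rings. Here I would use the abstract descent criterion: $\Perf(A) \to \Tot \Perf(B^{\bullet})$ is an equivalence provided that $\Perf(A)$ is generated under finite colimits and retracts by the image of $B$-modules that are perfect over $A$, together with a vanishing/conservativity statement. The essential input is a form of the \emph{reduction to Noetherian and then to the affine line glued at a square-zero point}, as in Gabber's argument and its refinements: one shows that any finite surjective cover is dominated, after base change, by one of a very restricted form (e.g. an abstract blowup square or a Ferrand pushout), for which descent can be checked by hand. The crucial algebraic fact is that for a finite morphism $A \to B$ with $\Spec B \to \Spec A$ surjective, the pushforward $B$ is \emph{descendable} in the sense of Mathew \cite{lurieha}-style nilpotence: there exists $n$ such that the $n$-fold tensor power of the cofiber of $A \to B$ is null. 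Descendability immediately implies the $\Tot$-comparison is an equivalence for $\Perf$ (and indeed for all of $\QCoh$) and is stable under arbitrary base change, which gives the statement for all $A \to A'$ simultaneously.

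The main obstacle is precisely proving \textbf{descendability of finite surjective covers} in the generality of arbitrary (non-Noetherian) connective $\Einfty$-rings. Over a Noetherian base this is classical (it underlies proper descent and the theorem of Halpern-Leistner--Preygel \cite{HP23}), but extending to non-Noetherian rings requires a valuative-criterion argument: one uses that $h$-covers are detected by valuation rings (Definition \ref{defn: v-cover}), reduces the descendability bound to the case of valuation rings by a limit/approximation argument (writing an arbitrary ring as a filtered colimit of finitely generated subrings and using that $h$-covers are finitely presented), and then checks descendability directly over a valuation ring, where finite surjective covers are particularly tractable. I would expect Chough's proof to organize this via the ``arc-topology''/$v$-topology machinery of Bhatt--Mathew, showing that the presheaf $R \mapsto \Perf(R)$ is a $v$-sheaf on a suitable category and that $h$-descent follows by comparing with $v$-descent on the finitely presented locus. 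Once descendability is in hand, the totalization comparison and its compatibility with base change are formal consequences of the theory of descendable morphisms, so no further calculation is needed.
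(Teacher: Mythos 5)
First, a structural point: the paper does not prove this statement at all --- it is imported verbatim as \cite[Theorem 1.11]{Cho25} and used as a black box in the proof of Theorem \ref{theorem: h-descent}. So the only meaningful comparison is between your sketch and Chough's actual argument, which (like its Noetherian precursor \cite{HP23}) works directly with $\Perf$ and $\mathrm{APerf}$ via Noetherian approximation, excision for Ferrand/Milnor pushouts, and Rydh's refinement of $h$-covers, rather than through a module-level nilpotence statement.

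The genuine gap in your proposal is the central claim that a finite surjective morphism of arbitrary connective $\Einfty$-rings is descendable in Mathew's sense. You correctly flag this as ``the main obstacle,'' but the proposed fix does not close it. Descendability of $A\to B$ would give $\Mod_{A'}\simeq\Tot\Mod_{(B')^\bullet}$ for \emph{every} base change $A\to A'$, i.e.\ descent for the entire derived category of modules, not just $\Perf$. That conclusion is strictly stronger than the theorem and is exactly what fails for $h$-covers outside the Noetherian, finite-Krull-dimension setting; the restriction to $\Perf$ in the statement, and the failure of $h$-hyperdescent for $\Perf$ recorded in \cite[Remark 1.14]{Cho25} and quoted in this very paper, are symptoms of the covers not being descendable. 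Uniform descendability of finite surjective (or $v$-) covers is known only under finiteness hypotheses (Noetherian of bounded Krull dimension, perfect rings, and the like), with the nilpotence index controlled by the dimension; your reduction ``write $A$ as a filtered colimit of finitely generated subrings'' does not rescue this, because descendability is not preserved under filtered colimits of rings unless the indices are uniformly bounded, and they are not. Two smaller issues: Rydh's structure theorem refines an $h$-cover by an open cover followed by a \emph{proper} finitely presented surjection, and reducing proper to finite requires Raynaud--Gruson flatification plus a separate treatment of blowups for $\Perf$, which your sketch omits; and the ``for all $A\to A'$'' clause in the statement is there precisely because universal descent does not come for free once one abandons descendability --- it has to be proved, not observed.
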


The main result of this section is that Theorem \ref{theorem: h-descent for affine morphisms} implies a similar statement for morphisms of prestacks surjectve in $h$-topology.

\begin{theorem}\label{theorem: h-descent}
    Suppose that $\cX \to \mathcal Y$ is a morphism of prestacks surjective in $h$-topology.
    Then there is an equivalence of categories 
    \[ 
        \Perf(\mathcal Y) \to \Tot \Perf(\cX^{\bullet})
    \]
    where $\cX^\bullet$ is the \v{C}ech nerve of $\cX \to \mathcal Y$.
\end{theorem}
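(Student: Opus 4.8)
The plan is to bootstrap Theorem~\ref{theorem: h-descent for affine morphisms} to prestacks, using that $\Perf$ on prestacks is the right Kan extension of $\Perf$ on affine schemes: $\Perf(\mathcal{Z})=\lim_{\Spec C\to\mathcal{Z}}\Perf(C)$, so that $\Perf$ carries colimits of prestacks to limits of $\infty$-categories. I will use repeatedly that colimits in $\PreStk$ are universal (it is a presheaf $\infty$-category valued in the $\infty$-topos of spaces), so that pullback preserves colimits; concretely, for $\mathcal{Z}\to\mathcal{W}\leftarrow\mathcal{Z}'$ one has $\mathcal{Z}\times_{\mathcal{W}}\mathcal{Z}'=\colim_{\Spec C\to\mathcal{Z}}(\Spec C\times_{\mathcal{W}}\mathcal{Z}')$, hence $\Perf(\mathcal{Z}\times_{\mathcal{W}}\mathcal{Z}')=\lim_{\Spec C\to\mathcal{Z}}\Perf(\Spec C\times_{\mathcal{W}}\mathcal{Z}')$, and the formation of \v{C}ech nerves commutes with all the (co)limits below (including $\colim_{\Spec A\to\mathcal{Y}}\Spec A=\mathcal{Y}$, so that $\Perf(\cX^n)=\lim_{\Spec A\to\mathcal{Y}}\Perf(\cX^n\times_{\mathcal{Y}}\Spec A)$).

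I would first isolate two auxiliary facts. \emph{(A)}: if $p\colon\Spec B\to\Spec A$ is an $h$-cover and $\mathcal{Z}\to\Spec A$ is any prestack, then the canonical augmentation $\Perf(\mathcal{Z})\to\Tot_n\Perf\bigl(\mathcal{Z}\times_{\Spec A}(\Spec B)^{\times_{\Spec A}(n+1)}\bigr)$ is an equivalence; reducing to $\mathcal{Z}=\Spec C$ via the Kan extension formula, $C\to C\otimes_A B$ is again an $h$-cover (base change of $h$-covers is an $h$-cover, \cite[Lemma~2.5]{Cho25}), and the claim is Theorem~\ref{theorem: h-descent for affine morphisms} with $A'=C$. \emph{(B)}: if $g\colon\mathcal{V}\to\mathcal{U}$ is a morphism of prestacks admitting a section, the \v{C}ech nerve of $g$ augmented over $\mathcal{U}$ has an extra degeneracy, so $\Perf(\mathcal{U})\to\Tot\Perf(\check{C}(\mathcal{V}/\mathcal{U}))$ is an equivalence (a split augmented cosimplicial object is a limit diagram).

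Next I would reduce to $\mathcal{Y}=\Spec A$ affine: for each $\Spec A\to\mathcal{Y}$ the base change $\cX_A:=\cX\times_{\mathcal{Y}}\Spec A\to\Spec A$ is surjective in $h$-topology by Definition~\ref{def: surjective in h-topology}, and $\cX_A^\bullet=\cX^\bullet\times_{\mathcal{Y}}\Spec A$; since $\Perf(\mathcal{Y})=\lim_{\Spec A\to\mathcal{Y}}\Perf(A)$ and $\Tot\Perf(\cX^\bullet)=\lim_{\Spec A\to\mathcal{Y}}\Tot\Perf(\cX_A^\bullet)$, it suffices that $\Perf(A)\to\Tot\Perf(\cX_A^\bullet)$ be an equivalence compatibly in $A$ --- and being an equivalence is a property of this canonical augmentation. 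So let $\mathcal{Y}=\Spec A$, let $q\colon\cX\to\Spec A$ be our morphism, and choose, by surjectivity in $h$-topology, an $h$-cover $p\colon\Spec B\to\Spec A$ with a factorization $r\colon\Spec B\to\cX$, $qr=p$. Put $B^n=B^{\otimes_A(n+1)}$ and form the bisimplicial prestack $Y^{m,n}=\cX^{\times_{\Spec A}(m+1)}\times_{\Spec A}(\Spec B)^{\times_{\Spec A}(n+1)}$, with the choice-free bisimplicial structure from the two \v{C}ech nerves over $\Spec A$, and apply $\Perf$. Along $n$: $Y^{\bullet,n}=\check{C}(\cX\times_{\Spec A}\Spec B^n/\Spec B^n)$, and $\cX\times_{\Spec A}\Spec B^n\to\Spec B^n$ has a section (base change of $r$ along $B\to B^n$), so (B) gives $\Tot_m\Perf(Y^{m,n})\simeq\Perf(B^n)$, whence $\Tot_n\Tot_m\Perf(Y^{m,n})\simeq\Tot_n\Perf(B^n)\simeq\Perf(A)$ by Theorem~\ref{theorem: h-descent for affine morphisms}. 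Along $m$: $Y^{m,\bullet}=\check{C}(\cX^{\times_{\Spec A}(m+1)}\times_{\Spec A}\Spec B/\cX^{\times_{\Spec A}(m+1)})$, so (A) with $\mathcal{Z}=\cX^{\times_{\Spec A}(m+1)}$ gives $\Tot_n\Perf(Y^{m,n})\simeq\Perf(\cX^{\times_{\Spec A}(m+1)})$, whence $\Tot_m\Tot_n\Perf(Y^{m,n})\simeq\Tot\Perf(\cX^\bullet)$. By the Fubini theorem for bicosimplicial totalizations the two iterated totalizations agree, and a diagram chase identifies the resulting equivalence $\Perf(\Spec A)\simeq\Tot\Perf(\cX^\bullet)$ with the canonical augmentation.

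The main obstacle is organizational rather than conceptual: one must check that all the (co)limits in play commute --- the Fubini interchange, and the passage of \v{C}ech nerves and base changes through the Kan extension formula, which rests on universality of colimits in $\PreStk$ --- that (A) and (B) are applied in families over the residual cosimplicial variable (legitimate because in each case the comparison is the canonical augmentation and ``being an equivalence'' is a property), and, finally, that the composite equivalence is the augmentation $\Perf(\mathcal{Y})\to\Tot\Perf(\cX^\bullet)$ and not some twist of it. Every ingredient is standard; the bicosimplicial bookkeeping is where the care lies.
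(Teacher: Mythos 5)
Your proposal is correct and follows essentially the same route as the paper: reduce to an affine base, use split (co)simplicial objects for the section over the $h$-cover, extend the affine $h$-descent theorem to prestack base changes via the right Kan extension formula, and conclude by a bicosimplicial Fubini/cofinality argument. The only difference is organizational — the paper packages the double-totalization step as a reusable ``refinement lemma'' (proved by the same $\Delta\times\Delta$ diagram and applied twice), whereas you run the single bisimplicial computation directly.
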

    The statement is likely well-known to experts, but we could not find a reference.
    It would be natural to prove it in the following way: sheafify the prestacks $\cX$ and $\mathcal Y$ in the h-topology, and then show that the map \[|(\cX^{h})^\bullet|\to\mathcal Y^h\] is an equivalence, where the upper index $h$ stands for sheafification in $h$-topology, and $|(\mathcal X^{h})^\bullet|$ is the geometric realization of the \v{C}ech nerve of $\mathcal X \to \mathcal Y$.
    An issue with this approach is that $\Perf$ does not satisfy descent with respect to h-hypercovers \cite[Remark 1.14]{Cho25}, so it is not true that the map $\Perf(\mathcal X^{h})\to\Perf(\mathcal X)$ is an equivalence. 
    Therefore, instead of using the general framework, we prove the claim directly following the same logic.
\begin{proof}[Proof of Theorem \ref{theorem: h-descent}]
    We will say that a morphism of prestacks $f:\mathcal X\to\mathcal Y$ satisfies \emph{descent for $\Perf$} if the induced functor 
     \[ 
        \Perf(\mathcal Y) \to \Tot \Perf(\mathcal X^{\bullet})
    \]
    is an equivalence. We say that $f$ satisfies \emph{universal descent for $\Perf$} if any base change of $f$ satisfies descent for $\Perf$. It is easy to see that it suffices to consider base changes under maps $Y\to\mathcal Y$ where $Y$ is an affine scheme.

    We now have the following lemmas:
    \begin{lemma}\label{lemma:Gr-section} If $f:\mathcal X\to\mathcal Y$ admits a section, it satisfies descent for $\Perf$.
    \end{lemma}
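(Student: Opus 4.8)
The plan is to exploit the standard fact that a simplicial (or cosimplicial) object equipped with an "extra degeneracy" has a trivial totalization. Suppose $s: \mathcal Y \to \mathcal X$ is a section of $f$, so $f \circ s \simeq \mathrm{id}_{\mathcal Y}$. Let $\mathcal X^\bullet$ denote the \v{C}ech nerve of $f$, an augmented cosimplicial prestack $\mathcal Y \to \mathcal X^\bullet$ with $\mathcal X^n = \mathcal X \times_{\mathcal Y} \cdots \times_{\mathcal Y} \mathcal X$ ($n+1$ factors). The section $s$ induces maps $\mathcal X^{n} \to \mathcal X^{n-1}$ (insert $s$ applied to the image in $\mathcal Y$ of a chosen factor, or more precisely contract using $s \circ f$ on one leg), which assemble into a splitting of the augmented cosimplicial object in the sense of \cite[Definition 4.7.2.2]{lurieha} (a "split augmented cosimplicial object"). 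Applying the contravariant functor $\Perf(-)$ turns $\mathcal X^\bullet$ into a cosimplicial $\infty$-category $\Perf(\mathcal X^\bullet)$ which is again split augmented, now augmented by $\Perf(\mathcal Y)$.

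First I would make precise the construction of the splitting: for each $n \geq 0$, define the extra coface/codegeneracy using the map that projects off one copy of $\mathcal X$ and re-inserts it via $s$ composed with the remaining augmentation; one checks the cosimplicial identities hold because $f s = \mathrm{id}$. Then I would invoke the dual of \cite[Lemma 6.1.3.16]{lurieha} (or \cite[Corollary 4.7.2.13]{lurieha} in its cosimplicial/opposite form): a split augmented cosimplicial object of an $\infty$-category is a limit diagram, i.e. the augmentation exhibits $\Perf(\mathcal Y) \simeq \mathrm{Tot}\,\Perf(\mathcal X^\bullet)$. This is exactly the assertion that $f$ satisfies descent for $\Perf$.

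The only genuine content to check is that applying $\Perf(-)$ respects the splitting — that is, that the extra codegeneracies of $\mathcal X^\bullet$ are sent by the functor $\Perf(-): \mathrm{PreStk}^{op} \to \mathrm{Cat}_\infty$ to the extra codegeneracies needed to witness $\Perf(\mathcal X^\bullet)$ as split augmented. This is formal: $\Perf$ is a functor, so it carries the coherent diagram of maps (including all the compatibilities encoded in "split augmented") to a coherent diagram of the same shape. I expect this bookkeeping — tracking that the splitting data is genuinely functorial rather than just levelwise — to be the main (mild) obstacle, but it is resolved entirely by working with the augmented cosimplicial object as a single functor $\Delta_+ \to \mathrm{PreStk}^{op}$ equipped with its splitting, as in Lurie's formalism, so that no hands-on simplicial identity verification beyond $fs = \mathrm{id}$ is needed. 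No Noetherian or dualizing-complex hypotheses enter here; the lemma is purely formal.
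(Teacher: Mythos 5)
Your proof is correct and follows essentially the same route as the paper: the paper likewise observes that a section makes the \v{C}ech nerve a split (augmented) simplicial object, hence an absolute colimit diagram preserved by any functor, so that applying $\Perf$ yields the desired totalization. The only cosmetic difference is that you cite Lurie's \emph{Higher Algebra} results on split cosimplicial objects while the paper cites the corresponding Kerodon tags.
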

    \begin{proof}
    If $f$ admits a section, then the \v{C}ech nerve of $f$ forms a split simplicial object \cite[\href{https://kerodon.net/tag/05HN}{Tag 05HN}]{kerodon}, and split simplicial objects are universal simplicial colimits \cite[\href{https://kerodon.net/tag/04TL}{Tag 04TL}]{kerodon}. 
    \end{proof}

    \begin{lemma}\label{lemma:Gr-refinement} Consider two maps $f_1:\mathcal X_1\to\mathcal Y$ and $f_2:\mathcal X_2\to\mathcal Y$,
    and put $\mathcal X:=\mathcal X_1\times_{\mathcal Y}\mathcal X_2$. Suppose that the base change
    \[(\mathcal X_1)_{\mathcal Y}^k\times_{\mathcal Y}\mathcal X_2\to(\mathcal X_1)^k_{\mathcal Y}\]
    satisfies descent for $\Perf$ for every $k>0$. Then $f_1$ satisfies descent if and only if the fiber product $\mathcal X\to\mathcal Y$ satisfies descent.
    \end{lemma}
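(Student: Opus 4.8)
The plan is to compare the two totalizations directly. Write $C(f)_\bullet$ for the \v{C}ech nerve of a morphism $f$ of prestacks, so that $C(f_1)_p = (\mathcal X_1)^{p+1}_{\mathcal Y}$ and $\mathcal X_1^\bullet = C(f_1)$, while $\mathcal X^\bullet = C(\mathcal X\to\mathcal Y)$ with $C(\mathcal X\to\mathcal Y)_n = (\mathcal X)^{n+1}_{\mathcal Y}$. Since $f_1$ (resp.\ $\mathcal X\to\mathcal Y$) satisfies descent for $\Perf$ precisely when the comparison functor $\Perf(\mathcal Y)\to\Tot\Perf(\mathcal X_1^\bullet)$ (resp.\ $\Perf(\mathcal Y)\to\Tot\Perf(\mathcal X^\bullet)$) is an equivalence, it suffices to produce an equivalence $\Tot\Perf(\mathcal X^\bullet)\simeq\Tot\Perf(\mathcal X_1^\bullet)$ compatible with these augmentation functors. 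The starting observation is that fiber products over $\mathcal Y$ commute with one another, yielding an identification of simplicial prestacks
\[
(\mathcal X)^{n+1}_{\mathcal Y} = (\mathcal X_1\times_{\mathcal Y}\mathcal X_2)^{n+1}_{\mathcal Y} = (\mathcal X_1)^{n+1}_{\mathcal Y}\times_{\mathcal Y}(\mathcal X_2)^{n+1}_{\mathcal Y}
\]
compatible with all face and degeneracy maps. In other words, $C(\mathcal X\to\mathcal Y)$ is the restriction along the diagonal $\Delta\to\Delta\times\Delta$ of the bisimplicial prestack $B_{\bullet,\bullet}$ with $B_{p,q} = C(f_1)_p\times_{\mathcal Y}C(f_2)_q$.

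First I would apply $\Perf$ to obtain a bicosimplicial $\infty$-category $\Perf(B_{\bullet,\bullet})$, augmented in each variable by $\Perf(\mathcal Y)$. Using the $\Tot$-version of the standard fact that a bisimplicial object and its diagonal have equivalent geometric realizations (equivalently, cofinality of the diagonal $\Delta\to\Delta\times\Delta$ for limits) together with Fubini for limits,
\[
\Tot\Perf(\mathcal X^\bullet)\simeq\Tot_{\Delta\times\Delta}\Perf(B_{\bullet,\bullet})\simeq\Tot_p\big(\Tot_q\Perf(B_{p,q})\big).
\]
Next, fix $p$ and set $W := C(f_1)_p = (\mathcal X_1)^{p+1}_{\mathcal Y}$. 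All $p+1$ projections $W\to\mathcal X_1\to\mathcal Y$ agree, so there is a canonical morphism $W\to\mathcal Y$, and commuting of fiber products gives $B_{p,q} = W\times_{\mathcal Y}(\mathcal X_2)^{q+1}_{\mathcal Y} = (W\times_{\mathcal Y}\mathcal X_2)^{q+1}_W$; that is, $q\mapsto\Perf(B_{p,q})$ is $\Perf$ applied to the \v{C}ech nerve of $W\times_{\mathcal Y}\mathcal X_2\to W$. This is exactly the morphism of the hypothesis with $k = p+1 > 0$, so its augmentation $\Perf(W)\to\Tot_q\Perf(B_{p,q})$ is an equivalence. As this is a map of cosimplicial objects in $p$, applying $\Tot_p$ gives an equivalence $\Tot\Perf(\mathcal X_1^\bullet) = \Tot_p\Perf(C(f_1)_p)\overset{\sim}{\to}\Tot_p\Tot_q\Perf(B_{p,q})$.

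Combining, $\Tot\Perf(\mathcal X^\bullet)\simeq\Tot\Perf(\mathcal X_1^\bullet)$, and unwinding the construction this equivalence sits under the canonical functors from $\Perf(\mathcal Y)$; hence $\Perf(\mathcal Y)\to\Tot\Perf(\mathcal X^\bullet)$ is an equivalence if and only if $\Perf(\mathcal Y)\to\Tot\Perf(\mathcal X_1^\bullet)$ is, which is the claim. The step requiring the most care — and the main obstacle — is the $\infty$-categorical bookkeeping: organizing $B_{\bullet,\bullet}$ as an honest bisimplicial object in prestacks, identifying its diagonal with $C(\mathcal X\to\mathcal Y)$ and each slice $B_{p,\bullet}$ with the \v{C}ech nerve appearing in the hypothesis at the level of simplicial objects (rather than only degreewise), and invoking the diagonal-cofinality statement in this setting; each is standard, but together they are the substance of the argument.
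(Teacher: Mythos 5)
Your argument is correct and is essentially the paper's own proof: both rest on the cofinality of the diagonal $\Delta\to\Delta\times\Delta$ (Lurie's Lemma 5.5.8.4) to identify $\Tot\Perf(\mathcal X^\bullet)$ with the totalization of the bicosimplicial object $\Perf\bigl((\mathcal X_1)^{p+1}_{\mathcal Y}\times_{\mathcal Y}(\mathcal X_2)^{q+1}_{\mathcal Y}\bigr)$, and on applying the hypothesis levelwise in $p$ to identify that with $\Tot\Perf(\mathcal X_1^\bullet)$. The paper packages this as a commutative square under $\Perf(\mathcal Y)$ whose bottom and right arrows are equivalences; you have simply unwound the same square.
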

    \begin{proof}
Contemplate the following diagram:
\[
\begin{tikzcd}[ampersand replacement=\&]
	{\Perf(\mathcal{Y})} \& {\lim\limits_{\Delta} \Perf(\mathcal X_1^\bullet)} \\
	{\lim\limits_{\Delta} \Perf ((\mathcal X_1 \times \mathcal X_2)^\bullet)} \& {\lim\limits_{\Delta \times \Delta}(\Perf(\mathcal X_1^\bullet \times \mathcal X_2^\bullet))}
	\arrow[from=1-1, to=1-2]
	\arrow[from=1-1, to=2-1]
	\arrow[from=1-2, to=2-2]
	\arrow[from=2-1, to=2-2]
\end{tikzcd}
.
\]
By \cite[Lemma 5.5.8.4]{Lur09} the bottom arrow is an equivalence.
The hypothesis implies the right arrow is an equivalence. 
Thus one of the remaining two arrows is an equivalence if and only if the other is.
\end{proof}

    The statement now follows. Indeed, we may assume that $\mathcal Y=Y$ is an affine scheme. By the hypothesis, there is an affine scheme $Y'$ and a morphism $Y'\to\cX$ such that the composition $Y'\to\cX\to Y$ is an h-cover $Y'\to Y$. 
    For every $k>0$, the map 
    \[(Y')^k_Y\times_Y\mathcal X\to (Y')^k_Y\]
    admits a section, and therefore satisfies descent for $\Perf$ by Lemma~\ref{lemma:Gr-section}. Applying Lemma~\ref{lemma:Gr-refinement}, we see that $Y'\times_Y\mathcal X\to Y$ satisfies descent for $\Perf$ as well. Now, using Lemma~\ref{lemma:Gr-refinement} again, we see that $\mathcal X\to Y$ satisfies descent for $\Perf$, as claimed.
\end{proof}

\begin{corollary}
    \label{corollary: h-descent for ind-finite}
    Suppose that $Y$ is a Noetherian scheme and $X \to Y$ is an ind-finite ind-schematic morphism which is surjective on geometric points. 
    Then $X \to Y$ satisfies descent for $\Perf$.
\end{corollary}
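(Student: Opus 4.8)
The plan is to obtain this as an immediate consequence of the two preceding results, so the proof will be very short. First I would observe that a Noetherian scheme $Y$ is in particular a locally Noetherian prestack, and that by hypothesis the morphism $X \to Y$ is ind-finite ind-schematic and surjective on geometric points. Hence Lemma~\ref{lemma: ind-finite noetherian surjective on geometric pts} applies directly and tells us that $X \to Y$ is surjective in the $h$-topology in the sense of Definition~\ref{def: surjective in h-topology}.

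Second, I would invoke Theorem~\ref{theorem: h-descent}, which asserts precisely that any morphism of prestacks surjective in the $h$-topology satisfies descent for $\Perf$: the canonical functor $\Perf(Y) \to \Tot \Perf(X^\bullet)$, where $X^\bullet$ is the \v{C}ech nerve of $X \to Y$, is an equivalence. This is exactly the assertion of the corollary, so we are done.

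The only points worth double-checking are bookkeeping: that the hypothesis ``locally Noetherian prestack'' of Lemma~\ref{lemma: ind-finite noetherian surjective on geometric pts} is met by a Noetherian scheme (it is), and that the notion of ``surjective in $h$-topology'' produced by that lemma is literally the hypothesis consumed by Theorem~\ref{theorem: h-descent} (it is, by construction of the latter). There is no genuine obstacle at this stage: all of the real content has already been carried out in the proof of Theorem~\ref{theorem: h-descent} (via Lemmas~\ref{lemma:Gr-section} and \ref{lemma:Gr-refinement} together with Chough's $h$-descent theorem) and in Lemma~\ref{lemma: ind-finite noetherian surjective on geometric pts}. One could optionally remark that, when $Y$ also admits a dualizing complex, the statement alternatively follows from $h$-descent for $\IndCoh$; but the argument above requires no such hypothesis.
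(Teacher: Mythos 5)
Your proposal is correct and matches the paper's proof exactly: the paper likewise deduces the corollary by combining Lemma~\ref{lemma: ind-finite noetherian surjective on geometric pts} with Theorem~\ref{theorem: h-descent}. Nothing further is needed.
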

\begin{proof}
    Combine Lemma \ref{lemma: ind-finite noetherian surjective on geometric pts} and Theorem \ref{theorem: h-descent}.
\end{proof}

\printbibliography




\end{document}